\documentclass{daj}
%reqno is for making equation numbers appear on the right side

%\voffset=5mm
%\oddsidemargin=5pt \evensidemargin=5pt
%\headheight=9pt     \topmargin=-24pt
%\textheight=655pt   \textwidth=463.pt

\usepackage{amssymb,verbatim}
\usepackage{amsmath,amsfonts}
\usepackage{amsthm}
\usepackage{url}
\usepackage{graphicx} %to include figure
\usepackage{enumitem} %to use nice enumeration with changed indent
\usepackage{hyperref} % to make links for lemmas and theorems
\hypersetup{colorlinks} %so the links look nicer
\usepackage{bbm} % for the characteristic function 1
\usepackage{mathrsfs} %to use \mathscr
\usepackage{cancel} %to make sout to cross things out

%global options for enumerate
\setlist[enumerate]{
	label=\textnormal{({\roman*})},
	ref={\roman*}}

%to make the theorem name bold
\makeatletter
\def\th@plain{%
	\thm@notefont{}% same as heading font
	\itshape % body font
}
\def\th@definition{%
	\thm@notefont{}% same as heading font
	\normalfont % body font
}
\makeatother

%\linespread{1.5}

\newtheorem{thm}{Theorem}[section]
\newtheorem{lemma}[thm]{Lemma}

\newtheorem{cor}[thm]{Corollary}
\newtheorem{prop}[thm]{Proposition}
\newtheorem{conj}[thm]{Conjecture}

\newtheorem{sublemma}[thm]{Sublemma}

% Theorem environments with roman or slanted font

\newtheorem{rem}[thm]{Remark}

\numberwithin{figure}{section}
\numberwithin{equation}{section}

%%%%%%%%%%%%% shortcuts and macros

\def\emp{\nothing}
\def\sq{\square}

\def\zz{\mathbb Z}
\def\nn{\mathbb N}

\def\rr{\mathbb R}

\def\pp{\mathbb P}

\def\sm{\smallsetminus}

\def\Om{\Omega}

\def\la{\lambda}
\def\ga{\gamma}
\def\si{\sigma}
\def\de{\delta}
\def\ep{\ve}
\def\al{\alpha}
\def\be{\beta}
\def\om{\omega}

\def\ve{\varepsilon}

\def\vp{\varphi}

\def\cL{\mathcal L}

\def\CB{\mathcal B}

\def\ssu{\subset}

\def\<{\langle}
\def\>{\rangle}

\def\DET{\text{{\rm DET}}}

\def\rM{ {\text {\rm M} } }

\def\vt{\ze}

\def\St{{\text {\rm Stab} } }

\def\0{{\mathbf 0}}

\def\nothing{\varnothing}

\def\.{\hskip.06cm}
\def\ts{\hskip.03cm}

\def\by{{\textbf{y}}}

\def\bX{\textbf{\textit{X}}}
\def\bZ{\textbf{\textit{Z}}}
\def\bcx{\textbf{\textit{x}}}

\def\La{\Lambda}

\def\ze{{\zeta}}

\newcommand{\wi}{\mathrm{width}}

\newcommand{\SSYT}{\operatorname{SSYT}}
\newcommand{\SYT}{\operatorname{{\rm SYT}}}

\def\.{\hskip.06cm}
\def\ts{\hskip.03cm}

\def\nin{\noindent}

\def\qp{\textrm{q}}
\def\eG{\Phi}
\def\SP{{\textsc{\#P}}}

\def\Pb{{\text{\bf P}}}
\def\Path{{\text{\rm Path}}}

%%%%%%%%%%%%%%%%%%%%%%%%%%%%%%%%%%%%

\DeclareMathOperator{\Ac}{\mathcal{A}} % a random event
 % a random event
\DeclareMathOperator{\Bnsim}{\ \overset{\CB}{\nsim} \ } % The relation stating that two integers are not contained in the same partition
\DeclareMathOperator{\Bsim}{\ \overset{\CB}{\sim} \ } % The relation stating that two integers are contained in the same partition
\DeclareMathOperator{\eBnsim}{\ {\nsim} \ } % The relation stating that two integers are not contained in the same partition
\DeclareMathOperator{\eBsim}{\ {\sim} \ } % The relation stating that two integers are contained in the same partition
 %Benjamini-Schramm convergence
\DeclareMathOperator{\Cc}{\mathcal{C}} % a random event
 % column vector of an infinite matrix
\DeclareMathOperator{\Dc}{\mathcal{D}} % a set of smooth SSYT
\DeclareMathOperator{\Eb}{\mathbb{E}} %the set of all linear extensions
\DeclareMathOperator{\Ec}{\mathrm{FT}} %the set of all flagged tabelaux
\DeclareMathOperator{\FT}{\mathrm{FT}} %the set of all flagged tabelaux
\DeclareMathOperator{\ED}{\mathrm{ED}} %the set of all flagged tabelaux
 %sigma-algebra
\DeclareMathOperator{\Gc}{\mathcal{G}} %set of smooth partitions
 % random linear extension
 % moments
 % probability vector
%\DeclareMathOperator{\Pb}{\mathbb{P}} %probability
\DeclareMathOperator{\Pbl}{\Pb_{\lambda}} %probability taken over the uniform random linear extention of straight-shaped Young poset
\DeclareMathOperator{\Pblm}{\Pb_{\lambda/\mu}} %probability taken over the uniform random linear extention of skew-shaped Young poset
 % Poisson poset
 % Szemeredi less or equal symbol
\DeclareMathOperator{\Rb}{\mathbb{R}} %Real numbers
 % Szemeredi parallel symbol
 % vector
 % vector
\DeclareMathOperator{\Zb}{\mathbb{Z}} % Integers

%%%%%%%%%%%%%%%Commands for making comments and to do lists

%%%%%%%%%%%%%%%%%%%%%%%%%%%%%%%%%%%%%%%%%%%%%%%%
%% AUTHOR: Fill in meta-data below:
\dajAUTHORdetails{%
	title = {Sorting Probability for Large Young Diagrams}, %% please capitalize all significant words
	author = {Swee Hong Chan, Igor Pak, and Greta Panova},
	%% Please use the format for commas as follows:
	%% "A", or "A and B", or "A, B, and C", or "A, B, C, and D", etc.
	plaintextauthor = {Swee Hong Chan, Igor Pak, and Greta Panova},
	%% An author list in plain text: Use the format
	%% "A", or "A, B", or "A, B, C", etc.
	%% NOTE: No LaTeX code in author names.
	%% NOTE: No "and" at the end--simply comma separated,
	% 
	%% The remaing lines in this section are optional:
	%
	%% IF YOUR TITLE CONTAINS MATH OR LATEX such as accented characters: 
	%% Add a "plain text title";  otherwise comment out the next line:
%	plaintexttitle = {Short Proof of Rodl's n**loglog n Bound}, %%  title without math or LaTeX
	%
	%% ONLY IF YOUR TITLE IS TOO LONG to fit in the page headers, please 
	%% add an abbreviated version of the title, otherwise comment it out:
%	runningtitle = {R\"odl's $n^{\log\log n}$ Bound}, 
	%
	%% ONLY IF YOUR AUTHOR LIST IS TOO LONG to fit in the page headers, 
	%% add an abbreviated version, otherwise comment it out:
%	runningauthor = {Paul Erd\H{o}s, Johan H{\aa}stad, L\'aszl\'o Lov\'asz, and Andrew C-C. Yao},
	%% you can replace first names and/or middle names with initials.
	%
	%% ONLY IF YOUR AUTHOR LIST IS TOO LONG to fit the copyright entry
	%% on the bottom of the front page,
	%% add an abbreviated version, otherwise comment it out:
%	copyrightauthor = {P. Erd\H{o}s, J. H{\aa}stad, L. Lov\'asz, and A. C-C. Yao},
	%% Note that the copyrightauthor  field will seldom be necessary;
	%% for instance, in this example with four authors, it would be 
	%% all right to comment it out and have all authors' full names 
	%% appear on the Copyright line
	%
	%% Include keywords of your choice: comma separated, lower case;
	%% comment out the "keywords" line if you don't wish to provide them
	keywords = {1/3--2/3 conjecture, hook-length formula, linear extension, Schur function, sorting probability, standard Young tableau},
}   %%% END \dajAUTHORdetails

%%%%%%%%%%%%%%%%%%%%%%%%%%%%%%%%%%%%%%%%%%%%%%%%
%%% EDITOR: please fill in the following data:
\dajEDITORdetails{%
	year={2021},
	%volume={XX},
	number={24},
	received={27 July 2020},   % received date: example: 7 January 2017
	%revised={XX Month 20XX},    % Optional revised date (you may comment it out)
	published={30 November 2021},  % published date
	doi={10.19086/da.30071},       % XXX = number of paper, e.g. da006 for paper#6
	%                              % or  da0006 (length of string arbitrary)
}   %%% END \dajEDITORdetails

%\title[Sorting probability for large Young diagrams]{Sorting probability for large Young diagrams}
%\date{\today}
%
%
%\author{Swee Hong Chan}
%\address[Swee Hong Chan]{Department of Mathematics, UCLA,  Los Angeles, CA 90095.}
%\email{\texttt{sweehong@math.ucla.edu}}
%
%\author[\ts Igor Pak]{Igor Pak}
%\address[Igor Pak]{Department of Mathematics, UCLA,  Los Angeles, CA 90095.}
%\email{\texttt{pak@math.ucla.edu}}
%
%\author[\ts Greta Panova]{Greta Panova}
%\address[Greta Panova]{Department of Mathematics, USC,  Los Angeles, CA 90089.}
%\email{\texttt{gpanova@usc.edu}}

%   \thanks{Department of Mathematics, University of California, Los Angeles.}

\begin{document}
\begin{frontmatter}[classification=text]
	%% EDITOR: this will force the keywords to appear right after the Abstract.
	%%   If the abstract is too long and would force the keywords off the
	%%   front page, please comment out % [classification=text] above
	%%   This way the keywords will be floated on the bottom of the first page
	%%   even though the Abstract spills over to the next page.
	
	%%% AUTHOR: Title goes here.  This line is optional.  You must use it
	%%   if title has footnote attached or requires nontrivial typesetting,
	%%   e.g., inclusion of linebreaks to force nice layout.
	\title{Sorting Probability for Large Young Diagrams} %% please capitalize all significant words
	
	%%% AUTHOR:
	%%% List all authors. If you wish, place grant acknowledgements in \thanks.
	%%% In brackets include a short tag for each author.
	\author[swee]{Swee Hong Chan}
	\author[igor]{Igor Pak}
	\author[greta]{Greta Panova}

	%%% AUTHOR: Abstract goes here
	\begin{abstract}
		For a finite poset $P=(X,\prec)$, let $\cL_P$ denote the set
		of linear extensions of~$P$. The \emph{sorting probability} \ts
		$\de(P)$ \ts is defined as
		$$
		\de(P) \. := \. \min_{x,y\in X} \. \bigl| \Pb\ts[L(x)\leq L(y) ] \ts - \ts \Pb\ts[L(y)\leq L(x) ] \bigr|\.,
		$$
		where $L \in \cL_P$ is a uniform linear extension of~$P$.  We give asymptotic
		upper bounds on sorting probabilities for posets associated with large
		Young diagrams and large skew Young diagrams, with bounded number
		of rows.
	\end{abstract}
\end{frontmatter}

%\begin{abstract}
%
%\end{abstract}
%	
%	
%		\keywords{1/3--2/3 conjecture, hook-length formula, linear extension, Schur function, sorting probability, standard Young tableau}
%	\subjclass[2010]{05A16, 06A07, 60C05}
%	
%	
%	\maketitle

\section{Introduction} \label{s:intro}
\emph{Random linear extensions} of finite posets occupy an unusual
place in combinatorial probability by being remarkably interesting
with numerous applications, and at the same time by being unwieldy
and lacking general structure.
One reason for this lies in the broad nature of posets, when some
special cases are highly structured, extremely elegant and well
studied, while there is no universal notion of ``large poset'' or
``random poset'' in the opposite extreme.
As a consequence, the results in the area tend to range widely
across the \emph{generality spectrum}:
from weaker results for large classes of posets to stronger results
for smaller classes of posets.
	
In this framework, the famous \ts \emph{$\frac13$ -- $\frac23$ \ts Conjecture}~\ref{c:1323}
is very surprising in both the scope and precision, as it bounds the
\emph{sorting probability} \ts $\de(P)\le \frac13$ \ts for  \emph{all} finite posets~$P$.
There are numerous partial results on the conjecture, as well as the
Kahn--Saks general upper bound \ts $\de(P)\le \frac{5}{11}$.
At the same time, the asymptotic analysis of $\de(P)$
remains out of reach even for the most classical examples.
In this paper we obtain sharp asymptotic upper bounds on $\de(P)$
for large Young diagrams and large skew Young diagrams.  These are the
first asymptotic results of this type, as we are moving down the
generality spectrum.
	
	\smallskip
	
	\subsection{Sorting probability} \label{ss:intro-def}
	Let $P=(X,\prec)$ be a finite poset with $n=|X|$ elements.
	A \emph{linear extension}~$L$ of $P$ is an order preserving bijection \ts
	$L: X\to [n]=\{1,\ldots,n\}$, so that $x\prec y$ implies $L(x)<L(y)$ for all $x,y\in X$.
	The set of linear extensions is denoted $\cL(P)$, and $e(P)=|\cL(P)|$ is the \emph{number
		of linear extensions} of~$P$.
	
	The \emph{sorting probability of two elements} $x,y\in X$,
	$x\ne y$, is defined as
\begin{equation}\label{eq:sort-xy}
\de(P; \ts x,y) \, := \,\. \Bigl|\Pb\bigl[L(x) < L(y)\bigr] \. - \. \Pb\bigl[L(y) <L(y)\bigr]\Bigr|\.,
\end{equation}
	where the probability is over uniform linear extensions  $L\in \cL(P)$.
	This is a measure of how independent random linear extensions on
	elements $x$ and~$y$ are.  The \emph{sorting probability}\footnote{There
		seem to be multiple conflicting notations for variations of the
		sorting probability used in the literature. Notably, in~\cite{BFT,Sah}
		the notation $\de(P)$ means what we denote by \ts $\frac12\bigl(1-\de(P)\bigr)$.
		We hope this will not lead to confusion.} of~$P$ is defined as:
	\begin{equation}\label{eq:sort-prob-def}
	\de(P) \, := \, \min_{x,y\in X, \, x\ne y} \. \de(P; \ts x,y).
	\end{equation}
	Clearly, $\de(P)=1$ when $P$ is a chain, since all pairs of elements are comparable,
	so \ts $\de(P;\ts x,y)=1$ \ts for all $x,y\in X$.
	The idea of the sorting probability~$\de(P)$ is to measure how close to $1/2$
	can one get the probabilities in~\eqref{eq:sort-xy}.
	
	\begin{conj}[{\rm The \ts $\frac13$ -- $\frac23$ Conjecture}] \label{c:1323}
		For every finite poset \ts $P=(X,\prec)$ \ts that is not a chain,
		we have \ts $\de(P)\le \frac13$.
	\end{conj}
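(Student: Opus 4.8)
\medskip
\noindent\textbf{Proof proposal.}\
Conjecture~\ref{c:1323} is a celebrated open problem, going back to Kislitsyn and rediscovered several times, so what follows is the strategy underlying the known partial results rather than a complete argument. The first step is to trade the minimum over \emph{all} pairs in~\eqref{eq:sort-prob-def} for the hunt for a single \emph{balanced incomparable pair}: comparable pairs give $\de(P;x,y)=1$ and never realize the minimum, so it suffices to show that every non-chain poset $P=(X,\prec)$ admits $x,y\in X$, incomparable in $P$, with
\[
\tfrac13 \,\le\, \Pb\bigl[L(x)<L(y)\bigr] \,\le\, \tfrac23 ,
\]
where $L\in\cL(P)$ is uniform. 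The value $\tfrac13$ is tight: for the three-element poset $P$ given by a two-element chain $a\prec b$ together with an incomparable element $c$, one has $e(P)=3$ and $\Pb[L(a)<L(c)]=\tfrac23$, whence $\de(P)=\tfrac13$. Any proof must therefore be sharp, which rules out several naive averaging schemes.

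The engine of all serious progress is a log-concavity phenomenon for rank statistics. For a fixed $x\in X$, set $e_k(x):=\#\{L\in\cL(P):L(x)=k\}$ for $k\in[n]$. Stanley's theorem --- a combinatorial consequence of the Alexandrov--Fenchel inequalities for mixed volumes, proved by expressing these counts through mixed volumes of a polytope attached to $P$ --- asserts that $(e_k(x))_k$ is log-concave, hence unimodal with no internal zeros. The plan is then: (i) choose $x$ whose rank distribution is as ``spread out'' as possible; (ii) note that since $P$ is not a chain this distribution has support of size at least $2$, so inside the rank window of $x$ there must sit a second element $y$ incomparable to $x$; (iii) use log-concavity --- ideally a bivariate version controlling $\#\{L:L(x)=k,\,L(y)=\ell\}$ through the gap $\ell-k$ --- to deduce that $\Pb[L(x)<L(y)]$ cannot stray far from $\tfrac12$. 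Carried out carefully this is precisely the Kahn--Saks argument, and it yields the bound $\de(P)\le\tfrac{5}{11}$ recalled in the introduction; subsequent refinements shave the constant slightly, but it stays bounded away from $\tfrac13$.

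The crux --- and the reason the conjecture remains open --- is step~(iii) with the \emph{sharp} constant. The Alexandrov--Fenchel input is lossy: the extremal posets (such as $a\prec b$ with an incomparable $c$, and its relatives) have rank supports of size only $2$, where log-concavity says nothing at all, so the bound $\tfrac13$ must come entirely from the global structure linking $x$, $y$, and the remainder of $P$. The natural strengthening here is the Kahn--Saks \emph{cross-product conjecture}, a two-variable log-concavity statement for linear-extension counts; proving it would drive the balancing constant down, but it too is open in general, with only special cases known. Absent such an input I would not expect a clean proof of the full Conjecture~\ref{c:1323} --- which is exactly why the present paper moves down the generality spectrum, replacing ``all posets, constant $\tfrac13$'' by ``structured families, asymptotically optimal bounds'': for posets arising from large (skew) Young diagrams with a bounded number of rows one has the hook-length formula and sharp asymptotics for Schur function evaluations, and these give quantitative control on $\Pb[L(x)<L(y)]$ that is unavailable for an arbitrary~$P$.
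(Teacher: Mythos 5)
You are right not to offer a proof: the statement is a conjecture, and the paper gives no proof of it either --- indeed $\S$\ref{ss:finrem-1323} states explicitly that the authors ``do not resolve the conjecture in any new cases.'' Your account of its status is accurate: the tight three-element example, the Kahn--Saks \ts $\frac{5}{11}$ \ts bound via Alexandrov--Fenchel-type log-concavity (improved to \ts $1/\sqrt{5}$ \ts in~\cite{BFT}), and the role of the cross-product conjecture all match what the paper recalls in $\S$\ref{ss:intro-sort}. The only point worth adding is that the paper does prove the conjecture for the special class it studies, namely (skew) Young diagram posets, in the Warmup Section~\ref{s:warmup} (Theorem~\ref{t:SYT-OS}, originally due to Olson and Sagan); that argument is quite different from the log-concavity route you sketch --- it is a Linial-style pigeonhole over the probabilities \ts $q_k$ \ts that $A(1,2)$ falls between consecutive first-column entries, with monotonicity of the $q_k$ established either by a direct tableau injection or via the Naruse hook-length formula --- and it is this NHLF machinery, not Alexandrov--Fenchel, that the rest of the paper develops into the asymptotic bounds of Theorems~\ref{t:TVK}--\ref{t:main}.
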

	
	This celebrated conjecture was initially motivated by applications to
	sorting under partial information, but quickly became a challenging
	problem of independent interest, and  inspired a great deal of work,
	including our investigation.  To quote~\cite{BFT}, this ``remains one
	of the most intriguing problems in the combinatorial theory of posets.''
	We discuss the history and previous results on the conjecture later in
	the section, after we present our main results (see also~$\S$\ref{ss:finrem-1323}).
	
	\smallskip
	
	\subsection{Main results}\label{ss:intro-main}
	Let \ts $\la=(\la_1,\ldots,\la_d)\vdash n$ \ts be an integer partition
	with at most $d$ parts.  We use $\ell(\la)$ to denote the number of parts
	and $|\la|$ the size of the partition.
	Denote by $P_\la$ the poset associated with~$\la$,
	with elements squares of the Young diagram, and the order defined by
	\ts $(i,j) \preccurlyeq (i',j')$ \ts if and only if \ts $i\le i'$ and $j\le j'$.
	The linear extensions \ts $L \in \cL(P_\la)$ \ts are exactly the
	\emph{standard Young tableaux} of shape~$\la$, see Figure~\ref{f:SYT}.
	
	\begin{figure}[hbt]
		\centering
		\includegraphics[width=12.3cm]{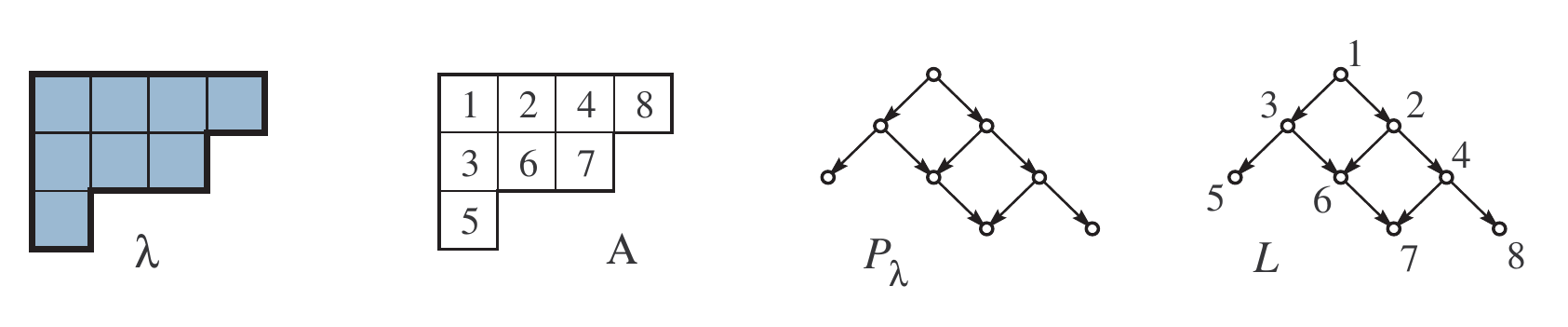}
		\vskip-.2cm
		\caption{{\small Young diagram $\la=(4,3,1)$, standard Young tableau $A\in \SYT(\la)$,
				poset $P_\la$, and the corresponding linear extension $L \in \cL(P_\la)$.  }}
		\label{f:SYT}
	\end{figure}
	
	We state our results, roughly, from less general to more general.
	Let \ts $\al=(\al_1,\ldots,\al_d)\in \rr_+^d$, $\al_1\ge \ldots \ge \al_d \ge 0$,
	and \ts $|\al|=1$, where \ts $|\al|:=\al_1+\ldots + \al_d$.  Such $\al$ are called \emph{Thoma sequences}.
	Define a \emph{Thoma--Vershik--Kerov} (TVK) $\al$-\emph{shape} \. $\la \simeq \al \ts n$, to be
	partition \ts $\la=(\la_1,\ldots,\la_d)$, with \ts $\la_i = \lfloor\al_i n\rfloor$,
	for all \ts $1\le i\le d$.  Note that \ts $|\la|=n-O(1)$ \ts in this case.
	
	\begin{thm} \label{t:TVK}
		Fix~$d\ge 2$. For every Thoma sequence \ts $\al\in \rr^d_{>0}$,
		there is universal constant \ts $C_\al$, s.t.
		$$
		\de\bigl(P_\la\bigr) \. \le \. \frac{C_{\al}}{\sqrt{n}}\,,
		$$
		where \ts $\la\simeq \al \ts n$ \ts is a TVK \ts $\al$-shape.
	\end{thm}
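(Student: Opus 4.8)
The plan is to reduce the bound to a single anti-concentration estimate for random standard Young tableaux, and then extract that estimate from asymptotics of hook-length type formulas. Write $A\in\SYT(\lambda)=\cL(P_\lambda)$ for the random tableau and $A(r,c)$ for its entry in cell $(r,c)$. For an incomparable pair of cells $x\ne y$ one has $\Pr[A(x)<A(y)]+\Pr[A(y)<A(x)]=1$, so $\delta(P_\lambda;x,y)=\bigl|2\Pr[A(x)<A(y)]-1\bigr|$; hence it suffices to exhibit a single incomparable pair whose ``less-than'' probability lies within $O(n^{-1/2})$ of $\tfrac12$.

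I would use the two bottom rows $d-1$ and $d$, which (since $\alpha_d>0$) are genuine rows of length $\Theta(n)$ with $\lambda_{d-1}\ge\lambda_d$, and fix a column $b:=\lfloor\alpha_d n/2\rfloor$ well inside row $d$; then the cells $(d-1,a)$ and $(d,b)$ are incomparable for every $a>b$. Put
\[
W\ :=\ \#\{\,j:\ A(d-1,j)<A(d,b)\,\},\qquad G(a)\ :=\ \Pr[\,W\ge a\,]\ =\ \Pr[\,A(d-1,a)<A(d,b)\,].
\]
Since $(d-1,j)\preccurlyeq(d,j)\preccurlyeq(d,b)$ for $j\le b$, we have $W\ge b$ almost surely; also $G$ is non-increasing with $G(a)=1$ for $a\le b$, and $G(a)-G(a+1)=\Pr[W=a]$. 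Taking $a^\ast$ to be a median of $W$ (so $G(a^\ast)\ge\tfrac12\ge G(a^\ast+1)$) gives $0\le G(a^\ast)-\tfrac12\le G(a^\ast)-G(a^\ast+1)=\Pr[W=a^\ast]$, and $a^\ast>b$ as soon as $\Pr[W=b]<\tfrac12$. Consequently
\[
\delta(P_\lambda)\ \le\ \delta\bigl(P_\lambda;(d-1,a^\ast),(d,b)\bigr)\ =\ \bigl|2G(a^\ast)-1\bigr|\ \le\ 2\max_{k}\Pr[\,W=k\,],
\]
so the whole theorem follows once one proves the anti-concentration bound $\max_k\Pr[W=k]\le C_\alpha\,n^{-1/2}$ for all large $n$ (this also yields $\Pr[W=b]<\tfrac12$; for the finitely many remaining $n$ one enlarges $C_\alpha$, using $\delta\le1$).

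To prove the anti-concentration bound, decompose over the order ideal $\mu=\{\,z:A(z)<A(d,b)\,\}$, a Young subdiagram of $\lambda$ with $\mu_{d-1}=W$ and $\mu_d=b-1$, for which $\mu^+:=\mu\cup\{(d,b)\}$ is again a Young subdiagram (namely $\{\,z:A(z)\le A(d,b)\,\}$). This yields the closed form
\[
\Pr[\,W=a\,]\ =\ \frac1{f^{\lambda}}\sum_{\substack{\mu\subseteq\lambda:\ \mu_{d-1}=a,\ \mu_d=b-1}}f^{\mu}\cdot f^{\lambda/\mu^+},
\]
a sum over the $d-2$ free parameters $\mu_1\ge\cdots\ge\mu_{d-2}\ge a$. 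For $d=2$ the sum collapses: $\mu=(a,b-1)$ is forced, $W=A(2,b)-b$, and the claim is exactly $\max_v\Pr[A(2,b)=v]=O(n^{-1/2})$, which follows from the ballot and two-row skew determinant formulas for $f^{(a,b-1)}$ and $f^{(\lambda_1,\lambda_2)/(a,b)}$ together with Stirling's formula — in the bulk the ratio of consecutive terms is $1+O(n^{-1/2})$ and the mass spreads over a window of width $\Theta(n^{1/2})$ about its dominant term. For $d\ge3$ one argues in the same way: every partition and skew shape occurring has at most $d$ rows, so $f^{\mu}$ and $f^{\lambda/\mu^+}$ are given by $d\times d$ Jacobi--Trudi/Aitken determinants (equivalently by the hook-length and Naruse hook formulas for bounded-row shapes) whose logarithms admit Stirling expansions accurate to additive error $o(1)$ uniformly over the summation range; substituting these shows the summand is, up to a $1+o(1)$ factor, a Gaussian profile of standard deviation $\Theta(n^{1/2})$ in the $d-2$ free variables, whence $\Pr[W=a]=O(n^{-1/2})$ for every $a$. (An alternative route to $\max_a\Pr[W=a]=O(n^{-1/2})$ is via log-concavity of the sequence $\bigl(\Pr[W=a]\bigr)_a$ — an instance of the log-concavity of linear-extension statistics — combined with a second-moment lower bound $\mathrm{Var}(W)\ge c_\alpha n$.)

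The main obstacle is this last step: the reduction is soft, but the anti-concentration bound amounts to a local limit theorem for $W$ with the sharp rate $n^{-1/2}$. For two-row shapes it is an elementary Stirling estimate; for $d\ge3$ one must control the $d\times d$ Jacobi--Trudi determinants (equivalently the hook products of bounded-row shapes) uniformly over a window of partitions, precisely enough to pin the relative change of the summed mass under $a\mapsto a+1$ at $1+O(n^{-1/2})$ rather than merely $1+o(1)$. This sharp, uniform asymptotic analysis of $\log f$ for partition and skew shapes with boundedly many rows is the technical heart of the argument.
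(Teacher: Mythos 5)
Your reduction is sound, and it is essentially the same as the paper's: fix one cell and scan the adjacent row, then use a median/pigeonhole argument to reduce $\de(P_\la)$ to the anti-concentration bound $\max_a \Pb\bigl[W=a\bigr]=O(n^{-1/2})$; your identity $\Pb[W=a]=\frac{1}{f(\la)}\sum_{\mu} f(\mu)\ts f(\la/\mu^+)$ is the same decomposition that drives the paper's proof (compare Lemma~\ref{l:quantitative bound for Linial} and the identity $\Pblm[Z_k=\ga]=f(\ga)f(\la/\ga)/f(\la)$ used in Lemma~\ref{l:straight-exp-decay}); working in rows $d-1,d$ instead of rows $1,2$ is immaterial. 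The genuine gap is the anti-concentration estimate itself, which you assert rather than prove, and the route you sketch would not go through as written. The straight-shape factor $f(\mu)$ does have a product formula to which Stirling applies, but $f(\la/\mu^+)$ is a skew count with no product formula, and your plan to get uniformly $o(1)$-accurate expansions of $\log f(\la/\mu^+)$ from the $d\times d$ Jacobi--Trudi/Aitken determinants ignores that these determinants are alternating sums with massive cancellation; no mechanism is offered to control that cancellation uniformly over the window of shapes $\mu$ carrying the mass. This is exactly the obstruction the paper flags for Feit's determinant formula, and it is why the proof of Theorem~\ref{t:TVK} (via Theorem~\ref{t:thick}, or Theorem~\ref{t:TVK-skew} with $\be=0$) runs instead through the Naruse hook-length formula: the two-sided bound of Theorem~\ref{t:NHLF-asy}, the flagged-tableau/Schur-polynomial upper bounds of Section~\ref{s:upper-bounds-SYTs}, and the interval-decomposition bound of Corollary~\ref{c:interval} with threshold $\sqrt{n}$ --- needed precisely because $\la_i-\la_j$ can be $O(1)$ when $\al_i=\al_j$, where any naive factor of the form $\la_i/(\la_i-\la_j)$ blows up and the profile is not simply Gaussian (whence the correction $\prod\bigl((y_i-y_j)^2+1\bigr)$) --- together with a Hoeffding argument to discard atypical intermediate shapes. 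The end product of that machinery is the upper bound $\Pblm[Z_k=\ga]\le C_{d,\ep}\ts n^{-(d-1)/2}\exp\bigl(-2\sum_i y_i^2\bigr)\prod_{i<j}\bigl((y_i-y_j)^2+1\bigr)$ of Lemma~\ref{l:straight-exp-decay}, from which your $O(n^{-1/2})$ bound follows by summing over the $d-2$ free coordinates; some quantitative skew-shape estimate of this strength has to be proved, and your sketch does not supply one.

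Your fallback route is likewise unsubstantiated on both counts: log-concavity of the sequence $\bigl(\Pb[W=a]\bigr)_a$ is not an instance of the known log-concavity of linear-extension statistics (Stanley's theorem concerns the distribution of $L(x)$ for a single fixed element, i.e.\ the position of one cell, whereas $W$ counts how many entries of row $d-1$ precede a fixed entry), and the variance lower bound $\mathrm{Var}(W)\ge c_\al\ts n$ is also not proved. Note that the paper never proves a local limit theorem or a lower bound of this kind --- one-sided upper bounds suffice, which is part of why the problem is tractable at the $O(n^{-1/2})$ level. For $d=2$ your argument is fine (it is essentially the Catalan-type computation of Section~\ref{ss:intro-examples}), but for $d\ge 3$ the uniform control of the skew counts is the actual content of the theorem and is missing from the proposal.
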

	
	% In other words, for the sequence of TVK shapes,
	% the sorting probability is decreasing as \ts $O\bigl(\frac{1}{\sqrt{n}}\bigr)$.
	
	\smallskip
	
	We say that a partition $\la\vdash n$ is \ts \emph{$\ep$-thick}, if
	the smallest part \ts $\la_d \ge \ep \ts n$, where $d=\ell(\la)$.
	
	\begin{thm} \label{t:thick}
		Fix~$d\ge 2$. For every $\ep>0$, there is a universal constant  $C_{d,\ep}$,
		such that for every $\ep$-thick partition $\la\vdash n$ with $\ell(\la)=d$
		parts, we have:
		$$
		\de\bigl(P_\la\bigr) \. \le \. \frac{C_{d,\ep}}{\sqrt{n}}\,.
		$$
	\end{thm}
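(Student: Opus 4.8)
The plan is to derive Theorem~\ref{t:thick} from the proof of Theorem~\ref{t:TVK} by a uniformity argument. The first observation is that an $\ep$-thick partition $\la\vdash n$ with $\ell(\la)=d$ is itself a TVK shape: setting $\al:=\la/n=(\la_1/n,\ldots,\la_d/n)$, we have $\al_1\ge\cdots\ge\al_d\ge\ep$ and $|\al|=1$, so $\al\in\rr^d_{>0}$ is a Thoma sequence, and since each $\la_i$ is an integer we have $\lfloor\al_i n\rfloor=\la_i$; that is, $\la\simeq\al\ts n$ exactly. Moreover $\al$ lies in the compact set
$$
K_{d,\ep}\,:=\,\bigl\{\al\in\rr^d \, : \, \al_1\ge\cdots\ge\al_d\ge\ep,\ \ |\al|=1\bigr\}\,\subset\,\rr^d_{>0}\ts.
$$
Hence Theorem~\ref{t:thick} is equivalent to the statement that the constant $C_\al$ in Theorem~\ref{t:TVK} may be chosen uniformly for $\al\in K_{d,\ep}$; one then takes $C_{d,\ep}:=\sup_{\al\in K_{d,\ep}}C_\al$.

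This uniformity cannot be extracted from Theorem~\ref{t:TVK} as a black box: a bound that holds, for each fixed~$\al$, along the sequence of shapes $\lfloor\al n\rfloor$ gives no control over a sequence $\lfloor\al^{(n)}n\rfloor$ with $\al^{(n)}$ drifting inside $K_{d,\ep}$, since $\lfloor\al^{(n)}_i n\rfloor$ and $\lfloor\al_i n\rfloor$ may differ by $\Theta(n)$. The plan is therefore to re-run the proof of Theorem~\ref{t:TVK} while tracking the dependence of every estimate on~$\al$. We expect that proof to proceed by: (i) identifying, for a box $b=(i,j)$ of~$\la$, the typical value of its entry $L(b)$ in a uniformly random standard Young tableau, governed by the limit shape of~$\la$ and depending smoothly on~$\al$; (ii) establishing concentration of $L(b)$ on the scale $\sqrt n$ and, crucially, anti-concentration — no atom of mass $\gg 1/\sqrt n$ — by asymptotic analysis of the relevant hook-length-formula and Schur-function expressions (a local central limit theorem of saddle-point type); and (iii) using $\ep$-thickness ($d\ge 2$ rows, each of length $\ge\ep n$) together with an intermediate-value argument to produce an incomparable pair of boxes $b,b'$ whose entries have nearly matching distributions, so that $\de(P_\la;\ts b,b')=O(1/\sqrt n)$. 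In each of (i)--(iii) the sequence~$\al$ enters only through data such as $\min_i\al_i$, the parts $\al_i$, and the gaps $\al_i-\al_{i+1}$, and the resulting bounds should be continuous, hence bounded, on the compact set $K_{d,\ep}$.

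The hard part will be the uniformity in step~(ii): the error terms in the local limit theorem — equivalently, in the asymptotics for the number of standard Young tableaux of shape~$\la$ with a prescribed entry in a given box — must be controlled uniformly as $\al$ ranges over $K_{d,\ep}$. The only genuine degeneration in such estimates is the smallest part $\al_d$ tending to~$0$, which is precisely what $\ep$-thickness forbids; the walls $\al_i=\al_{i+1}$ of $K_{d,\ep}$ (coinciding parts) are harmless for both the hook-length count and the concentration bounds, and one need only check that the argument survives there. Granting this, $C_\al$ is locally bounded on $\rr^d_{>0}$, hence bounded on the compact set $K_{d,\ep}$, and then $C_{d,\ep}:=\sup_{\al\in K_{d,\ep}}C_\al<\infty$ finishes the proof.
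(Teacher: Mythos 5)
Your reduction-to-uniformity strategy correctly identifies the crux (a pointwise application of Theorem~\ref{t:TVK} cannot control a shape whose profile $\al^{(n)}=\la/n$ drifts with~$n$), and your outline (i)--(iii) does match the overall architecture of the actual argument: the paper bounds $\de(P_\la)$ by a quantitative Linial-type intermediate-value argument (Lemma~\ref{l:quantitative bound for Linial}) applied to the tableau lattice walk, combined with a pointwise anti-concentration bound $\Pblm[Z_k=\ga]=O_{d,\ep}\bigl(n^{-(d-1)/2}\prod_{i<j}((y_i-y_j)^2+1)\,e^{-2\sum y_i^2}\bigr)$ and a Hoeffding estimate showing non-admissible intermediate shapes are exponentially rare. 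However, the proposal has a genuine gap: the entire quantitative content is deferred under ``granting this.'' The anti-concentration estimate with constants depending only on $d$ and $\ep$ is not a routine local CLT; it requires bounding $f(\la/\ga)/F(\la/\ga)$ for \emph{all} intermediate $\ga$, and this is where your claim that the walls $\al_i=\al_{i+1}$ are ``harmless'' fails. When shifted parts nearly coincide, the Schur/Frobenius determinant expression for the skew count degenerates (the Vandermonde factors $\ell_i-\ell_j$ in the denominator blow up), and neither the distinct-parts bound nor the equal-parts bound \eqref{eq:HCF} applies on its own. The paper's resolution --- an interval decomposition of $[d]$ with the tuned threshold $\la_i-\la_j+j-i\le\sqrt{n}$, a blockwise Laplace expansion of the determinant, and the rearrangement machinery of Lemmas~\ref{l:rear-ineq-a=b}--\ref{l:upper bound for mi-mj} culminating in Theorem~\ref{t:interval} --- occupies all of Section~\ref{s:upper-bounds-SYTs} and is precisely the part your plan waves away.

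A secondary point on logical order: in the paper there is no independent proof of Theorem~\ref{t:TVK} to ``re-run.'' Theorem~\ref{t:TVK} is itself deduced as a special case of Theorem~\ref{t:thick} (every TVK $\al$-shape with $\al\in\rr^d_{>0}$ is $\ep$-thick for $\ep<\al_d$), so the uniform-in-$\ep$ estimates must be built from scratch rather than extracted from a fixed-$\al$ argument. Your compactness step also would not close the gap even granted continuity heuristics: boundedness of $\al\mapsto C_\al$ on $K_{d,\ep}$ does not follow from the existence of each $C_\al$ separately, since for fixed $\al$ the constant only controls the single sequence $\lfloor\al n\rfloor$ as $n\to\infty$, not nearby shapes at finite~$n$. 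The only way through is the one you name but do not execute: track the $(d,\ep)$-dependence explicitly through every estimate.
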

	
	Clearly, every TVK $\al$-shape is $\ep$-thick when
	\ts \ts $0<\ep < \al_d$, and $n$ is
	large enough.  Thus, Theorem~\ref{t:thick} can be viewed as an advanced
	generalization of Theorem~\ref{t:TVK}.
	
	\smallskip
	
	Let \ts $\la=(\la_1,\ldots,\la_d)$, \ts $\mu=(\mu_1,\ldots,\mu_d)$ \ts
	be two partitions with at most $d$ parts, and such that \ts
	$|\la/\mu|:= |\la|-|\mu|=n$.  We write \ts $\mu\ssu \la$,
	if $\mu_i \le \la_i$ for all $1\le i \le d$, and refer to $\la/\mu$
	as \emph{skew partition} (see Figure~\ref{f:SYT-skew}).
	Since poset $P_\mu$ is a subposet of $P_\la$, poset \ts
	$P_{\la/\mu}$ \ts is defined as their difference.
	
	\begin{figure}[hbt]
		\centering
		\includegraphics[width=7.8cm]{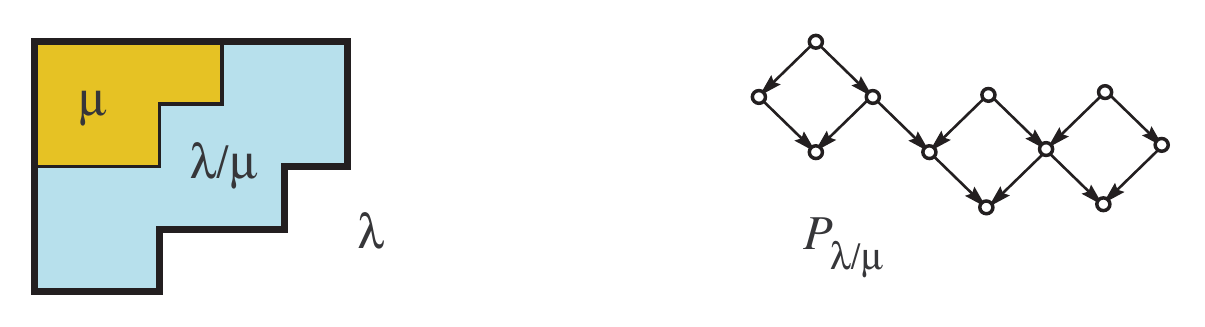}
		\vskip-.2cm
		\caption{{\small
				Skew Young diagram $\la/\mu$ and poset $P_{\la/\mu}$, where $\la=(5,5,4,2)$ and $\mu=(3,2,0,0)$. }}
		\label{f:SYT-skew}
	\end{figure}
	
	Let \ts $\al=(\al_1,\ldots,\al_d)$, $\be = (\be_1,\ldots,\be_d) \in \rr_+^d$,
	$\al_1\ge \al_2\ge \ldots \ge \al_d$, $\be_1\ge \ldots \ge \be_d$, $\be_i \le \al_i$
	for all $1\le i \le d$, and $|\al| - |\be| =1$. Such $(\al,\be)$ are
	called \emph{Thoma pairs}.
	Define a \emph{TVK} $(\al,\be)$-\emph{shape} to be the skew
	partition \ts $\la/\mu$, where \ts $\la \simeq \al \ts n$ \ts
	and \ts $\mu \simeq \be \ts n$. Note that \ts $|\la/\mu|=n+O(1)$ \ts in this case.
	
	\begin{thm} \label{t:TVK-skew}
		Fix~$d\ge 2$. For every Thoma pair \ts $(\al,\be)$, with \ts
        $\al \in \rr^d_{>0}$, \ts $\be\in \rr^{d}_+$,
		there is a universal constant  $C_{\al,\be}$, s.t.
		$$
		\de\bigl(P_{\la/\mu}\bigr) \. \le \. \frac{C_{\al,\be}}{\sqrt{n}}\,,
		$$
		where \ts $\la/\mu$ is a TVK \ts $(\al,\be)$-shape, i.e.\ \ts
		$\la\simeq \al\ts n$, \ts $\mu \simeq \be \ts n$.
	\end{thm}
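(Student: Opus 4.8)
The plan is to reduce Theorem~\ref{t:TVK-skew} to Theorem~\ref{t:thick} (or rather to the skew analogue that presumably will be proved alongside it), by the same comparison that makes Theorem~\ref{t:thick} a generalization of Theorem~\ref{t:TVK}. Concretely, given a Thoma pair $(\al,\be)$ with $\al\in\rr^d_{>0}$, I would fix $\ep>0$ small enough that $\ep<\al_d$, so that for all large $n$ the TVK $(\al,\be)$-shape $\la/\mu$ has outer partition $\la$ with every part $\la_i=\lfloor\al_i n\rfloor\ge \ep n$, i.e. $\la$ is $\ep$-thick with exactly $d$ parts. Thus the core task is: prove an $\ep$-thick skew analogue of Theorem~\ref{t:thick}, asserting that $\de(P_{\la/\mu})\le C_{d,\ep}/\sqrt{|\la/\mu|}$ whenever $\la$ is $\ep$-thick with $d$ parts and $\mu\ssu\la$, and then apply it with $n=|\la/\mu|=n+O(1)$ — the $O(1)$ discrepancy only changes the constant.

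The key steps, in order. First, recall the standard reduction of the sorting probability to a single well-chosen pair of adjacent cells: it suffices to exhibit cells $x,y$ lying in a single row (or a single column) such that $\Pb[L(x)<L(y)]$ is within $O(1/\sqrt n)$ of $1/2$. For a skew shape with $d$ bounded rows, a natural choice is a pair of horizontally adjacent cells $(i,j),(i,j+1)$ in the longest row, chosen so that the column coordinate $j$ is near $\al_i n$ times some balancing fraction; the point $\Pb[L(x)<L(y)]-\Pb[L(y)<L(x)]$ equals (up to sign) the difference of the two hook-length-formula counts, i.e. $\bigl(e(P_{\la/\mu\cup\{x\}/\text{...}})$-type quantities$\bigr)$, which by Naruse's skew hook-length formula and the Okounkov--Olshanski or the determinantal formula for $f^{\la/\mu}$ can be written as a ratio of Schur function evaluations. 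Second, I would use the asymptotics of skew Schur functions / character ratios for TVK shapes: as $n\to\infty$ with $\la\simeq\al n$, $\mu\simeq\be n$, the normalized count of SYT of shape $\la/\mu$ with a prescribed entry in a given cell converges, after rescaling, to a smooth density on an interval, and this density is bounded and bounded away from $0$ on a subinterval (this is where $\ep$-thickness — equivalently $\al_i>0$ — is essential, guaranteeing the limit shape is nondegenerate). Third, a local central limit theorem / Berry--Esseen type estimate upgrades the convergence of the empirical distribution of, say, the position of the cell $(i,j)$'s entry among the $n$ values, to a $1/\sqrt n$-scale statement: sweeping $j$ across the interval moves $\Pb[L(x)<L(y)]$ continuously in steps of size $O(1/\sqrt n)$ from near $0$ to near $1$, so some consecutive pair hits within $O(1/\sqrt n)$ of $1/2$.

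The main obstacle I expect is the third step in the skew case: controlling the increments of $\Pb[L(x)<L(y)]$ uniformly as $x,y$ slide along a row, which amounts to a quantitative (rate $1/\sqrt n$) bound on the derivative of the limiting distribution function, and requires the limit density to be bounded \emph{above} everywhere on its support — this can fail near the ``corners'' of the skew limit shape where the density may blow up, so one must argue that the balancing cell can be taken in the interior where the density is genuinely bounded. In the straight-shape case (Theorems~\ref{t:TVK}, \ref{t:thick}) this is handled by the hook-length formula giving clean polynomial expressions; for skew shapes the cleanest route is probably to use the Okounkov--Olshanski formula $f^{\la/\mu}/f^\la = \sum_{T}\prod \ldots$ or the Aitken determinant $f^{\la/\mu}= |\la/\mu|!\,\det\!\bigl(1/(\la_i-\mu_j-i+j)!\bigr)$ to reduce the relevant ratio to a $d\times d$ determinant whose entries are ratios of factorials, then apply Stirling and a Laplace-type analysis to extract the Gaussian local behavior. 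The remaining details — absorbing the $|\la/\mu|=n+O(1)$ slack, checking the chosen cells are actually in the skew shape and not in $\mu$, and verifying that $\al_i>0$ for all $i$ (the hypothesis $\al\in\rr^d_{>0}$) makes row $i$ long enough to contain the balancing cell — are routine bookkeeping that only affect the constant $C_{\al,\be}$.
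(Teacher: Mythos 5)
Your proposal has two concrete breakdowns, one local and one structural. The local one: the sorting pair you propose, two horizontally adjacent cells $x=(i,j)$, $y=(i,j+1)$ in one row, is useless, because these cells are comparable in $P_{\la/\mu}$ (same row, increasing column), so $\Pb[L(x)<L(y)]=1$ and $\de(P_{\la/\mu};x,y)=1$ — no sweep along the row can bring this near $1/2$. The pair must be incomparable; the paper takes $x=(1,a)$ and $y=(2,b)$ with $b\le a$ in two \emph{different} rows, fixes $a\approx\frac{\mu_1+\la_1}{2}$, and sweeps $b$, so that $\Pb[L(1,a)<L(2,b)]$ increases from $0$ to $1$ in increments $\Pblm[\Ac(a,b)]$ (Lemma~\ref{l:quantitative bound for Linial}). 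The entire difficulty of the theorem is the uniform bound $\Pblm[\Ac(a,b)]=O(1/\sqrt n)$, which you acknowledge as "the main obstacle" but do not carry out; your suggested route through the Aitken/Feit determinant plus Stirling and a Laplace analysis is precisely what the paper identifies as exceedingly difficult for skew shapes, and it is replaced there by the NHLF/flagged-tableaux and Schur-polynomial machinery (Sections~\ref{s:paths}--\ref{s:upper-bounds-SYTs}) feeding into the Main Lemma~\ref{l:main}.

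The structural one: your reduction to "an $\ep$-thick skew analogue of Theorem~\ref{t:thick}" (i.e.\ $\de(P_{\la/\mu})\le C_{d,\ep}/\sqrt n$ for every $\ep$-thick $\la$ and arbitrary $\mu\ssu\la$) is a reduction to a statement the paper does not prove and which is strictly stronger than both Theorem~\ref{t:main} and Theorem~\ref{t:TVK-skew}; the paper states explicitly (after Theorem~\ref{t:main}, and in~$\S$\ref{ss:finrem-thick}) that it does not know how to drop the smoothness assumption for skew shapes, and that the TVK case with repeated $\al_i$'s is \emph{not} implied by the smooth case. The actual proof of Theorem~\ref{t:TVK-skew} exploits the special TVK structure: each gap $\la_i-\la_j$ and each gap $\mu_i-\mu_j$ is either $O(1)$ or $\ge\ep|\la|$ (never of intermediate size such as $\sqrt n$), which permits interval decompositions of $[d]$ keyed to $\al$ and, separately, to $\be$, yielding via Lemmas~\ref{l:f -- TVK upper bound} and~\ref{l:f -- TVK lower bound} the three solid inequalities~\eqref{eq:TVK-123} comparing $f(\cdot)/F(\cdot)$ with the balance function $\eG(\cdot)$ for the triplets $(\la,\ga,\mu)$; these are exactly the hypotheses of the Main Lemma~\ref{l:main}, which then delivers the $O(1/\sqrt n)$ bound. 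For a general $\ep$-thick $\la$ with arbitrary $\mu$, rows of $\la$ at intermediate distance break both the upper and lower bounds in that scheme, so your proposed intermediate statement cannot be assumed to "be proved alongside" — as matters stand it is open.
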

	
	When \ts $\be=(0,\ldots,0)$, we obtain Theorem~\ref{t:TVK} as a special case.
	We can now state our main result, the analogue of Theorem~\ref{t:thick} for
	skew shapes.
	
	\smallskip
	
	We say that a partition $\la$ is \ts \emph{$\ep$-smooth}, if $\la$ is \ts $\ep$-thick,
	and  \ts $\la_{i}-\la_{i+1} \ge \ep \ts n$, for all $1 \le i < d$.  For brevity,
	we say that a skew partition~$\la/\mu$ is \emph{$\ep$-smooth} if $\la$ is  \emph{$\ep$-smooth}.
	Note that, despite the notation, this condition does not impose any restriction on $\mu$.
	
	\smallskip
	
	\begin{thm}[{\rm Main theorem}] \label{t:main}
		Fix~$d\ge 2$. For every $\ep>0$, there is a universal constant  $C_{d,\ep}$,
		such that for every $\ep$-smooth skew partition $\la/\mu\vdash n$,
		with $\ell(\la)=d$, we have:
		$$
		\de\bigl(P_{\la/\mu}\bigr) \. \le \. \frac{C_{d,\ep}}{\sqrt{n}}\,.
		$$
	\end{thm}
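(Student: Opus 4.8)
The plan is to locate a single pair of incomparable cells $x\ne y$ of the skew diagram $\la/\mu$ for which $\bigl|\ts\Pb[L(x)<L(y)]-\tfrac12\ts\bigr|\le\tfrac12\ts C_{d,\ep}\ts n^{-1/2}$. Since $L(x)$ and $L(y)$ are almost surely distinct, $\de(P_{\la/\mu};\ts x,y)=\bigl|\ts2\ts\Pb[L(x)<L(y)]-1\ts\bigr|$, so such a pair yields the asserted bound. Before starting I would reduce to the nondegenerate situation: deleting an empty row of $\la/\mu$ changes neither $P_{\la/\mu}$ nor $\de$, and keeps $\la$ $\ep$-smooth with strictly fewer rows (the row-gaps only increase, and the thickness bound is unaffected), so I may assume every row of $\la/\mu$ is nonempty, i.e.\ $\mu_i<\la_i$ for all $i\le d$, and that $P_{\la/\mu}$ is not a chain.

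I would produce the pair by a discrete intermediate-value argument. As $|\la/\mu|=n$ there is a row $R$ with at least $n/d$ cells; using $\ep$-smoothness, fix a corner cell $x$ of $\la$ in a row adjacent to $R$ — so $x$ is a maximal element of $P_{\la/\mu}$ — chosen so that $x$ is incomparable with at least $c_{d,\ep}\ts n$ of the cells of row $R$ (when $R\ge 2$ take $x=(R-1,\la_{R-1})$, incomparable with all of row $R$; when $R=1$ take $x=(2,\la_2)$). These cells form a saturated chain $c_1\prec c_2\prec\dots\prec c_m$ with $m\ge c_{d,\ep}\ts n$. Since $c_{k-1}\prec c_k$, the function $k\mapsto g(k):=\Pb[L(x)<L(c_k)]$ is nondecreasing, with consecutive increments
\[
g(k)-g(k-1)\;=\;\Pb\bigl[\,L(c_{k-1})<L(x)<L(c_k)\,\bigr].
\]
At the left end $c_1$ lies near the bottom of $P_{\la/\mu}$ while $x$ is maximal, so the means of $L(c_1)$ and $L(x)$ differ by $\Omega_{d,\ep}(n)$; combined with the concentration bounds established below, this forces $g(1)<\tfrac12$. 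If also $g(m)\ge\tfrac12$, then there is an index $k^\star$ with $g(k^\star-1)\le\tfrac12\le g(k^\star)$, whence
\[
\de(P_{\la/\mu})\;\le\;\de(P_{\la/\mu};\,x,\ts c_{k^\star})\;\le\;g(k^\star)-g(k^\star-1)\;=\;\Pb\bigl[\,L(c_{k^\star-1})<L(x)<L(c_{k^\star})\,\bigr].
\]
If instead $g(m)<\tfrac12$, I run the same scheme with the roles of $R$ and the adjacent row, and of the two extreme corners, interchanged; a pigeonhole over the at most $d$ corners of $\la$ guarantees that in one of these runs the crossing occurs.

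It remains to prove the key estimate: for horizontally adjacent cells $c=(R,j)$ and $c'=(R,j+1)$ of the long row $R$, and any cell $x$ incomparable with both, $\Pb[\,L(c)<L(x)<L(c')\,]=O_{d,\ep}(n^{-1/2})$. Writing $c$ as the $a$-th cell of row $R$, this event says exactly that at time $L(x)$ precisely the first $a$ cells of row $R$ have been filled; equivalently, $L(x)$ lands in the $a$-th gap of the increasing sequence of values occupied by row $R$. I would bound this in two steps. (i) \emph{A local limit theorem for $L(x)$.} Using the Jacobi--Trudi / Naruse determinantal expression for $f^{\la/\mu}$ together with the hook-length asymptotics available for $\ep$-thick shapes, one shows that for our cell $x$ the variance of $L(x)$ is $\Theta_{d,\ep}(n)$ and that $L(x)$ obeys a central limit theorem with a local refinement, so $\max_s\Pb[L(x)=s]=O_{d,\ep}(n^{-1/2})$; the same asymptotics supply the Gaussian concentration used above. (ii) \emph{Localizing against the row.} Since row $R$ carries a fraction at least $1/d$ of all cells, and since conditioning on the sub-diagram filled by time $s$ yields — for $s$ in the bulk of the distribution of $L(x)$ — again a skew shape with non-degenerate (limit-shape-smooth) boundary, the number of values occupied by row $R$ in an interval of bounded length around $s$ stays $O_{d,\ep}(1)$ in the relevant averaged sense. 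Combining (i) and (ii) bounds the probability that $L(x)$ falls in the single gap indexed by $a$ by $C_{d,\ep}\,\max_s\Pb[L(x)=s]=O_{d,\ep}(n^{-1/2})$, completing the proof.

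The crux is the analytic content of step~(i): establishing the local limit theorem for $L(x)$ with constants depending only on $d$ and $\ep$, uniformly across the whole family of $\ep$-smooth skew shapes, and then correctly coupling it with the random gap structure of the long row in step~(ii). This is exactly where $\ep$-smoothness is essential: it keeps the underlying limit shape bounded away from its degenerate forms and all relevant hook-length ratios bounded away from $0$ and $1$ on the $n^{1/2}$ scale, so the variance of $L(x)$ cannot collapse. A secondary, bookkeeping-type obstacle is guaranteeing the crossing $g(1)\le\tfrac12\le g(m)$ for every shape of $\mu$, which is why the argument needs the freedom to permute the two rows and the two extreme corners.
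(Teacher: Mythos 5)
Your skeleton — pick a cell $x$ incomparable to a long chain $c_1\prec\dots\prec c_m$, note that $g(k)=\Pb[L(x)<L(c_k)]$ is nondecreasing, and bound $\de$ by the jump at the index where $g$ crosses $\tfrac12$ — is the same quantitative Linial argument the paper uses (its Lemma~\ref{l:quantitative bound for Linial}, with $x=(1,a)$, $a\approx\frac{\mu_1+\la_1}{2}$, and the chain being an initial segment of row~2). The problem is that both inputs you then invoke are asserted rather than proved, and they constitute essentially the entire content of the theorem. Step~(i), a local limit theorem $\max_s\Pb[L(x)=s]=O_{d,\ep}(n^{-1/2})$ uniform over all $\ep$-smooth skew shapes, is not available to cite: the Jacobi--Trudi determinant does not yield usable asymptotics for ratios of $f(\cdot)$, and extracting such bounds from the NHLF is exactly what the paper spends Sections~\ref{s:paths}--\ref{s:main-thm} doing (comparison of the tableau walk with an unconstrained multinomial walk via Hoeffding and a polynomial lower bound on $\Pb[\Cc]$, then the point-probability bound of Lemma~\ref{l:pmf of Zt upper bound} obtained by summing $f(\ga/\mu)f(\la/\ga)/f(\la/\mu)$ over interior shapes $\ga$, controlled through the balance function, interval decompositions of the Schur determinant, and the smooth-asymptotics Lemma~\ref{l:asy-smooth}). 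Your outline contains no substitute for this machinery; "hook-length asymptotics available for $\ep$-thick shapes" is precisely what has to be established.

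Two further gaps would remain even if (i) were granted. First, the event $\{L(c_{k-1})<L(x)<L(c_k)\}$ couples $L(x)$ with the random gap of row $R$, and your step~(ii) controls that gap only "in the relevant averaged sense"; an averaged bound cannot be plugged into the intermediate-value argument, because the average jump of $g$ is trivially $O(1/n)$ while the whole difficulty is ruling out a single large jump at the specific crossing index $k^\star$. You need a bound valid uniformly in $k$ (or at $k^\star$), which again amounts to a joint local estimate of the type the paper proves. Second, the crossing $g(1)<\tfrac12\le g(m)$ is not established: for a fixed corner $x$ adjacent to the long row there is no a priori reason the crossing occurs inside that row, and "pigeonhole over the at most $d$ corners" is not an argument. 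In the paper this issue does not arise because the comparison cell runs over the full initial segment of row~2, so the cumulative probability $W(a,b)$ runs from $0$ to $1$ and a crossing is automatic. As written, the proposal therefore defers the crux to unproven claims and does not constitute a proof.
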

	
	\smallskip
	
	In the TVK case, when $\al_1>\ldots > \al_d > 0$, we obtain
	Theorem~\ref{t:TVK-skew}. However, when the inequalities are non-strict,
	there is no such implication. Similarly, Theorem~\ref{t:main} generalizes
	Theorem~\ref{t:thick} for $\mu=\emp$, and $\la$ is $\ep$-smooth.
	
	\smallskip
	
	The results are proved by using random walks estimates and the
    technique Morales and the last two authors recently developed in
    a series of papers~\cite{MPP1}--\cite{MPP4} on the
	\emph{Naruse hook-length formula} (NHLF).  Roughly,
	in order to estimate the sorting probabilities \ts
	$\de\bigl(P_\la\ts; \. x, \. y\bigr)$, we need very careful bounds
	on the number of standard Young tableaux \ts
	$f(\la/\nu):=|\SYT(\la/\nu)|=e\bigl(P_{\la/\nu}\bigr)$
	for the typical $\nu \ssu \la$ obtained after removing $x$ and/or $y$ from~$\la$.
	The NHLF gives a useful technical tool, which combined with various
	asymptotic estimates implies the result.  We postpone further discussion
	of our results until after a brief literature review.
	
	\smallskip
	
	\subsection{Prior work on sorting probability} \label{ss:intro-sort}
	The \ts $\frac13$ -- $\frac23$ \ts Conjecture~\ref{c:1323} was proposed independently by
	Kislitsyn~\cite{Kis} and Fredman~\cite{Fre} in the context of sorting
	under partial information.  The name is motivated by the following
	attractive equivalent formulation.  In notation of~\eqref{eq:sort-prob-def},
	for every $P=(X,\prec)$ that is not a chain, there exist elements
	$x,y\in X$, such that
	\begin{equation}\label{eq:1323-standard}
	\frac13 \, \le \,
	\Pb\bigl[L(x) < L(y)\bigr]  \, \le \, \frac23\..
	\end{equation}
	
	A major breakthrough was made by Kahn and Saks~\cite{KS}, who
	proved~\eqref{eq:1323-standard} with slightly weaker constants \ts
	$\frac3{11}-\frac8{11}$\ts. In our notation, they showed that
	\ts $\de(P)\le \frac5{11}\approx 0.4545$ \ts for all finite~$P$. A much
	simplified geometric proof (with a slightly weaker bound) was given
	later in~\cite{KL}.  By utilizing technical combinatorial
	tools, the Kahn--Saks bound was slightly improved in~\cite{BFT} to \ts
	$\de(P)\le \frac{1}{\sqrt{5}}\approx 0.4472$, where it currently stands.  
For more on the history
	and various related results, we refer the reader to a dated
	but very useful survey~\cite{Bri99}.
	
While the conjecture does not ask for an efficient algorithm for
finding the desired elements $x,y\in X$, a nearly optimal sorting algorithm
using \ts $O\bigl(\log e(P)\bigr)$ \ts comparisons was found in~\cite{KK}.
See also~\cite{C+} for a simpler version.
	
	Note that the bound \ts $\de(P)\le \frac13$ \ts in the conjecture
	is tight for a 3-element poset that is a union of a $2$-chain and
	a single element.  The effort to establish the conjecture and
	improve the constants remains very active.  First, Linial~\cite{Lin84}
	proved that \ts $\de(P)\le \frac13$ \ts for posets of width~$2$,
	where $\wi(P)$ is the size of the maximal antichain in~$P$.
	In this class, Aigner showed that the tight bound \ts
	$\de(P)=\frac13$ \ts can come only from decomposable posets,
	and Sah~\cite{Sah} recently improved the bound to a slightly lower
	bound \ts $\de(P)< 0.3225$ in the indecomposable case (see also~\cite{Chen}).
	
	Conjecture~\ref{c:1323} was further established for several other classes
	of posets, including semiorders~\cite{Bri89}, $N$-free posets~\cite{Zag1},
	height~2 posets~\cite{TGF}, and posets whose cover graph is a
	forest~\cite{Zag2}. For posets with a nontrivial automorphism
	the conjecture was proved by Pouzet, see~\cite{GHP}, and
	a stronger bound \ts $\de(P)<1-\frac2e \approx 0.2642$ \ts was shown by
	Saks~\cite{Saks}.  Closer to the subject of this paper, Olson
	and Sagan~\cite{OS} recently applied Linial's approach to establish
	Conjecture~\ref{c:1323} for all Young diagrams and skew Young
	diagrams.
	
	There are very few results proving that \ts $\de(P_n)\to 0$ \ts 
as \ts $n\to\infty$ \ts for a sequence \ts $\{P_n\}$ \ts of posets on 
$n$ elements.  Some of them are motivated by the following interesting 
conjecture of Kahn and Sacks~\cite{KS}.
	
	\begin{conj}[{\rm Kahn--Saks}] \label{c:width-KS}
		Let $\eta(d)$ denotes the supremum of $\de(P)$ over all finite posets~$P$
		of width~$d$.  Then \ts $\eta(d)\to 0$ \ts as \ts $d\to \infty$.
	\end{conj}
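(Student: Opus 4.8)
The plan is to attack Conjecture~\ref{c:width-KS} through a maximal-antichain reduction combined with the position-distribution machinery behind the Kahn--Saks bound $\de(P)\le\tfrac5{11}$ in~\cite{KS}, trying to extract from large width a quantitative improvement that decays as $d\to\infty$. Fix a finite poset $P=(X,\prec)$ with $n=|X|$ and width exactly~$d$, and by Dilworth's theorem pick a maximum antichain $A=\{a_1,\dots,a_d\}\subseteq X$. It suffices to locate two elements whose sorting probability is at most some $\eta_0(d)$ with $\eta_0(d)\to0$ uniformly over all such~$P$; this gives $\eta(d)\le\eta_0(d)\to0$.

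First I would dispose of the ``thin'' regime $n\le g(d)$ for a slowly growing~$g$: then $P$ differs from the antichain~$A$ by few elements (the extreme case $n=d$ giving $\de(P)=0$ outright), and a direct count should force $\de(P)$ small. In the main regime $n>g(d)$ the key inputs are: (i)~each \emph{position distribution} $h^{(k)}(i):=\Pb\bigl[L(a_k)=i\bigr]$, with $L\in\cL(P)$ uniform, is log-concave in~$i\in[n]$, by Stanley's theorem (a consequence of the Aleksandrov--Fenchel inequalities), hence unimodal with a mode $m_k$; (ii)~the Kahn--Saks displacement inequalities, saying roughly that two incomparable elements with nearby position distributions have near-balanced relative order; and (iii)~the FKG-type positive-correlation inequalities for linear extensions. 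The scheme is a pigeonhole step: among the $d$ log-concave profiles $h^{(1)},\dots,h^{(d)}$ on~$[n]$, two must overlap substantially, say $h^{(k)},h^{(k')}$ with total-variation distance $\tau(d)\to0$, and then --- using that $a_k,a_{k'}$ are incomparable together with (ii)--(iii) --- one concludes $\Pb\bigl[L(a_k)<L(a_{k'})\bigr]=\tfrac12+O\bigl(\tau(d)\bigr)$.

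The hard part, and the reason Conjecture~\ref{c:width-KS} is still open, is this pigeonhole step and its conversion into balance. Large width produces $d$ log-concave profiles but gives no \emph{a priori} control of their spreads; one must rule out the configuration where the $h^{(k)}$ are spread out and nearly pairwise disjoint (a tall, chain-like~$P$) and show this is incompatible with $A$ being a \emph{maximum} antichain when $n$ is large --- this seems to need a genuinely new correlation estimate. Moreover, coinciding marginals alone need not force a balanced pair without control of the joint law of $\bigl(L(a_k),L(a_{k'})\bigr)$: the Kahn--Saks symmetrization only ever yields $\tfrac5{11}$, not $o(1)$. This is exactly why the present paper avoids the general conjecture and specializes to the posets $P_\la$, $P_{\la/\mu}$, where the Naruse hook-length formula turns the counts $f(\la/\nu)=e\bigl(P_{\la/\nu}\bigr)$ into explicit combinatorial sums that, with random-walk asymptotics, control both marginals and pairwise correlations and yield the sharp $O(1/\sqrt n)$ bounds of Theorems~\ref{t:TVK}--\ref{t:main}. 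Note that those results only give $\de\to0$ for particular width-$d$ families $\{P_\la\}$ and hence say nothing about the supremum~$\eta(d)$; settling Conjecture~\ref{c:width-KS} would require an argument valid for \emph{every} poset of a given width.
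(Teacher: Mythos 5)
You have not proved anything here, and to be clear, neither does the paper: the statement you were given is Conjecture~\ref{c:width-KS}, which is stated in the paper as an open conjecture of Kahn and Saks~\cite{KS}, with only partial results recorded (Koml\'os~\cite{Kom} for height-$2$ posets and posets with many minimal elements, Korshunov~\cite{Kor} for random posets). Your text is a roadmap for an attack, and you candidly flag that its two central steps are missing; I agree with your diagnosis, and those missing steps are genuine gaps, not technicalities. First, the pigeonhole step fails as stated: log-concavity of the position distributions $h^{(k)}(i)=\Pb\bigl[L(a_k)=i\bigr]$ gives unimodality but no control on their spreads or locations, and for a maximum antichain whose elements sit at very different heights (say, one element chosen from each of $d$ long chains at staggered depths) the $d$ profiles can be pairwise nearly disjoint, so no two of them need be close in total variation no matter how large $d$ is. Ruling this out would itself require a new structural or correlation estimate, which is precisely what is unavailable.

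Second, even granting two antichain elements $a_k,a_{k'}$ with nearly identical marginals, closeness of marginals does not force $\Pb\bigl[L(a_k)<L(a_{k'})\bigr]$ close to $\tfrac12$ without control of the joint law of $\bigl(L(a_k),L(a_{k'})\bigr)$; the Kahn--Saks and Kahn--Linial machinery~\cite{KS,KL} produces constants bounded away from zero, and, as the paper notes in \S\ref{ss:intro-context}, that geometric approach can never yield $\de(P)=o(1)$ (cf.~\cite{Saks}). Your closing observation is correct and worth retaining: the theorems of this paper concern specific bounded-width families $P_{\la}$, $P_{\la/\mu}$ and therefore bound $\de$ along those families only; they say nothing about the supremum $\eta(d)$ over all width-$d$ posets, so they neither prove nor are implied by Conjecture~\ref{c:width-KS}. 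In short, the proposal is an honest sketch of why the conjecture is hard, but it contains no proof, and there is no proof in the paper to compare it against.
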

	
	The most notable result in this direction is due to Koml\'{o}s~\cite{Kom},
	who proved it for height~$2$ posets, as well as posets with \ts $n/f(n)$
	\ts minimal elements, for some undetermined, but possibly very
	slowly growing function \ts $f(n) =\om(1)$. Similarly, Korshunov~\cite{Kor} proved
	that Conjecture~\ref{c:width-KS} holds for \emph{random posets}, which are
	known to have height~$3$ w.h.p.~\cite{KR}.  Note that these are the
	opposite extremes to our setting, as we consider posets $P_{\la/\mu}$ with
	width \ts $d = O(1)$ \ts and height \ts $\Theta(n)$, see 
    also~$\S$\ref{ss:finrem-1323}. 
	
	Before we conclude, let us note that for general posets,
	counting the number $e(P)$ of linear extensions, as well as
	computing the sorting probability $\de(P)$, is $\SP$-complete~\cite{BW}.
	Thus, there is little hope of getting good asymptotic bounds
	on~$\de(P)$, except possibly for one of several notions of
	``random poset''~\cite{Bri93} and ``large poset''~\cite{Jan}.
	In fact, the same complexity results hold for counting linear extensions
	of general $2$-dimensional posets, as well as for posets of
	height~$2$; both results are recently proved in~\cite{DP}.
	This makes (skew) Young diagrams refreshingly accessible
	in comparison.
	
	\smallskip
	
	\subsection{Prior work on asymptotics for standard Young tableaux}
	\label{ss:intro-SYT}
	The combinatorics of standard Young tableaux is a classical subject, but
	until relatively recently, much of the work was on exact counting rather
	than on asymptotics and probabilistic aspects.
	
	The \emph{hook-length formula} (HLF) gives an explicit product formula
	for \ts $e(P_\la)=|\SYT(\la)|$, see e.g.~\cite{Sta99}.  In the
	\emph{stable limit shape}, the Young diagram $\la$ scaled by \ts
	$\frac{1}{\sqrt{n}}$ \ts in both directions $\to \pi$, a curve of area~1.
	Then the HLF gives a tight asymptotic bound for \ts
	$e(P_\la)$ \ts via \emph{hook integral}~\cite{VK}
	(see also~\cite{MPP4}).   \emph{Feit's determinant formula}
	is an exact formula for $f(\la/\mu)$, which can also be
	derived from the \emph{Jacobi--Trudi identity} for skew shapes,
	see e.g.~\cite{Sta99}.  Unfortunately, its determinantal nature
	makes finding exact asymptotics exceedingly difficult, see e.g.\ \cite{BR,MPP4}.
	
	For large skew shapes, Okounkov--Olshanski~\cite{OO} and
	Stanley~\cite{Sta03} computed the asymptotics of $f(\la/\mu)$
	for fixed~$\mu$, as $|\la|\to \infty$. Both papers rely on
	the \emph{factorial Schur functions} introduced by Macdonald
	in~\cite[$\S$6]{Mac}.  The \emph{Naruse hook-length formula} (NHLF)
	was introduced by Hiroshi Naruse in a talk in~2014, and given multiple
	proofs and generalizations in \cite{MPP1,MPP2}.  While the
	formula itself is algebro-geometric in nature, coming from
	the equivariant cohomology of the Grassmannian,
	some of the proofs are direct and combinatorial, using
	factorial Schur functions and explicit bijections~\cite{Kon,MPP1,MPP2} 
(see also~\cite{Pak} for an overview).
	
	In~\cite{MPP4}, Morales--Pak--Panova used the NHLF and the hook integral
	approach to prove an exact asymptotic formula for $f(\la/\mu)$ when
	$\la/\mu$ have a TVK $(\al,\be)$-shape.  In~\cite{MPT}, based on
	a bijection with lozenge tilings given in~\cite{MPP3} and the variational
	principle in~\cite{CKP}, Morales--Pak--Tassy proved an asymptotic
	formula for $f(\la/\mu)$ when both $\la$ and $\mu$ have a stable limit shape.
	In a parallel investigation, Pittel--Romik \cite{PR} found limit curves for
	the shape of random Young tableaux of a rectangle.  Most recently,
	Sun~\cite{Sun} established existence of such limit curves for general
	skew stable limit shapes.
	
	\smallskip
	
	\subsection{Some examples} \label{ss:intro-examples}
	The main difficulty in estimating the sorting probability
	is finding the ``right'' \emph{sorting elements}~$x,y\in X$, such that,
	even when suboptimal, still give a good bound for \ts $\de(P; \ts x,y)$.
	To better understand this issue, let us illustrate the
	sorting probability in some simple examples.
	
	First, take
	$\la=(n,1)$ and $\mu=(1)$. Then poset $P_{\la/\mu}$ consists
	of two chains, of length $1$ and~$(n-1)$.  There is an easy optimal
	pair of elements $x=(1,\lfloor \frac{n+1}{2}\rfloor)$ and $y=(2,1)$.
	Then \ts $\de(P_{\la/\mu})=0$ \ts for even $n$, and \ts
	$\de(P_{\la/\mu})=\frac{1}{n}$ \ts for odd~$n$. \ts % So far so good.
	Similarly, let  $\la=(n,2)$ and $\mu=(2)$.  The poset $P_{\la/\mu}$ again
	consists of two chains, of length $2$ and~$(n-2)$.  In this case, the
	$x$ as above give suboptimal \ts $\de(P_{\la/\mu}; \. x,y)\sim \frac{1}{2}$.
	Perhaps counterintuitively, the optimal sorting elements are \ts $y=(2,1)$ \ts
	and \ts $x=(1,m)$, where \ts $m= n(1-\frac{1}{\sqrt{2}}) + O(1)$.
	We have \ts $\de(P_{\la/\mu};\. x,y)=\Theta(\frac{1}{n})$ \ts bound
	in this case.  We generalize this example in~$\S$\ref{ss:warmup-upper}.
	
	Now let $\la=(m,m)$, $\mu=\emp$, $n=2m$.  We have
	$f(\la/\mu)= |\SYT(m,m)| = \frac{1}{m+1}\binom{2m}{m}$,
	the \emph{Catalan number}. % \ts $\Cat(m)\sim C\ts 4^m/m^{3/2}$.
	One can check in this case that
	\ts $\de(P_{\la/\mu}; \. x,y) = \Omega(1)$ for $y=(2,1)$
	and every $x=(1,i)$.  In fact, the bounds that work in this
	case are given by \ts $x=\bigl(1,\frac{m}{2}+k\bigr)$ \ts and \ts
    $y=\bigl(2,\frac{m}{2}-k\bigr)$, for some \ts
	$k=\Theta(\sqrt{m})$.  We prove in~\cite{CPP} that \ts
    $\de(P_{\la/\mu}) = O\bigl(n^{-5/4}\bigr)$ \ts by a direct asymptotic
    argument.
    A weaker \ts $O\bigl(\frac{1}{\sqrt{n}}\bigr)$ \ts  bound
    can be proved via standard bijection from standard
	Young tableaux $A\in \SYT(m,m)$ and Dyck paths \ts $(0,0) \to (m,m)$,
	which in the limit \ts $m\to \infty$ \ts converge to the Brownian excursion
	(see e.g.~\cite{Pit}).  This is the motivational example for this paper.
	
	\smallskip
	
	\subsection{Our work in context} \label{ss:intro-context}
	The differences between various approaches can now be explained
	in the way the authors look for the sorting elements.  In~\cite{Lin84},
	Linial takes $P$ of width two, breaks it into two chains, takes~$x$
	to be the minimal element in one of them and looks for $y$ in
	another chain.  As the previous examples show, this approach can
	never give \ts $\de(P)=o(1)$ \ts for general Young tableaux even
	with two rows. This approach has been influential, and was later
	refined and applied in a more general settings, see e.g.~\cite{Bri89,Zag1}.

	In~\cite{KS} and followup papers~\cite{BFT,KL,Kom,Zag1}, a more
	complicated pigeonhole principle is used, at the end of which
	there is no clear picture of what sorting elements are chosen.
    In fact, the geometric approach in~\cite{KS,KL} can never
     give \ts $\de(P) = o(1)$, as they also point out, cf.~\cite{Saks}.
	The paper most relevant to our paper is \cite{OS}, where the authors look
	for elements $x,y$ on the boundary \ts $\partial \la$, and apply
	the pigeonhole principle, Linial-style.  Already in the Catalan
    example this approach cannot be used to prove that
    \ts $\de(P; \ts x,y) = o(1)$.
%   In this case one can
% 	take diagram~$\la$ with no long intervals between the
%	inner and outer corners of~$\la$, and then show there are
%	no elements \ts $x,y \in \partial \la$ with \ts
%	$\de(P; \ts x,y) = o(1)$.
	
	Now, following~\cite{PR,Sun}, let \ts $\la\vdash n$ \ts be the stable
	limit shape. It is natural to take $x$ and~$y$ from the same limit curve \ts
	$C_\la(\al):=\partial \{(i,j)\in \la, \. A(i,j)\le \al \ts n\}$,
	where \ts $0<\al <1$, and \ts $A\in \SYT(\la)$ \ts is a
    uniform standard Young tableau of shape~$\la$.
	An example of these limit curves is given in Figure~\ref{f:romik}.
	Since the curves $C_\la(\al)$ have \ts $\Theta(\sqrt{n})$ \ts elements,
	and all $(i,j)\in C_\la(\al)$ can be permuted nearly independently,
	this could in principle give a small sorting probability.
	Making this precise would be both interesting and challenging, but this
	approach fails in our case, since we have \ts $d=O(1)$ \ts rows.
	It does have a few heuristic implications.
	
	\begin{figure}[hbt]
		\centering
		\includegraphics[width=4.4cm]{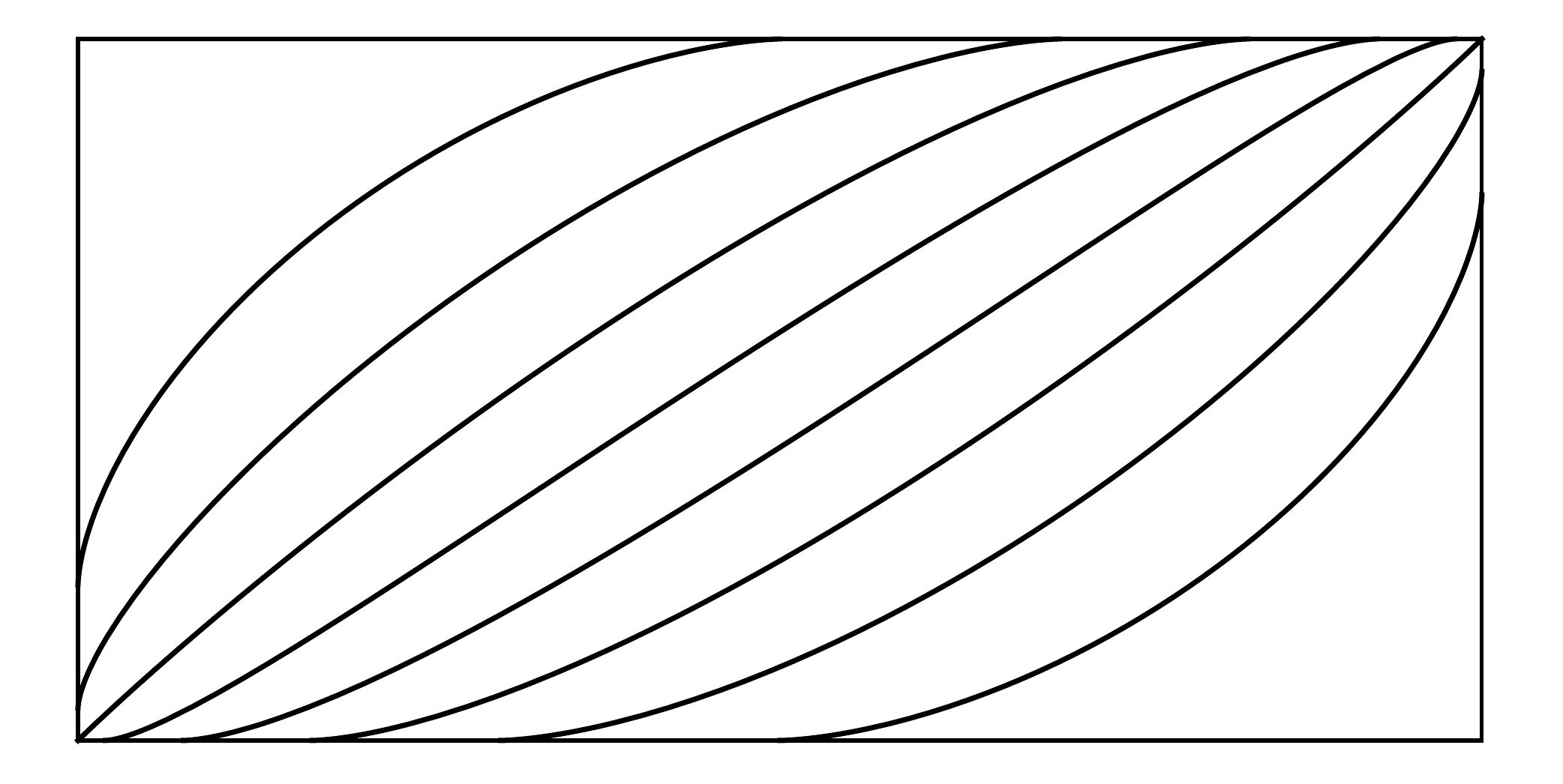}
		\vskip-.4cm
		\caption{\footnotesize The limit curves in a $d\times 2d$ rectangle (created by Dan Romik, April 2020). }
		\label{f:romik}
	\end{figure}
	
	On the one hand, there are likely to be many good sorting pairs of
	elements $x=(i,j)$, $y=(i',j')$, for all $i<j$.
	On the other hand, in general, the limit curves do not have
	a closed-form formula of any kind, and arise as the solution of
	a variational problem~\cite{Sun}. The same holds for the
	asymptotics of \ts $f(\la/\mu)$~\cite{MPT}. As a consequence,
	we are essentially forced to make an indirect argument,
    which proves the result without explicitly
	specifying the exact location of $x,y$ in~$\la$.
	
%	\smallskip
	
	Our approach is based on a combination of tools and ideas
	from algebraic combinatorics and discrete probability.
	The general philosophy is somewhat similar to the pigeonhole
	principle of Linial~\cite{Lin84}, in the sense that we find a
	sorting pair \ts $x=(1,a)$ \ts and \ts $y=(2,b)$ \ts by searching
	over suitable choices of~$a,b$. As in the Catalan case,
	we start with extreme cases $a=\la_1$, $b=\mu_2+1$, and
	decrease $(a-b)$ until the sorting probabilities
	of~$x$ and~$y$ becomes small. The main difficulty,
	of course, is estimating these sorting probabilities.
	
	In fact, by analogy with the Catalan example, one can interpret
	random standard Young tableaux as random walks from \ts
	$(0,\ldots,0)$ \ts to \ts $(\la_1,\ldots,\la_d)$, which are confined
	to a certain simplex region in $\nn^d$ defined by combinatorial
	constraints.
	The sorting probability  \ts $\de(P_{\la/\mu}; \. x,y)$ \ts can then
	be interpreted as the probability the walk passes below versus
	above of certain codimension-2 subspace. These probabilities
	are then bounded by comparing the simplex-confined lattice walk with
	the usual (unconstrained) lattice walk. This comparison is based
	on delicate estimates which largely rely on the Schur functions
	technology combined with the NHLF.  This technical part occupies
	much of the paper.
	
	\smallskip
	
	\subsection{Structure of the paper} \label{ss:intro-structure}
	We begin by reviewing standard definitions and notation in
	Section~\ref{s:def}, where we also include a number of basic
	results in Algebraic Combinatorics and Discrete Probability.
	In the Warmup Section~\ref{s:warmup} we prove the \ts $\frac13$ -- $\frac23$ \ts
	Conjecture~\ref{c:1323} for all Young diagrams.  This is a known
	result, but the proof we give is new and the tools are
	a precursor of the proof of the Main Theorem~\ref{t:main}.
    We also show how these tools easily give an upper bound
    on the sorting probability $\de(P_\la)$, for $n-\la_1=o(n)$,
    where \ts $n=|\la|$.  In fact, this short section has both
    the style and the flavor of the rest of the paper,
    cf.~$\S$\ref{ss:outline-tune}.
    	
	In Section~\ref{s:outline}, we give key new definitions which allow
	us to state the Main Lemma~\ref{l:main}, and two bounds Lemmas~\ref{l:asy-smooth} and~\ref{l:asy-TVK} on the
	number \ts $f(\la/\mu)$ \ts of standard Young tableaux
	of shape~$\la/\mu$.
	The proofs of these lemmas occupy much of the paper.  The
    technical outline of these proofs is the given
	in~$\S$\ref{ss:outline-roadmap}, so below we only give
	the structure of the paper in the broadest terms.
	
	First, in Sections~\ref{s:paths}--\ref{s:sorting-lattice-path}, we develop
	the technology of lattice path probabilities and their estimates,
	which culminates with the proof of Main Lemma~\ref{l:main}
	in Section~\ref{s:sorting-lattice-path}.  Then, in
	Section~\ref{s:upper-bounds-SYTs}, we develop the technology
	of Young tableaux estimates, which allows us to prove
	Theorem~\ref{t:thick} in Section~\ref{s:straight-shaped Young posets}.
	We then prove Lemma~\ref{l:asy-smooth} and Main Theorem~\ref{t:main}
	in Section~\ref{s:main-thm}. Finally, Lemma~\ref{l:asy-TVK}
	and Theorem~\ref{t:TVK-skew} are proved in Section~\ref{s:TVK}.
	
	We conclude with Section~\ref{s:conj}, where we state several
	conjectures and open problems motivated by our results.
	We present final remarks in Section~\ref{s:finrem}.
	
	\bigskip
	
	%\newpage
	
	\section{Definitions, notation and background results}
	\label{s:def}

	\subsection{Standard conventions}  \label{ss:def-standard}
	We fix the number of rows \ts $d \geq 2$ \ts throughout the paper.
	We consider only posets $P=(X,\prec)$ corresponding
	to partitions $\la \vdash n$, or skew partitions \ts $\la/\mu\vdash n$.
	Unless stated otherwise, we have \ts $|X|=n$.
	
	We use \ts $[n]=\{1,\ldots,n\}$, \ts $\nn = \{0,1,2,\ldots\}$, \ts $\zz_+ = \{1,2,\ldots\}$,
	\ts $\rr_+=\{x\ge 0\}$, and \ts $\rr_{>0}=\{x> 0\}$.  We  denote by \ts $\pp_d \ssu\nn^d$ \ts the set of partitions \ts
	$(\la_1,\ldots, \la_d)$, where \ts $\la_1\ge \ldots \ge \la_d\ge 0$, and \ts $\la_i \in \nn$.
	We write \ts $(a_1,\ldots,a_d) \trianglerighteq  (b_1,\ldots,b_d)$, when \ts
	$a_1\ge b_1$, \ts $a_1+a_2\ge b_1+b_2$, \ts \ldots \ts, and \ts
	$a_1+\ldots+a_d=b_1+\ldots+b_d$.
	
	\smallskip
	
	\subsection{Standard Young tableaux}  \label{ss:def-SYT}
	We adopt standard notation in the area.  See e.g.~\cite{Mac95,Sag,Sta99} for
	these results and further references.
	
	Let \ts $\la=(\la_1,\ldots,\la_d)\vdash n$, \ts $\la_1 \ge \ldots \ge \la_d \ge 0$,
	be an \emph{integer partition} of~$n$. Here \ts $n = |\la|:=\la_1+\ldots+\la_d$ \ts
	denotes the size of~$\la$, and $\ell(\la)\le d$ is the number of parts of~$\la$.
	We use $\la'$ to denote a conjugate partition whose parts are the column lengths
	of the diagram~$\la$.
	
	A \emph{skew partition} $\la/\mu$ is
	a pair of partitions $\la=(\la_1,\ldots,\la_d)$, $\mu=(\mu_1,\ldots,\mu_d)$, such
	that $\mu_i\le \la_i$.  In the vector notation above, $\la, \mu\in \pp_d$,
	and $\la-\mu\in \nn^d$.
	The \emph{empty partition} is $\mu=(0,\ldots,0)$, which we
	also denote~$\emp$, e.g.\ $\la/\emp=\la$. The size $|\la/\mu|:=|\la|-|\mu|$;
	we write $\la/\mu\vdash n$ for $|\la/\mu|=n$.
	
	A \emph{Young diagram} (shape), which we also denote by~$\la$, is a set of squares
	$(i,j)\in \nn^2$, such that \ts $1\le i\le d$, and \ts  $1\le j \le \la_i$.
	Similarly, a \emph{skew Young diagram}, which we also denote by~$\la/\mu$, is a set of squares
	$(i,j)\in \nn^2$, such that \ts $1\le i\le d$ \ts and \ts  $\mu_{i}< j \le \la_i$.
	It can in principle have empty rows or be disconnected, although such cases are less
	interesting.   We adopt the \emph{English notation}, where~$i$ increases downwards,
	and~$j$ from left to right, as in Figure~\ref{f:SYT}.
	
	A \emph{standard Young tableau} of shape~$\la/\mu$ is a bijection
	\ts $A: \la/\mu \to [n]$, which increases in rows and columns,
	see Figure~\ref{f:SYT}.  We use $\SYT(\la)$ to denote the set of
	standard Young tableaux of shape~$\la/\mu$.  As in the introduction,
	we use $P_{\la/\mu}=(\la/\mu,\prec)$ to denote the poset on the set of squares
	of~$\la/\mu$, with the partial order defined by \ts $(i,j) \preccurlyeq (i',j')$ \ts
	if and only if \ts $i\le i'$ and $j\le j'$.
	This is a standard definition of a \emph{$2$-dimensional poset}
	associated with a set of points in the plane, see e.g.~\cite{Tro}.
	
	Recall that the linear extensions $\cL(P_{\la/\mu})$ are in natural bijection
	with the set \ts $\SYT(\la)$ \ts of standard Young tableaux.
	Whenever clear, we will use the latter from this point on.  Denote by $\Pblm$
	the uniform probability measure on $\SYT(\la/\mu)$.
	To simplify and unify the notation, from now on we use
	$$
	f(\la/\mu) \, := \, \bigl|\SYT(\la/\mu)\bigr| \. = \. e\bigl(P_{\la/\mu}\bigr) \. = \. \bigl|\cL\bigl(P_{\la/\mu}\bigr)\bigr|.
	$$
	%, and by $\Pbl$ the uniform probability measure on the linear extensions of the poset $P(\lambda)$.
	%
	%
	For straight shapes $\la\vdash n$, we have the \emph{Frobenius formula}:
	%by the hook length formula (see \cite[Lemma~7.21.1]{Sta99}),
	\begin{equation}\label{eq:SYT-Frob}
	f(\la) \, = \, \frac{n!}{\la_1! \. \cdots\ts \la_d!} \.\prod_{1 \leqslant i < j \leqslant d} \frac{\la_i-\la_j+j-i}{\la_i+j-i}\,,
	\end{equation}
	see e.g.~\cite{FRT} (cf.~\cite[Ex.~1.1]{Mac} and~\cite[Lemma 7.21.1]{Sta99}).
	
	\smallskip
	
	\subsection{Schur polynomials}\label{ss:def-Schur}
	A \emph{semistandard Young tableau} of shape~$\la$ is an map \ts $A: \la \to \zz_+$,
	such that $A$ is weakly increasing in rows and strictly increasing in columns.  We write
	$\SSYT(\la, d)$ for the set of such tableaux with all entries~$\le d$.
	The \emph{Schur polynomial} is a symmetric
	polynomial defined as
	\begin{equation}\label{eq:Schur-def}
	s_{\mu}(x_1,\ldots, x_d) \, :=  \, \det \bigl(x_{j}^{m_i}\bigr)_{i,j=1}^d
	\,\prod_{1\leq i <j \leq d} (x_i-x_j)^{-1}\.,
	\end{equation}
	where \ts $m_i=m_i(\mu):= \mu_i +d-i$.  We call \ts
	$(m_1,\ldots,m_d)= \mu+ (d-1,\ldots,1,0)$ \ts the
	\emph{shifted partition}~$\mu$.
	
	The combinatorics of Schur functions is given by
	\begin{equation}\label{eq:Schur polynomial}
	s_{\lambda}(x_1,\ldots, x_d)\, := \, \sum_{A \in \SSYT(\la, d)} \, \prod_{(i,j)\in \la} \. x_{A(i,j)}
	\, = \, \sum_{A \in \SSYT(\la, d)} \, \prod_{i=1}^d \. (x_i)^{t_i(A)}\ts,
	\end{equation}
	where
	\begin{equation}\label{eq:def-ti}
	t_i(A) \, :=  \, \bigl| \bigl\{(j,k) \in \la/\mu \, \mid \, A(j,k)=i  \bigr\}  \bigr|\., \qquad 1\le i \le d\ts.
	\end{equation}
	The product formula below is classical and follows from~\eqref{eq:Schur-def}
	and~\eqref{eq:Schur polynomial}:
	\begin{equation}\label{eq:HCF}
	s_{\mu}(1,\ldots,1) \, =  \, \bigl|\SSYT(\la, d)\bigr| \, =  \,\prod_{1 \leqslant i < j \leqslant d} \frac{m_i-m_j}{j-i}\..
	\end{equation}
	
	\smallskip
	
	\subsection{Hook-length formulas}\label{ss:def-HLF}
	The \emph{hook-length} of square \ts $(i,j) \in \la$ \ts is defined
	as
	\begin{equation}\label{eq:hook-def}
	h_{\la}(i,j) \. := \. \la_i-j + \la_j' - i+1\ts.
	\end{equation}
	The \emph{hook-length formula} (HLF)~\cite{FRT} (see also~\cite{Sag,Sta99}), is a product
	formula for the number of standard Young tableaux of straight shape:
	\begin{equation}\label{eq:HLF}
	f(\la) \, = \, n!\. \prod_{(i,j)\in \la} \. \frac{1}{h_\la(i,j)}\..
	\end{equation}

	For skew Young diagrams, the number $f(\lambda/\mu)$ can be determined by the
	\emph{Naruse hook-length formula} (NHLF), see~\cite{MPP1, MPP2}.
	Let $D\ssu \la $ be a subset of squares with the same number of squares
	in each diagonal as~$\mu$.  A subset~$D$ is called an \emph{excited diagram}
	if and only if the relation~$\preccurlyeq$ on squares of~$\mu$ holds for
	the corresponding squares in~$D$.  Denote by \ts $\ED(\lambda/\mu)$ \ts
	the set excited diagram of shape~$\la/\mu$. As shown in~\cite{MPP1},
	all $D\in \ED(\lambda/\mu)$ can be obtained from~$\mu$ by a sequence of
	\emph{excited moves}: $(i,j)\to (i+1,j+1)$, for some $(i,j)\in D$,
	s.t. $(i+1,j), \ts (i,j+1) \notin D$.
	
	\begin{thm}[{\rm NHLF~\cite{MPP1}}]\label{t:NHLF-standard}
		For all \ts $\la/\mu\vdash n$, we have:
		\begin{equation}\label{eq:Naruse HLF}
		f(\lambda/\mu) \, = \,  \, n!\. \sum_{D \in \ts \ED(\lambda/\mu)}
		\prod_{(i,j)\in \la\sm D} \. \frac{1}{h_\la(i,j)}\..
		\end{equation}
	\end{thm}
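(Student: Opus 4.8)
The plan is to reduce the Naruse formula to a Jacobi--Trudi type determinant and then evaluate that determinant combinatorially via the Lindström--Gessel--Viennot (LGV) lemma, with excited diagrams furnishing the relevant families of non-intersecting lattice paths. As a starting point I would invoke the classical Aitken (equivalently, Feit) determinant
\begin{equation*}
	f(\la/\mu) \. = \. n! \, \det\!\left( \frac{1}{(\la_i - \mu_j - i + j)!} \right)_{i,j=1}^{\ell(\la)}\ts,
\end{equation*}
with the conventions $\mu_j:=0$ for $j>\ell(\mu)$ and $1/m!:=0$ for $m<0$; this follows from the skew Jacobi--Trudi identity $s_{\la/\mu}=\det\bigl(h_{\la_i-\mu_j-i+j}\bigr)$ (see e.g.~\cite{Sta99}) upon applying the exponential specialization $h_k\mapsto 1/k!$, under which $s_{\la/\mu}\mapsto f(\la/\mu)/n!$. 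It therefore suffices to prove the purely combinatorial identity
\begin{equation*}
	\det\!\left( \frac{1}{(\la_i - \mu_j - i + j)!} \right)_{i,j=1}^{\ell(\la)} \. = \. \sum_{D \in \ED(\la/\mu)} \ \prod_{(i,j)\in \la\sm D} \frac{1}{h_\la(i,j)}\ts.
\end{equation*}

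To this end I would set up a path model for excited diagrams. By the standard dictionary (Kreiman; Ikeda--Naruse), each $D\in\ED(\la/\mu)$ corresponds to a family $\Ga=(\ga_1,\dots,\ga_{\ell(\la)})$ of pairwise non-intersecting monotone (down/right) lattice paths contained in~$\la$, where $\ga_j$ runs from a fixed source~$A_j$ depending only on~$\mu$ to a fixed sink~$B_j$ depending only on~$\la$, the cells of~$\la$ swept out by $\bigcup_j\ga_j$ being precisely those of~$D$, and non-crossing of the paths being exactly the excited condition. Since all paths lie in~$\la$, the only planar matching of the sources $A_j$ to the sinks $B_i$ is the identity, so LGV applies without sign cancellation. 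The heart of the proof is then to construct local step-weights~$w$ on monotone paths in~$\la$ with three properties: (i) $w(\ga)$ equals the product of the step-weights along~$\ga$, so that weights are LGV-multiplicative; (ii) $\sum_{\ga\colon A_j\to B_i} w(\ga) = 1/(\la_i-\mu_j-i+j)!$ for all $i,j$, so that the LGV transfer matrix \emph{is} the Aitken matrix; and (iii) for every non-intersecting family $\Ga$ corresponding to an excited diagram~$D$, one has $\prod_j w(\ga_j) = \prod_{(i,j)\in\la\sm D} 1/h_\la(i,j)$.

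Granting such weights, the LGV lemma closes the argument at once:
\begin{equation*}
	\sum_{D\in\ED(\la/\mu)} \ \prod_{(i,j)\in\la\sm D}\frac{1}{h_\la(i,j)} \. = \. \sum_{\Ga\text{ non-int.}}\prod_{j}w(\ga_j) \. = \. \det\!\left(\sum_{\ga\colon A_j\to B_i}w(\ga)\right) \. = \. \det\!\left(\frac{1}{(\la_i-\mu_j-i+j)!}\right)\ts,
\end{equation*}
which together with the Aitken determinant gives $f(\la/\mu)=n!\sum_{D}\prod_{(i,j)\in\la\sm D}1/h_\la(i,j)$, the assertion of the theorem. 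I expect the genuine difficulty to be concentrated in requirement~(iii) together with (i)--(ii): the step-weights must be simple enough to sum to inverse factorials along single paths, yet rich enough to telescope, over a non-intersecting family, to the hook-length product over the complement $\la\sm D$ --- which pins the weights down essentially uniquely and demands careful accounting of the ``corner'' cells at which an excited diagram differs from the bare union of its paths. As an alternative that sidesteps LGV and stays closer to the algebro-geometric origin of the formula, one could instead begin from the excited-diagram expansion of the skew factorial Schur polynomial $s_{\la/\mu}(x\,\|\,a)$ of Ikeda--Naruse, substitute it into the Okounkov--Olshanski realization of $f(\la/\mu)$ --- up to a global hook product --- as a specialization of such a polynomial, and then degenerate the parameters~$a$ so that the linear factors collapse to hook lengths; there the same ``corner'' phenomenon resurfaces as the vanishing of the degenerate factors.
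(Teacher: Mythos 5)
You should first note that the paper does not prove Theorem~\ref{t:NHLF-standard} at all: it is imported verbatim from~\cite{MPP1}, so there is no internal argument to measure yours against. Judged on its own terms, your proposal is a strategy rather than a proof. Everything is conditional on the existence of step-weights $w$ satisfying your conditions (i)--(iii), and you explicitly defer their construction while conceding that this is where the difficulty is concentrated. That construction \emph{is} the theorem; the reduction to the Aitken determinant and the formal invocation of Lindstr\"om--Gessel--Viennot are routine bookkeeping. As written, the argument has a gap precisely at its load-bearing step.

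There is moreover a concrete obstruction to the model as you have set it up. You take the paths to sweep out the cells of $D$ itself, yet the weight in~\eqref{eq:Naruse HLF} is a product of $1/h_\la(i,j)$ over the complement $\la\sm D$, which contains cells lying on none of your paths; already for $\mu=\emp$ one has $D=\emp$ while the product runs over all of~$\la$, so no LGV-multiplicative weight carried by steps tracing out $D$ can reproduce it. In the path model that actually underlies the known combinatorial proofs (Kreiman's bijection, used in \cite{MPP1,MPP2}), it is the \emph{complement} $\la\sm D$ that decomposes into non-intersecting border strips, and the determinant one then evaluates is not the Aitken/Jacobi--Trudi determinant but the Hamel--Goulden determinant attached to an outer border-strip decomposition of $\la/\mu$, with entries given by counts for substrips rather than inverse factorials; with the roles reversed this way your condition (iii) becomes easy but your condition (ii) fails for the Aitken matrix. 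Your closing alternative --- the excited-diagram expansion of factorial Schur functions combined with an Okounkov--Olshanski-type evaluation and a degeneration of the parameters --- is essentially the algebraic proof given in~\cite{MPP1}, but there too you only gesture at the degeneration step, which is exactly where the hook lengths appear. Either route can be completed; neither is completed here.
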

	
	When $\mu=\emp$, we obtain the~HLF~\eqref{eq:HLF}.  The next result
	is a consequence of the~NHLF. Define
	\begin{equation}\label{eq:F-def}
	F(\lambda/\mu) \, := \, n!\. \prod_{(i,j)\in \la/\mu} \. \frac{1}{h_\la(i,j)}\,.
	\end{equation}
	
	\begin{thm}[{\cite[Thm~3.3]{MPP4}}]\label{t:NHLF-asy}
		Let $\la/\mu\vdash n$, $\ell(\la)\le d$.  Then
		$$
		F(\lambda/\mu) \, \le \, f(\la/\mu)  \, \le \, \bigl|\ED(\la/\mu)\bigr| \ts \cdot \ts F(\lambda/\mu)\ts.
		$$
	\end{thm}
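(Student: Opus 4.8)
The plan is to extract both bounds directly from the Naruse hook-length formula in Theorem~\ref{t:NHLF-standard} (equation~\eqref{eq:Naruse HLF}) by examining its summands one at a time. Every excited diagram $D \in \ED(\la/\mu)$ has the same number of squares on each diagonal as $\mu$, so $|D| = |\mu|$ and $|\la \sm D| = n$; thus the summand indexed by $D$ is $n!\prod_{(i,j)\in\la\sm D} h_\la(i,j)^{-1}$, a product of $n$ hook reciprocals times $n!$. The minimal excited diagram is $D=\mu$ itself, for which $\la\sm D$ is precisely the set of squares of $\la/\mu$, so its summand is exactly $F(\la/\mu)$ as defined in~\eqref{eq:F-def}. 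Since every summand in~\eqref{eq:Naruse HLF} is positive, dropping all but the $D=\mu$ term yields the lower bound $F(\la/\mu)\le f(\la/\mu)$ at once.

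For the upper bound it suffices to show that the $D=\mu$ term is the largest, i.e.
$$
\prod_{(i,j)\in\la\sm D} h_\la(i,j) \; \ge \; \prod_{(i,j)\in\la/\mu} h_\la(i,j) \qquad \text{for every } D\in\ED(\la/\mu),
$$
since then each of the $|\ED(\la/\mu)|$ summands is at most $F(\la/\mu)$, and their sum $f(\la/\mu)$ is at most $|\ED(\la/\mu)|\cdot F(\la/\mu)$. I would prove this inequality by induction on the number of excited moves needed to build $D$ from $\mu$, invoking the structural fact of~\cite{MPP1}, quoted above, that every $D\in\ED(\la/\mu)$ arises this way. An excited move sending $(i,j)\in D$ to $(i+1,j+1)$ removes $(i+1,j+1)$ from the complement $\la\sm D$ and inserts $(i,j)$; all hooks remain computed with respect to the fixed shape $\la$, so the complement's hook product gets multiplied by $h_\la(i,j)/h_\la(i+1,j+1)$. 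By the definition~\eqref{eq:hook-def},
$$
h_\la(i,j) - h_\la(i+1,j+1) \; = \; (\la_i-\la_{i+1}) + (\la'_j-\la'_{j+1}) + 2 \; \ge \; 2,
$$
because $\la$ and its conjugate $\la'$ are both weakly decreasing. Hence every excited move strictly increases the hook product of the complement, and the displayed inequality follows by induction.

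The only nontrivial input is the description of $\ED(\la/\mu)$ via excited moves from $\mu$, which is the result of~\cite{MPP1} recalled in the excerpt; everything else — the bookkeeping $|D|=|\mu|$, the identification of the $D=\mu$ term with $F(\la/\mu)$, and the two-line hook computation — is routine. I do not expect a genuine obstacle here: this is precisely the sense in which Theorem~\ref{t:NHLF-asy} is a quick corollary of the NHLF, with $|\ED(\la/\mu)|$ playing the role of the (typically lossy) multiplicative gap between the $F$-lower bound and $f(\la/\mu)$.
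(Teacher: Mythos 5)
Your argument is correct and is essentially the standard proof of \cite[Thm~3.3]{MPP4}, which the paper cites without reproving: the lower bound comes from keeping only the $D=\mu$ term of the NHLF~\eqref{eq:Naruse HLF}, and the upper bound from the fact that each excited move $(i,j)\to(i+1,j+1)$ multiplies the complement's hook product by $h_\la(i,j)/h_\la(i+1,j+1)\ge 1$ (since hook lengths weakly decrease along southeast diagonals), so every summand is at most $F(\la/\mu)$. No gaps.
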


	In an effort to
	quantify excited diagrams, we follow an equivalent
	definition given in~\cite[$\S$3.3]{MPP1}.  A \emph{flagged tableau}
	of shape~$\lambda/\mu$ is a tableaux \ts $T\in \SSYT(\mu)$, such that
	\begin{equation}\label{eq:definition flagged tableau}
	j\ts +\ts T(i,j) \ts - \ts i \. \leq \. \lambda_{T(i,j)}\,, \quad  \text{for all} \ \. (i,j) \in \mu\ts.
	\end{equation}
	The corresponding excited diagram is obtained by moving $(i,j)$ for \ts $T(i,j)-i$ \ts
	steps down the southeast diagonal.  The above inequality is a constraint that \ts
	$D\ssu \lambda$.  We denote by \ts $\FT(\la/\mu)$ \ts
	the set of flagged tableaux of shape~$\la/\mu$, so \ts
	$|\FT(\la/\mu)| = |\ED(\la/\mu)|$.
	
	\begin{thm}[{\rm Flagged NHLF~\cite{MPP1}}]\label{t:NHLF}
		For all \ts $\la/\mu\vdash n$, we have:
		\begin{equation}\label{eq:Naruse HLF flagged}
		f(\lambda/\mu) \, = \,  \, n!\. \Biggl[\prod_{(i,j)\in \la} \. \frac{1}{h_\la(i,j)}\Biggr] \.
		%\left[
		\sum_{T \in \FT(\lambda/\mu)} \ \prod_{(i,j) \in \mu} h_\la\bigl(T(i,j),j+T(i,j)-i\bigr).
		%\right].
		\end{equation}
	\end{thm}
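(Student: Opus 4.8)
The plan is to derive the flagged formula~\eqref{eq:Naruse HLF flagged} directly from the standard NHLF~\eqref{eq:Naruse HLF} by translating the sum over excited diagrams into a sum over flagged tableaux, and then doing the hook-length bookkeeping. Recall from the discussion preceding the theorem that there is a bijection $\ED(\la/\mu)\to\FT(\la/\mu)$, $D\mapsto T_D$: given $D\in\ED(\la/\mu)$, each square $(i,j)\in\mu$ is displaced by a sequence of excited moves to a unique square of $D$ lying on its southeast diagonal, say after $k_{ij}\ge 0$ steps, and one sets $T_D(i,j):=i+k_{ij}$. First I would (re)check the two defining properties of this correspondence: that $T_D$ is weakly increasing along rows and strictly increasing down columns — i.e. $T_D\in\SSYT(\mu)$ — which is exactly the content of the excited-move constraint $(i+1,j),(i,j+1)\notin D$; and that the flag inequality~\eqref{eq:definition flagged tableau}, namely $j+T_D(i,j)-i\le\la_{T_D(i,j)}$, is equivalent to the containment $D\ssu\la$. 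Conversely, any $T\in\FT(\la/\mu)$ reconstructs a valid excited diagram by moving each $(i,j)\in\mu$ down its diagonal by $T(i,j)-i$ steps, so the map is a bijection; this is~\cite[$\S$3.3]{MPP1}.

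Next, I would record the elementary consequence of this bijection that drives the computation: as a set,
$$
D \ = \ \bigl\{\, \bigl(T(i,j),\ j+T(i,j)-i\bigr) \ :\ (i,j)\in\mu \,\bigr\},
$$
and $(i,j)\mapsto\bigl(T(i,j),j+T(i,j)-i\bigr)$ is a bijection from $\mu$ to $D$. Reindexing the hook-length product over $D$ along this bijection gives
$$
\prod_{(i,j)\in D} h_\la(i,j) \ = \ \prod_{(i,j)\in\mu} h_\la\bigl(T(i,j),\, j+T(i,j)-i\bigr).
$$

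Finally, the hook-length bookkeeping. Since $|D|=|\mu|$ and $D\ssu\la$, the complement $\la\sm D$ has $|\la|-|\mu|=n$ squares, and
$$
\prod_{(i,j)\in\la\sm D} \frac{1}{h_\la(i,j)} \ = \ \Biggl[\prod_{(i,j)\in\la} \frac{1}{h_\la(i,j)}\Biggr] \cdot \prod_{(i,j)\in D} h_\la(i,j).
$$
Substituting this into~\eqref{eq:Naruse HLF}, replacing $\prod_{(i,j)\in D} h_\la(i,j)$ by the flagged-tableau product from the previous display, and replacing the sum over $D\in\ED(\la/\mu)$ by the sum over $T\in\FT(\la/\mu)$ via the bijection, yields exactly~\eqref{eq:Naruse HLF flagged}.

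The only step requiring genuine work is the verification of the bijection $\ED(\la/\mu)\leftrightarrow\FT(\la/\mu)$ — in particular, matching the local excited-move condition to semistandardness of $T_D$ and matching $D\ssu\la$ to the flag inequality~\eqref{eq:definition flagged tableau}; everything after that is formal manipulation of the hook-length products. Since this bijection is precisely~\cite[$\S$3.3]{MPP1}, in our exposition it suffices to invoke it, so the proof reduces to the two displayed identities above combined with Theorem~\ref{t:NHLF-standard}.
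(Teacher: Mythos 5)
Your proposal is correct, and it is essentially the argument the paper intends: the paper does not prove Theorem~\ref{t:NHLF} but cites~\cite{MPP1}, and the passage immediately preceding the statement sets up exactly the excited-diagram/flagged-tableau correspondence you invoke, after which the reduction to Theorem~\ref{t:NHLF-standard} is the routine reindexing of hook products that you carry out. The only nontrivial content is the bijection $\ED(\la/\mu)\leftrightarrow\FT(\la/\mu)$ itself, which you correctly isolate and attribute to~\cite[$\S$3.3]{MPP1}, matching the paper's treatment.
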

	
	\smallskip
	
	\subsection{Bounds on binomial coefficients}\label{ss:def-bin}
	Recall an effective version of the \emph{Stirling formula}:
	\begin{equation}\label{eq:Stirling-formula}
	\sqrt{2\pi}\. n^{n+\frac12}\ts e^{-n} \. \le \. n! \. \le \. n^{n+\frac12}\ts e^{1-n}\ts.
	\end{equation}
	This implies the following standard result:
	
	\begin{prop}\label{p:Stirling's formula}
		Let $a,b$ be integers  such that $a>b >0$. Then
		\begin{align*}
		\frac{\sqrt{2\pi}}{e^2} \. \sqrt{ \frac{a}{b(a-b)} } \. \exp\bigl( a \ts H(b/a)\bigr)
		\,  \leq \, \binom{a}{b} \, \leq \, \frac{e}{2\pi}\. \sqrt{ \frac{a}{b(a-b)} } \. \exp\bigl( a \ts H(b/a)\bigr)\ts,
		\end{align*}
		where \ts $H(r) \ts := \ts -r\ts \log r \ts -\ts (1-r) \ts \log(1-r)$ \ts is the \ts \emph{binary entropy function}.
	\end{prop}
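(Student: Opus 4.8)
The plan is to apply the effective Stirling bounds~\eqref{eq:Stirling-formula} termwise to the three factorials in $\binom{a}{b} = a!\big/\bigl(b!\,(a-b)!\bigr)$. Write $c := a-b$; since $a>b>0$ are integers we have $a\ge 2$ and $b,c\ge 1$, so~\eqref{eq:Stirling-formula} applies to each of $a!$, $b!$, $c!$, and moreover $a=b+c$. For the upper bound on $\binom{a}{b}$ I would bound $a!$ from above and each of $b!$ and $c!$ from below, which gives
\[
\binom{a}{b} \;\le\; \frac{a^{a+\frac12}\,e^{1-a}}{\bigl(\sqrt{2\pi}\,b^{b+\frac12}\,e^{-b}\bigr)\,\bigl(\sqrt{2\pi}\,c^{c+\frac12}\,e^{-c}\bigr)} \;=\; \frac{e}{2\pi}\cdot\frac{a^{a+\frac12}}{b^{b+\frac12}\,c^{c+\frac12}}\,,
\]
where the exponential factors collapse to $e^{\,1-a+b+c} = e$ because $b+c = a$. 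Symmetrically, bounding $a!$ from below and $b!,c!$ from above yields
\[
\binom{a}{b} \;\ge\; \frac{\sqrt{2\pi}\,a^{a+\frac12}\,e^{-a}}{\bigl(b^{b+\frac12}\,e^{1-b}\bigr)\,\bigl(c^{c+\frac12}\,e^{1-c}\bigr)} \;=\; \frac{\sqrt{2\pi}}{e^{2}}\cdot\frac{a^{a+\frac12}}{b^{b+\frac12}\,c^{c+\frac12}}\,,
\]
this time with the exponential factors collapsing to $e^{\,-a-2+b+c} = e^{-2}$.

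The remaining step is purely algebraic: I rewrite the common factor $\dfrac{a^{a+1/2}}{b^{b+1/2}\,c^{c+1/2}}$ appearing in both displays. Separating the half-powers produces the factor $\sqrt{a/(bc)} = \sqrt{a/\bigl(b(a-b)\bigr)}$, which is exactly the square-root prefactor in the statement. For the remaining factor I use $a = b+c$ to write $a^a = a^b\,a^c$, so that
\[
\frac{a^a}{b^b\,c^c} \;=\; \Bigl(\frac{a}{b}\Bigr)^{b}\,\Bigl(\frac{a}{c}\Bigr)^{c}\,,
\]
and then, setting $r := b/a$ (so $b = ra$ and $c = (1-r)a$), taking logarithms gives $b\log(a/b) + c\log(a/c) = -ar\log r - a(1-r)\log(1-r) = a\,H(b/a)$. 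Hence $a^a/(b^b c^c) = \exp\bigl(a\,H(b/a)\bigr)$. Substituting this together with the square-root factor back into the two displays above produces precisely the claimed lower and upper bounds.

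I do not expect any genuine obstacle: the only points requiring care are the bookkeeping of the three exponential factors — which telescope to $e$ in the upper estimate and to $e^{-2}$ in the lower estimate, using $b+c = a$ — and the identity $a^a/(b^b c^c) = \exp\bigl(a\,H(b/a)\bigr)$, both of which are routine manipulations.
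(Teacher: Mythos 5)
Your proof is correct and is exactly the routine derivation the paper has in mind (the paper simply asserts that the effective Stirling bound~\eqref{eq:Stirling-formula} implies the proposition, without writing out the details). The bookkeeping of the exponential factors and the identity $a^a/(b^b c^c)=\exp\bigl(a\,H(b/a)\bigr)$ are handled correctly, and the observation that $b,a-b\ge 1$ is needed for the Stirling bounds to apply is the right point of care.
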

	
	\smallskip
	
	\subsection{Concentration inequalities}\label{ss:def-Hoe}
	Consider a simple random walk \ts $\bX=\bigl(X_t\bigr)_{t \geq 0}$ \ts on \ts $\rr^d$,
	with steps $V=\{v_1,\ldots,v_k\}\ssu \rr^d$ and probability distribution $Q$ on~$[k]$:
	\begin{equation}\label{eq:Hoe-steps}
	X_0\. = \. O, \ \ X_{t+1} \. = \. X_t \. + \. v_i\., \quad \text{where \, $1\le i \le k$ \, \.
		is chosen with probability \. $q_i:=Q(i)$.}
	\end{equation}
	We will use the following concentration inequality that applies in much more general situation.
	
	\begin{thm}[{\rm Hoeffding's inequality~\cite{Hoe63}}]\label{t:Hoeffding's inequality}
		Let $\bX=(X_t)_{t \geq 0}$ be a random walk on $\Rb^d$ with steps $V$ such that $\|v_i\|\le 1$.
		Then, for every $t\geq 1$ and $c>0$ ,
		$$\Pb\bigl[ \ts\|X_t - \Eb[X_t] \ts \| \. \geq \. c\bigr]  \ \leq \  2 \exp \left( -2c^2/t\right).
		$$
	\end{thm}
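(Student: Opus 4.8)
The plan is to reduce the $\Rb^d$-valued tail bound to the classical one-dimensional Chernoff--Hoeffding estimate by projecting the walk onto lines, and then to lift a single-direction bound to a bound on the Euclidean norm. Write $Y_s:=X_s-X_{s-1}$ for the $s$-th step, so that $Y_1,\ldots,Y_t$ are independent, each $Y_s$ equal to $v_i$ with probability $q_i$, and $\|Y_s\|\le 1$. For a unit vector $u\in\Rb^d$ set
$$
M^u_s \ :=\ \bigl\langle u,\ X_s-\Eb[X_s]\bigr\rangle \ =\ \sum_{r=1}^{s}\bigl\langle u,\ Y_r-\Eb[Y_r]\bigr\rangle .
$$
Since $x\mapsto\langle u,x\rangle$ is $1$-Lipschitz, each summand $\langle u,Y_r-\Eb[Y_r]\rangle$ is mean zero and, by the normalization of the steps, confined to an interval of length at most $1$; moreover the summands are independent. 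Thus $(M^u_s)$ is a one-dimensional random walk with independent, centered, bounded increments, to which the usual machinery applies directly.

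For such a walk the estimate is textbook. By Hoeffding's lemma, $\Eb\bigl[\exp(\theta\langle u,Y_r-\Eb[Y_r]\rangle)\bigr]\le\exp(\theta^2/8)$ for every $\theta$, so by independence $\Eb\bigl[\exp(\theta M^u_t)\bigr]\le\exp(\theta^2 t/8)$. Markov's inequality gives $\Pb[M^u_t\ge c]\le\exp(-\theta c+\theta^2 t/8)$ for all $\theta>0$; optimizing at $\theta=4c/t$ yields $\Pb[M^u_t\ge c]\le\exp(-2c^2/t)$, and applying the same bound to $-u$ gives $\Pb[\,|M^u_t|\ge c\,]\le 2\exp(-2c^2/t)$ for every fixed unit vector~$u$.

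It remains to pass from a fixed direction to the Euclidean norm, using $\|X_t-\Eb X_t\|=\sup_{\|u\|=1}\langle u,X_t-\Eb X_t\rangle$. A crude $\ve$-net union bound over the unit sphere would insert a factor depending on~$d$; to keep the clean constant I would instead work with the real-valued process $R_s:=\|X_s-\Eb X_s\|$. By convexity of the norm and $\Eb[Y_{s+1}-\Eb[Y_{s+1}]\mid\cF_s]=0$ (Jensen, using that $Y_{s+1}$ is independent of $\cF_s$), $(R_s)$ is a submartingale, hence so is $(e^{\theta R_s})$ for $\theta>0$; Doob's maximal inequality then reduces the tail of $R_t$ to the single moment generating function $\Eb[e^{\theta R_t}]$. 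One bounds the latter by $\exp(\theta^2 t/8)$ through a conditional, dimension-free version of Hoeffding's lemma for the norm increments $R_{s+1}-R_s$ --- decompose $Y_{s+1}-\Eb[Y_{s+1}]$ into its components parallel and orthogonal to $X_s-\Eb X_s$ and apply Hoeffding's lemma to the parallel part, cf.\ Pinelis --- and then optimizes $\theta$ exactly as above. I expect this last, dimension-free moment-generating-function estimate for a bounded-increment martingale in $\Rb^d$ to be the only genuinely non-routine point, and it is also where the precise normalization of the steps enters; in the paper's applications one may alternatively sidestep it by projecting onto a fixed coordinate axis and absorbing a harmless factor of~$d$.
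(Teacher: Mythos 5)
The paper offers no proof of this statement: it is quoted as a black box from Hoeffding's paper \cite{Hoe63}, so there is nothing internal to compare against, and your attempt must stand on its own. As written it has two genuine gaps, both of which affect the constant $2$ in the exponent. First, in the one-dimensional reduction: from $\|Y_r\|\le 1$ and $\|u\|=1$ you only get $\langle u, Y_r\rangle\in[-1,1]$, an interval of length $2$, not $1$; Hoeffding's lemma then gives $\Eb\bigl[\exp(\theta\langle u,Y_r-\Eb[Y_r]\rangle)\bigr]\le\exp(\theta^2/2)$, and the optimized directional tail is $\exp(-c^2/(2t))$, weaker than the claimed $\exp(-2c^2/t)$ by a factor of $4$ in the exponent. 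Second, the lift from a fixed direction to the norm is not actually carried out, and the shortcut you propose is false: the inequality $\Eb[e^{\theta R_t}]\le e^{\theta^2 t/8}$ cannot hold, because $R_t=\|X_t-\Eb[X_t]\|\ge 0$ has $\Eb[R_t]=\Theta(\sqrt{t})$ for any nondegenerate walk, so by Jensen $\Eb[e^{\theta R_t}]\ge e^{\theta\,\Eb[R_t]}$, which exceeds $e^{\theta^2 t/8}$ precisely in the regime $\theta\lesssim 1/\sqrt{t}$ used in the optimization. The correct dimension-free statements of this type (Kallenberg--Sztencel, Pinelis) for Hilbert-space martingales with increments of norm at most $1$ give $\Pb\bigl[\|X_t-\Eb[X_t]\|\ge c\bigr]\le 2\exp(-c^2/(2t))$ --- again with $1/2$, not $2$, in the exponent.

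Your closing remark is in fact the right way to read the theorem as the paper uses it. In the only application (Lemma~\ref{l:most-triplets-admissible}), the steps are the standard basis vectors $e_1,\ldots,e_d$ and $\|\cdot\|$ is the $\ell_\infty$-norm, so each coordinate of $X_t$ is a sum of independent $\{0,1\}$-valued increments; scalar Hoeffding gives $2\exp(-2c^2/t)$ per coordinate and a union bound gives $2d\exp(-2c^2/t)$, with the factor $d$ harmlessly absorbed into $C_{d,\ep}$. A fully correct proof should commit to one of these two routes: the coordinatewise version with the extra factor $d$, or the Euclidean/Pinelis version with exponent $-c^2/(2t)$; the hybrid as stated (vector norm, constant $2$, exponent $-2c^2/t$) is not what either argument delivers.
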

	
	In~$\S$\ref{ss:most-triplets-admissible}, we will use Hoeffding's inequality for the  set of steps \ts $E=\{e_1,\ldots,e_d\}$ \ts which forms the standard basis
	in~$\rr^d$, and a certain non-uniform distribution~$Q$ on~$[d]$.
	
	\bigskip

\section{Warmup}\label{s:warmup}
	
In this short section we give a new proof and an extension
of the \ts $\frac13$--$\frac23$ \ts Conjecture~\ref{c:1323}
for Young diagrams. We apply these to give an upper bound
for the sorting probability for general Young diagrams.

\smallskip

\subsection{General Young diagrams}\label{ss:warmup-proof}
The first part of the following theorem is the result by Olson and
Sagan~\cite{OS}.  Below, we present a completely different proof 
of the result.  In fact, our sorting pairs of elements are in 
a different location when compared to~\cite{OS}. 

\smallskip
	
\begin{thm}\label{t:SYT-OS}
For every $\la\vdash n$, we have \ts $\de(P_\la)\le \frac13$. Moreover,
$\de(P_\la; \ts x, y) \ts \le \ts \frac13$ \ts for some \ts $x=(1,k)\in \la$
	\ts and \ts $y=(\ell,1)\in \la$.
\end{thm}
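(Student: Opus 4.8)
The plan is to produce the required sorting pair with $y$ in the first column and $x$ in the first row; since $\de(P_\lambda)\le\de(P_\lambda;x,y)$, exhibiting one such pair with $\de(P_\lambda;x,y)\le\tfrac13$ proves both assertions at once. We may assume $P_\lambda$ is not a chain, i.e.\ $\lambda_1\ge 2$ and $\ell(\lambda)\ge 2$; then $(2,1)\in\lambda$, and $(1,k)$ is incomparable to $(2,1)$ for $2\le k\le\lambda_1$, so $\de(P_\lambda;(1,k),(2,1))=|2p_k-1|$ where $p_k:=\Pb[A(1,k)<A(2,1)]$ for a uniform $A\in\SYT(\lambda)$. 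The idea, in the spirit of Linial's pigeonhole, is to walk $k$ from $1$ to $\lambda_1$: one checks at once that $p_1=1$ (as $A(1,1)=1<A(2,1)$) and that $p_k$ is non-increasing in $k$ (as $A(1,k)<A(1,k+1)$ always), so it suffices to find $k$ with $p_k\in[\tfrac13,\tfrac23]$.

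The combinatorial input I would establish first is the identity $p_k-p_{k+1}=f(\lambda/(k,1))/f(\lambda)$ for $1\le k\le\lambda_1-1$, together with $p_{\lambda_1}=f(\lambda/(\lambda_1))/f(\lambda)$. Both come from analyzing the growth process of $A$: writing $\nu^{(t)}$ for the shape of the cells with entry $\le t$, and using that $(2,1)$ is the $\preccurlyeq$-minimal cell of rows $\ge 2$, the event $A(1,k)<A(2,1)<A(1,k+1)$ forces $\nu^{(A(2,1)-1)}$ to be the single row $(k)$, hence $A(1,j)=j$ for $j\le k$, $A(2,1)=k+1$, and the remaining entries form an arbitrary tableau of $\lambda/(k,1)$; the endpoint statement is the degenerate case in which the whole first row is filled before $(2,1)$. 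Next I would record two consequences, of the branching rule and of an elementary injection. (i) $f(\lambda/(k,1))$ is non-increasing in $k$: placing the entry $1$ of a tableau of $\lambda/(k,1)$ in the corner $(1,k+1)$ leaves a tableau of $\lambda/(k+1,1)$, so $f(\lambda/(k,1))\ge f(\lambda/(k+1,1))$; equivalently the gaps $g_k:=p_k-p_{k+1}$ are non-increasing. (ii) $f(\lambda/(\lambda_1))\le\tfrac12 f(\lambda)$, i.e.\ $p_{\lambda_1}\le\tfrac12$: there are two disjoint injections $\SYT(\lambda/(\lambda_1))\hookrightarrow\SYT(\lambda)$ --- extend a tableau of the lower rows by putting $1,\dots,\lambda_1$ in row $1$; or do this and then swap the entries $\lambda_1$ and $\lambda_1+1$, which occupy the incomparable cells $(1,\lambda_1)$ and $(2,1)$ and whose exchange keeps the SYT condition --- whose images are disjoint because they differ at $(1,\lambda_1)$.

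With these in hand the conclusion is a pigeonhole along the first row, organized by the value $g_1=1-p_2=\Pb[A(2,1)=2]$. If $g_1\in[\tfrac13,\tfrac23]$, then $p_2=1-g_1\in[\tfrac13,\tfrac23]$ and $x=(1,2)$, $y=(2,1)$ works. If $g_1<\tfrac13$, then by (i) every $g_k\le g_1<\tfrac13$, so the partial sums $S_K:=g_1+\dots+g_K=1-p_{K+1}$ climb in steps of length $<\tfrac13$ from $S_0=0$ to $S_{\lambda_1-1}=1-p_{\lambda_1}\ge\tfrac12$ by (ii); taking the first $K_0$ with $S_{K_0}\ge\tfrac13$ gives $S_{K_0}<\tfrac13+g_{K_0}<\tfrac23$, so $p_{K_0+1}\in[\tfrac13,\tfrac23]$ and $x=(1,K_0+1)$, $y=(2,1)$ works. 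Finally, if $g_1>\tfrac23$, pass to the conjugate $\lambda'$: the transpose map is a measure-preserving bijection $\SYT(\lambda)\to\SYT(\lambda')$ taking $(2,1)\leftrightarrow(1,2)$, so $\Pb_{\lambda'}[A(2,1)=2]=\Pb_\lambda[A(1,2)=2]=1-g_1<\tfrac13$; thus $\lambda'$ falls into the previous case, producing a pair $(1,k),(\ell,1)$ in $\lambda'$ whose transpose $(k,1),(1,\ell)$ --- a pair of $\lambda$ with one cell in row $1$ and one in column $1$, hence of the required form --- achieves $\de(P_\lambda;\cdot,\cdot)\le\tfrac13$.

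The step requiring the most care is the monotonicity of the gaps $g_k$ in (i): it is precisely what forbids the partial sums from jumping over the interval $[\tfrac13,\tfrac23]$, and it is the only place the branching estimate enters. The bound $f(\lambda/(\lambda_1))\le\tfrac12 f(\lambda)$ in (ii) is what guarantees the walk reaches the interval at all, ruling out the ``first row fills first'' regime from dominating; and the passage to the conjugate partition dispatches the remaining boundary case $g_1>\tfrac23$. This is also the reason the sorting pair ends up either with $x$ in the first row and $y$ in the first column, or --- after conjugation --- with the roles of row and column exchanged.
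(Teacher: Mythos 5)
Your proof is correct and is essentially the paper's argument in transposed form: the paper fixes $x=(1,2)$ and walks $y$ down the first column, proving by a tableau injection (or the NHLF) that the gap probabilities $q_k$ are non-increasing and sum to $1$, then runs the same pigeonhole and uses conjugation for the remaining case. The only real deviation is your treatment of the endpoint, where you bound $p_{\lambda_1}=f(\lambda/(\lambda_1))/f(\lambda)\le\frac12$ by a separate swap-injection, whereas the paper absorbs the final gap $q_\ell$ into the same monotone family (both work).
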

	
\smallskip

As suggested by the second part of the Theorem, we need to estimate
sorting probabilities for pairs of elements in the first row and the first column.
	
	\smallskip
	
	\begin{lemma}\label{l:rel_probs}
		Let \ts $\la\vdash n$, and \ts $\Pbl$ \ts denote the probability over uniform
		standard Young tableaux \ts $A\in \SYT(\la)$.  Denote
		$$\aligned
		q_i \, & : = \, \Pbl \bigl[\ts A(i,1)\. < \. A(1,2)\. <\. A(i+1,1)\ts\bigr]\ts, \quad
		1\le i \le \ell-1\., \quad \text{and} \\
		q_\ell \, & : = \, \Pbl \bigl[\ts A(\ell,1)\. < \. A(1,2)\ts\bigr]\ts,
		\endaligned
		$$
		where \ts $\ell=\ell(\la)$ \ts is the length of the first column. Then \ts
		$q_1 \ge \ldots \ge q_\ell$,
		and \ts $q_1+ \ldots + q_\ell=1$.
	\end{lemma}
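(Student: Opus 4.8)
The plan is to identify, for each $i$, the event defining $q_i$ with an event of the form $\{A(1,2)=c\}$, and then to read off both claims from an exact count together with a one-cell injection between sets of skew standard Young tableaux. We may assume $\la_1\ge 2$, since otherwise $P_\la$ is a chain and $(1,2)\notin\la$.

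\emph{Step 1 (structural observation).} Fix $A\in\SYT(\la)$ and put $c:=A(1,2)$. The cells of $\la$ carrying values in $\{1,\dots,c\}$ form an order ideal of $P_\la$, hence a Young subdiagram $\nu\ssu\la$ with $|\nu|=c$. Its maximal value $c$ lies in $(1,2)$, so neither $(1,3)$ nor $(2,2)$ is in $\nu$; thus $\nu_1=2$, $\nu_2\le 1$, and $\nu=(2,1^{c-2})$ is a hook. Reading off its first column gives $A(j,1)=j$ for $1\le j\le c-1$, while $A(j,1)>c$ for every first-column cell $(j,1)$ with $j\ge c$; in particular $2\le A(1,2)\le\ell+1$ always. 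It follows that, for $1\le i\le\ell-1$, the condition $A(i,1)<A(1,2)<A(i+1,1)$ forces $i\le c-1$ from the left inequality and $c\le i+1$ from the right one, hence $c=i+1$; conversely $A(1,2)=i+1$ gives both inequalities. Similarly $A(\ell,1)<A(1,2)$ holds iff $c=\ell+1$. Summarizing,
$$
q_i \ = \ \Pbl\bigl[\,A(1,2)=i+1\,\bigr]\,, \qquad 1\le i\le\ell\,.
$$

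\emph{Step 2 (the two claims).} Since $A(1,2)\in\{2,\dots,\ell+1\}$ always, the events $\{A(1,2)=i+1\}$ for $1\le i\le\ell$ partition $\SYT(\la)$, whence $q_1+\dots+q_\ell=1$. For the count, a tableau with $A(1,2)=c$ is obtained by filling the hook $(2,1^{c-2})$ in the unique standard way and then filling the complementary skew shape by an arbitrary element of $\SYT\bigl(\la/(2,1^{c-2})\bigr)$, so
$$
q_i \ = \ \frac{f\bigl(\la/(2,1^{i-1})\bigr)}{f(\la)}\,, \qquad 1\le i\le\ell\,.
$$
Hence $q_i\ge q_{i+1}$ is equivalent to $f\bigl(\la/(2,1^{i})\bigr)\le f\bigl(\la/(2,1^{i-1})\bigr)$. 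Now $(2,1^{i})$ arises from $(2,1^{i-1})$ by adjoining the corner cell $c_0=(i+1,1)$, which is a minimal element of $P_{\la/(2,1^{i-1})}$: its only lower cover in $P_\la$ is the cell $(i,1)$ directly above, and that cell lies in the inner shape $(2,1^{i-1})$. Therefore the map sending $T\in\SYT\bigl(\la/(2,1^{i})\bigr)$ to the tableau obtained by raising every entry of $T$ by $1$ and placing $1$ in $c_0$ is a well-defined injection into $\SYT\bigl(\la/(2,1^{i-1})\bigr)$ (undone by deleting the cell labelled $1$ and lowering the rest by $1$), which yields the inequality.

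The whole content is the structural observation of Step 1: the small entries of a straight-shape standard Young tableau occupy a hook pinned down by the value $A(1,2)$, so that the three-way comparisons collapse to the single statistic $A(1,2)$. After that, Step 2 is bookkeeping together with a standard ``prepend a minimal cell'' injection, and the only delicate points are the degenerate cases ($\la$ a single row or column, or $(2,1^{c-2})\not\ssu\la$), all controlled by the bound $A(1,2)\le\ell+1$.
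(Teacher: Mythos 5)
Your proposal is correct and follows essentially the same route as the paper's first proof: identify $q_i$ with $f\bigl(\la/(2,1^{i-1})\bigr)/f(\la)$ via the hook-shaped order ideal determined by $A(1,2)$, then get monotonicity from the ``place $1$ in the new minimal corner cell'' injection, which is exactly the paper's observation that $f(\la/\mu^{k+1})$ counts the tableaux in $\SYT(\la/\mu^k)$ whose new corner is labelled~$1$. (The paper also gives a second proof of the monotonicity via the Naruse hook-length formula, but that is an alternative, not a gap in yours.)
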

	
	\smallskip

We present two proofs of the lemma: the traditional Young tableaux proof and the proof
via the Naruse hook-length formula (Theorem~\ref{t:NHLF}).  The former proof is simpler 
while the latter is amenable for generalizations and asymptotic analysis.  We recommend 
the reader study both proofs. 

\smallskip

	\begin{proof}[First proof of Lemma~\ref{l:rel_probs}] 
		Since \ts $A(1,1)=1< A(1,2)$, the number \ts $A(1,2)$ \ts must fall in exactly
		one of the intervals in the lemma.  Thus, we have \ts $q_1+ \ldots + q_\ell=1$.
		
		Let \ts $A\in \SYT(\la)$ \ts be a standard Young tableau, such that \ts
		$A(k,1)<A(1,2)<A(k+1,1)$, for some \ts $1\le k < \ell$.
		Then \ts $A(1,1) =1$, \ldots, $A(k,1)=k$,
		and $A(1,2)=k+1$. The number of such tableaux~$A$ is then equal to $f(\la/\mu^k)$,
		where \ts $\mu^k = (2,1^{k-1})\vdash k+1$.  In the notation of the lemma, we have:
\begin{equation}\label{eq:qk}
		q_k \, = \, \frac{f(\la/\mu^k)}{f(\la)}\..
\end{equation}		

Clearly \. $\mu^k \subset \mu^{k+1}$, and so \. $\la/\mu^{k+1} \subset \la/\mu^k$. Then $f(\la/\mu^{k+1})$ is equal to the number of 
tableaux $A\in \SYT\bigl(\la/\mu^k\bigr)$ \. with \. $A(k,1)=1$.  Therefore, \. $f(\la/\mu^{k+1}) \leq f(\la/\mu^k)$ \. 
and \. $q_{k+1} \leq q_k$.
\end{proof}

\smallskip

	\begin{proof}[Second proof of Lemma~\ref{l:rel_probs}] 		
We follow the first proof until~\eqref{eq:qk}.  At this point, recall the Naruse 
hook-length formula~\eqref{eq:Naruse HLF}:
		$$
		f(\la/\mu) \, = \, (n-|\mu|)! \,  \prod_{(i,j) \in \la} \. \frac{1}{h_\la(i,j)} \,
		\sum_{D\in \ED(\la/\mu)} \. \prod_{(i,j) \in D} \. h_\la(i,j)\ts.
		$$
		Combined with the hook-length formula~\eqref{eq:HLF}, we have:
		\begin{equation}\label{eq:qk-formula}
		q_k \, = \, \frac{f(\la/\mu)}{f(\la)}  \, = \, \frac{(n-k-1)!}{n!}
		\,\sum_{D\in \ED(\la/\mu)} \. \prod_{(i,j) \in D} \. h_\la(i,j)\ts.
		\end{equation}
		
		Now, let \ts $\nu:=(2,1^{k})\vdash k+2$.  We similarly have:
		\begin{equation}\label{eq:qk+1-formula}
		q_{k+1} \, = \, \frac{f(\la/\nu)}{f(\la)} \, = \,  \frac{(n-k-2)!}{n!}
		\,\sum_{D'\in \ED(\la/\nu)} \. \prod_{(i,j) \in D'} \. h_\la(i,j)\ts.
		\end{equation}
		Observe that excited diagrams $D'\in \ED(\la/\nu)$ \ts are characterized
		by the locations of the squares \ts $x_c\in D'$ \ts in the diagonal
		\ts $\{i-j=c\}$, where \ts $-1 \le c \le k$ (see Figure~\ref{f:OS}).
		
		\begin{figure}[hbt]
			\centering
			\includegraphics[width=11.2cm]{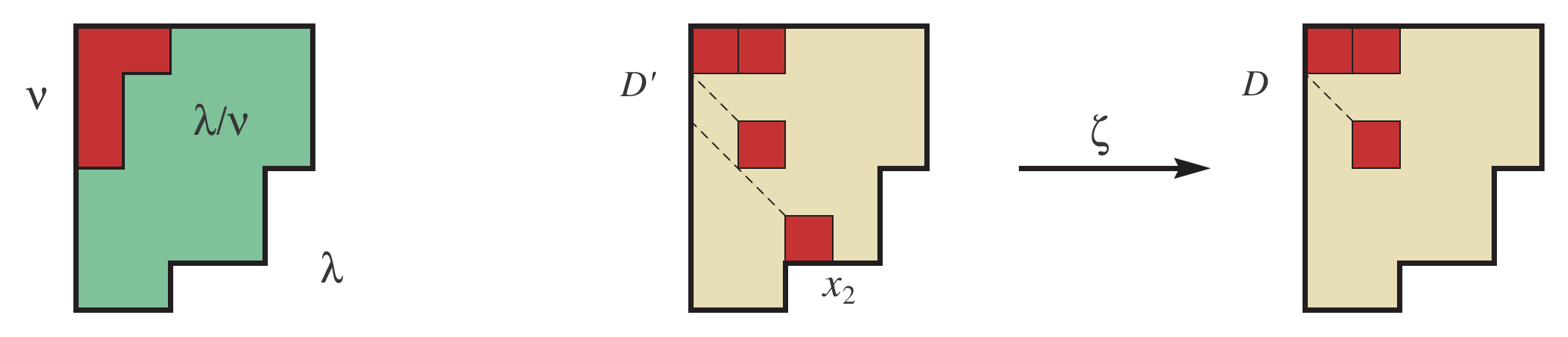}
			\vskip-.1cm
			\caption{{\small
					Skew Young diagram $\la/\nu$, where \ts $\la=(5,5,5,4,4,2)$ \ts and \ts
                    $\nu=(2,1,1)$. \ts
					Map $\vt: D'\to D$, where \ts $D'\in \ED(\la/\nu)$, \ts $D\in \ED(\la/\mu)$,
				    and \ts $D'\sm D = x_2=(5,3)$. }}
			\label{f:OS}
		\end{figure}
		
		Consider a map \ts $\vt: \ED(\la/\nu) \to \ED(\la/\mu)$,
		\ts $\vt(D')=D$, where $D$ is obtained from $D'$ by removing
		the square~$x_k$. From above and by definition of excited diagrams,
		map~$\vt$ is well defined.  This gives:
		\begin{equation}\label{eq:ED-sum-formula}
		\sum_{D'\in \ED(\la/\nu)} \. \prod_{(i,j) \in D'} \. h_\la(i,j) \, =
		\, \sum_{D\in \ED(\la/\mu)}  \prod_{(i,j) \in D} \. h_\la(i,j) \,
%	\left(	
        \sum_{\substack{(i,j)\in\la, \, i-j=k \\ D\cup(i,j)\.\in \.\ED(\la/\nu)}}
		\, h_\la(i,j)
%\right)
%it is very confusing without brackets
 %IP: No, actually the brackets are super distructing in this case.
		\end{equation}
		The sum on the right is at most
		\begin{equation}\label{eq:hook-sum-formula}
		h_\la(k+1,1) \. + \. h_\la(k+2,2) \. + \. \ldots \, = \,
		\la_{k+1} \. + \. \ldots \. + \. \la_\ell
		\, \le \, n \. - \. |\mu| \, = \, n-k-1\ts.
		\end{equation}
Combining these equations together, we obtain:
$$
		\aligned
		q_{k+1} \ & =_{\eqref{eq:qk-formula}} \qquad \frac{(n-k-2)!}{n!}
		\,\sum_{D'\in \ED(\la/\nu)} \. \prod_{(i,j) \in D'} \. h_\la(i,j)  \\
		& \le_{\eqref{eq:ED-sum-formula}\,\eqref{eq:hook-sum-formula}} \  \frac{(n-k-2)!}{n!}
		\,\sum_{D\in \ED(\la/\mu)} \,\.\ts \prod_{(i,j) \in D} \. h_\la(i,j) \. (n-k-1)
		\ =_{\eqref{eq:qk+1-formula}} \  q_k\,,
		\endaligned
$$
		as desired.
	\end{proof}
	
	\smallskip
	
	\begin{proof}[Proof of Theorem~\ref{t:SYT-OS}]
		Without loss of generality, we can assume that
		$$
		p_1\, := \, \Pbl\bigl[A(1,2) \. < \. A(2,1)\bigr] \, \le \, \frac12\.,
		$$
		since we can conjugate diagram~$\la$, otherwise.  If \ts $p_1\ge \frac13$,
		this implies \ts
		$\de(P_\la; \ts x, y)\le \frac13$ \ts for  \ts $x=(1,2)$ and $y=(2,1)$,
		and proves the theorem.
		
		Suppose now that \ts $p\le \frac13$. By the lemma, we have:
		$$\frac13 \. \ge \. p_1 \. = \. q_1 \. \ge \. q_2 \. \ge \. \ldots \. \ge \. q_\ell\ts.
		$$
		Observe that
		$$
		p_k \, := \, \Pbl\bigl[A(1,2) \. < \. A(k+1,1)\bigr] \, = \, q_1 \. + \. \ldots \. + \. q_k\..
		$$
		Since \ts $q_k \le \frac13$ \ts and \ts $q_1+ \ldots + q_\ell=1$ \ts by the lemma,
		this implies that at least one of these probabilities \ts $p_k \in \bigl[\frac13, \frac23\bigr]$.
		Therefore, the sorting probability \ts $\de(P_\la; \ts x, y)\le \frac13$ \. for \ts $x=(1,2)$ \ts and \ts $y=(k+1,1)$, as desired.
	\end{proof}

\medskip

\subsection{General upper bounds}  \label{ss:warmup-upper}
% The proof above can be used to obtain an extension of Theorem~\ref{t:SYT-OS}.
%\smallskip
%
For a partition \ts $\la \vdash n$ \ts define the \emph{imbalance} \ts $\qp(\la)$ \ts as follows:
\begin{equation}\label{eq:imbalance}
\qp(\la) \. := \. \frac{1}{n(n-1)} \. \sum_{i\le j} \. h_\la(i,i) \ts h_\la(j,j+1)\ts.
\end{equation}
Note that
\begin{equation}\label{eq:imbalance-max}
\sum_{i\le j} \. h_\la(i,i) \ts h_\la(j,j+1) \,\le \, \sum_{i} \. h_\la(i,i) \,\sum_{j} h_\la(j,j+1)
\, \le \, n\ts (n-1)\ts,
\end{equation}
so \ts $0\le \qp(\la) \le 1$. \ts  The following result is a generalization of Theorem~\ref{t:SYT-OS}.

\begin{thm}\label{SYT-ext}
For every $\la\vdash n$, we have:
{\rm
$$\de(P_\la) \, \le \,
\min\big\{\.\qp(\la), \. 1-\qp(\la), \. |1-2 \ts\ts \qp(\la)|\.\big\}.
$$
}
\end{thm}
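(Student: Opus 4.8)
The plan is to promote the qualitative pigeonhole step in the proof of Theorem~\ref{t:SYT-OS} to a quantitative one, the key point being that the imbalance $\qp(\la)$ equals the single probability $q_1 = p_1 = \Pbl[A(1,2)<A(2,1)]$ of Lemma~\ref{l:rel_probs}. To identify these, apply \eqref{eq:qk} with $k=1$ (so $\mu^1=(2)$) together with the NHLF computation \eqref{eq:qk-formula}, which gives $q_1 = \frac{1}{n(n-1)}\sum_{D\in\ED(\la/(2))}\prod_{(i,j)\in D}h_\la(i,j)$. The shape $(2)$ has one cell on the main diagonal $\{i=j\}$ and one on the next diagonal $\{j=i+1\}$, and the excited move pushing the main-diagonal cell down is blocked precisely while the other cell occupies the square immediately to its right; tracking the two cells, one sees that $\ED(\la/(2))$ is exactly the family of pairs $D=\{(i,i),(j,j+1)\}$ with $i\le j$ and $(j,j+1)\in\la$ (the condition $(i,i)\in\la$ being then automatic). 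Summing $h_\la(i,i)\ts h_\la(j,j+1)$ over this family is precisely the sum defining $\qp(\la)$ in \eqref{eq:imbalance}, so $q_1=\qp(\la)$; in particular this recovers $0\le\qp(\la)\le1$.

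Now recall from the proof of Theorem~\ref{t:SYT-OS} that for $2\le r\le\ell:=\ell(\la)$ the cells $x=(1,2)$ and $y=(r,1)$ are incomparable with $\de(P_\la;x,y)=|2p_{r-1}-1|$, where $p_k:=q_1+\ldots+q_k=\Pbl[A(1,2)<A(k+1,1)]$; hence
$$
\de(P_\la)\ \le\ \min_{1\le k\le\ell-1}\,|2p_k-1|.
$$
Conjugating $\la$ if necessary — this preserves $\de(P_\la)$ and the right-hand side of the theorem while sending $\qp(\la)=q_1$ to $1-q_1$, since transposing a standard Young tableau exchanges the entries at $(1,2)$ and $(2,1)$ — I may assume $q_1\le\tfrac12$. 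Then $\min\{\qp(\la),1-\qp(\la),|1-2\qp(\la)|\}$ equals $1-2q_1$ when $q_1\ge\tfrac13$ and equals $q_1$ when $q_1\le\tfrac13$. If $q_1\ge\tfrac13$, the pair $x=(1,2)$, $y=(2,1)$ already gives $\de(P_\la)\le|2q_1-1|=1-2q_1$, which is the claimed bound.

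It remains to treat $q_1<\tfrac13$, where I must exhibit a pair with $\de(P_\la)\le q_1$. From Lemma~\ref{l:rel_probs}, $q_1\ge q_2\ge\ldots\ge q_\ell\ge0$ and $\sum_i q_i=1$; since all $q_i\le q_1<\tfrac13$, this forces $\ell\ge4$. The partial sums $p_1=q_1<\tfrac13,\,p_2,\ldots,p_{\ell-1}=1-q_\ell>\tfrac23$ are nondecreasing with consecutive gaps $p_{k+1}-p_k=q_{k+1}\le q_1$, so there is an index $k$ with $1\le k\le\ell-2$ and $p_k<\tfrac12\le p_{k+1}$. For this $k$,
$$
\min\bigl(\tfrac12-p_k,\ p_{k+1}-\tfrac12\bigr)\ \le\ \tfrac12(p_{k+1}-p_k)\ \le\ \tfrac12\ts q_1,
$$
hence $\min(|2p_k-1|,|2p_{k+1}-1|)\le q_1$. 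Since $k+1$ and $k+2$ both lie in $\{2,\ldots,\ell\}$, one of the pairs $x=(1,2),\,y=(k+1,1)$ or $x=(1,2),\,y=(k+2,1)$ witnesses $\de(P_\la)\le q_1$, which finishes the proof. (As in Theorem~\ref{t:SYT-OS}, the statement is understood for $\la$ not a chain.)

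The step I expect to carry real content is the identification $\qp(\la)=q_1$ through the excited diagrams of $\la/(2)$; granting that, the theorem is a short, purely elementary refinement of the proof of Theorem~\ref{t:SYT-OS}, using only the monotonicity of $(q_i)$ and the one-step crossing estimate above.
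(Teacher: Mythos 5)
Your proof is correct and takes essentially the same route as the paper: identifying $\qp(\la)=q_1$ via the excited diagrams of $\la/(2)$, and then running the conjugation-plus-pigeonhole case analysis built on Lemma~\ref{l:rel_probs}, exactly as in the proof of Theorem~\ref{t:SYT-OS}. The only difference is that you spell out the one-step crossing estimate (giving $\de(P_\la)\le q_{k+1}\le q_1$ when $q_1<\tfrac13$), which the paper states tersely as $\de(P_\la)\le q_k\le q_1$.
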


\begin{proof}
In the notation of the proof above, let \ts $k=1$, \ts $\mu=(2)$, and observe that
excited diagrams \ts $D\in \ED(\la/\mu)$ \ts consist of two squares: $(i,i)$
and $(j,j+1) \in \la$, s.t.\ $1\le i \le j$.  Therefore,
$$
p_1\. = \. q_1 \. = \. \frac{f(\la/\mu)}{f(\la)} \. = \. \frac{1}{n(n-1)} \.
\sum_{D\in \ED(\la/\mu)}  \. \prod_{(i,j) \in D} \. h_\la(i,j) \, =_{\eqref{eq:imbalance}} \, \qp(\la)\ts.
$$
There are three possibilities. First, if \ts $q_1\le \frac{1}{3}$, then
the sorting probability \ts $\de(P_\la) \le q_k\le q_1$. Similarly, if
\ts $q_1\ge \frac{2}{3}$, by using \ts $\qp(\la')=1-q_1$,
we have \ts $\de(P_\la) \le 1-\qp(\la)$. Finally, if \ts
$\frac{1}{3} \le q_1  \le \frac{2}{3}$, we have \ts
$\de(P_\la) \le |1-2\ts q_1|$ \ts by definition of $p_1=q_1$.
This implies the result. \end{proof}

\begin{lemma}
Let \ts $\la\vdash n$, and~$m=n-\la_1$.  Then:
$$
\de(P_\la) \, \le \, \frac{m\ts n \. + \. (m-1)(m-2)}{n(n-1)}\..
$$
\end{lemma}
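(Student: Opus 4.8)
The plan is to read the bound off from Theorem~\ref{SYT-ext}, which in particular gives $\de(P_\la) \le 1 - \qp(\la)$, and then to bound $1 - \qp(\la)$ from above using two ``hook-partition'' identities. Write $\ell := \ell(\la)$; I may assume $P_\la$ is not a chain, so $\ell \ge 2$ and $\la_1 \ge 2$. Unwinding the definition \eqref{eq:imbalance} of $\qp(\la)$ and clearing the positive denominator $n(n-1)$, the claimed inequality $1 - \qp(\la) \le \frac{mn + (m-1)(m-2)}{n(n-1)}$ is equivalent to
$$
n(n-1) \, - \, \sum_{i\le j} h_\la(i,i)\, h_\la(j,j+1) \, \le \, mn + (m-1)(m-2),
$$
the sum being over all pairs with $(i,i),(j,j+1)\in\la$ and $i\le j$.

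First I would record two inputs. Since the principal hooks of $\la$ (the hooks of the diagonal boxes $(i,i)$) partition the boxes of $\la$, we have $\sum_i h_\la(i,i) = n$. Deleting the first column of $\la$ produces a partition $\nu$ with $|\nu| = n - \ell$, in which the box $(j,j+1)$ of $\la$ becomes the diagonal box $(j,j)$, and a direct check from \eqref{eq:hook-def} gives $h_\nu(j,j) = h_\la(j,j+1)$; applying the same partition fact to $\nu$ yields $\sum_j h_\la(j,j+1) = n - \ell$. Finally, $h_\la(1,1) = \la_1 + \ell - 1 = (n - m) + \ell - 1$ by \eqref{eq:hook-def}, so $\sum_{i\ge 2} h_\la(i,i) = n - h_\la(1,1) = m - \ell + 1$; note $m - \ell + 1 \ge 0$, since the $\ell - 1$ nonempty rows $2, \ldots, \ell$ of $\la$ each contribute at least one box to $m = |\la| - \la_1$.

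Next I would multiply the identities $\sum_i h_\la(i,i) = n$ and $\sum_j h_\la(j,j+1) = n - \ell$ and split the resulting double sum according to whether $i \le j$ or $i > j$:
$$
\sum_{i\le j} h_\la(i,i)\, h_\la(j,j+1) \, = \, n(n-\ell) \, - \, \sum_{i>j} h_\la(i,i)\, h_\la(j,j+1).
$$
Substituting this into the display above, the target reduces to $\sum_{i>j} h_\la(i,i)\, h_\la(j,j+1) \le n(m - \ell + 1) + (m-1)(m-2)$. Every pair with $i > j \ge 1$ satisfies $i \ge 2$, and all hook lengths are nonnegative, so
$$
\sum_{i>j} h_\la(i,i)\, h_\la(j,j+1) \, \le \, \Bigl(\sum_{i\ge 2} h_\la(i,i)\Bigr)\Bigl(\sum_j h_\la(j,j+1)\Bigr) \, = \, (m-\ell+1)(n-\ell).
$$
It then remains to verify $(m-\ell+1)(n-\ell) \le n(m-\ell+1) + (m-1)(m-2)$; writing $t := m - \ell + 1 \ge 0$, the difference of the two sides equals $t\ts\ell + (m-1)(m-2)$, which is $\ge 0$ because $\ell \ge 1$ and $(m-1)(m-2) \ge 0$ for every integer $m$. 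This completes the proof.

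The only genuinely non-obvious step is the strategic one at the beginning: when $m$ is small, $\qp(\la)$ is close to $1$, not to $0$, so one must use the term $1 - \qp(\la)$ of the minimum in Theorem~\ref{SYT-ext}; and before invoking the crude ``product of the two marginal sums'' estimate, one must peel off the single dominant hook $h_\la(1,1)$ using $\sum_{i\ge 2} h_\la(i,i) = m - \ell + 1$. After that, bounding $\sum_{i>j} h_\la(i,i)\, h_\la(j,j+1)$ is routine, and the two hook-partition identities are the only nontrivial combinatorial ingredients.
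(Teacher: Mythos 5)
Your proof is correct, and while it rests on the same two pillars as the paper's — Theorem~\ref{SYT-ext} and the hook-partition identities — the estimation is organized differently. The quantity you bound, $1-\qp(\la)$, is exactly the paper's $\qp(\la')$ (the proof of Theorem~\ref{SYT-ext} records $\qp(\la')=1-\qp(\la)$), but the paper conjugates and splits $\qp(\la')$ by the diagonal index: the $i=1$ term is at most $h_{\la'}(1,1)\sum_j h_{\la'}(j,j+1)\le n\ts m$, and the terms with $i\ge 2$ are recognized as the unnormalized imbalance of the smaller partition $\tau'$ obtained by deleting the first hook, hence at most $(m-1)(m-2)$ by~\eqref{eq:imbalance-max}. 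You instead stay with $\la$, write $\sum_{i\le j}$ as the full product of marginals $n(n-\ell)$ minus $\sum_{i>j}$, and control $\sum_{i>j}$ by $(m-\ell+1)(n-\ell)$ after peeling $h_\la(1,1)$ off one marginal. Your route avoids the recursive step entirely, and as you essentially observe it proves the stronger estimate $\de(P_\la)\le m/(n-1)$: your final bound is $n(\ell-1)+(m-\ell+1)(n-\ell)\le nm$, so the $(m-1)(m-2)$ term is never actually needed. Both arguments implicitly require $P_\la$ not to be a chain (otherwise Theorem~\ref{SYT-ext} and the lemma itself fail), which you are right to state explicitly.
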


\begin{proof}
% we need long first column in the conventions of this section.
% Conjugate partitions are defined in the notations section.
We apply Theorem~\ref{SYT-ext} to the conjugate partition~$\la'$.
We have \ts $h_{\la'}(1,1) \leq n$, and
$$\sum_{j\geq 1} \ts h_{\la'}(j,j+1) \, = \,
\sum_{j\geq 1} \ts h_\la(j+1,j) \, = \, m.
$$
Thus, the first term $i=1$ of the summation~\eqref{eq:imbalance}
for the imbalance $\qp(\la')$, is at most \ts $m\ts n$. The remaining
terms with \ts $i\geq 2$ \ts are equal to \ts $\qp(\tau')$,
where \ts $\tau=(\la_2-1,\la_3-1,\ldots)$ \ts of size $\leq m-1$.  We conclude:
$$
\qp(\la') \, \le \, \frac{1}{n(n-1)} \. \bigl(m\ts n \. + \. \qp(\tau')\bigr)
\, \le_{\eqref{eq:imbalance-max}} \, \frac{1}{n(n-1)} \. \bigl(m\ts n \. + \. (m-1)(m-2)\bigr),
$$
as desired. \end{proof}

\begin{cor}\label{c:warmup-upper}
Let \ts $\la\vdash n$, \ts $m=n-\la_1$, and suppose \ts $m=o(n)$.
Then \ts $\de(P_\la) \ts = \ts O\left(\frac{m}{n}\right)$.
\end{cor}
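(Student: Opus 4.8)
The plan is to obtain the corollary as an immediate consequence of the preceding Lemma, which already supplies the sharp inequality
$$
\de(P_\la) \, \le \, \frac{m\ts n \. + \. (m-1)(m-2)}{n(n-1)}\ts,
$$
so the only remaining task is to simplify the right-hand side. First I would note that $m \ge 1$: since $\la$ is not a chain (we fix $d\ge 2$ throughout) we have $\la_1 \le n-1$, and hence the conclusion $O(m/n)$ is not vacuous. Next I would use that $\la_1 \ge 1$ forces $m = n - \la_1 \le n$, so $m^2 \le m\ts n$ and therefore $(m-1)(m-2) \le m^2 \le m\ts n$ (with the cases $m\le 2$ trivial, since then $(m-1)(m-2)\le 0$). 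Substituting this into the numerator bounds it by $2\ts m\ts n$, whence
$$
\de(P_\la) \, \le \, \frac{2\ts m\ts n}{n(n-1)} \, = \, \frac{2\ts m}{n-1}\ts.
$$
Finally, $\frac{2\ts m}{n-1} \le \frac{4\ts m}{n}$ for $n \ge 2$, which is exactly the asserted bound $\de(P_\la) = O\!\left(\frac{m}{n}\right)$; the hypothesis $m = o(n)$ plays no role in the inequality itself and enters only to make this a genuinely vanishing quantity.

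I do not expect any real obstacle here: the corollary reduces to a one-line manipulation, and all the substantive content lives in the Lemma. That in turn draws on Theorem~\ref{SYT-ext} — the reduction of $\de(P_\la)$ to the imbalance $\qp(\la)$ via the excited-diagram description of the shape $\la/(2)$ — together with the two-term estimate $\qp(\la') \le \frac{1}{n(n-1)}\bigl(m\ts n + \qp(\tau')\bigr)$ obtained from $h_{\la'}(1,1) \le n$ and $\sum_{j} h_{\la'}(j,j+1) = m$. If anything calls for care in writing up the corollary, it is merely the bookkeeping of small values of $m$ and pinning down the constant in the $O(\cdot)$, both entirely routine.
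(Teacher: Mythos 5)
Your proposal is correct and matches the paper's intent exactly: the corollary is stated as an immediate consequence of the preceding lemma, and your manipulation $(m-1)(m-2)\le m^2\le m\ts n$, giving $\de(P_\la)\le 2m/(n-1)=O(m/n)$, is precisely the routine simplification the paper leaves to the reader. No gaps.
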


\smallskip
We refer to Section~\ref{s:conj} for further discussion of
general upper bounds.

\bigskip
	
\section{Proof outline}\label{s:outline}
	
	We begin with a number of technical definitions which we present without
	any motivation.  They allow us to state three key lemmas: Main Lemma~\ref{l:main},
	and two asymptotic upper bound Lemmas~\ref{l:asy-smooth} and~\ref{l:asy-TVK}.
	These lemmas follow with a roadmap to the proofs of all theorems in the
	introduction.
	
	\smallskip
	
\subsection{The balance function}  \label{ss:outline-balance}  Define
\begin{align}\label{eq:G-def}
	\eG(\lambda/\mu) \, := \, \prod_{1 \leqslant i < j \leqslant d} \. \min
	\left\{\mu_i-\mu_j+j-i, \frac{\lambda_i+d-i}{\lambda_i-\lambda_j+j-i} \right\}. 
\end{align}
We refer to $\eG(\la/\mu)$ as the \emph{balance function} (not to be confused 
with the \emph{balance constant}).  We will need the following simple estimate:
	
	\smallskip
	
	\begin{prop} For all $\la\vdash N$, we have:
		$$
		1 \. \le \. \eG(\lambda/\mu) \. \le \. (d\ts N)^{\frac{d(d-1)}{2}}.
		$$
	\end{prop}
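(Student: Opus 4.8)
The plan is to show that each of the $\binom{d}{2} = \tfrac{d(d-1)}{2}$ factors in the product defining $\eG(\lambda/\mu)$ lies in the interval $[1, dN]$; multiplying then yields both inequalities. Throughout I use the standing assumption $\mu \subseteq \lambda$, so that $\mu_i \le \lambda_i$ for all $i$, and (tacitly) $N \ge 1$.

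For the lower bound I would check that \emph{both} quantities inside the minimum are at least $1$, whence so is their minimum. For the first quantity: since $i < j$ and $\mu$ is a partition, $\mu_i - \mu_j \ge 0$, while $j - i \ge 1$, so $\mu_i - \mu_j + j - i \ge 1$. For the second quantity $\tfrac{\lambda_i + d - i}{\lambda_i - \lambda_j + j - i}$: the denominator equals $(\lambda_i - \lambda_j) + (j - i) \ge 0 + 1 > 0$ because $\lambda_i \ge \lambda_j$, so the fraction is positive and well defined; and $(\lambda_i + d - i) - (\lambda_i - \lambda_j + j - i) = (d - j) + \lambda_j \ge 0$ since $j \le d$ and $\lambda_j \ge 0$, so the numerator is at least the denominator and the fraction is $\ge 1$.

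For the upper bound I would simply bound each factor by the first term of the minimum: $\min\{\cdots\} \le \mu_i - \mu_j + j - i \le \mu_i + (j - i) \le \lambda_i + (d-1) \le N + d - 1$, using $\mu_i \le \lambda_i \le |\lambda| = N$ and $j - i \le d - 1$. It remains to note the elementary inequality $N + d - 1 \le dN$, valid for $d \ge 2$ and $N \ge 1$ since $dN - N - (d-1) = (d-1)(N-1) \ge 0$. Raising to the power $\binom{d}{2}$ gives $\eG(\lambda/\mu) \le (dN)^{d(d-1)/2}$, as claimed.

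There is no substantive obstacle in this argument; the only points that deserve a moment's attention are verifying that the denominator of the second term is \emph{strictly} positive (so the minimum is genuinely well defined and the factor is not $+\infty$), and confirming that $N + d - 1 \le dN$ over the relevant range $d \ge 2$, $N \ge 1$.
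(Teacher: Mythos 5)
Your proof is correct and follows essentially the same route as the paper: both terms inside each minimum are at least $1$ (via $\mu_i-\mu_j+j-i\ge 1$ and $\lambda_i+d-i\ge\lambda_i-\lambda_j+j-i$), and each factor is at most $dN$, then multiply over the $\binom{d}{2}$ pairs. The only cosmetic difference is that the paper records the crude bound $\le dN$ for both terms of the minimum while you bound only the first term via $N+d-1\le dN$; this changes nothing of substance.
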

	
	\begin{proof} The first inequality follows from
		\begin{align}
		& \lambda_i+d-i  \, \geq   \,  \lambda_i-\lambda_j+j-i,  \ \ \text{and}
		\label{eq:lambdai-lambdaj is greater than 1}\\
		& \mu_i-\mu_j+j-i \, \geq   \, 1.\label{eq:mui-muj is greater than 1}
		\end{align}
		for all \ts $1 \leqslant i < j \leqslant d$.
		The second inequality follows from:
		\begin{align}
		&  \lambda_i+d-i \, \leq  \, d\ts N,
		\ \ \text{and}  \label{eq:lambdai-lambdaj is less than n}\\
		& \mu_i-\mu_j+j-i \, \leq  \, d\ts N,\label{eq:mui-muj is less than n}
		\end{align}
		for all \ts $1 \leqslant i < j \leqslant d$.
	\end{proof}
	
	\smallskip
	
	\subsection{Definition of $\ep$-admissible pairs}\label{ss:outline-pairs-def}
	Fix $\ep >0$. % We need the following technical definition.
	% \begin{def}\label{d:epsilon-smooth}
	%
	We say that \ts $(\lambda,\mu)$ \ts is an \emph{$\ep$-admissible pair of partitions}
	if $\mu \ssu \la$, \ts $\la, \mu \in \pp_d$, and
	\begin{equation}\label{eq:def-admiss-pair}
	\lambda_i -\mu_i \. \geq \. \ep \ts |\la|\ts, \quad \text{for all \ $1\le i \le d$.}
	\end{equation}
	Denote by \ts $\La(n,d,\ep)$ \ts the set of $\ep$-admissible pairs of
	partitions $(\la,\mu)$, such that \ts $\la/\mu\vdash n$, and \ts $\la,\mu\in\pp_d$.
	
	\smallskip
	
	\begin{prop} \label{p:admis-1d}
		Let $(\lambda,\mu)\in \La(n,d,\ep)$  be an $\ep$-admissible pair, \ts $\la, \mu \in \pp_d$. Then \ts $\ep \leq  1/d$.
	\end{prop}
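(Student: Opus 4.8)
The plan is to sum the $d$ defining inequalities of $\ep$-admissibility and to notice that the resulting left-hand side is precisely the skew size. Explicitly, by~\eqref{eq:def-admiss-pair} we have $\la_i - \mu_i \ge \ep\ts|\la|$ for every $1 \le i \le d$; adding these over~$i$ gives
$$
|\la| \. - \. |\mu| \ = \ \sum_{i=1}^d \bigl(\la_i - \mu_i\bigr) \ \ge \ d\ts\ep\ts|\la|.
$$

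Next I would drop the $|\mu|$ term, using that $|\mu| \ge 0$ because $\mu \in \pp_d$ has nonnegative parts, to get $|\la| \ge |\la| - |\mu| \ge d\ts\ep\ts|\la|$. Finally, since $(\la,\mu) \in \La(n,d,\ep)$ means $\la/\mu\vdash n$, we have $|\la| = |\mu| + n \ge n \ge 1$, so $|\la| > 0$ and we may divide through by $|\la|$ to conclude $1 \ge d\ts\ep$, that is, $\ep \le 1/d$.

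There is essentially no obstacle here: the whole argument is a one-line counting estimate. The only point worth a moment's care is that the last division requires $|\la|\neq 0$, which is ensured by the standing convention $n \ge 1$ (i.e.\ $\la/\mu$ is a nonempty skew shape). One may also observe that equality $\ep = 1/d$ forces $|\mu| = 0$ and $\la_1 = \dots = \la_d$, i.e.\ $\mu = \emp$ and $\la$ is a $d \times (n/d)$ rectangle.
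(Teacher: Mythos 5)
Your proof is correct and is essentially the paper's own argument: both sum (equivalently, average) the $d$ admissibility inequalities $\la_i-\mu_i\ge\ep\ts|\la|$ and then use $|\la|-|\mu|\le|\la|$ to conclude $\ep\le 1/d$. The remarks about $|\la|>0$ and the equality case are fine but add nothing beyond the paper's one-line computation.
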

	
	\begin{proof}  We have:
		$$ \ep \, \leq \, \frac{1}{d}  \, \sum_{i=1}^d  \. \frac{\lambda_i-\mu_i}{|\lambda|}  \, =  \,
		\frac{|\lambda|-|\mu|}{d\ts |\lambda|} \, \leq \, \frac{1}{d}\,,
		$$
		as desired.
	\end{proof}
	
	\smallskip
	
	\subsection{Definition of $\ep$-admissible triplets}\label{ss:outline-triplets-def}
	Fix $\ep >0$.  Let \ts $\la, \ga, \mu \in \pp_d$, such that \ts
	$\mu \subseteq \gamma \subseteq \lambda$, and \ts $\la/\mu\vdash n$.
	We say that a triplet $(\la,\ga,\mu)$ is \emph{$\ve$-separated}, if
	\begin{equation}\label{eq:separated-def}
	\gamma_i - \mu_i \geq \frac{\ep^3\ts |\lambda|}{2}\,, \quad  \lambda_i - \gamma_i \geq \frac{\ep^3\ts |\lambda|}{2}\,, \quad
	\text{for all \ \. $1\le i \le d$.}
	\end{equation}
	In other words, condition~\eqref{eq:separated-def} means that the partition $\gamma$
	is bounded away from both $\mu$ and~$\lambda$.
	
	We say that $(\la,\ga,\mu)$ is \emph{progressive}, if
	\begin{equation}\label{eq:progressive-def}
	\bigl\| \gamma \. - \. (1-p)\mu \. - \. p \lambda \bigr\| \, \leq \,  n^{\frac{3}{4}},
	\end{equation}
	where $\| \cdot \|$ denote the $\ell_\infty$-distance in $\Rb^d$, and \ts
	$p:=p(\lambda,\gamma,\mu) \in [0,1]$ \ts is given by
	\begin{equation}\label{eq:definition p}
	p \, := \, \frac1{n}\.\bigl(|\gamma|-|\mu|\bigr).
	\end{equation}
	In other words, condition~\eqref{eq:progressive-def} means that~$\gamma$
	is close to the weighted average of $\mu$ and~$\lambda$.
	
	Finally, we say that \ts $(\lambda,\gamma,\mu)$ \ts is an
	\emph{$\ep$-admissible triplet of partitions}, if \ts
	$\mu \subseteq \gamma \subseteq \lambda$,
	the pair \ts $(\lambda,\mu)$ is $\ep$-admissible, and
	the triplet \ts $(\lambda,\gamma,\mu)$ \ts is both $\ve$-separated
	and progressive.  We use \ts $\Om(n,d,\ep)$ \ts to denote the set
	of $\ep$-admissible triplets.
	
	\smallskip
	
	\subsection{Definition of solid triplets}\label{ss:outline-solid-triplets-def}
	Let \ts $(\lambda,\gamma,\mu) \in \Om(n,d,\ep)$ \ts be an
	$\ep$-admissible triplet defined above.  We say that a triplet
	\ts $(\la,\ga,\mu)$ \ts is \emph{solid} if the following inequalities hold:
	\begin{equation}\label{eq:if conjecture is true}
	% \begin{split}
	\frac{f(\gamma/\mu)}{F(\gamma/\mu)}   \, \leq \, {C} \cdot  \eG(\gamma/\mu)\., \quad
	\frac{f(\lambda/\gamma)}{F(\lambda/\gamma)}   \, \leq \, {C} \cdot \eG(\lambda/\gamma)
	\quad \text{ and } \quad
	\frac{f(\lambda/\mu)}{F(\lambda/\mu)}   \, \geq \, \frac{1}{C} \.\cdot \. \eG(\lambda/\mu),
	% \end{split}
	\end{equation}
	where~$\ts \eG(\cdot)$ \ts is the balance function defined in~\eqref{eq:G-def}.
	We refer to $C$ as the \emph{solid constant} of the triplet.
	
	\smallskip
	
	\subsection{Sorting probability of solid pairs} \label{ss:outline-solid-def}
	Let $(\lambda,\mu)\in \La(n,d,\ep)$ be an $\ep$-admissible pair.
	We say that a pair \ts $(\la,\mu)$ \ts is \emph{solid}, if there is a
	constant \ts $C_{\lambda,\mu}>0$, such that every $\ep$-admissible triplet
	$(\lambda,\gamma,\mu) \in \Om(n,d,\ep)$ is solid with the
	solid constant~$C_{\lambda,\mu}$.
	
	\smallskip
	
	\begin{lemma}[{\rm Main Lemma}]
		\label{l:main}
		Fix $d\geq 2$ and $\ep >0$.  Let $(\lambda,\mu)\in \La(n,d,\ep)$.
		Suppose further, that \ts $(\la,\mu)$ \ts is a solid pair, with a
		solid constant $C=C_{\la,\mu}>0$.  Then:
		\begin{equation}\label{eq:sorting probability skew-shaped theorem}
		\delta(P_{\lambda/\mu})  \, \leq \, C_{d,\ep} \.  \frac{C^3+1}{\sqrt{n}}\.,
		\end{equation}
		where \ts $C_{d,\ep}>0$ \ts is an absolute constant.
	\end{lemma}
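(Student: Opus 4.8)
The plan is to find the sorting pair among elements $x=(1,a)$ of the first row and $y=(2,b)$ of the second row. Encode a uniform $A\in\SYT(\lambda/\mu)$ by its level‑set process: for $0\le t\le n$ let $\gamma^{(t)}=\gamma^{(t)}(A)$ be the partition with $\gamma^{(t)}/\mu=\{(i,j):A(i,j)\le t\}$. Then $t\mapsto\gamma^{(t)}$ is a lattice path in $\pp_d$ from $\mu$ to $\lambda$ adding one box at a time, and $\Pblm[\gamma^{(t)}=\gamma]=f(\gamma/\mu)\,f(\lambda/\gamma)/f(\lambda/\mu)$ whenever $\mu\subseteq\gamma\subseteq\lambda$ and $|\gamma/\mu|=t$. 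Writing $T_a:=A(1,a)$ for the time row~$1$ reaches length~$a$, the boxes $(1,a)$ and $(2,b)$ being distinct, $A(1,a)<A(2,b)$ iff $\gamma^{(T_a)}_2\le b-1$. Hence $g(a,b):=\Pblm[A(1,a)<A(2,b)]=\Pblm[\gamma^{(T_a)}_2\le b-1]$ and $\delta(P_{\lambda/\mu};x,y)=|2g(a,b)-1|$, so it suffices to produce $(a,b)$ with $g(a,b)$ within $O_{d,\ep}\bigl((C^3+1)/\sqrt n\bigr)$ of $\tfrac12$.

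Second, fix an interior column $a^\star$ with $a^\star-\mu_1$ and $\lambda_1-a^\star$ both at least $c_\ep\, n$ (possible since $\lambda_1-\mu_1\ge\ep n$), and walk along the L‑shaped monotone path that first lowers $a$ from $\lambda_1$ to $a^\star$ with $b=\mu_2+1$, then raises $b$ from $\mu_2+1$ to $\lambda_2$ with $a=a^\star$. Along the whole path $g$ is non‑decreasing (lowering $a$ makes $(1,a)$ appear earlier; raising $b$ makes $(2,b)$ appear later). On the first leg $g$ stays near $0$; on the second leg, the concentration of $\gamma^{(\cdot)}$ about the segment $t\mapsto\mu+\tfrac tn(\lambda-\mu)$ --- the content of $\S$\ref{ss:most-triplets-admissible}, where Hoeffding's inequality (Theorem~\ref{t:Hoeffding's inequality}) is applied after comparing $\gamma^{(\cdot)}$ with a free walk --- shows $\gamma^{(T_{a^\star})}_2$ is concentrated, to within $O(\sqrt{n\log n})$, around an interior value bounded away from both $\mu_2$ and $\lambda_2$ by at least $c'_{\ep,d}\,n$. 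Thus $g$ runs from near $0$ to near $1$, and for $n$ large it crosses $\tfrac12$ at some step $(a^\star,b)$; at that step the level‑set partitions $\nu$ contributing below have $\nu_1=a^\star$, $\nu_2=b-1$, and (by the same concentration) are $\ep$‑separated and progressive, so that $(\lambda,\nu,\mu)$ and $(\lambda,\nu\setminus(1,a^\star),\mu)$ are $\ep$‑admissible triplets. Taking $x=(1,a^\star)$, $y=(2,b)$ there reduces the lemma to bounding the size of this single step by $O_{d,\ep}\bigl((C^3+1)/\sqrt n\bigr)$ --- for small $n$ the inequality \eqref{eq:sorting probability skew-shaped theorem} is vacuous after enlarging $C_{d,\ep}$, and on the first leg $g$ is pinned near $0$.

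Third --- the technical heart --- I bound the step. Expanding over the level set $\nu=\gamma^{(T_{a^\star})}$,
$$
g(a^\star,b)-g(a^\star,b-1)\;=\;\Pblm\bigl[\gamma^{(T_{a^\star})}_2=b-1\bigr]\;=\;\sum_{\nu}\,\frac{f\bigl((\nu\setminus(1,a^\star))/\mu\bigr)\,f(\lambda/\nu)}{f(\lambda/\mu)},
$$
the sum over partitions $\nu$ with $\nu_2=b-1$ having $(1,a^\star)$ as a removable corner. Discarding the $\nu$ outside the $\ep$‑admissible range, which contribute $O\bigl(e^{-c(d,\ep)n}\bigr)$ by the concentration above, solidity of the pair $(\lambda,\mu)$ supplies $f(\cdot/\mu)\le C\,\eG(\cdot/\mu)F(\cdot/\mu)$, $f(\lambda/\cdot)\le C\,\eG(\lambda/\cdot)F(\lambda/\cdot)$ and $f(\lambda/\mu)\ge C^{-1}\eG(\lambda/\mu)F(\lambda/\mu)$, which costs exactly the factor $C^3$. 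Since $F(\alpha/\beta)=|\alpha/\beta|!\big/\!\prod_{(i,j)\in\alpha/\beta}h_\alpha(i,j)$ and $\lambda/\mu=(\nu/\mu)\sqcup(\lambda/\nu)$, the ratio of the $F$'s collapses to $\binom nt^{-1}$ times a product of hook quotients $h_\lambda(i,j)/h_\nu(i,j)$ over $\nu/\mu$, with $t=|\nu/\mu|$; after the Frobenius‑type hook bookkeeping behind \eqref{eq:SYT-Frob} and the Stirling bounds of Proposition~\ref{p:Stirling's formula}, this is comparable --- up to a $d,\ep$‑dependent factor that absorbs the polynomially large ratio $\eG(\nu/\mu)\eG(\lambda/\nu)/\eG(\lambda/\mu)$ --- to the multivariate hypergeometric weight $\prod_i\binom{\lambda_i-\mu_i}{\nu_i-\mu_i}\big/\binom nt$. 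Summing over $\nu$ with $\nu_1=a^\star$, $\nu_2=b-1$ fixed leaves a one‑dimensional hypergeometric marginal whose largest atom is $O_{d,\ep}(1/\sqrt n)$, because $\lambda_i-\mu_i$, $t$ and $n-t$ are all $\Theta_\ep(n)$. Collecting the constants yields the step bound, hence \eqref{eq:sorting probability skew-shaped theorem}.

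The main obstacle is this last step: turning $F(\nu/\mu)F(\lambda/\nu)/F(\lambda/\mu)$ --- a product of hook lengths relative to three different shapes --- into a clean hypergeometric weight, and verifying that the residual hook corrections exactly cancel the (polynomially large) balance‑function ratio, so that only a $d,\ep$‑dependent constant together with the announced $C^3$ survives. This is precisely what the lattice‑path estimates of Sections~\ref{s:paths}--\ref{s:sorting-lattice-path} are built for, and it is where the Schur‑function and NHLF technology is spent; the second proof of Lemma~\ref{l:rel_probs} is the $d=2$, single‑box prototype of the whole computation.
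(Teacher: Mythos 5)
Your route is essentially the paper's: pick the sorting pair as $(1,a^\star)$ with $a^\star$ an interior column and $(2,b)$, use monotonicity in $b$ to pigeonhole the crossing of $\tfrac12$ (Lemma~\ref{l:quantitative bound for Linial}; the first leg of your L-shaped path is superfluous, since monotonicity in $b$ at the fixed $a^\star$ already runs $g$ from $0$ to $1$), discard level sets that are not $\ep$-admissible by comparing the tableau walk with the free multinomial walk and applying Hoeffding (Lemmas~\ref{l:lower bound for Cc} and~\ref{l:most-triplets-admissible}), then use solidity to trade the three $f$'s for $F$'s and $\eG$'s at cost $C^3$ and compare the $F$-ratio to hypergeometric weights via Stirling. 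Two quantitative slips along the way: the progressive condition \eqref{eq:progressive-def} is calibrated at deviation $n^{3/4}$, not $\sqrt{n\log n}$ --- with only $\sqrt{n\log n}$-concentration, the failure probability, after paying the $1/\Pb[\Cc]=O\bigl(n^{(d^2-1)/2}\bigr)$ transfer cost, need not be $o(n^{-1/2})$; and the correct tail is $O\bigl(n^{(d^2-1)/2}e^{-2\sqrt n}\bigr)$, not $e^{-cn}$. Also, the Schur/NHLF technology is spent on verifying the solidity hypothesis (Lemmas~\ref{l:asy-smooth} and~\ref{l:asy-TVK}), not inside the Main Lemma, whose internal estimates are elementary.

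The genuine gap is exactly at what you call the main obstacle, and it is not just deferred bookkeeping: the claimed cancellation of the balance-function ratio down to an $O_{d,\ep}(1)$ constant times the hypergeometric weight is false. What the paper proves (Lemma~\ref{l:minimum bound with m,g,l}, via Sublemma~\ref{l:min of three things bound}) is that the leftover factor $\prod_{i<j}\frac{g_i-g_j}{\gamma_i+j-i}\cdot\frac{\eG(\gamma/\mu)\,\eG(\lambda/\gamma)}{\eG(\lambda/\mu)}$ is only bounded by $C_{d,\ep}\prod_{i<j}\bigl((y_i-y_j)^2+1\bigr)$, and this quadratic growth genuinely occurs: when rows $i,j$ have gaps $o(\sqrt n)$ in both $\mu$ and $\lambda$ while $\gamma_i-\gamma_j\approx y\sqrt n$ (which is allowed within the admissible range, since \eqref{eq:progressive-def} permits deviations up to $n^{3/4}$), the factor is of order $y^2$. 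Hence the pointwise bound is not comparable to a pure hypergeometric atom, and your final ``largest atom of a one-dimensional hypergeometric marginal'' step does not close the argument as written. The repair is the paper's: bound the binomial ratio itself by a Gaussian, $B_{d,\ep}\,n^{-(d-1)/2}\exp\bigl[-2\sum_i y_i^2\bigr]$ (Lemma~\ref{l:binomial formula asymptotic}), let the Gaussian absorb the polynomial factor, and then sum over the $d-2$ free coordinates of the level set, each contributing $O(\sqrt n)$ as a Riemann sum of a convergent integral (Lemma~\ref{l:pdf upper bound}); this yields $C^3\,n^{-(d-1)/2}\cdot n^{(d-2)/2}=O\bigl(C^3/\sqrt n\bigr)$, and together with Lemma~\ref{l:quantitative bound for Linial} gives \eqref{eq:sorting probability skew-shaped theorem}. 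With that substitution your outline coincides with the paper's proof.
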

	
	\smallskip
	
	Main Lemma~\ref{l:main} \ts is proved in Section~\ref{s:sorting-lattice-path}.

	\subsection{Asymptotics of  $f(\lambda/\mu)$}\label{ss:outline-asy}
	The key to proving the theorems in the introduction is proving that \ts $f(\lambda/\mu)$ \ts
	is equal, up to a multiplicative constant, to the product of \ts $F(\lambda/\mu)$ \ts
	and the balance function \ts $\eG(\lambda/\mu)$.  Here is the precise statement
	of the reduction.
	
	\begin{lemma}[{\rm Smooth asymptotics}]\label{l:asy-smooth}
		Fix $d\geq 2$ and $\ve >0$.  Let $\lambda/\mu$ be a skew partition,
		such that $\la$ is $\ve$-smooth, and \ts $\la, \mu \in \pp_d$.
		Then there exists an absolute constant \ts $C_{d,\ep} >0$, such that
		$$
		\frac{1}{C_{d,\ep}} \ \eG(\lambda/\mu) \, \leq \,
		\frac{f(\lambda/\mu)}{F(\lambda/\mu)}   \, \leq {C_{d,\ep}} \, \eG(\lambda/\mu)\ts.
		$$
	\end{lemma}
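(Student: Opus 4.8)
The plan is to use the Flagged NHLF (Theorem~\ref{t:NHLF}) to reduce the ratio $f(\la/\mu)/F(\la/\mu)$ to the weighted sum over flagged tableaux
\[
S(\la/\mu) \, := \, \sum_{T \in \FT(\la/\mu)} \ \prod_{(i,j)\in \mu} h_\la\bigl(T(i,j),\, j+T(i,j)-i\bigr),
\]
so that $f(\la/\mu)/F(\la/\mu) = S(\la/\mu)$, and then show $\frac{1}{C_{d,\ep}}\,\eG(\la/\mu) \le S(\la/\mu) \le C_{d,\ep}\,\eG(\la/\mu)$. The upper bound is the easier half: the number of flagged tableaux is $|\FT(\la/\mu)| = |\ED(\la/\mu)|$, which is at most $n^{O(d^2)}$ by crude counting (each of the $\le d^2$ ``diagonal positions'' has $\le n$ choices), but that is far too lossy; instead I would bound $|\ED(\la/\mu)|$ by a constant $C_d$ using $\ve$-smoothness. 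The point is that $\ve$-smoothness forces the gaps $\la_i - \la_{i+1} \ge \ve n$, so the flag constraint \eqref{eq:definition flagged tableau} on $T(i,j)$ becomes essentially ``$T(i,j) \le$ something comparable to $i + (\text{bounded})$,'' which pins down each entry to within $O_{d,\ve}(1)$ possible values, giving $|\FT(\la/\mu)| \le C_{d,\ve}$. Each hook factor in the product over $\mu$ is at most $\la_1 + d \le n$, but more precisely, for the dominant excited diagram (the one where $T$ is as large as allowed), the hook $h_\la(T(i,j), j+T(i,j)-i)$ should be comparable to $\lambda_i + d - i$ or smaller, and matching the $\min$ structure of $\eG$.

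For the \textbf{lower bound}, which I expect is the crux, I would isolate the contribution of a single well-chosen flagged tableau — the one realizing $\eG$ termwise — and show its weight is $\ge \frac{1}{C_{d,\ve}}\,\eG(\la/\mu)$. Concretely, $\eG(\la/\mu) = \prod_{i<j} \min\{\mu_i - \mu_j + j - i,\ \frac{\la_i + d - i}{\la_i - \la_j + j - i}\}$ is a product over pairs $(i,j)$, while $S(\la/\mu)$ is a sum over tableaux of products over cells of $\mu$. The idea is to reorganize: $|\ED(\la/\mu)|$ itself should be comparable to $\prod_{i<j}\min\{\mu_i - \mu_j + j - i,\ \ldots\}$ of a related form (the number of ways to slide diagonals is governed by exactly these $\min$ expressions, which count the available room), and the hook-product weight of a typical excited diagram contributes the numerator/denominator ratio $\frac{\la_i + d - i}{\la_i - \la_j + j - i}$ factors. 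More carefully, I would split $\eG$ into the ``$\mu$-dominated'' pairs where the first term of the $\min$ wins and the ``$\la$-dominated'' pairs where the second wins; the former contribute to the \emph{number} of flagged tableaux via the available vertical room $\mu_i - \mu_j + j - i$ for consistent column-strict fillings, and the latter contribute to the \emph{hook-weight} of each such filling via telescoping products $\prod h_\la(\cdots)$ along rows, which collapse to $\frac{\la_i + d - i}{\la_i - \la_j + j - i}$-type ratios using $h_\la(i,j) = \la_i - j + \la'_j - i + 1$ and the smoothness-controlled column lengths.

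The main obstacle will be making the telescoping precise: the Frobenius formula \eqref{eq:SYT-Frob} and the Schur-polynomial identities \eqref{eq:Schur-def}--\eqref{eq:HCF} show that products of the form $\prod_{i<j}\frac{m_i - m_j}{j-i}$ count semistandard tableaux, and $\eG$ has this shape but with a $\min$ truncation, so I would try to realize $S(\la/\mu)$ as (a constant multiple of) a bounded-flag analogue of $s_\mu(1^d)$ evaluated with hook weights, and then bound that bounded-flag Schur-type sum above and below by its unrestricted or minimally-restricted counterpart — here $\ve$-smoothness is exactly what guarantees the flag bounds only cut off a bounded number of terms in each direction, so the truncation costs only a $d,\ve$-dependent constant. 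Once $S(\la/\mu) \asymp_{d,\ve} \eG(\la/\mu)$ is established, combining with Theorem~\ref{t:NHLF} gives the claim with $C_{d,\ep}$ depending only on $d$ and $\ve$ as required.
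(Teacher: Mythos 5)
There is a genuine gap at the heart of your upper bound: the claim that $\ve$-smoothness forces $|\FT(\la/\mu)|\le C_{d,\ve}$ is false. Smoothness constrains only $\la$, not $\mu$, and when $\mu$ is small relative to $\la$ the flag condition~\eqref{eq:definition flagged tableau} is satisfied by essentially every semistandard filling of $\mu$. Concretely, take $d=2$, $\la=(2m,m)$ (which is $\frac13$-smooth) and $\mu=(k)$ with $k\le m-1$: a flagged tableau is any weakly increasing $\{1,2\}$-sequence of length $k$, so $|\FT(\la/\mu)|=|\ED(\la/\mu)|=k+1$, which is $\Theta(n)$ when $k=\Theta(n)$. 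So the sum over flagged tableaux cannot be reduced to boundedly many terms, and both halves of your argument, which treat that sum as dominated by $O_{d,\ve}(1)$ terms or by a single well-chosen term, inherit this false premise. The correct mechanism is decay of the weights, not scarcity of the terms: Lemma~\ref{l:NHLF  upper bounded by schur polynomial} bounds $f(\la/\mu)/F(\la/\mu)$ by $s_\mu(\ell_1,\ldots,\ell_d)/(\ell_1^{\mu_1}\cdots\ell_d^{\mu_d})$, and Theorem~\ref{t:interval} (equivalently Corollary~\ref{c:interval}) applied with the all-singletons interval decomposition turns the polynomially many terms into a geometric-type sum bounded by $d!\prod_{i<j}\ell_i/(\ell_i-\ell_j)$, which under smoothness is at most $d!\ts(d/\ep)^{d(d-1)/2}$.

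You are also missing the observation that trivializes the rest of the lemma: under $\ve$-smoothness every factor of $\eG(\la/\mu)$ satisfies $1\le\min\bigl\{\mu_i-\mu_j+j-i,\ \frac{\la_i+d-i}{\la_i-\la_j+j-i}\bigr\}\le\frac{d}{\ep}$, since the second entry of the $\min$ is itself at most $d/\ep$. Hence $\eG(\la/\mu)=\Theta_{d,\ep}(1)$, and the entire statement reduces to showing $1\le f(\la/\mu)/F(\la/\mu)\le C_{d,\ep}$. The lower bound is then immediate from Theorem~\ref{t:NHLF-asy} ($f\ge F$) with no construction of a distinguished flagged tableau, and no term-by-term matching of the $\min$ structure is needed; your proposed splitting into $\mu$-dominated and $\la$-dominated pairs is solving a harder problem than the one at hand (it is closer to what Conjecture~\ref{conj:number of skew-shaped diagrams asymptotic} would require in general).
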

	
	This is the version we need for the proof of the Main Theorem~\ref{t:main}.
	For Theorem~\ref{t:TVK-skew}, we need the following similar result.
	
	\begin{lemma}[{\rm TVK asymptotics}]\label{l:asy-TVK}
		Fix~$d\ge 1$. Let \ts $(\al,\be)$, \ts $\al,\be\in \rr^{d}_+$,
		be a Thoma pair.  Then there is universal constant \ts $C_{\al,\be}>0$,
		such that
		$$
		\frac{1}{C_{\alpha,\beta}} \ \eG(\lambda/\mu) \, \leq \,
		\frac{f(\lambda/\mu)}{F(\lambda/\mu)} \, \leq \, {C_{\alpha,\beta}} \, \eG(\lambda/\mu).
		$$
		where $\la/\mu$ is a TVK $(\al,\be)$-shape, i.e.\ \ts $\la\simeq \al n$, \ts
		$\mu \simeq \be n$.
	\end{lemma}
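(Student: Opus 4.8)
The plan is to pass to the flagged form of the Naruse hook-length formula (Theorem~\ref{t:NHLF}) and estimate the resulting weighted count of flagged tableaux using the explicit, essentially piecewise-linear, behaviour of hook lengths in a TVK shape. Dividing the flagged NHLF~\eqref{eq:Naruse HLF flagged} by the definition~\eqref{eq:F-def} of $F(\la/\mu)$ and cancelling the common factor $n!\prod_{(i,j)\in\la}h_\la(i,j)^{-1}$ gives
$$
\frac{f(\la/\mu)}{F(\la/\mu)} \, = \, \frac{1}{\prod_{(i,j)\in\mu}h_\la(i,j)}\,\sum_{T\in\FT(\la/\mu)}\,\prod_{(i,j)\in\mu}h_\la\bigl(T(i,j),\,j+T(i,j)-i\bigr).
$$
The identity flagged tableau $T_0(i,j)=i$ is always admissible and its term is exactly the denominator, so the ratio is $\ge 1$; the content of the lemma is the matching two-sided comparison with $\eG(\la/\mu)$.

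First I would record the needed hook asymptotics. Since $\la_b'$, the number of rows of $\la$ of length at least $b$, is a step function of $b$, the hook $h_\la(i,j)=\la_i-j+\la_j'-i+1$ is, up to $O(1)$, piecewise linear in the cell coordinates with slopes determined by $\al$; in particular $h_\la(i,j)=\Theta_{\al}(n)$ whenever $(i,j)$ lies a $\Theta(n)$-distance inside $\la$, and raising a cell from row $i$ to a row $i'$ with $i<i'\le j$ in a fixed column $\le\mu_i$ multiplies its hook factor by a ratio in $(0,1]$ that is bounded away from $1$ — uniformly over the relevant columns — precisely when $\al_i>\al_{i'}$; write $\rho_{ij}\in(0,1]$ for the rate attained at $i'=j$, which satisfies $1-\rho_{ij}\asymp_{\al,\be}(\la_i-\la_j+j-i)/(\la_i+d-i)$. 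I would then expand the flagged sum by processing the rows of $\mu$ from the bottom up: every $T\in\FT(\la/\mu)$ is obtained from $T_0$ by raising the entries on some suffixes of rows, with the column-strictness and the flag inequalities~\eqref{eq:definition flagged tableau} limiting both which cells can be raised and how far. Summing the resulting truncated near-geometric sums over the admissible raising lengths in each row contributes, for each pair $i<j$, a factor
$$
\asymp_{\al,\be}\ \min\Bigl\{\,\mu_i-\mu_j+j-i,\ \frac{1}{1-\rho_{ij}}\,\Bigr\}\ \asymp_{\al,\be}\ \min\Bigl\{\,\mu_i-\mu_j+j-i,\ \frac{\la_i+d-i}{\la_i-\la_j+j-i}\,\Bigr\},
$$
which is exactly the $(i,j)$-factor of $\eG(\la/\mu)$. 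Taking the product over all $\binom{d}{2}$ pairs, and checking that the coupling between rows perturbs each factor by at most an $(\al,\be)$-dependent constant, gives $f(\la/\mu)/F(\la/\mu)\asymp_{\al,\be}\eG(\la/\mu)$; for the lower bound it is enough to exhibit one flagged tableau realizing the maximum contribution in each row.

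The main obstacle is making the heuristic ``the flagged sum factors over pairs $i<j$ into truncated geometric series'' rigorous: the raising operations in distinct rows interact through column-strictness and through the flag thresholds $\la_{T(i,j)}$, so the sum does not literally factor, and one must either order the operations or group flagged tableaux by their vector of raising lengths and then control the induced dependencies — using that $d=O(1)$ and that the bound may depend on $(\al,\be)$. A second, lower-order issue is the bookkeeping of the degeneracies permitted by the hypotheses: when $\al_i=\al_j$ the ratio $\rho_{ij}\to 1$ and the $\min$ is carried by the available-cells term $\mu_i-\mu_j+j-i$; when $\be_i=\be_j$ only $O(1)$ cells of row $i$ lie beyond column $\mu_j$; and when some $\al_i$ or $\be_i$ is $0$ rows of $\la$ or $\mu$ collapse. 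These are handled by separate short arguments, exploiting that any tie $\al_i=\al_j$ is stable across the TVK family, i.e.\ forces $\la_i-\la_j=O(1)$ uniformly in $n$. Finally, I would note the alternative of deducing the lemma by combining the exact asymptotic formula for $f(\la/\mu)$ on TVK shapes from~\cite{MPP4} with the Stirling bounds~\eqref{eq:Stirling-formula} applied to $F(\la/\mu)$; the flagged-NHLF computation above is essentially a direct derivation of the polynomial correction factor $\eG$ that such a comparison isolates, and it is the version that will also adapt, with the $\ep$-smoothness hypothesis replacing the linear scaling, to prove Lemma~\ref{l:asy-smooth}.
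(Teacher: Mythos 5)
Your heuristic is the right one --- the flagged NHLF of Theorem~\ref{t:NHLF}, the block structure determined by the ties $\al_i=\al_j$, and the identification of the $(i,j)$-factor of $\eG$ with a truncated geometric series whose ratio is roughly $\ell_j/\ell_i$ --- and this is indeed how the paper organizes its proof (an interval decomposition with $i\sim j$ iff $\al_i=\al_j$). But what you label ``the main obstacle'' is not a loose end to be tidied: it is the entire content of the upper bound, and your proposal supplies no mechanism for it. The paper never factors the flagged sum directly; it bounds it by the Schur specialization $s_\mu(\ell_1,\ldots,\ell_d)/(\ell_1^{\mu_1}\cdots\ell_d^{\mu_d})$ (Lemma~\ref{l:NHLF  upper bounded by schur polynomial}) and then controls that quantity by a Laplace expansion of the determinant~\eqref{eq:Schur-def} along the blocks, together with the chain of rearrangement estimates culminating in Lemma~\ref{l:upper bound for mi-mj} and Theorem~\ref{t:interval}, before specializing to TVK shapes (Lemmas~\ref{l:condition 1 min estimate}--\ref{l:f -- TVK upper bound}). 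Saying one should ``order the operations or group by raising-length vectors and control the induced dependencies'' restates the difficulty rather than resolving it, so as written the upper bound is not proved.

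The lower bound is a genuine error, not merely a gap: exhibiting one flagged tableau cannot work, because excited moves weakly decrease hooks, so every single $T\in\FT(\la/\mu)$ contributes at most $1$ to the flagged sum (cf.\ Lemma~\ref{l:asymptotic upper bound for hook length of a cell}), whereas $\eG(\la/\mu)$ is unbounded on TVK shapes: if $\al_i=\al_j$ but $\be_i>\be_j$ (e.g.\ $\al=(\tfrac{3}{4},\tfrac{3}{4})$, $\be=(\tfrac{1}{2},0)$), the $(i,j)$-factor of $\eG$ is of order~$n$. A lower bound of order $\eG$ therefore requires summing over $\Theta(\eG)$ flagged tableaux, each of weight $\Omega(1)$; this is exactly the paper's route: restrict to the block-respecting tableaux $\Dc_\CB$, bound each weight from below (Lemmas~\ref{l:hook cells lower bound} and~\ref{l:hook products lower bound}), count $\Dc_\CB$ blockwise via the product formula~\eqref{eq:HCF} (Lemma~\ref{l:Dc lower bound}), and compare with $\eG$ through Lemma~\ref{l:G exact estimate}, yielding Lemma~\ref{l:f -- TVK lower bound}. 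Finally, the fallback you mention --- combining the TVK asymptotics of~\cite{MPP4} with Stirling bounds for $F(\la/\mu)$ --- does not rescue the argument: those asymptotics are not sharp up to multiplicative constants, so they cannot isolate a correction factor like $\eG$, which itself grows polynomially in the degenerate cases above.
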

	
	\smallskip
	
	\subsection{Roadmap for the rest of the paper}\label{ss:outline-roadmap}
	The next three sections are dedicated to the proof of the Main Lemma~\ref{l:main}.
	First, in Section~\ref{s:paths}, we relate sorting probabilities with the estimates
	on the number \ts $f(\la/\mu)$ \ts of standard Young tableaux, which we then
	compare with a  certain lattice random walk in $\rr^d$.  The main result of this
	section is Lemma~\ref{l:most-triplets-admissible}, which proves that the probability
	of having \emph{any} non-$\ep$-admissible triplets is exponentially small.  In the
	following, completely independent Section~\ref{s:technical}, we obtain various Young
	tableaux estimates.  Here the main result is Lemma~\ref{l:pmf of Zt upper bound}
	which gives an upper bound on the number of standard Young tableaux which contain
	a given $\ep$-admissible triplet.  This is the only result which will be used
	later on.  Finally, in a short Section~\ref{s:sorting-lattice-path}, we combine
	Lemma~\ref{l:most-triplets-admissible} and Lemma~\ref{l:pmf of Zt upper bound}
	to prove the Main Lemma~\ref{l:main}.
	
	We restart anew our analysis of the number \ts $f(\la/\mu)$ \ts in
	Section~\ref{s:upper-bounds-SYTs}, this time with a different purpose
	of comparing it to the product \ts $\eG(\la/\mu) \ts F(\la/\mu)$.
	The main results of this section are Lemma~\ref{l:NHLF  upper bounded by schur polynomial}
	and Corollary~\ref{c:interval} which give upper and lower bounds.
	In Section~\ref{s:straight-shaped Young posets}, we prove
	conceptually simpler estimates required for Theorem~\ref{t:thick}.
	This section is both a culmination of earlier results, and a training bound for
	the next two sections.
% 	\com{SH}{The last sentence is missing some words. My guess is that it is supposed to be ``This section is both a culmination or earlier results and a training ground for the next two sections''.}
	
	In Section~\ref{s:straight-shaped Young posets}, we use results from
	Section~\ref{s:upper-bounds-SYTs} to prove Lemma~\ref{l:asy-smooth}.
	We then combine it with the Main Lemma~\ref{l:main} to prove
	Theorem~\ref{t:main} in a short Section~\ref{s:main-thm}.
    Similarly, in a much longer and more technical Section~\ref{s:TVK},
    we first prove Lemma~\ref{l:asy-TVK}, which is then combined with the
    Main Lemma~\ref{l:main} to prove Theorem~\ref{t:TVK-skew}.
	
	\smallskip

	\subsection{A tale of two styles}\label{ss:outline-tune}
	The underlying logic of the paper is rather convoluted and somewhat
	buried in the avalanche of technical estimates, so let us clarify it a bit.
	There are really two things going on at the same time.  On a higher
	level, we develop various probabilistic tools to obtain
	the desired estimates.  While largely elementary from a technical point
	of view, these tools seem to be necessary. They are also unavoidably
	tedious largely because we are starting from scratch in the absence
	of such approach in the existing literature on the subject.
	
	On a lower level, our probabilistic calculations employ a variety
	of highly technical estimates on a host of Young tableau parameters.
	Some of the tools involved, such as NHLF~\eqref{eq:Naruse HLF},
	are relatively recent and come from a long series of works in
	Algebraic Combinatorics, including some by the last two authors.
	While we make our presentation largely self-contained and clarify the
    NHLF in the Warmup Section~\ref{s:warmup}, this technology remains
	difficult and yet to be fully understood on a conceptual level.
	
To make a musical comparison, we have a guitar duo with a new
accessible melody played on a lead guitar, paired with a famously
difficult theme on a rhythm guitar.  The result may appear
cacophonous at first, but we hope the reader can persevere,
become oblivious to the noise, and learn to appreciate the tune.

	\bigskip
	
	\section{Standard Young tableaux as lattice paths}\label{s:paths}
	
	We interpret the standard Young tableaux $A\in \SYT(\la/\mu)$ as lattice paths
	within a simplex in $\nn^d$.  We compare them to unconstrained lattice
	paths to estimate the sorting probabilities.
	
	\smallskip
	
	\subsection{Setup}\label{ss:paths-setup}
	Let $\lambda/\mu\vdash n$, and let \ts $L\in \SYT(\lambda/\mu)$ \ts
	be a uniform random standard Young tableau. Denote by \ts $\bZ = (Z_0,Z_1,\ldots,Z_n)$
	\ts the sequence of  \ts $Z_0=\mu$, \ts $Z_n=\la$, and \ts
	$Z_t = \{(i,j) \. | \. L(i,j) \le t\}$ \ts is a partition.
	Denote by $\Path(\lambda/\mu)$ the set of all such lattice paths $\bZ: \mu\to\la$.
	Note that $\Path(\lambda/\mu)$ is in bijection with $\SYT(\lambda/\mu)$.
	
	We write \ts $\bZ$ \ts as a sequence of vectors \. 
$\bigl(Z_t(1),\ldots,Z_t(d)\bigr)_{0 \leq t \leq n} \in \pp_d$.
	From this point on, we think of \ts $Z_t\in \pp_d$ \ts as a random vector,
	and the sequence \ts $(Z_0,Z_1,\ldots,Z_n)$ \ts as a random lattice path \ts
	$\bZ: \mu \to \la$ \ts  in~$\pp_d$.  We refer to $\bZ$ as
	\emph{tableau random walk}. \ts
	Recall that \ts $\Pblm$ \ts denotes the probability over uniform standard
	Young tableaux \ts $A\in \SYT(\la/\mu)$. By a mild abuse of notation,
	we refer to tableau random walks \ts $\bZ$ \ts as being
	sampled from $\Pblm$.
	
	%\smallskip
	
	Below we give an upper bound for the sorting probability \ts
	$\de(P_{\lambda/\mu})$ \ts in terms of the probability of the lattice path
	\ts $(Z_t)_{t \geq 0}$ visiting a particular codimension 2 hyperplane
	in~$\rr^d$.
	% We refer to~$\S$\ref{ss:finrem-hist} for the origin of this
	% argument.
	
	\smallskip
	
	\subsection{Sorting probability via tableau random walks}\label{ss:paths-anti}
	Let $(a,b)$ be two integers, such that \ts
	$\mu_1 < a\le \la_1$ \ts and \ts $\mu_2 < b \le \lambda_2$.
	Consider the event
	$$
	\Ac(a,b)\, := \, \bigl\{ \ts (Z_t)_{0\le t \le n} \, \mid \,
	Z_t(1)=a, \. Z_t(2)=b,   \, \text{ for some } \, t\geq 0\ts \bigr\}.
	$$
	In other words, $\Ac(a,b)$ is the event that the tableau random walk \ts $\bZ=(Z_0,\ldots,Z_n)$ \ts
	intersects the hyperplane in $\Rb^d$ given by \ts $\bigl\{ (x_1,\ldots, x_d) \. \mid \, x_1=a, \. x_2 =b  \bigr\}$.
	
	\begin{lemma}\label{l:quantitative bound for Linial}
		Let \ts $\lambda, \mu\in \pp_d$, \ts $\lambda/\mu\vdash n$, and  let \ts
		$a \in \nn$, s.t. $\mu_1\le a \le \lambda_1$.
		Define
		\begin{equation}\label{eq:epsilon(a)}
		\vp(a) \. := \. \max_{\mu_2 < k \leq \lambda_2 }    \Pblm \big[\Ac(a,k) \big].
		\end{equation}
		Then there exists $b\in \nn$, such that \ts $\mu_2 \le b \le a$,
		$$
		\Bigl|  \Pblm \bigl[L(1,a) < L(2,b)  \bigr] \, - \, \frac{1}{2}\Bigr|  \,\. \leq \, \vp(a).
		$$
		In particular, we have
		\[ \delta(P_{\lambda/\mu}) \, \leq \,  2 \ts\vp(a).  \]
	\end{lemma}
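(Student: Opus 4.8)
The plan is to fix the first coordinate at $a$ and track where the tableau random walk crosses the vertical line $\{x_1 = a\}$, recording the value $b = Z_t(2)$ of the second coordinate at the moment of crossing. Since $Z_0(1) = \mu_1 \le a$ and $Z_n(1) = \la_1 \ge a$, and the first coordinate of $\bZ$ increases by $0$ or $1$ at each step, there is a (first) time $\tau$ at which $Z_\tau(1) = a$; at that time $Z_\tau(2)$ takes some value $b$ with $\mu_2 \le b$, and moreover $b \le a$ because $Z_\tau$ is a partition so $Z_\tau(2) \le Z_\tau(1) = a$. Thus we get a random variable $b = B(L) \in \{\mu_2, \mu_2+1, \ldots, a\}$, measurable with respect to the uniform tableau $L$. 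The key observation is that the event $\{\, B(L) = k \,\}$ is (for $k$ in the relevant range) exactly the event that the walk passes through the point $(a,k)$ in coordinates $1,2$, i.e.\ a subevent of $\Ac(a,k)$; more precisely, $\{B(L)=k\}$ records the \emph{first} visit and partitions the probability space, so $\Pblm[B(L)=k] \le \Pblm[\Ac(a,k)] \le \vp(a)$ for every $k$ with $\mu_2 < k \le \la_2$, and also $\Pblm[B(L) = \mu_2] \le \Pblm[\Ac(a,\mu_2+1)]$ by a similar crossing argument (or is simply bounded by $\vp(a)$ after a boundary adjustment).

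The second ingredient is a symmetrization: conditioned on $B(L) = b$, compare the relative order of the entries $L(1,a)$ and $L(2,b)$. Once the walk has reached the corner configuration where row $1$ has exactly $a$ cells filled and row $2$ has exactly $b$ cells filled, the cell $(1,a)$ is the next cell available in row $1$ and $(2,b)$ is the next cell available in row $2$ (or one of them is already filled at the crossing time — this is the case that needs care). The heart of the argument is that, conditioned on the set of cells filled at the crossing time and on the restriction of $L$ to those cells, the completion of $L$ is a uniform linear extension of the complementary skew poset, and in that complementary poset the two cells $(1,a)$ and $(2,b)$ are \emph{incomparable} and moreover symmetric with respect to an order-reversing type argument — so the conditional probability that $L(1,a) < L(2,b)$ is exactly $1/2$. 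Therefore
\[
\Bigl| \Pblm\bigl[L(1,a) < L(2,b)\bigr] - \tfrac12 \Bigr|
\;\le\; \sum_{k} \Bigl| \Pblm\bigl[L(1,a)<L(2,b) \mid B(L)=k\bigr] - \tfrac12 \Bigr|\, \Pblm[B(L)=k]
\]
where only the ``bad'' values of $k$ — those for which one of $(1,a)$, $(2,b)$ is already occupied at the crossing time, forcing the conditional probability to $0$ or $1$ — contribute, and each such term is bounded by $\Pblm[B(L)=k] \le \vp(a)$. Choosing $b$ to be the (deterministic, optimally-chosen, or simply ``the'' bad) value realizing the bound gives the claim, and then $\delta(P_{\la/\mu}) \le \delta(P_{\la/\mu}; (1,a),(2,b)) \le 2\vp(a)$ by definition of the sorting probability.

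The main obstacle is the bookkeeping around the crossing time: one must argue carefully that when the walk first reaches $\{x_1 = a\}$ at a point $(a,k)$, the conditional law of the rest of the tableau is uniform on linear extensions of the remaining skew shape, and that in that remaining shape $(1,a)$ and $(2,b)$ are incomparable and interchangeable by an involution (swapping the roles of the two cells via conjugation or a local reflection of the order on $\{L(1,a), L(2,b)\}$ within the incomparable pair), so the conditional balance is exactly $1/2$ except on the event that one of the two cells has already been filled before the crossing — and on that exceptional event the contribution is absorbed into $\vp(a)$. A secondary technical point is handling the boundary value $b = \mu_2$ (where the ``cell $(2,\mu_2)$'' lies in $\mu$, not in the skew shape), which is why the statement allows $\mu_2 \le b$ rather than $\mu_2 < b$; one simply interprets the crossing event appropriately there. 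I would also double-check the inequality $b \le a$, which is forced since every $Z_t$ is a genuine partition, hence weakly decreasing in its coordinates.
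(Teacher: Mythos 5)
Your first paragraph is sound and is in fact the right starting point, essentially matching the paper: writing $B(L):=Z_\tau(2)$ for the first time $\tau$ with $Z_\tau(1)=a$, the events $\{B=k\}$ partition the space, $\{B=k\}\subseteq\Ac(a,k)$, so $\Pblm[B=k]\le\vp(a)$ for $\mu_2<k\le\la_2$, and $\Pblm\bigl[L(1,a)<L(2,b)\bigr]=\Pblm[B<b]$, since $(2,b)$ precedes $(1,a)$ exactly when row~$2$ already has at least $b$ cells at the moment $(1,a)$ is placed. But this last identity is precisely what destroys your second step. Conditioned on $\{B=k\}$, the relative order of $L(1,a)$ and $L(2,b)$ is \emph{deterministic}: the conditional probability of $L(1,a)<L(2,b)$ is $1$ if $k<b$ and $0$ if $k\ge b$; it is never $1/2$, and there is no residual symmetry to exploit. (Note also the off-by-one: at the crossing time the cell $(1,a)$ has just been filled, and if $B=b>\mu_2$ then $(2,b)$ is already filled too, so neither is ``the next available cell.'') Consequently, in your displayed inequality \emph{every} value of $k$ is ``bad'': each term equals $\tfrac12\ts\Pblm[B=k]$ and the sum is $\tfrac12$, a vacuous bound. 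The claim that incomparability plus an order-reversing involution forces conditional balance $1/2$ cannot be repaired: if it were true for an arbitrary fixed $b$, then essentially every pair $(1,a),(2,b)$ would be balanced, which is false already for $b$ near $\mu_2$ (probability near $0$) or $b$ near $a$ (probability near $1$) --- and indeed such an unconditional symmetry would trivialize the whole subject.

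The missing idea is that $b$ must be \emph{chosen} by a pigeonhole (intermediate-value) argument, which is exactly what the paper does. The function $W(b):=\Pblm\bigl[L(1,a)<L(2,b)\bigr]=\Pblm[B\le b-1]$ is the distribution function of $B$, hence nondecreasing in $b$, with $W(\mu_2)=0$ and $W(a)=1$ (the column constraint forces $B<a$ when $a>\mu_1$), and its increments satisfy $W(b+1)-W(b)=\Pblm[B=b]\le\Pblm[\Ac(a,b)]\le\vp(a)$. Taking $b'$ largest with $W(b')<\tfrac12$, one of $b'$, $b'+1$ satisfies $\bigl|W(\cdot)-\tfrac12\bigr|\le\vp(a)$, and then $\de(P_{\la/\mu})\le\de\bigl(P_{\la/\mu};(1,a),(2,b)\bigr)=|2\ts W(b)-1|\le 2\ts\vp(a)$. (The boundary issue you flagged is real but secondary: the increment at $b'=\mu_2$ is $\Pblm[B=\mu_2]$, which the definition of $\vp(a)$ does not control; this is harmless in the paper's application, where $a$ is taken near the middle of $[\mu_1,\la_1]$.) So keep your crossing-value decomposition, but replace the symmetrization by monotonicity of the CDF of $B$ in the parameter $b$; as written, your proof does not establish the bound.
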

	\begin{proof}
		Observe that \ts $L(1,a) < L(2,b)$ \ts in the language of paths means \ts
		$Z_t(1)=a$ \ts and \ts $Z_t(2)<b$, for some \ts $0\le t\le n$.
		By taking the probabilities of both events, we then have
		$$
		\Pblm \bigl[L(1,a) < L(2,b) \bigr]
		\, = \, \Pblm \bigl[ Z_t(1)=a, \ Z_t(2)<b  \ \text{ for some }t\geq 0 \bigr]
		\, = \, \sum_{k=\mu_2}^{b-1}\Pblm \bigl[ \Ac(a,k) \bigr].
		$$
		Denote by $W(a,b)$ the sum on the right.  It then suffices to show that \ts
		$W(a,b) \in \left[\frac{1}{2}-\vp, \frac{1}{2} + \vp \right]$
		\ts for some $b \in [\mu_2,\lambda_2]$ \ts and \ts $\vp>0$.
		
		Note that, when $b=\mu_2$, the sum has zero summands, so \ts $W(a,b)=0$.
		On the other hand, when $b=a$, we have \ts $W(a,b)=1$.   As the sum is
		nondecreasing, there exists an integer \ts $b' \in [\mu_2,a)$,
		such that \ts $W(a,b') < \frac{1}{2}$, while \ts $W(a,b'+1) \ge \frac{1}{2}$.
		This completes our proof.
	\end{proof}
	
	\smallskip
	
	\subsection{Conditioned lattice random walks are tableau random walks}\label{ss:paths-LRW}
	Fix $\la/\mu\vdash n$, where $\la,\mu\in \pp_d$ as above.
	Recall the notation in~$\S$\ref{ss:def-Hoe}.   Denote by \ts
	$E=\{e_1,\ldots,e_d\}$ \ts the standard basis in~$\rr^d$.
	
	\smallskip
	
	Define the \emph{lattice random walk} \ts $\bX=(X_0,\ldots,X_n)$ \ts
	on $\nn^d$, as follows:
	\begin{equation}\label{eq:LRW-steps}
	X_0\. = \. \mu, \ \ X_{t+1} \. = \. X_t \. + \. e_i\., \quad \text{where \ $i \in [d]$ \
		is chosen with probability} \ \ q_i := \. \frac{1}{n}\ts\bigl(\lambda_i-\mu_i\bigr).
	\end{equation}
	Denote by
	\begin{equation}\label{eq:event-Cc}
	\Cc \. :=  \bigl\{ X_n = \la, \, \bX \in \pp_d \bigr\}
	\end{equation}
	the event that $\bX\in \Path(\la/\mu)$.
	
	\begin{prop} \label{p:cond-prob}
		$$
		\Pb\bigl[\ts \bX \mid \ts \Cc \. \bigr] \, = \,
		\frac{1}{f(\la/\mu)}\..
		$$
	\end{prop}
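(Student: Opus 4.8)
The plan is to show that conditioning the unconstrained lattice random walk $\bX$ on the event $\Cc$ produces exactly the uniform measure on $\Path(\la/\mu)$, and this reduces to checking that every element of $\Path(\la/\mu)$ carries one and the same probability under the unconditional law of $\bX$.

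First I would identify $\Cc$ with the event $\{\bX\in\Path(\la/\mu)\}$. Indeed, if $X_n=\la$ and every $X_t$ lies in $\pp_d$, then $X_0=\mu$, $X_n=\la$, and each increment $X_{t+1}-X_t=e_{i_{t+1}}$ adds a single box while preserving the partition shape --- which is precisely what it means for $\bX$ to be a lattice path $\mu\to\la$ in $\pp_d$; the converse is immediate. By the bijection recalled in $\S$\ref{ss:paths-setup}, this set has $|\Path(\la/\mu)|=f(\la/\mu)$ elements.

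Next I would compute $\Pb[\bX=P]$ for a fixed $P=(Y_0,\dots,Y_n)\in\Path(\la/\mu)$. Writing the steps of $P$ as $Y_t-Y_{t-1}=e_{i_t}$ and using that the increments of $\bX$ are i.i.d.\ with $\Pb[X_{t+1}-X_t=e_i]=q_i$, we get $\Pb[\bX=P]=\prod_{t=1}^n q_{i_t}=\prod_{i=1}^d q_i^{\,c_i}$, where $c_i=\#\{t:i_t=i\}$. The key point is that $c_i$ equals the net displacement of $P$ in coordinate $i$, i.e.\ $c_i=\la_i-\mu_i$, so it does not depend on the choice of $P$; hence $\Pb[\bX=P]=\prod_{i=1}^d q_i^{\la_i-\mu_i}=:p_0$ for every such $P$ (with the convention $0^0=1$), and $p_0>0$ since $q_i=\frac{1}{n}(\la_i-\mu_i)>0$ whenever $\la_i>\mu_i$.

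Finally, summing over the $f(\la/\mu)$ paths gives $\Pb[\Cc]=\sum_{P\in\Path(\la/\mu)}\Pb[\bX=P]=f(\la/\mu)\,p_0>0$, so conditioning is legitimate, and for each $P\in\Path(\la/\mu)$ we obtain $\Pb[\bX=P\mid\Cc]=\Pb[\bX=P]/\Pb[\Cc]=1/f(\la/\mu)$, as claimed. I do not expect any genuine obstacle here; the only points requiring a word of care are verifying $\Pb[\Cc]\neq 0$ and handling coordinates with $\la_i=\mu_i$, both of which are immediate.
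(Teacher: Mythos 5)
Your proposal is correct and follows essentially the same route as the paper: each path in $\Path(\la/\mu)$ has the same unconditional probability $\prod_i q_i^{\la_i-\mu_i}$ because the step counts are determined by $\la-\mu$, so conditioning on $\Cc$ yields the uniform distribution on the $f(\la/\mu)$ paths. The extra care you take with $\Pb[\Cc]>0$ and coordinates where $\la_i=\mu_i$ is a minor refinement the paper leaves implicit.
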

	
	The proposition is saying that the lattice random walk~$\bX$ conditioned
	to~$\Cc$ coincides with the tableau random walk~$\bZ$ defined above.
	
	\begin{proof}
		Suppose \ts
		$(X_0,\ldots,X_{n}) \in \Path(\lambda/\mu)$.
		Then \ts $\bX$ takes \ts $(\lambda_i-\mu_i)$ \ts steps~$e_i$.
		Therefore,
		$$
		\Pb\bigl[\ts \bX \mid \ts \Cc \. \bigr] \, = \,
		\Pb \bigl[ \ts \bX \, \bigl| \bigr. \, X_n = \la, \, \bX \in \pp_d\bigr]
		\, \propto \, \prod_{i=1}^d \. (q_i)^{\lambda_i-\mu_i} \, = \, 
\prod_{i=1}^d \left(\frac{\lambda_i-\mu_i}{n}\right)^{\lambda_i-\mu_i}.
		$$
		In other words, conditioned on~$\Cc$, the random walk~$\bX$ is uniform
		in \ts $\Path(\lambda/\mu)$.  Since \ts $f(\la/\mu)=\big|\Path(\lambda/\mu)\big|$ \ts 
by definition, we obtain the result.\end{proof}
	
\smallskip

	The reason for the non-uniform choice of distribution~$Q$ given above
	will become clear in the next subsection.  For now, let us mention that
	this distribution is chosen so that \ts $\Eb[X_{n}]= \lambda$.
	This is to ensure that the probability \ts $\Pb \bigl[\Cc]$ \ts
	decays polynomially rather than exponentially, i.e., so that the paths
	in \ts $\Path(\la/\mu)$ \ts are living in the typical regime and
	not the large deviation regime.
	
	\smallskip
	
	\subsection{Polynomial decay}\label{ss:paths-poly-decay}
	% We now build toward the proof of Lemma~\ref{l:most-triplets-admissible}.
	Let \ts $\bX = (X_t)_{1\le t\le n}$ \ts be the random walk on $\Zb^d$ defined above.
	It follows from Proposition~\ref{p:cond-prob} that \ts $\Pb[\bX \mid \Cc]$ \ts is uniform
	in~$\Path(\la/\mu)$.  The following lemma gives a lower bound on \ts $\Pb[ \Cc]$.
	
	\begin{lemma}\label{l:lower bound for Cc}
		Fix $d\geq 2$. There exists an absolute constant $C_d>0$ such that the
		following holds.  Let $\lambda/\mu\vdash n$, and \ts $\mu_i < \lambda_i$,
		for all $1\le i \le d$.  Then
		$$
		\Pb[ \Cc]  \,\geq  \, C_d \. n^{- \frac{d^2-1}{2}}.
		$$
	\end{lemma}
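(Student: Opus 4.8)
The plan is to turn $\Pb[\Cc]$ into an exact combinatorial quantity, bound that quantity from below via the Naruse hook-length formula, and finish with Stirling. Write $m_i := \lambda_i - \mu_i \ge 1$, so $q_i = m_i/n$ and $m_1 + \cdots + m_d = n$. The first step is the exact identity
\[
\Pb[\Cc] \. = \. f(\lambda/\mu)\,\prod_{i=1}^d\left(\frac{m_i}{n}\right)^{m_i},
\]
which is immediate from (the argument in) Proposition~\ref{p:cond-prob}: every path $P \in \Path(\lambda/\mu)$ uses exactly $m_i$ steps $e_i$, hence has probability $\prod_i q_i^{m_i} = \prod_i (m_i/n)^{m_i}$ regardless of $P$; since $\Cc = \{\bX \in \Path(\lambda/\mu)\}$ and $|\Path(\lambda/\mu)| = f(\lambda/\mu)$, the claim follows. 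So everything reduces to a lower bound on $f(\lambda/\mu)\prod_i (m_i/n)^{m_i}$.

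Next I would lower-bound $f(\lambda/\mu)$ by $F(\lambda/\mu) = n!\prod_{(i,j)\in\lambda/\mu}h_\lambda(i,j)^{-1}$ using Theorem~\ref{t:NHLF-asy}, and then bound the hook product from above. The key observation is that, since $\lambda'_j \le d$, for each $(i,j)\in\lambda$ we have $h_\lambda(i,j) = (\lambda_i - j) + (\lambda'_j - i + 1) \le (\lambda_i - j) + (d-i+1)$; as $j$ runs over row $i$ of $\lambda/\mu$ the quantity $\lambda_i - j$ takes each of the values $0,1,\ldots,m_i-1$ exactly once, so
\[
\prod_{j=\mu_i+1}^{\lambda_i} h_\lambda(i,j) \. \le \. \prod_{\ell=0}^{m_i-1}(\ell + d - i + 1) \. = \. \frac{(m_i+d-i)!}{(d-i)!} \. = \. m_i!\binom{m_i+d-i}{d-i} \. \le \. m_i!\,(n+d)^{d-i}.
\]
Multiplying over $i=1,\ldots,d$ and using $\sum_{i=1}^d (d-i) = \binom{d}{2}$ gives $\prod_{(i,j)\in\lambda/\mu}h_\lambda(i,j) \le \bigl(\prod_{i} m_i!\bigr)(n+d)^{d(d-1)/2}$, hence
\[
f(\lambda/\mu) \. \ge \. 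F(\lambda/\mu) \. \ge \. \frac{n!}{\bigl(\prod_{i=1}^d m_i!\bigr)\,(n+d)^{d(d-1)/2}}.
\]

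Finally I would substitute this into the identity of Step~1 and invoke \eqref{eq:Stirling-formula}. From $n!/n^n \ge \sqrt{2\pi}\,n^{1/2}e^{-n}$, from $m_i!/m_i^{m_i} \le \sqrt{m_i}\,e^{1-m_i}$, and from $\prod_i m_i \le (n/d)^d$ (AM--GM), one obtains
\[
\Pb[\Cc] \. \ge \. \frac{1}{(n+d)^{d(d-1)/2}}\cdot\frac{n!}{n^n}\cdot\prod_{i=1}^d\frac{m_i^{m_i}}{m_i!} \. \ge \. \sqrt{2\pi}\,d^{d/2}e^{-d}\,\frac{n^{(1-d)/2}}{(n+d)^{d(d-1)/2}},
\]
and since $n = |\lambda/\mu| \ge d$ this is at least $C_d\,n^{-(d-1)/2 - d(d-1)/2} = C_d\,n^{-(d^2-1)/2}$ for $C_d := \sqrt{2\pi}\,d^{d/2}e^{-d}2^{-d(d-1)/2} > 0$. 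The only step with any content is the hook estimate: it says the hooks inside a $d$-row skew shape are, row by row, a bounded ($\le d$) shift of the flat hooks $1,2,\ldots,m_i$, costing only a polynomial factor — and that factor $(n+d)^{d(d-1)/2}$ is exactly what produces the $d(d-1)/2$ in the exponent, on top of the $(d-1)/2$ coming from the (unconstrained) local limit theorem for the endpoint. Everything else is routine Stirling bookkeeping.
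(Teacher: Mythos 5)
Your proposal is correct and follows essentially the same route as the paper's proof: the exact identity $\Pb[\Cc]=f(\lambda/\mu)\prod_i(q_i)^{\lambda_i-\mu_i}$ from Proposition~\ref{p:cond-prob}, the lower bound $f(\lambda/\mu)\ge F(\lambda/\mu)$ via Theorem~\ref{t:NHLF-asy}, the row-by-row hook bound using $\lambda'_j\le d$, and Stirling. The only difference is bookkeeping — you factor the hook product as $m_i!\binom{m_i+d-i}{d-i}$, cancel $m_i^{m_i}$ against $m_i!$ and finish with AM--GM, whereas the paper applies Stirling directly to $(\lambda_i-\mu_i+d-i)!$ — and both yield the same exponent $-(d^2-1)/2$ with a valid constant $C_d$.
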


	\begin{proof}
		It follows from the proof of Proposition~\ref{p:cond-prob} that
		$$
		\Pb \big[ \Cc \big] \, =   \, \frac{\Pb\bigl[\ts \bX \ts \bigr]}{\Pb\bigl[\ts \bX \mid \ts \Cc \. \bigr]}\, =   \,  f(\lambda/\mu) \, \prod_{i=1}^d \left(\frac{\lambda_i-\mu_i}{n}\right)^{\lambda_i-\mu_i}.
		$$
		Recall the definition of $F(\lambda/\mu)$ in~\eqref{eq:F-def}.
		Theorem~\ref{t:NHLF-asy} and definition~\eqref{eq:hook-def} give:
		\begin{align*}
		f(\lambda/\mu) \ \geq \ F(\lambda/\mu)  \, =  \,
		n! \. \prod_{(i,j) \in \lambda \setminus \mu} \frac{1}{h_\lambda(i,j)}
		\, \geq   \, n! \, \prod_{i=1}^d \. \frac{1}{(\lambda_i-\mu_i+d-i)!}\..
		\end{align*}
		Combining the two equations above, we then get that $\Pb \big[ \Cc \big]$ is bounded
		from below by
		\begin{align*}
		\Pb \big[ \Cc \big] \, & \ge   \,  \frac{n!}{n^n} \, \prod_{i=1}^d \frac{(\lambda_i-\mu_i)^{\lambda_i-\mu_i}}{(\lambda_i-\mu_i+d-i)!} \\
		& \geq   \sqrt{2 \ts\pi\ts n} \. e^{-n} \. \prod_{i=1}^d \frac{ (\la_i-\mu_i)^{\la_i-\mu_i}}{ (\la_i-\mu_i+d-i)^{\la_i-\mu_i+d-i+1/2} \. e^{-\la_i+\mu_i-d+i+1}} \\
		& \geq   \sqrt{2 \ts\pi\ts n} \. e^{-\frac{d(d-1)}{2}}  \.  \prod_{i=1}^d \. e^{-d+i+1} \.\frac{1}{ (\la_i -\mu_i+d-i)^{d-i+1/2} } \\
		& \geq  \sqrt{2\pi} \. e^{-d(d-2)} n^{-d^2/2+1/2}\ts.
		\end{align*}
		Here we used Stirling's formula~\eqref{eq:Stirling-formula} to bound the factorials and
		$$
		\left(1 + \frac{d-i}{\la_i-\mu_i}\right)^{\la_i-\mu_i} \, \leq \, e^{d-i}\ts.
		$$
		The assumption \ts $\mu_i < \lambda_i$ \ts for all $1\le i \le d$, is used to conclude that \ts
		$(\la_i-\mu_i +d-i) \leq n$.  Taking \ts $C_d = \sqrt{2\pi}e^{-d(d-2)}$ \ts implies the result.
	\end{proof}
	
	% We are now ready to present the proof of Lemma~\ref{l:most-triplets-admissible}.
	
	\smallskip

	\subsection{Most triplets are $\ep$-admissible}\label{ss:most-triplets-admissible}
	We can now prove the main result of this section, that the probability
	of \ts $(\lambda,Z_t,\mu)$ \ts not being $\ep$-admissible is exponentially small.
	
	\smallskip
	
	\begin{lemma}\label{l:most-triplets-admissible}
		Fix $d \ge 2$ and $\ep >0$.  Let \ts $(\lambda,\mu)\in \La(n,d,\ep)$ \ts
		be an $\ep$-admissible pair.  Suppose \ts $t\in \nn$ \ts satisfies
		\begin{equation}\label{eq:assumption on t}
		\ep^2 \. \leq \, \frac{t}{n} \. \leq \. 1-\ep^2.
		\end{equation}
		Then there exists a  constant \ts $C_{d,\ep}>0$, such that
		\begin{equation*}
		\Pblm\bigl[(\lambda,Z_t,\mu) \notin \Omega(n,d,\ep)  \bigr] \, \leq \,
		C_{d,\ep} \. n^{\frac{d^2-1}{2}} \. e^{- 2 \sqrt{n}} \ts.
		\end{equation*}
	\end{lemma}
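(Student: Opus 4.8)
The plan is to transfer the estimate to the \emph{unconditioned} lattice random walk $\bX$ of~$\S$\ref{ss:paths-LRW}, where the two ways of failing $\ep$-admissibility become standard concentration events. By Proposition~\ref{p:cond-prob}, the law of $(Z_0,\dots,Z_n)$ under $\Pblm$ is that of $(X_0,\dots,X_n)$ conditioned on~$\Cc$, so
\[
\Pblm\bigl[(\lambda,Z_t,\mu)\notin\Om(n,d,\ep)\bigr]\ =\ \frac{\Pb\bigl[\{(\lambda,X_t,\mu)\notin\Om(n,d,\ep)\}\cap\Cc\bigr]}{\Pb[\Cc]}.
\]
Since $(\lambda,\mu)$ is $\ep$-admissible we have $\lambda_i-\mu_i\ge\ep|\lambda|>0$, so Lemma~\ref{l:lower bound for Cc} applies and gives $\Pb[\Cc]\ge C_d\,n^{-(d^2-1)/2}$. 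It therefore suffices to bound the numerator by $C'_{d,\ep}\,e^{-2\sqrt n}$.

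On the event $\Cc$ the vector $X_t$ is a partition with $\mu\subseteq X_t\subseteq\lambda$, and $(\lambda,\mu)$ is $\ep$-admissible by hypothesis; hence on $\Cc$ the triplet $(\lambda,X_t,\mu)$ can fail to lie in $\Om(n,d,\ep)$ only by failing to be $\ve$-separated or failing to be progressive. The key point is that the step distribution $q_i=\tfrac1n(\lambda_i-\mu_i)$ was chosen precisely so that $\Eb[X_t]=\mu+\tfrac{t}{n}(\lambda-\mu)=(1-\tfrac{t}{n})\mu+\tfrac{t}{n}\lambda$, and since $|X_t|=|\mu|+t$ identically, the parameter $p=p(\lambda,X_t,\mu)$ of~\eqref{eq:definition p} equals $t/n$; thus the ``center'' $(1-p)\mu+p\lambda$ of~\eqref{eq:progressive-def} is exactly $\Eb[X_t]$, and failure of progressivity is the event $\{\|X_t-\Eb[X_t]\|_\infty>n^{3/4}\}$. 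For $\ve$-separatedness, \eqref{eq:def-admiss-pair} together with the hypothesis~\eqref{eq:assumption on t} gives, for every $i$,
\[
\Eb[X_t](i)-\mu_i=\tfrac{t}{n}(\lambda_i-\mu_i)\ \ge\ \ep^3|\lambda|,\qquad
\lambda_i-\Eb[X_t](i)=\bigl(1-\tfrac{t}{n}\bigr)(\lambda_i-\mu_i)\ \ge\ \ep^3|\lambda|,
\]
so $\|X_t-\Eb[X_t]\|_\infty\le\tfrac12\ep^3|\lambda|$ already forces both inequalities of~\eqref{eq:separated-def}. Using $|\lambda|\ge n$, we conclude
\[
\{(\lambda,X_t,\mu)\notin\Om(n,d,\ep)\}\cap\Cc\ \subseteq\ \bigl\{\|X_t-\Eb[X_t]\|_\infty\ge\tfrac12\ep^3 n\bigr\}\ \cup\ \bigl\{\|X_t-\Eb[X_t]\|_\infty\ge n^{3/4}\bigr\}.
\]

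Next I would bound these two deviation probabilities with Hoeffding's inequality (Theorem~\ref{t:Hoeffding's inequality}) applied to $\bX$ with standard basis steps $E=\{e_1,\dots,e_d\}$ and distribution $Q$ (note $\|e_i\|=1$, and the $\ell_\infty$ deviations above are dominated by the corresponding $\ell_2$ deviations). Since~\eqref{eq:assumption on t} gives $1\le t\le(1-\ep^2)n\le n$, we get
\[
\Pb\bigl[\|X_t-\Eb[X_t]\|_\infty\ge n^{3/4}\bigr]\ \le\ 2\exp\!\bigl(-2n^{3/2}/t\bigr)\ \le\ 2\,e^{-2\sqrt n},
\]
\[
\Pb\bigl[\|X_t-\Eb[X_t]\|_\infty\ge\tfrac12\ep^3 n\bigr]\ \le\ 2\exp\!\bigl(-\ep^6 n^2/(2t)\bigr)\ \le\ 2\,e^{-\ep^6 n/2}.
\]
For $n\ge 16\,\ep^{-12}$ the second bound is at most $2e^{-2\sqrt n}$, and for the finitely many smaller~$n$ the asserted inequality holds after enlarging $C_{d,\ep}$ (its right-hand side being bounded below by a positive constant on that finite range). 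Combining with the first paragraph,
\[
\Pblm\bigl[(\lambda,Z_t,\mu)\notin\Om(n,d,\ep)\bigr]\ \le\ \frac{4\,e^{-2\sqrt n}}{C_d\,n^{-(d^2-1)/2}}\ =\ \frac{4}{C_d}\,n^{(d^2-1)/2}\,e^{-2\sqrt n},
\]
which is the claim with $C_{d,\ep}$ a suitable multiple of $1/C_d$.

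The argument has no conceptual difficulty, but it rests on two bookkeeping points being exactly right. First, the engineered drift must make the progressive ``center'' equal $\Eb[X_t]$ on the nose, so that \emph{both} failure modes are genuine concentration events for $\bX$ and Hoeffding applies verbatim. Second, the exponents are finely calibrated: the progressive threshold $n^{3/4}$ together with $t\le n$ produces precisely the rate $2n^{3/2}/n=2\sqrt n$ demanded in the statement, so one must check that the $\ve$-separated contribution $e^{-\ep^6 n/2}$ is dominated by $e^{-2\sqrt n}$ for all large~$n$ and therefore does not weaken the bound.
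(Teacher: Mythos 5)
Your proof is correct and follows essentially the same route as the paper's: transfer to the unconditioned walk $\bX$ via Proposition~\ref{p:cond-prob}, apply Hoeffding's inequality, and divide by the polynomial lower bound on $\Pb[\Cc]$ from Lemma~\ref{l:lower bound for Cc}. The only (harmless) cosmetic difference is that the paper uses the single threshold $n^{3/4}$ and notes it already forces $\ve$-separation for large $n$, whereas you split the failure event into two deviation events and observe that the separation term $e^{-\ep^6 n/2}$ is dominated by $e^{-2\sqrt{n}}$.
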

	
	\smallskip
	
	\begin{proof}
		Let
		$$
		\xi_t \, := \, \frac{t}{n} \ts (\la-\mu) \. +\. \mu\ts.
		$$
		Note that $\xi_t\in \rr_+^d$ is not necessarily in~$\pp_d$.  Suppose that $Z_t$ satisfies
		\begin{equation}\label{eq:concentration}
		\left\| Z_t- \xi_t \right\| \, \leq \, n^{3/4}.
		\end{equation}
		This assumption implies:
		\begin{align*}
		\left|Z_t(i)- \mu_i \right| \  & \geq  \ |\xi_t(i)- \mu_i | \. - \. n^{3/4} \, = \, \frac{t}{n} (\lambda_i-\mu_i)  \, - \,  n^{3/4} \\
		& \geq_{\eqref{eq:assumption on t}} \  \ep^2  (\lambda_i-\mu_i)  \, - \,  n^{3/4} \, \geq_{\eqref{eq:def-admiss-pair}} \ \ep^3 |\lambda|  \, - \,  n^{3/4} \, \geq \, \frac{\ep^3}{2} \. |\lambda|\.,
		\end{align*}
		for all \ts $1\le i \le d$, and $n$ large enough. \ts
		By the same reasoning, the assumption~\eqref{eq:concentration} implies:
		$$
		\left|\lambda_i -Z_t(i) \right| \, \geq \, \frac{\ep^3}{2} \. |\lambda|\.,
		$$
		for all \ts $1\le i \le d$, and $n$ large enough.  By the definitions~\eqref{eq:separated-def}
		and~\eqref{eq:progressive-def} of $\ep$-admissible triplets, the
		assumption~\eqref{eq:concentration} implies that \ts $(\lambda, Z_t,\mu) \in \Om(n,d,\ve)$
		\ts for $n$ large enough. We conclude:
		\begin{equation*}
		\Pblm\bigl[(\lambda,Z_t,\mu) \notin \Omega(n,d,\ep)  \bigr] \, \leq \,
		\Pb \big[\left\| Z_t- \xi_t \right\| \. \geq \, n^{3/4}  \big]\ts,
		\end{equation*}
		for $n$ large enough.
		
		By Proposition~\ref{p:cond-prob}, the lattice random walk~$\bX$ conditioned to~$\Cc$
		coincides with~$\bZ$. Observe that \ts $\xi_t = \Eb[X_t]$.  Since \ts $t \leq n$, we have:
		\begin{align*}
		\Pb \big[\left\| Z_t- \xi_t \right\| \. \geq \, n^{3/4}  \big] \,
		& \le \,  \Pb \big[\left\| X_t- \xi_t \right\| \. \geq \, n^{3/4}  \ \mid \ \Cc \big]
		\, \le \, \frac{1}{\Pb[\Cc]} \.\cdot \. \Pb \big[\left\| X_t- \xi_t \right\| \. \geq \, n^{3/4}   \big] \\
		& \leq_{(\text{Thm}~\ref{t:Hoeffding's inequality})}   \ \frac{1}{\Pb[\Cc]} \.\cdot \. 2\. e^{- 2 \ts n^{3/2}/t}
		\ \leq   \ \frac{1}{\Pb[\Cc]} \. \cdot \. 2\. e^{- 2 \sqrt{n}}  \\
		& \leq_{(\text{Lem}~\ref{l:lower bound for Cc})}  \ 2\ts C_{d} \,  n^{\frac{d^2-1}{2}} \. \. e^{- 2 \sqrt{n}} \.,
		\end{align*}
		for $n$ large enough, and where \ts $C_d>0$ \ts is the constant from Lemma~\ref{l:lower bound for Cc}.
		This implies the result.
		%\ $\sq$
	\end{proof}
	
	\bigskip
	
	\section{Asymptotics and bounds for lattice paths}\label{s:technical}
	
	This section contains bounds and estimates used to bound
	the sorting probability in the proof of
	Main Lemma~\ref{l:main}.
	
	\smallskip
	
	\subsection{Asymptotics for hook-lengths}
	In this subsection, we prove an asymptotic estimate for $F(\lambda/\mu)$
	defined in~\eqref{eq:F-def}, for all $\ep$-admissible pairs $(\lambda,\mu)$.
	First, we need the following technical lemma.
	
	\begin{lemma}\label{l:mu hook length estimate}
		Fix $d\geq 2$ and $\ep >0$.  Let \ts $(\lambda,\mu)\in \La(n,d,\ep)$ \ts  be an $\ep$-admissible pair.
		Then:
		$$
		\frac{\lambda_i!}{(\lambda_i-\mu_i)!} \, \leq \,  \prod_{j=1}^{\mu_i} \. h_\lambda(i,j)
		\, \leq \,  \ep^{-(d-i)} \.  \frac{\lambda_i!}{(\lambda_i-\mu_i)!}\.,
		$$
		for all \ts $1\le i \le d$.
	\end{lemma}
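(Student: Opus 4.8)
The plan is to split each hook length into its arm and leg contributions and to exploit that the leg contribution is small because $\la$ has at most $d$ rows. Concretely, recall from~\eqref{eq:hook-def} that for a square $(i,j)$ with $j\le\mu_i$ we may write
\[
h_\la(i,j) \, = \, (\la_i-j+1) \, + \, (\la_j'-i),
\]
and that the ``arm'' factor telescopes:
\[
\prod_{j=1}^{\mu_i}(\la_i-j+1) \, = \, \la_i(\la_i-1)\cdots(\la_i-\mu_i+1) \, = \, \frac{\la_i!}{(\la_i-\mu_i)!}\,.
\]

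For the lower bound I would note that, since $\mu\ssu\la$, every $j\le\mu_i\le\la_i$ gives $(i,j)\in\la$, so $\la_j'\ge i$ and hence $h_\la(i,j)\ge\la_i-j+1$; taking the product over $j=1,\dots,\mu_i$ yields the left inequality. For the upper bound I would use that $\la$ has at most $d$ parts, so $\la_j'\le d$ and $h_\la(i,j)\le(\la_i-j+1)+(d-i)$. Multiplying over $j=1,\dots,\mu_i$, whose factors run through the consecutive integers from $\la_i-\mu_i+d-i+1$ to $\la_i+d-i$, gives
\[
\prod_{j=1}^{\mu_i}h_\la(i,j)\,\le\,\frac{(\la_i+d-i)!}{(\la_i-\mu_i+d-i)!}\,,
\]
and dividing by $\la_i!/(\la_i-\mu_i)!$ and regrouping the factorials produces the clean expression
\[
\frac{(\la_i+d-i)!\,(\la_i-\mu_i)!}{\la_i!\,(\la_i-\mu_i+d-i)!}\,=\,\prod_{s=1}^{d-i}\frac{\la_i+s}{\la_i-\mu_i+s}\,.
\]

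It then suffices to bound each of the $d-i$ factors by $1/\ep$. Here I would invoke $\ep$-admissibility~\eqref{eq:def-admiss-pair}, which gives $\la_i-\mu_i\ge\ep\ts|\la|\ge\ep\ts\la_i$ (using $|\la|\ge\la_i$), together with $\ep\le 1/d<1$ from Proposition~\ref{p:admis-1d}. Then $\ep(\la_i+s)=\ep\ts\la_i+\ep\ts s\le(\la_i-\mu_i)+s$, i.e.\ $\tfrac{\la_i+s}{\la_i-\mu_i+s}\le\tfrac1\ep$; taking the product over $s=1,\dots,d-i$ yields the right inequality. (The boundary cases $\mu_i=0$ and $i=d$ reduce to equalities and need no separate argument.)

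The argument has no genuine obstacle; the one point requiring a little care is obtaining the exact constant $\ep^{-(d-i)}$ rather than $\ep^{-(d-i)}(1+o(1))$. This is why, in the last step, one should split $\ep(\la_i+s)=\ep\ts\la_i+\ep\ts s$ and bound the two summands separately --- by $\la_i-\mu_i$ (from admissibility) and by $s$ (using $\ep<1$) --- instead of the cruder estimate $\tfrac{\la_i+s}{\la_i-\mu_i+s}\le\tfrac{\la_i+s}{\ep\ts\la_i}$, which only gives the asymptotic form.
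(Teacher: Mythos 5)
Your proof is correct and follows essentially the same route as the paper's: both compare $\prod_{j\le\mu_i} h_\la(i,j)$ with the falling factorial $\la_i!/(\la_i-\mu_i)!$ and then bound the correction factor $\prod_{s=1}^{d-i}\frac{\la_i+s}{\la_i-\mu_i+s}$ (or its analogue) by $\ep^{-(d-i)}$ using $\la_i-\mu_i\ge\ep\ts|\la|$ and $\ep\le 1$. The only difference is cosmetic: where you use the crude termwise bound $\la_j'\le d$, the paper writes down the exact product identity for the row-$i$ hooks (keeping track of which columns have full height via the index $J$) and only then discards the extra information.
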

	
	\begin{proof}
		The lower bound is clear since $h_{\la}(i,j) \geq \la_i - j+1$.  For the upper bound,
		let $J$ be the largest nonnegative integer such that \ts $\lambda_{J+i} \geq \mu_i$.
		It follows from the definition of hook-lengths that
		\begin{equation}\label{eq:proof-lemma-hook-estimates}
		\prod_{j=1}^{\mu_i} \. h_{\lambda} (i,j) \, =  \,
		\frac{\lambda_i!}{(\lambda_i-\mu_i)!} \, \cdot \,  \prod_{k=1}^{d-i}  \. (\lambda_i+k)  \,
		\prod_{k=1}^{J} \. \frac{1}{\lambda_i-\mu_i+k} \, \prod_{k=J+1}^{d-i} \. \frac{1}{\lambda_i-\lambda_{i+k}+k} \..
		\end{equation}
		First, note that for all $k >J$, we have
		\begin{align*}
		\lambda_i-\lambda_{i+k} \ \geq & \ \lambda_i -\mu_i  \ \geq \ \ep|\lambda|,
		\end{align*}
		where the first inequality follows from the maximality of~$J$,
		and the second inequality follows from  \eqref{eq:def-admiss-pair}.
		This implies
		\begin{align*}
		& \prod_{k=1}^{d-i}  (\lambda_i+k) \, \prod_{k=1}^{J} \frac{1}{\lambda_i-\mu_i+k} \, \prod_{k=J+1}^{d-i} \frac{1}{\lambda_i-\lambda_{i+k}+ k} \\
		&  \qquad \leq  \,   \prod_{k=1}^{d-i}  (|\lambda|+k) \, \prod_{k=1}^{J} \frac{1}{\ep|\lambda|+k} \, \prod_{k=J+1}^{d-i} \frac{1}{\ep|\lambda|+k}  \, \leq  \, \prod_{k=1}^{d-i} \frac{|\lambda|+k}{\ep|\lambda|+k}
		\, \leq   \, \ep^{-(d-i)},
		\end{align*}
		where the last inequality follows since \ts
		$\ep^{-1}\geq d \geq 1$, by Proposition~\ref{p:admis-1d}.
		Together with~\eqref{eq:proof-lemma-hook-estimates}, this completes the proof.
	\end{proof}
	
	\begin{lemma}\label{l:F -- asymptotic estimate}
		Fix $d\geq 2$ and $\ep >0$.  Let $(\lambda,\mu)\in \La(n,d,\ep)$.  Then:
		$$
		G(\la/\mu) \, \leq \, F(\lambda/\mu) \, \leq \, \ep^{-\frac{d(d-1)}{2}} \. G(\la/\mu)\.,
		$$
		where
		\begin{equation}\label{eq:G-function-def}
		G(\lambda/\mu) \, := \,  \frac{n!}{(\lambda_1-\mu_1)! \. \cdots \ts (\lambda_d-\mu_d)!} \ \prod_{1 \leqslant i < j \leqslant d}  \frac{\lambda_i-\lambda_j+j-i}{\lambda_i+j-i}\..
		\end{equation}
	\end{lemma}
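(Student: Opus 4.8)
The plan is to rewrite both $F(\lambda/\mu)$ and $G(\lambda/\mu)$ in terms of the single straight-shape number $f(\lambda)$, where $\lambda$ has size $|\lambda|=n+|\mu|$, and then to read off the two inequalities directly from the row-by-row hook estimate of Lemma~\ref{l:mu hook length estimate}. First I would observe, using the definition~\eqref{eq:F-def} of $F$ together with the HLF~\eqref{eq:HLF} applied to the straight shape $\lambda$,
$$
F(\lambda/\mu) \,=\, n!\,\prod_{(i,j)\in\lambda}\frac{1}{h_\lambda(i,j)}\;\prod_{(i,j)\in\mu} h_\lambda(i,j)
\,=\, \frac{n!}{|\lambda|!}\, f(\lambda)\,\prod_{(i,j)\in\mu} h_\lambda(i,j).
$$
Separately, plugging the Frobenius formula~\eqref{eq:SYT-Frob} for $f(\lambda)$ into the right-hand side and cancelling each $\lambda_i!$ against $1/(\lambda_i-\mu_i)!$ gives
$$
\frac{n!}{|\lambda|!}\, f(\lambda)\,\prod_{i=1}^d \frac{\lambda_i!}{(\lambda_i-\mu_i)!}
\;=\; \frac{n!}{(\lambda_1-\mu_1)!\cdots(\lambda_d-\mu_d)!}\;\prod_{1\le i<j\le d}\frac{\lambda_i-\lambda_j+j-i}{\lambda_i+j-i}
\;=\; G(\lambda/\mu).
$$

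With these two identities in hand, the asserted chain $G(\lambda/\mu)\le F(\lambda/\mu)\le \ep^{-d(d-1)/2}\,G(\lambda/\mu)$ becomes exactly the claim that
$$
\prod_{i=1}^d \frac{\lambda_i!}{(\lambda_i-\mu_i)!} \;\le\; \prod_{(i,j)\in\mu} h_\lambda(i,j)\;\le\; \ep^{-\frac{d(d-1)}{2}}\,\prod_{i=1}^d \frac{\lambda_i!}{(\lambda_i-\mu_i)!}.
$$
To prove this I would factor the middle product over rows, $\prod_{(i,j)\in\mu} h_\lambda(i,j) = \prod_{i=1}^d \prod_{j=1}^{\mu_i} h_\lambda(i,j)$, apply Lemma~\ref{l:mu hook length estimate} to each row to get $\tfrac{\lambda_i!}{(\lambda_i-\mu_i)!}\le \prod_{j=1}^{\mu_i} h_\lambda(i,j)\le \ep^{-(d-i)}\,\tfrac{\lambda_i!}{(\lambda_i-\mu_i)!}$, and multiply over $i=1,\dots,d$; since $\sum_{i=1}^d(d-i)=\binom d2=\tfrac{d(d-1)}{2}$, the accumulated constant is exactly $\ep^{-d(d-1)/2}$. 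This is the only place where $\ep$-admissibility of $(\lambda,\mu)$ is used; the two displayed identities above hold for any $\mu\subseteq\lambda$ with $\lambda,\mu\in\pp_d$.

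I do not expect a genuine obstacle here: the substantive estimate is already packaged in Lemma~\ref{l:mu hook length estimate}, and the remainder is bookkeeping. The one point that needs care is keeping the size $n=|\lambda/\mu|$ of the skew shape distinct from $|\lambda|=n+|\mu|$ when invoking the HLF and the Frobenius formula (both stated for straight shapes), and checking that the $\lambda_i!$ factors produced by Frobenius cancel cleanly against the $(\lambda_i-\mu_i)!$ in the denominator so as to leave precisely the product defining $G(\lambda/\mu)$.
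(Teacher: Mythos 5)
Your proposal is correct and follows essentially the same route as the paper: the paper's proof likewise rewrites $F(\lambda/\mu)$ as $\frac{n!}{\lambda_1!\cdots\lambda_d!}\prod_{i<j}\frac{\lambda_i-\lambda_j+j-i}{\lambda_i+j-i}\prod_{(i,j)\in\mu}h_\lambda(i,j)$ (the identity you derive via the HLF and Frobenius formula) and then substitutes the row-by-row bounds of Lemma~\ref{l:mu hook length estimate}, accumulating the constant $\ep^{-d(d-1)/2}$ exactly as you do.
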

	
	\begin{proof}
		By definition~\eqref{eq:F-def}, we have:
		$$
		%\begin{align*}
		F(\lambda/\mu) \, =  \, n! \, \prod_{(i,j)\in \lambda} \. \frac{1}{h_\lambda(i,j)}
		\.  \prod_{(i,j) \in \mu} h_{\lambda}(i,j) \,
		= \, \frac{n!}{\lambda_1! \. \cdots \lambda_d!} \,
		\prod_{1 \leqslant i < j \leqslant d}  \frac{\lambda_i-\lambda_j+j-i}{\lambda_i+j-i}
		\, \prod_{(i,j) \in \mu} h_{\lambda}(i,j)\ts.
		$$
		%\end{align*}
		The result now follows by substituting the upper and lower bounds
		in Lemma~\ref{l:mu hook length estimate} to the
		products of hooks on the RHS, over all \ts $1\le i \le d$.
	\end{proof}
	
	\smallskip

	\subsection{Asymptotics for binomial coefficients}
	Consider a triplet of partitions \ts $(\lambda,\gamma,\mu)$, such that \ts $\la/\mu\vdash n$.
	Denote by \ts $\by= \by(\lambda,\gamma,\mu)$ \ts the vector $(y_1,\ldots, y_d)\in \rr^d$,
	defined as
	\begin{equation} \label{eq:definition yi}
	y_i \, := \, \frac{ \gamma_i \. - \. (1-p)\mu_i \. - \. p\lambda_i }{\sqrt{n}}\,,
	\quad \text{where \. $p \in [0,1]$ \. is given by~\eqref{eq:definition p}.}
	\end{equation}

	\begin{lemma}\label{l:binomial formula asymptotic}
		Fix $d\geq 2$ and $\ep >0$.
		Let \ts $(\lambda,\gamma,\mu)\in \Om(n,d,\ep)$ \ts be an $\ep$-admissible triplet.
		Then there exists an absolute constant $B_{d,\ep}>0$, such that
		$$
		\frac{\binom{\lambda_1-\mu_1}{\gamma_1- \mu_1}  \ldots \binom{\lambda_d-\mu_d}{\gamma_d-\mu_d}}{ \binom{n}{|\gamma|-|\mu|} }
		\ \leq  \  B_{d,\ep} \, n^{-\frac{(d-1)}{2}} \,
		\exp \left[ - 2 \sum_{i=1}^d y_i^2\right],
		$$
		where $y_i$ are as in~\eqref{eq:definition yi}.
	\end{lemma}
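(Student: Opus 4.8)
The plan is to bound every binomial coefficient by Stirling's formula and keep separate track of the exponential and the polynomial contributions. Write $a_i := \la_i-\mu_i$ and $b_i := \ga_i-\mu_i$ for $1\le i\le d$, so that $\sum_i a_i = |\la/\mu| = n$ and $\sum_i b_i = |\ga|-|\mu| = pn$, with $p$ as in~\eqref{eq:definition p}. Since $(\la,\ga,\mu)$ is $\ep$-separated (see~\eqref{eq:separated-def}), we have $b_i \ge \tfrac12\ep^3 n$ and $a_i - b_i = \la_i - \ga_i \ge \tfrac12\ep^3 n$; since $(\la,\mu)$ is $\ep$-admissible, $a_i \ge \ep|\la| \ge \ep n$; and trivially $a_i \le n$. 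Consequently $p = n^{-1}\sum_i b_i \ge \tfrac12\ep^3$ and $1-p = n^{-1}\sum_i(a_i-b_i) \ge \tfrac12\ep^3$, and for each $i$ the ratio $r_i := b_i/a_i$ lies in $[\tfrac12\ep^3,\, 1-\tfrac12\ep^3] \subset (0,1)$. Finally, from~\eqref{eq:definition yi} one has $y_i = (b_i - p\ts a_i)/\sqrt n$; writing $\de_i := r_i - p = \sqrt n\ts y_i/a_i$ we record the identity
\[
\sum_{i=1}^d a_i\ts\de_i \, = \, \sum_{i=1}^d b_i \, - \, p\sum_{i=1}^d a_i \, = \, pn - pn \, = \, 0.
\]

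First I would apply Proposition~\ref{p:Stirling's formula}: an upper bound on each $\binom{a_i}{b_i}$ and a lower bound on $\binom{n}{pn}$ (all hypotheses ``$a>b>0$'' hold by the estimates above, and $pn = |\ga|-|\mu|$ is an integer). In the resulting ratio the square-root prefactors combine: $\prod_i \sqrt{a_i/(b_i(a_i-b_i))} \le (2\ep^{-3}n^{-1/2})^d$, while $\sqrt{n/(pn(1-p)n)} \ge 2n^{-1/2}$ because $p(1-p)\le\tfrac14$, so the prefactor ratio is at most $2^{d-1}\ep^{-3d}\,n^{-(d-1)/2}$, up to an absolute constant depending only on $d$ coming from the Stirling constants. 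This already supplies the claimed power of $n$, and it remains only to show that the exponential factor satisfies
\[
\exp\!\Big[\ts\sum_{i=1}^d a_i\ts H(r_i) \, - \, n\ts H(p)\ts\Big] \, \le \, \exp\!\Big[-2\sum_{i=1}^d y_i^2\Big],
\]
where $H$ is the binary entropy function of Proposition~\ref{p:Stirling's formula}.

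The heart of the argument is a second-order Taylor expansion of $H$ at $p$. Using $H'(r) = \log\frac{1-r}{r}$ and $H''(r) = -1/(r(1-r))$, Taylor's theorem with the Lagrange remainder gives, for each $i$, $H(r_i) = H(p) + H'(p)\ts\de_i - \tfrac12\ts\de_i^2/(\xi_i(1-\xi_i))$ with $\xi_i$ strictly between $p$ and $r_i$; both of these lie in $[\tfrac12\ep^3, 1-\tfrac12\ep^3]$, so $\xi_i \in (0,1)$ and the remainder is legitimate. Multiplying by $a_i$ and summing over $i$, the term $\sum_i a_i H(p) = nH(p)$ cancels and the linear term $H'(p)\sum_i a_i\de_i$ vanishes by the identity above, leaving
\[
\sum_{i=1}^d a_i\ts H(r_i) \, - \, n\ts H(p) \, = \, -\frac12 \sum_{i=1}^d \frac{a_i\ts\de_i^2}{\xi_i(1-\xi_i)} \, = \, -\frac12 \sum_{i=1}^d \frac{n\ts y_i^2}{a_i\ts\xi_i(1-\xi_i)}\,,
\]
where the last step uses $a_i\de_i^2 = n\ts y_i^2/a_i$. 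Since $a_i \le n$ and $\xi_i(1-\xi_i) \le \tfrac14$, each summand is at least $4\ts y_i^2$, so the right-hand side is $\le -2\sum_i y_i^2$. Exponentiating and combining with the prefactor bound yields the lemma with $B_{d,\ep}$ the product of the Stirling constants, $2^{d-1}$, and $\ep^{-3d}$.

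The only point needing genuine care is the vanishing of the first-order Taylor term: this is exactly where the centering built into the definition of $y_i$ (equivalently, the choice of drift $q_i = (\la_i-\mu_i)/n$ for the lattice walk in~\eqref{eq:LRW-steps}) is used, and it is what makes the exponent quadratic in the $y_i$. One must also be attentive in pinning the constant in the exponent down to exactly $2$ rather than something weaker, which forces using both $a_i \le n$ and the global maximum $\xi_i(1-\xi_i)\le\tfrac14$, each attained in the worst case. Everything else is routine bookkeeping: the $\ep$-separated hypothesis keeps $a_i, b_i, a_i-b_i$ all of order $\Theta(n)$ and keeps $\xi_i$ bounded away from $\{0,1\}$, while the progressive hypothesis~\eqref{eq:progressive-def} is in fact not invoked for this particular estimate (it only guarantees $|\de_i| = O(\ep^{-1}n^{-1/4})$, which the argument does not use). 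I do not foresee a serious obstacle.
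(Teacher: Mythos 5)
Your proof is correct and follows exactly the route the paper indicates: the paper omits the argument, remarking only that the lemma ``follows easily from Proposition~\ref{p:Stirling's formula} and the $\ve$-separation property~\eqref{eq:separated-def},'' and you have supplied precisely those details (Stirling bounds on each binomial, cancellation of the first-order entropy term via $\sum_i a_i\delta_i=0$, and the second-order remainder controlled by $a_i\le n$ and $\xi_i(1-\xi_i)\le\tfrac14$). Your side remarks — that the progressive condition is not needed here and that the centering in~\eqref{eq:definition yi} is what kills the linear term — are also accurate.
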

	
	The lemma follows easily from Proposition~\ref{p:Stirling's formula}
	and the $\ve$-separation property~\eqref{eq:separated-def}. We omit the details.
	
	\smallskip

	\subsection{Technical lemmas on the bounds}
	Denote by $\ell_i$, $g_i$, and $m_i$ the shifted values of $\lambda_i$, $\gamma_i$, and $\mu_i$,
	respectively:
	\begin{equation}\label{eq:definition l,g,m}
	\ell_i\. := \. \lambda_i +d-i,\quad  g_i\. := \. \gamma_i +d-i, \quad m_i:= \mu_i +d-i,
	\end{equation}
	for all \ts $1\le i \le d$. Note that
	\begin{equation}\label{eq:l,g,m is positive}
	\ell_{i}-\ell_{j} \. \geq \. 1,  \quad  g_{i}- g_{j} \. \geq \. 1, \quad  m_{i}- m_{j} \. \geq \. 1,
	\end{equation}
	for all \ts $1 \leq i<j \leq d$.
	
	\smallskip
	
	\begin{lemma}\label{l:minimum bound with m,g,l}
		Let $d\geq 2$, $\ep >0$.
		Let \ts $(\lambda,\gamma,\mu)\in \Om(n,d,\ep)$ \ts be an  $\ep$-admissible triplet.
		Then:
		\begin{equation}\label{eq:min-bound-mgl}
		\left(\frac{g_i-g_j}{\gamma_i+j-i}\right) \, \frac{\min\bigl\{m_i-m_j, \frac{g_i}{g_i-g_j} \bigr\}  \, \min\bigl\{g_i-g_j, \frac{\ell_i}{\ell_i-\ell_j}\bigr\} }{\min\bigl\{m_i-m_j, \frac{\ell_i}{\ell_i-\ell_j}\bigr\} }  \ \leq \  \frac{32}{d^{2}\ts \ep^{12}}  \. \bigl((y_i-y_j)^2 + 1\bigr),
		\end{equation}
		for all \ts $1 \leqslant i < j \leqslant d$.
	\end{lemma}
	
	\smallskip
	
	We now build toward the proof of Lemma~\ref{l:minimum bound with m,g,l}.

	\smallskip
	
	\begin{sublemma}\label{l:minimum constant can be extracted}
		Let \ts $x,y,c\in \rr_+$, we have:
		$$
		\min\{1,c \}  \cdot \min \{x,y  \} \, \leq \, \min\{x,cy\} \, \leq \,  \max\{1,c \} \cdot \min\{x,y\}.
		$$
	\end{sublemma}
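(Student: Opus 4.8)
The plan is to obtain both inequalities directly from the two trivial bounds $\min\{x,cy\}\le x$ and $\min\{x,cy\}\le cy$, keeping the whole argument free of any case split on the size of~$c$. Write $m:=\min\{x,y\}$ throughout; since $x,y,c\in\rr_+$, every product appearing below is a product of nonnegative numbers, and the degenerate cases $x=0$ or $y=0$ (where all three expressions in the statement vanish) need no separate treatment.

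For the right-hand inequality I would argue according to which coordinate realizes the minimum~$m$. If $m=x$, then $\min\{x,cy\}\le x=m\le \max\{1,c\}\cdot m$, using $\max\{1,c\}\ge 1$. If $m=y$, then $\min\{x,cy\}\le cy=c\cdot m\le \max\{1,c\}\cdot m$, using $\max\{1,c\}\ge c$. In either case $\min\{x,cy\}\le \max\{1,c\}\cdot m$, which is the claimed upper bound.

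For the left-hand inequality I would instead exhibit the single number $\min\{1,c\}\cdot m$ as a lower bound for \emph{each} of $x$ and $cy$, hence for their minimum. From $\min\{1,c\}\le 1$ and $m\le x$ we get $\min\{1,c\}\cdot m\le x$; from $\min\{1,c\}\le c$ and $m\le y$ we get $\min\{1,c\}\cdot m\le cy$. Therefore $\min\{1,c\}\cdot m\le \min\{x,cy\}$, as desired.

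There is no genuine obstacle here: the statement is an elementary manipulation of minima. The only point worth a little care is resisting a case analysis on whether $c\le 1$ or $c\ge 1$; the case-free formulation above is both shorter and precisely the form needed in the proof of Lemma~\ref{l:minimum bound with m,g,l}, where the role of $c$ is played by ratios of hook-type quantities whose position relative to~$1$ is not controlled a priori.
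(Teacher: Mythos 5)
Your proof is correct: the lower bound follows because $\min\{1,c\}\cdot\min\{x,y\}$ is simultaneously at most $x$ and at most $cy$, and the upper bound follows by checking the two cases $\min\{x,y\}=x$ and $\min\{x,y\}=y$. The paper omits the proof of this sublemma entirely (it is dismissed as elementary), so there is nothing to compare against; your argument is a clean and complete way to fill that gap, and it is indeed the kind of case-free-in-$c$ formulation that plugs directly into the proof of Lemma~\ref{l:minimum bound with m,g,l}.
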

	
	\smallskip
	
	\begin{sublemma}\label{l:min of three things bound}
		For all \ts $x,y,z \in \rr_+$, we have:
		$$
		y \. \cdot \. \frac{\min\{x,\frac{1}{y}\} \ts \cdot \ts
			\min\{y,\frac{1}{z}\}}{\min\{x,\frac{1}{z}\}}  \, \leq \, 4 \left(y-\frac{x+z}{2} \right)^2 + 4.
		$$
	\end{sublemma}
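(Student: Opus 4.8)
The plan is to split on which term achieves the denominator $\min\{x,1/z\}$, that is, on whether $x\ge 1/z$ or $x<1/z$; in either case the left-hand side collapses into something transparent, while the right-hand side is always at least $4$.

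First suppose $x\ge 1/z$, so that $\min\{x,1/z\}=1/z$. Then the left-hand side equals $yz\cdot\min\{x,1/y\}\cdot\min\{y,1/z\}$, and bounding the two minima by $\min\{x,1/y\}\le 1/y$ and $\min\{y,1/z\}\le 1/z$ gives at once a bound of $yz\cdot\tfrac1y\cdot\tfrac1z=1\le 4$. This case needs nothing further.

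Next suppose $x<1/z$, so $\min\{x,1/z\}=x$. Dividing through by $x$ and using the identity $y\min\{x,1/y\}=\min\{xy,1\}$, the left-hand side becomes $\min\{y,\tfrac1x\}\cdot\min\{y,\tfrac1z\}$. I would then write each factor as $\min\{y,1/t\}=1/\max\{1/y,t\}$ and bound the max from below by the arithmetic mean $\tfrac12(1/y+t)$ (for $t=x$ and $t=z$), which yields
\begin{equation*}
\min\Bigl\{y,\tfrac1x\Bigr\}\,\min\Bigl\{y,\tfrac1z\Bigr\}\ \le\ \frac{4}{\bigl(x+\tfrac1y\bigr)\bigl(z+\tfrac1y\bigr)}\ \le\ \frac{4}{\tfrac1{y^2}+\tfrac{x+z}{y}}\ =\ \frac{4\,y^2}{1+y(x+z)}\,,
\end{equation*}
the middle inequality simply discarding the nonnegative cross term $xz$ from the expansion of the product. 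It then remains to verify the elementary inequality $\dfrac{4y^2}{1+ys}\le 4\bigl(y-\tfrac s2\bigr)^2+4$ with $s:=x+z\ge 0$; clearing the positive denominator and using $(y-\tfrac s2)^2=y^2-ys+\tfrac{s^2}4$, this collapses to $0\le \tfrac{s^2}4+1+ys\bigl(y-\tfrac s2\bigr)^2$, which is manifest.

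The computation is short and there is no genuine obstacle; the one point requiring care is the direction of the mean inequality — one must bound the \emph{max} from below by the arithmetic mean (and drop $xz\ge 0$), rather than trying to relate $\sqrt{xz}$ to $\tfrac{x+z}2$, which points the wrong way and does not close the argument. The degenerate values $x=0$ or $z=0$ never occur where the sublemma is applied (there $x,y,z>0$), and in any case are covered by the same computation with the convention $1/0=+\infty$.
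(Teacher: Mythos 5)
Your argument is correct. The paper itself omits the proof of this sublemma (it is dismissed as elementary), so there is nothing to compare against; what matters is only that your verification is sound, and it is. The case $x\ge 1/z$ collapses to the bound $1\le 4$ exactly as you say; in the case $x<1/z$ the identities $y\min\{x,1/y\}=\min\{xy,1\}$ and $\min\{y,1/t\}=1/\max\{1/y,t\}$, the bound $\max\{1/y,t\}\ge \tfrac12(1/y+t)$, the discarding of $xz\ge 0$, and the final clearing of the positive denominator $1+ys$ (which reduces the claim to the manifestly nonnegative quantity $\tfrac{s^2}{4}+1+ys\bigl(y-\tfrac{s}{2}\bigr)^2\ge 0$) are all in the correct direction and complete the proof. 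Your remark about the degenerate boundary values is also appropriate: the only genuinely undefined point is $x=0$ (a $0/0$ ratio), which is an artifact of the statement allowing $x\in\rr_+$ rather than $x>0$ and never arises where the sublemma is invoked, since there the quantities compared are shifted differences $m_i-m_j$, $g_i-g_j$, $\ell_i-\ell_j\ge 1$.
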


	\smallskip
	
	Both sublemmas are elementary; we omit their proof for brevity.
	
	\smallskip
	
	\begin{proof}[Proof of Lemma~\ref{l:minimum bound with m,g,l}]
		We start with estimating $g_i$, $\ell_i$, and $\gamma_i$.
		Since $(\lambda,\gamma,\mu)$ is $\ep$-admissible, we have
		the following upper bound for $g_i$ and $\ell_i$:
		\begin{equation}\label{eq:g l upper bound}
		g_i \, \leq  \, \ell_i \, = \,  \lambda_i +d-i \, \leq \, |\lambda| + d-i \,
		\leq \, d \ts |\lambda| \, \leq_{\eqref{eq:def-admiss-pair}}
		\ \sum_{i=1}^d \. \frac{\lambda_i-\mu_i}{\ep}
		\, = \,  \frac{n}{\ep}\..
		\end{equation}
		Similarly, we have the following lower bounds:
		\begin{equation}\label{eq:l lower bound}
		\ell_i \, = \, \lambda_i +d-i \, \geq \, \lambda_i - \mu_i  \,
		\geq_{\eqref{eq:def-admiss-pair}}  \ \ep \ts |\lambda| \, \geq \, \ep \ts n\ts,
		\end{equation}
		\begin{equation}\label{eq:gamma lower bound}
		\gamma_i+j-i \, \geq  \, \gamma_i - \mu_i  \, \geq_{\eqref{eq:separated-def}} \ \frac{\ep^3}{2} \. |\lambda| \, \geq \, \frac{\ep^3}{2} \. n\ts.
		\end{equation}
		Finally, we have the following lower and upper bounds for~$p$ defined in~\eqref{eq:definition p}:
		\begin{equation}
		p \, =  \, 1- \frac{|\lambda|-|\gamma|}{n} \, \leq_{\eqref{eq:separated-def}} \
		1\. - \. \frac{d\ts \ep^3\ts |\lambda|}{2\ts n} \, \leq \, 1-\frac{d\ts \ep^3}{2}\,, \label{eq:p upper bound}
		\end{equation}
		\begin{equation}\label{eq:p lower bound}
		p  \, =  \, \frac{|\gamma|-|\mu|}{n}  \, \geq_{\eqref{eq:separated-def}} \
		\frac{d\ts \ep^3\ts |\lambda|}{2\ts n} \, \geq \, \frac{d\ts \ep^3}{2}\..
		\end{equation}
		Now note that
		\begin{equation}\label{eq:minimum 4}
		\begin{split}
		\frac{g_i-g_j}{\gamma_i+j-1} \, \leq_{\eqref{eq:gamma lower bound}} \ \frac{2\ts (g_i-g_j)}{\ep^3\ts n}.
		\end{split}
		\end{equation}
		Using repeatedly Sublemma~\ref{l:minimum constant can be extracted}, Proposition~\ref{p:admis-1d} and the above inequalities, we obtain:
		\begin{equation}\label{eq:minimum 1}
		\min \left\{m_i-m_j, \frac{g_i}{g_i-g_j} \right\} \,
		\leq_{\eqref{eq:p upper bound}, \. \eqref{eq:g l upper bound} }   \ \frac{1}{d\ep^4} \. \min \left\{{2\ts (1-p) \ts (m_i-m_j)}, \. \frac{n}{g_i-g_j} \right\},
		\end{equation}
		\begin{equation}\label{eq:minimum 2}
		\min \left\{g_i-g_j, \frac{\ell_i}{\ell_i-\ell_j} \right\} \,
		\leq_{\eqref{eq:g l upper bound}} \ \frac{2}{\ep} \. \min \left\{{g_i-g_j},  \.\frac{n}{2\ts p \. (\ell_i-\ell_j)} \right\},
		\end{equation}
		\begin{equation}\label{eq:minimum 3}
		\min \left\{m_i-m_j, \frac{\ell_i}{\ell_i-\ell_j} \right\} \
		\geq_{\eqref{eq:l lower bound},\. \eqref{eq:p lower bound}} \  \frac{d \ep^4}{2}\. \min \left\{{2\ts (1-p)\ts (m_i-m_j)}, \. \frac{n}{2\ts p \. (\ell_i-\ell_j)} \right\}.
		\end{equation}
		
		\nin
		By dividing \eqref{eq:minimum 1}--\eqref{eq:minimum 3} by $\sqrt{n}$, and combining these upper bounds with~\eqref{eq:minimum 4},
		we conclude:
		\begin{equation*}
		\text{LHS \ts in~\eqref{eq:min-bound-mgl}} \ \le \, \frac{8}{d^2\ep^{12}} \ts \left(\frac{g_i-g_j}{\sqrt{n}}\right)  \.
		\frac{\min\bigl\{\frac{2(1-p)(m_i-m_j)}{\sqrt{n}}, \frac{\sqrt{n}}{g_i-g_j} \bigr\}
			\,\cdot \,
			\min\bigl\{\frac{g_i-g_j}{\sqrt{n}}, \frac{\sqrt{n}}{\ell_i-\ell_j}\bigr\} }{
			\min\bigl\{\frac{m_i-m_j}{\sqrt{n}}, \frac{\sqrt{n}}{\ell_i-\ell_j}\bigr\} } \..
		\end{equation*}
		%where $c_{d,\ep}$ is the constant
		%\[  c_{d,\ep}:= \max\left\{1, \frac{1}{d\ep^2} \right\}\max \left \{1, \frac{d}{\ep} \right\}  2\, \max \left \{1, \frac{d}{\ep} \right\}   \frac{1}{2}  \min\left\{1, d\ep^2 \right\}\min \left \{1, {\ep} \right\}  \frac{2}{\ep}. \]
		By Sublemma~\ref{l:min of three things bound}, the RHS of the equation above
		is bounded by
		%Now note that, it follows from Lemma~\ref{l:min of three things bound} that  the term in \eqref{eq:minimum 5} is bounded from above by
		$$\aligned
		& \frac{32}{d^{2}\ts \ep^{12}}\. \left( B_{ij}^2 \. + \. 1 \right), \quad \text{
			where}  \quad
		B_{ij} \,  = \, \frac{ (g_i-g_j) \. - \. (1-p)(m_i-m_j) \. - \. p\ts (\ell_i-\ell_j)}{\sqrt{n}} \, = \\
		& \hskip2.3cm =_{\text{\eqref{eq:definition l,g,m}}} \
		\frac{ (\gamma_i-\gamma_j) \. -  \. (1-p)\ts (\mu_i-\mu_j) \. - \.
			p\ts (\lambda_i-\lambda_j)}{\sqrt{n}} \, =_{\text{\eqref{eq:definition yi}}} \ (y_i\ts -\ts y_j)\ts.
		\endaligned
		$$
		This completes the proof.
	\end{proof}
	
	\smallskip
	
	\subsection{Upper bounds for solid triplets}
	Recall the definition of solid triplets in~$\S$\ref{ss:outline-solid-triplets-def}.
	We can now give an upper bound for the probability that a tableau random walk \ts $\bZ: \mu\to \la$ \ts
	goes through~$\ga$.
	
	\begin{lemma}\label{l:pmf of Zt upper bound}
		Fix $d\geq 2$ and $\ep >0$.  Let \ts $(\lambda,\gamma,\mu)\in \Om(n,d,\ep)$ \ts
		be an $\ep$-admissible solid triplet, with the solid constant~$C$ defined
		in~\eqref{eq:if conjecture is true}.
		Let \ts $k:=|\ga|-|\mu|$.   Then
		\begin{equation}\label{eq:lemma-pmf of Zt upper bound}
		\Pblm \big[Z_{k} = \gamma  \big] \, \leq \, C^3 \ts C_{d,\ep} \.
		n^{\frac{1-d}{2}} \. \prod_{1 \leqslant i < j \leqslant d } \. \bigl((y_i-y_j)^2 + 1\bigr) \.\cdot \.
		\exp \left[ - 2 \sum_{i=1}^d y_i^2\right],
		\end{equation}
		for an absolute constant \ts $C_{d,\ep} >0$.
	\end{lemma}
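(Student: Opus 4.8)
The plan is to reduce the statement to the three input lemmas (Lemmas~\ref{l:F -- asymptotic estimate}, \ref{l:binomial formula asymptotic}, and~\ref{l:minimum bound with m,g,l}) by feeding in the solidity hypothesis. First I would record the elementary identity
$$
\Pblm\bigl[Z_k = \gamma\bigr] \, = \, \frac{f(\gamma/\mu)\ts f(\lambda/\gamma)}{f(\lambda/\mu)}\,,
$$
which holds because a tableau $A\in\SYT(\la/\mu)$ satisfies $Z_k=\ga$ exactly when its entries $1,\dots,k$ form a standard tableau of shape $\ga/\mu$ and its entries $k+1,\dots,n$ form one of shape $\la/\ga$, the two being chosen independently, and $k=|\ga|-|\mu|$ forces $|Z_k|=|\ga|$. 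Applying the two upper bounds and the one lower bound of~\eqref{eq:if conjecture is true} to the numerator and denominator respectively gives
$$
\Pblm\bigl[Z_k=\gamma\bigr] \, \le \, C^3\,\frac{\eG(\gamma/\mu)\,\eG(\lambda/\gamma)}{\eG(\lambda/\mu)}\cdot\frac{F(\gamma/\mu)\,F(\lambda/\gamma)}{F(\lambda/\mu)}\,,
$$
so it remains to bound these two ratios.

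For the $F$-ratio, I would first observe that, since $(\la,\ga,\mu)$ is $\ve$-separated and $|\ga|\le|\la|$, both $(\ga,\mu)$ and $(\la,\ga)$ are $(\ep^3/2)$-admissible pairs in the sense of~\eqref{eq:def-admiss-pair}, while $(\la,\mu)$ is $\ep$-admissible. Hence Lemma~\ref{l:F -- asymptotic estimate} applies to all three, giving
$$
\frac{F(\gamma/\mu)\,F(\lambda/\gamma)}{F(\lambda/\mu)} \, \le \, \bigl(\tfrac{\ep^3}{2}\bigr)^{-d(d-1)}\,\frac{G(\gamma/\mu)\,G(\lambda/\gamma)}{G(\lambda/\mu)}\,,
$$
with $G$ as in~\eqref{eq:G-function-def}. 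Expanding $G$ and using $k=|\ga|-|\mu|$, $n-k=|\la|-|\ga|$, the factorials telescope:
$$
\frac{G(\gamma/\mu)\,G(\lambda/\gamma)}{G(\lambda/\mu)} \, = \, \frac{\prod_{i=1}^d\binom{\lambda_i-\mu_i}{\gamma_i-\mu_i}}{\binom{n}{|\gamma|-|\mu|}}\ \prod_{1\le i<j\le d}\frac{\gamma_i-\gamma_j+j-i}{\gamma_i+j-i}\,,
$$
and Lemma~\ref{l:binomial formula asymptotic} bounds the binomial quotient by $B_{d,\ep}\,n^{-(d-1)/2}\exp\!\bigl[-2\sum_i y_i^2\bigr]$.

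The remaining step is to merge the leftover product $\prod_{i<j}\frac{\gamma_i-\gamma_j+j-i}{\gamma_i+j-i}$ with the $\eG$-ratio. Passing to the shifted coordinates~\eqref{eq:definition l,g,m}, one has $\eG(\gamma/\mu)=\prod_{i<j}\min\{m_i-m_j,\frac{g_i}{g_i-g_j}\}$, analogously for $\eG(\la/\ga)$ and $\eG(\la/\mu)$, and $\frac{\gamma_i-\gamma_j+j-i}{\gamma_i+j-i}=\frac{g_i-g_j}{\gamma_i+j-i}$; so the factorwise product of $\frac{\eG(\gamma/\mu)\,\eG(\lambda/\gamma)}{\eG(\lambda/\mu)}$ against $\prod_{i<j}\frac{g_i-g_j}{\gamma_i+j-i}$ is exactly $\prod_{i<j}$ of the left-hand side of~\eqref{eq:min-bound-mgl}. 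Lemma~\ref{l:minimum bound with m,g,l} then bounds it by $\bigl(\tfrac{32}{d^2\ep^{12}}\bigr)^{\binom{d}{2}}\prod_{i<j}\bigl((y_i-y_j)^2+1\bigr)$. Multiplying the three resulting estimates and folding $(\ep^3/2)^{-d(d-1)}$, $B_{d,\ep}$, and $(32/d^2\ep^{12})^{\binom{d}{2}}$ into a single constant $C_{d,\ep}$ yields~\eqref{eq:lemma-pmf of Zt upper bound}. The proof is just a chain of substitutions; the only points that demand care — rather than any new idea — are verifying that the intermediate pairs $(\ga,\mu)$ and $(\la,\ga)$ meet the hypotheses of Lemmas~\ref{l:F -- asymptotic estimate} and~\ref{l:binomial formula asymptotic} with the degraded parameter $\ep^3/2$, the telescoping of factorials in $G$, and matching the shifted-coordinate product against~\eqref{eq:min-bound-mgl} term by term.
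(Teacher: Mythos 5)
Your proposal is correct and follows essentially the same route as the paper's proof: the path-counting identity $\Pblm[Z_k=\gamma]=f(\gamma/\mu)f(\lambda/\gamma)/f(\lambda/\mu)$, the solidity bound, Lemma~\ref{l:F -- asymptotic estimate} plus the telescoping of $G$ and Lemma~\ref{l:binomial formula asymptotic} for the $F$-ratio, and Lemma~\ref{l:minimum bound with m,g,l} for the leftover product merged with the $\eG$-ratio. Your explicit check that the intermediate pairs $(\gamma,\mu)$ and $(\lambda,\gamma)$ are admissible with the degraded parameter $\ep^3/2$ (coming from $\ve$-separation) is a welcome precision the paper glosses over, but it changes only the constant absorbed into $C_{d,\ep}$.
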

	
	\smallskip
	
	Note that the RHS in the lemma does not depend on~$k$.  This is by design, as
	$k$ will not be known, so we need a general upper bound.
	\smallskip
	
	\begin{proof}
		By directly counting the number of lattice paths \ts $\mu \to \gamma\to \la$,
		we obtain:
		\begin{equation}\label{eq:pmf-two-products}
		\Pblm \big[Z_{k} \, = \, \gamma  \big]  \, = \, \frac{f(\gamma/\mu) \. f(\lambda/\gamma)}{f(\lambda/\mu)} \,
		\le_{\eqref{eq:if conjecture is true}} \  C^3 \.\left[\frac{F(\gamma/\mu) \.  F(\lambda/\gamma)}{F(\lambda/\mu)}\right]
		\. \left[\frac{\eG(\gamma/\mu) \, \eG(\lambda/\gamma)}{\eG(\lambda/\mu)}\right].
		\end{equation}
		We now give an upper bound for the first product term:
		\begin{equation}\label{eq:pmf 2}
		\aligned
		&\frac{F(\gamma/\mu) \. F(\lambda/\gamma)}{F(\lambda/\mu)} \ \leq_{\text{Lem}~\ref{l:F -- asymptotic estimate}} \
		\ep^{-d(d-1)} \. \frac{G(\gamma/\mu) \. G(\lambda/\gamma)}{G(\lambda/\mu)} \\
		&\qquad \le_{\eqref{eq:G-function-def}} \
		\ep^{-d(d-1)} \.
		\left[\binom{\lambda_1-\mu_1}{\gamma_1-\mu_1}\cdots \binom{\lambda_d-\mu_d}{\gamma_d-\mu_d} \binom{n}{k}^{-1}\right] \.
		\prod_{1 \leqslant i < j \leqslant d}  \.  \frac{g_i-g_j}{\gamma_i+j-i}\\
		&\qquad \le_{\text{Lem~\ref{l:binomial formula asymptotic}}} \ \ep^{-d(d-1)} \.  B_{d,\ep} \, n^{-\frac{(d-1)}{2}}
		\. \exp \left[ - 2 \sum_{i=1}^d y_i^2\right] \. \prod_{1 \leqslant i < j \leqslant d}  \.  \frac{g_i-g_j}{\gamma_i+j-i}.
		\endaligned
		\end{equation}
		Combining the last products in RHS of~\eqref{eq:pmf-two-products} and~\eqref{eq:pmf 2}, we have
		\begin{equation}\label{eq:pmf-last-products}
		\aligned
		& \prod_{1 \leqslant i < j \leqslant d}  \.  \frac{g_i-g_j}{\gamma_i+j-i}\. \left[\frac{\eG(\gamma/\mu) \, \eG(\lambda/\gamma)}{\eG(\lambda/\mu)}\right] \\ & \qquad\le_{\eqref{eq:G-def}} \
		\prod_{1 \leqslant i < j \leqslant d} \.  \left(\frac{g_i-g_j}{\gamma_i+j-i}\right) \frac{\min\bigl\{m_i-m_j, \frac{g_i}{g_i-g_j} \bigr\}  \, \min\bigl\{g_i-g_j, \frac{\ell_i}{\ell_i-\ell_j}\bigr\} }{\min\bigl\{m_i-m_j, \frac{\ell_i}{\ell_i-\ell_j}\bigr\} } \\
		& \qquad \le_{\text{Lem~\ref{l:minimum bound with m,g,l}}} \ \left(\frac{32}{d^2\ts \ep^{12}}\right)^{d(d-1)/2} \, \prod_{1 \leqslant i < j \leqslant d } \. \bigl((y_i-y_j)^2 + 1\bigr).
		\endaligned
		\end{equation}
		The lemma now follows by combining \eqref{eq:pmf-two-products}, \eqref{eq:pmf 2} and~\eqref{eq:pmf-last-products}.
	\end{proof}
	
	\bigskip
	
	\section{Sorting probability via lattice paths}\label{s:sorting-lattice-path}

We use an upper bound for the probability mass function
of $Z_t$  and the results of Section~\ref{s:paths} which show that most triples 
are $\ep$-admissible, see Lemma~\ref{l:pdf upper bound} below for a precise statement. 
The upper bounds are derived via some technical asymptotic bounds from Section~\ref{s:technical}.

	\smallskip
	
	\subsection{Sorting probability of $\ep$-admissible pairs}
	The following technical lemma is central to our proof.
	
	\begin{lemma}\label{l:pdf upper bound}
		Fix $d\geq 2$ and $\ep >0$. Let \ts $(\lambda,\mu)\in \La(n,d,\ep)$ \ts
		be an $\ep$-admissible pair.  Let  $a$ be an integer that satisfies
		\begin{equation}\label{eq:a-ep-assumpltion}
		\ep \ \leq \ \frac{a-\mu_1}{\lambda_1-\mu_1} \ \leq \ 1-\ep.
		\end{equation}
		Suppose there exists a constant \ts $C=C_{\la,\mu}>0$, such that
		for every $\gamma$ for which $(\lambda,\gamma,\mu)\in \Om(n,d,\ep)$,
		this triplet is solid with solid constant~$C$.
		Then, there exists an absolute constant $C_{d,\ep}>0$ such that
		$$
		\vp(a) \. \leq \. C_{d,\ep} \.  \frac{C^3+1}{\sqrt{n}}\..
		$$
	\end{lemma}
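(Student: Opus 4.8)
The plan is to fix an arbitrary integer $b$ with $\mu_2 < b \le \lambda_2$, bound $\Pblm\bigl[\Ac(a,b)\bigr]$ by $C_{d,\ep}\,(C^3+1)/\sqrt n$ with a constant not depending on $b$, and then take the maximum over $b$. Since $|Z_t| = |\mu| + t$ for every $t$, the tableau random walk visits a partition $\gamma$ at the unique time $|\gamma|-|\mu|$; hence $\Ac(a,b) = \bigcup_{\gamma}\bigl\{Z_{|\gamma|-|\mu|}=\gamma\bigr\}$, the union being over partitions $\gamma$ with $\mu\subseteq\gamma\subseteq\lambda$, $\gamma_1=a$, $\gamma_2=b$, and the union bound gives
$$\Pblm\bigl[\Ac(a,b)\bigr] \ \le \ \sum_{\gamma}\Pblm\bigl[Z_{|\gamma|-|\mu|}=\gamma\bigr].$$
I would split this sum according to whether $(\lambda,\gamma,\mu)\in\Om(n,d,\ep)$.

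For the non-admissible part, note first that every $\gamma$ occurring in the sum has $k:=|\gamma|-|\mu|$ in the range $[\ep^2 n,\,(1-\ep^2)n]$: indeed $k\ge a-\mu_1\ge\ep(\lambda_1-\mu_1)\ge\ep^2 n$ by~\eqref{eq:a-ep-assumpltion} and~\eqref{eq:def-admiss-pair}, and symmetrically $|\lambda|-|\gamma|\ge\lambda_1-a\ge\ep(\lambda_1-\mu_1)\ge\ep^2 n$, so $k\le(1-\ep^2)n$. Thus $k$ satisfies the hypothesis~\eqref{eq:assumption on t} of Lemma~\ref{l:most-triplets-admissible}, and grouping the non-admissible $\gamma$ by the value of $k$ gives, for each such $k$,
$$\sum_{\substack{\gamma:\ |\gamma|-|\mu|=k,\ \gamma_1=a,\ \gamma_2=b\\ (\lambda,\gamma,\mu)\notin\Om(n,d,\ep)}}\Pblm\bigl[Z_k=\gamma\bigr] \ \le \ \Pblm\bigl[(\lambda,Z_k,\mu)\notin\Om(n,d,\ep)\bigr] \ \le \ C_{d,\ep}\,n^{\frac{d^2-1}{2}}\,e^{-2\sqrt n}.$$
Summing over the at most $n+1$ possible values of $k$ bounds the non-admissible part by $C_{d,\ep}\,n^{\frac{d^2+1}{2}}e^{-2\sqrt n}$, which is $o(n^{-1/2})$ (hence $\le n^{-1/2}$ for $n$ large, the finitely many remaining $n$ being absorbed into the final constant).

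For the admissible part, the hypothesis of the lemma guarantees that each triplet $(\lambda,\gamma,\mu)\in\Om(n,d,\ep)$ appearing here is solid with solid constant $C$, so Lemma~\ref{l:pmf of Zt upper bound} applies termwise and, after dropping the constraint $(\lambda,\gamma,\mu)\in\Om(n,d,\ep)$ (all terms are nonnegative), bounds the admissible part by
$$C^3\,C_{d,\ep}\,n^{\frac{1-d}{2}}\sum_{\gamma}\ \prod_{1\le i<j\le d}\bigl((y_i-y_j)^2+1\bigr)\,\exp\!\Bigl[-2\sum_{i=1}^d y_i^2\Bigr],$$
with $\by=\by(\lambda,\gamma,\mu)$ as in~\eqref{eq:definition yi} and $\gamma$ ranging over partitions with $\gamma_1=a$, $\gamma_2=b$. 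It remains to bound the last sum by $O_{d,\ep}\bigl(n^{(d-2)/2}\bigr)$, which makes the admissible part $O_{d,\ep}\bigl(C^3 n^{-1/2}\bigr)$ and completes the proof (when $d=2$ the sum has a single term and the exponent is $n^0$). For this, the elementary inequalities $(y_i-y_j)^2\le 2y_i^2+2y_j^2$ and $2s+2t+1\le(2s+1)(2t+1)$ give $\prod_{i<j}\bigl((y_i-y_j)^2+1\bigr)\le\prod_{i=1}^d(2y_i^2+1)^{d-1}$, and with $K_d:=\sup_{s\ge0}(2s+1)^{d-1}e^{-s}<\infty$ the summand is at most $K_d^{\,d}\,\exp[-\sum_{i\ge3}y_i^2]$. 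Finally, with $\gamma_1=a$ and $\gamma_2=b$ fixed, the remaining coordinates $\gamma_3,\dots,\gamma_d$ are the free variables, and $(\gamma_3,\dots,\gamma_d)\mapsto(y_3,\dots,y_d)$ is affine with linear part $\tfrac1{\sqrt n}M$, where $M$ is the identity minus the rank-one matrix whose $(i,j)$ entry ($i,j\ge3$) equals $(\lambda_i-\mu_i)/n$. One checks that $M$ has eigenvalue $1$ with multiplicity $d-3$ and a single eigenvalue $\det M=\tfrac1n\bigl((\lambda_1-\mu_1)+(\lambda_2-\mu_2)\bigr)$; since $\lambda_1-\mu_1,\lambda_2-\mu_2\ge\ep n$ by $\ep$-admissibility, we get $\det M\in[2\ep,1]$ and $\|M^{-1}\|\le 1+\tfrac{d-2}{2\ep}$. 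Hence the image of $\mathbb Z^{d-2}$ is a lattice whose distortion and covolume are controlled in terms of $d$ and $\ep$ (the covolume being $\Theta_{d,\ep}\bigl(n^{-(d-2)/2}\bigr)$), so a routine comparison of the Gaussian sum with the integral $\int_{\mathbb R^{d-2}}e^{-\|y\|^2}\,dy=\pi^{(d-2)/2}$ yields $\sum_\gamma\exp[-\sum_{i\ge3}y_i^2]\le C_{d,\ep}\,n^{(d-2)/2}$, as required. Assembling the two parts gives $\Pblm[\Ac(a,b)]\le C_{d,\ep}\,(C^3+1)/\sqrt n$ uniformly in $b$, hence the claim.

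The step I expect to be the main obstacle is this last Gaussian-sum estimate: the quantities $y_1,\dots,y_d$ are not independent summation variables — they all depend on $\gamma_3,\dots,\gamma_d$ through $p=(|\gamma|-|\mu|)/n$ — so one must carry out the linear change of variables explicitly and verify that the Jacobian, and hence the distortion and covolume of the resulting lattice, are controlled in terms of $d$ and $\ep$ only; this is exactly where $\ep$-admissibility (through $\lambda_i-\mu_i\ge\ep n$) enters essentially. Everything else — the decomposition of $\Ac(a,b)$ and the union bound, pinning down the range of $k$, and the invocation of Lemmas~\ref{l:most-triplets-admissible} and~\ref{l:pmf of Zt upper bound} — is routine bookkeeping.
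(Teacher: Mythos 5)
Your proposal is correct and follows the paper's own proof almost step for step: the same decomposition of $\Pblm[\Ac(a,b)]$ into a non-admissible part, handled by Lemma~\ref{l:most-triplets-admissible} after pinning the visiting time into $[\ep^2 n,(1-\ep^2)n]$ exactly as the paper does via~\eqref{eq:a-ep-assumpltion} and~\eqref{eq:def-admiss-pair}, and an admissible part, handled termwise by Lemma~\ref{l:pmf of Zt upper bound} using the solidity hypothesis. The only genuine divergence is the final Gaussian-sum estimate over $\gamma$ with $\gamma_1=a$, $\gamma_2=b$: the paper bounds the summand by $\prod_i (y_i^2+1)^{d-1}e^{-2y_i^2}$ and then compares each coordinate $i\in\{3,\ldots,d\}$ separately with a one-dimensional integral, using the increment bound $|y_i'-y_i|\ge n^{-1/2}(d-1)\ep$ and in effect factorizing the sum over the $y_i$'s; you instead absorb the polynomial factors into a constant $K_d$ and carry out the affine change of variables $(\gamma_3,\ldots,\gamma_d)\mapsto(y_3,\ldots,y_d)$ explicitly, with linear part $\tfrac{1}{\sqrt n}M$, $M=I-v\mathbf{1}^{T}$, $\det M=\bigl((\lambda_1-\mu_1)+(\lambda_2-\mu_2)\bigr)/n\in[2\ep,1]$ and $\|M^{-1}\|$ bounded in terms of $d,\ep$, so that the lattice has covolume $\Theta_{d,\ep}\bigl(n^{-(d-2)/2}\bigr)$ and minimal distance of order at least $c_{d,\ep}\ts n^{-1/2}$, and the comparison with $\int_{\rr^{d-2}}e^{-\|y\|^2}$ is legitimate. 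This is, if anything, more scrupulous than the paper: the coupling of the $y_i$'s through $p$ — the issue you flag as the main obstacle — is exactly what the paper's coordinate-wise treatment glosses over, and your lattice computation is what justifies that step. Both routes yield the required $O_{d,\ep}\bigl(n^{(d-2)/2}\bigr)$ and hence the stated bound $\vp(a)\le C_{d,\ep}(C^3+1)/\sqrt n$.
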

	
	\begin{proof}
		Let $b$ be an arbitrary integer in $[\mu_2, \lambda_2]$.
		It follows from the definition of $\vp(a)$ in \eqref{eq:epsilon(a)} that it suffices to show that
		\begin{align*}
		\Pblm \big[\Ac(a,b) \big] \, \leq  \, C_{d,\ep} \.\frac{C^3+1}{\sqrt{n}}\,, \quad \text{for all} \ \ \,\mu_2\. \le\. b\. \le \. \la_2\..
		\end{align*}
		We start with
		\begin{equation}\label{eq:pdf 1}
		\begin{split}
		& \Pblm \big[\Ac(a,b) \big] \ =  \  \Pblm \big[Z_t(1)=a, \, Z_t(2)=b   \  \text{ for some } t\geq 0  \big] \\
		&  \quad =   \
		\Pblm \big[Z_t(1)=a, \,  Z_t(2)=b, \, (\lambda,Z_t,\mu) \notin\Om(n,d,\ep) \, \text{ for some $t\geq 0$} \big]  \\
		& \qquad \ +  \, \Pblm \big[Z_t(1)=a, \,  Z_t(2)=b, \, (\lambda,Z_t,\mu) \in \Om(n,d,\ep)  \text{ for some $t\geq 0$} \big].
		\end{split}
		\end{equation}
		We will bound each term in the RHS separately.
		
		Since $Z_t(1)=a$, by definition of $Z_t$ we have:
		$$
		a-\mu_1 \ \leq \  t \ \leq \   n - (\lambda_1-a).
		$$
		From~\eqref{eq:a-ep-assumpltion} and the assumption that $(\lambda,\mu)\in \La(n,d,\ep)$, it follows that
		$$
		\aligned
		\frac{t}{n} \ \geq \  \frac{a-\mu_1}{n} \ \geq \ \ep \. \frac{\lambda_1-\mu_1}{n} \ \geq \ \ep^2 \. \frac{|\lambda|}{n} \ & \geq \ \ep^2, \quad \text{and}\\
		1- \frac{t}{n} \ \geq \ \frac{a-\lambda_1}{n} \ \geq \ \ep \. \frac{\lambda_1-\mu_1}{n} \ & \geq \ \ep^2.
		\endaligned
		$$
		This implies that  condition \eqref{eq:assumption on t} holds.  By Lemma~\ref{l:most-triplets-admissible}, we then get:
		$$\Pblm\bigl[Z_t(1) = a, \, \. (\lambda,Z_t,\mu)\notin \Om(n,d,\ep) \bigr] \,
		\leq \, C_{d,\ep}  \, n^{\frac{d^2-1}{2}} \. e^{- 2 \sqrt{n}},
		$$
		for all $0\le t\le n$, and for some absolute constant \ts $C_{d,\ep}>0$.
		Thus, for the first term in the RHS of \eqref{eq:pdf 1}, we have:
		\begin{equation}
		\begin{split}\label{eq:pdf 2}
		&  \Pblm[Z_t(1) = a,  Z_t(a)=b, \ (\lambda,Z_t,\mu) \notin\Om(n,d,\ep) \text{ for some $t\geq 0 $}  ] \\
		&  \leq  \ \sum_{t=0}^{n-1} \. C_{d,\ep} \, n^{\frac{d^2-1}{2}} \. e^{- 2 \sqrt{n}}\,  =  \,
		C_{d,\ep} \, n^{\frac{d^2+1}{2}} \. e^{- 2 \sqrt{n}}.
		\end{split}
		\end{equation}
		For the second term in the RHS of~\eqref{eq:pdf 1}, denote by \ $\Gc(a,b)$ \ts
		the set of partitions given by
		$$
		\Gc(a,b) \, := \, \bigl\{ \gamma \ \mid \ \gamma_1=a, \ \gamma_2=b, \ \text{ and } \Gc(a,b)\in \La(n,d,\ep) \bigr\}.
		$$
		Then
		\begin{equation}\label{eq:pdf 3}
		\begin{split}
		&\    \Pblm \big[Z_t(1)=a, \,  Z_t(2)=b, \, (\lambda,Z_t,\mu) \in \Om(n,d,\ep)  \text{ for some $t\geq 0$}\big] \
		\leq  \. \sum_{\gamma \in \Gc(a,b) }  \Pblm \big[Z_t=\gamma \big]\\
		& \qquad \leq_{\text{Lem~\ref{l:pmf of Zt upper bound}}} \ \sum_{\gamma \in \Gc(a,b) } \. C^3  \.   n^{-\frac{(d-1)}{2}}  \
		\exp \left[ - 2 \ts \sum_{i=1}^d \ts y_i^2 \right] \, \prod_{1 \leqslant i < j \leqslant d } \bigl((y_i-y_j)^2 + 1\bigr).
		\end{split}
		\end{equation}
		
		Using \ts $(x-z)^2+1 \le (x^2+1)(z^2+1)$, we obtain:
		\begin{align*}
		\exp \left[ - 2 \sum_{i=1}^d y_i^2\right] \, \prod_{1 \leqslant i < j \leqslant d } \bigl((y_i-y_j)^2 + 1\bigr)
		\,\leq \,   \prod_{i=1}^d \.  \bigl(y_i^2+1\bigr)^{d-1} \ts e^{-2y_i^2}.
		\end{align*}
		Plugging this upper bound into \eqref{eq:pdf 3}, we obtain:
		\begin{equation}\label{eq:pdf 4}
		\text{RHS of~\eqref{eq:pdf 3}} \, \le \, C^3  \ts  n^{-\frac{(d-1)}{2}} \. \sum_{\gamma \in \Gc(a,b) }
		\. \prod_{i=1}^d  \. (y_i^2+1)^{d-1} \ts e^{-2y_i^2}.
		\end{equation}
		
		Note that for all $\gamma \in \Gc(a,b)$,
		the value $y_1$ and $y_2$ is fixed by the assumption that $\gamma_1=a$ and $\gamma_2=b$.
		For \ts $i \in \{3,\ldots,d\}$, it follows from \eqref{eq:definition yi} that as $\ga$
		varies between $\mu$ and $\la$, an increment of $\ga_i$ to $\ga'_i=\ga_i+1$ would
		lead to an increment in the $y$'s of order \ts $ \bigl|y_i'-y_i\bigr| \. =  \. n^{-1/2} (1- \frac{\lambda_i-\mu_i}{n} ) \. \geq  \. n^{-1/2} (d-1) \ep$ (by $\ep$-admissibility).
		Thus, we can bound each term for \ts $i\in \{3,\ldots,d\}$, as
		$$
		\sum_{z \in n^{-1/2} (d-1) \ep \, \zz} \. (z^2+1)^{d-1} \ts e^{-2z^2} \, \leq \,
		\frac{\sqrt{n}}{(d-1) \ep}\. \int_{-\infty}^{+\infty} (z^2+1)^{d-1} \ts e^{-2z^2} dz \, \leq \, \sqrt{n} \. C'_{d,\ep},
		$$
		since the integral converges.
		This allows us to bound~\eqref{eq:pdf 4} as
		\begin{equation*}
		\begin{split}
		\ & \sum_{\gamma \in \Gc(a,b) }   \
		\prod_{i=1}^d  (y_i^2+1)^{d-1} e^{-2y_i^2}
		\leq  \
		\left[ \prod_{i=1,2} \. (y_i^2+1)^{d-1} e^{-2y_i^2} \right] \, n^{\frac{d-2}{2}} \. \bigl(C'_{d,\ep}\bigr)^{d-2} \,
		\leq \, C''_{d,\ep} \, n^{\frac{d-2}{2}},
		\end{split}
		\end{equation*}
		where \. $C''_{d,\ep} := \left(\frac{d-1}{2}\right)^{2(d-1)}e^{-2(d+3)} (C_{d,\ep}')^{d-2}$.
		Thus we get the following upper bound for the second term in the RHS of~\eqref{eq:pdf 1}:
		\begin{equation}\label{eq:pdf 5}
		\begin{split}
		\    \Pblm \big[Z_t(1)=a, \,  Z_t(2)=b, \,\. (\lambda,Z_t,\mu)\in \Om(n,d,\ep) \, \text{ for some $t\geq 0$}\big]
		\leq   \ C_{d,\ep}'' \, C^3 \. \frac{1}{\sqrt{n}}.
		\end{split}
		\end{equation}
		Using  the upper bounds from \eqref{eq:pdf 2} and \eqref{eq:pdf 5}  in~\eqref{eq:pdf 1}, gives us:
		\begin{align*}
		\Pblm \big[\Ac(a,b) \big] \, \leq  \, C_{d,\ep} \, n^{\frac{d^2+1}{2}} \. e^{- 2 \sqrt{n}} \. + \.  C_{d,\ep}'' \, C^3 \. \frac{1}{\sqrt{n}}\..
		\end{align*}
		Since the second term dominates for sufficiently large~$n$, we obtain:
		\begin{align*}
		\Pblm \big[\Ac(a,b) \big] \, \leq  \, C_{d,\ep} \,  \frac{C^3+1}{\sqrt{n}}\.,
		\end{align*}
		as desired.
	\end{proof}
	
	\smallskip
	
	\subsection{Proof of Main Lemma~\ref{l:main}}\label{ss:technical-ML-proof}
	Let \ts $a:=\lfloor\frac{\mu_1+\lambda_1}{2}\rfloor$, so
	the first condition in Lemma~\ref{l:pdf upper bound} is satisfied.
	The second condition in Lemma~\ref{l:pdf upper bound} is satisfied
	by~\eqref{eq:if conjecture is true} and the definition of solid triplets.
	Lemma~\ref{l:pdf upper bound} combined with
	Lemma~\ref{l:quantitative bound for Linial}, gives:
	\[ \delta(P_{\lambda/\mu}) \, \leq \, 2 \ts C_{d,\ep} \.  \frac{C^3+1}{\sqrt{n}}\., \]
	for some absolute constant $C_{d,\ep} >0$, as desired.  \ $\sq$
	
	\bigskip
	
	\section{Upper bounds for the number of standard Young tableaux}\label{s:upper-bounds-SYTs}
	
	\subsection{Upper bound via Schur polynomials}
	In this subsection we give an upper bound to $f(\lambda/\mu)$ in terms of \ts
	$F(\lambda/\mu)$ (see \eqref{eq:F-def}), and evaluations of Schur polynomial
	(see \eqref{eq:Schur polynomial}). For $\la/\mu\vdash n$, recall the definition
	of shifted values \ts $\ell_i$ and~$m_i$ (see~\eqref{eq:definition l,g,m}).
	
	\smallskip
	
	\begin{lemma}\label{l:asymptotic upper bound for hook length of a cell}
		Let $\lambda$ be a partition.
		Then, for every \ts $(i,j) \in \lambda$, and every \ts $k\geq 0$, we have:
		$$
		\frac{h_{\lambda}(i+k,j+k)}{h_{\lambda}(i,j)}  \, \leq \, \frac{\ell_{i+k}}{\ell_{i}}\,.
		$$
	\end{lemma}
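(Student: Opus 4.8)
The plan is to recast the inequality using the hook-length formula~\eqref{eq:hook-def} and then finish with an elementary comparison of fractions. Write $A := h_\lambda(i,j)$ and $B := h_\lambda(i+k,j+k)$. A direct computation gives
$$
\ell_i - h_\lambda(i,j) \,=\, (\lambda_i + d - i) - (\lambda_i - j + \lambda_j' - i + 1) \,=\, (d-\lambda_j') + (j-1),
$$
and the crucial observation is that this quantity depends only on $j$ (and $\lambda$), not on $i$; denote it $u_j$. Since $\lambda$ has at most $d$ parts, $\lambda_j' \le d$, and since $(i,j)\in\lambda$ we have $j\ge 1$, so $u_j \ge 0$, and likewise $u_{j+k}\ge 0$. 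Hence $\ell_i = A + u_j$ and $\ell_{i+k} = B + u_{j+k}$, so the claim is equivalent to $\frac{B}{A} \le \frac{B+u_{j+k}}{A+u_j}$.

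Next I would record two monotonicity facts. First, hook-lengths weakly decrease along the main diagonal direction:
$$
h_\lambda(i,j) - h_\lambda(i+k,j+k) \,=\, (\lambda_i - \lambda_{i+k}) + (\lambda_j' - \lambda_{j+k}') + 2k \,\ge\, 0,
$$
each summand being nonnegative because $\lambda$ and its conjugate $\lambda'$ are weakly decreasing; thus $A \ge B$. Second,
$$
u_{j+k} - u_j \,=\, k + (\lambda_j' - \lambda_{j+k}') \,\ge\, k \,\ge\, 0,
$$
so $0 \le u_j \le u_{j+k}$. If $(i+k,j+k)\notin\lambda$ the asserted inequality is trivial, since then $h_\lambda(i+k,j+k)\le 0$ while the right-hand side is nonnegative; so we may assume $(i+k,j+k)\in\lambda$, which forces $i+k\le d$ and hence $A\ge 1$, $\ell_i\ge 1$, making all denominators positive, and $B\ge 1\ge 0$.

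Finally, apply the elementary inequality: if $0\le B\le A$ with $A>0$ and $0\le u\le v$, then $\frac{B}{A}\le\frac{B+v}{A+u}$, since cross-multiplying reduces this to $Bu\le Av$, which follows from $Bu\le Au\le Av$. Taking $u=u_j$, $v=u_{j+k}$ and substituting $\ell_i = A+u_j$, $\ell_{i+k}=B+u_{j+k}$ yields $\frac{h_\lambda(i+k,j+k)}{h_\lambda(i,j)}\le\frac{\ell_{i+k}}{\ell_i}$, as desired. There is no serious obstacle here; the only point requiring a little care is spotting that $\ell_i - h_\lambda(i,j)$ is independent of $i$ — this is precisely what makes the two-fraction comparison available — after which everything reduces to the arm/leg bookkeeping above.
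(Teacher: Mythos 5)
Your argument is correct in its main case and takes a genuinely different route from the paper's. The paper bounds the numerator from above, writing $h_{\lambda}(i+k,j+k) \le h_{\lambda}(i,j) - k + \ell_{i+k} - \ell_i$, and then feeds the estimate $h_{\lambda}(i,j)\le \ell_i$ into the manipulation $1 - X/h_{\lambda}(i,j) \le 1 - X/\ell_i \le \ell_{i+k}/\ell_i$ with $X = \ell_i-\ell_{i+k}+k \ge 0$. You instead isolate the identity $\ell_i - h_{\lambda}(i,j) = (d-\lambda_j') + (j-1) =: u_j$, note that it depends only on the column, and reduce the claim to the mediant-type comparison $B/A \le (B+u_{j+k})/(A+u_j)$ with $0\le B\le A$ and $0\le u_j\le u_{j+k}$. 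Both proofs rest on the same two facts (that $h_{\lambda}(i,j)\le\ell_i$, and that hooks weakly decrease along the diagonal), but your decomposition makes the mechanism more transparent; it is a legitimate alternative.

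One small repair is needed in your dismissal of the case $(i+k,j+k)\notin\lambda$: you assert that the right-hand side $\ell_{i+k}/\ell_i$ is nonnegative, but when $i+k>d$ one has $\lambda_{i+k}=0$ and $\ell_{i+k}=d-i-k<0$, so the right-hand side is negative and the "trivial" conclusion does not follow (for instance $\lambda=(1,1)$, $d=2$, $(i,j)=(1,1)$, $k=2$ gives LHS $=-5/2$ and RHS $=-1/2$; the lemma still holds, but not for the reason you give). Fortunately your main computation already covers this case without the split: the identities $\ell_i=A+u_j$ and $\ell_{i+k}=B+u_{j+k}$, together with $u_{j+k}\ge u_j\ge 0$, $B\le A$ and $A=h_{\lambda}(i,j)\ge 1$, hold unconditionally, and the cross-multiplied inequality $Bu_j\le Au_{j+k}$ remains valid for $B<0$ since then $Bu_j\le 0\le Au_{j+k}$. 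So simply drop the case distinction and state the final elementary inequality for arbitrary real $B\le A$ with $A>0$ and $0\le u\le v$.
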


	\begin{proof}
		We have:
		$$
		h_{\la}(i+k,j+k) \. \leq \. h_{\la}(i,j) -2k +\la_{i+k}-\la_i \. = \. h_{\la}(i,j) -k +\ell_{i+k}-\ell_i\..
		$$
		Note that \ts $h_{\la}(i,j) = \la_i-i +\la'_j-j+1 \leq \la_i+d-i$.  Hence:
		$$
		\frac{h_{\la}(i+k,j+k)}{h_{\la}(i,j)} \, \leq \, 1 \. - \. \frac{\ell_i - \ell_{i+k} +k}{h_{\la}(i,j)}
		\, \leq \, 1 \. - \. \frac{\ell_i - \ell_{i+k} +k}{\ell_i} \, \leq  \, \frac{\ell_{i+k}}{\ell_i}\,,
		$$
		as desired.
	\end{proof}
	
	\smallskip
	
	We now apply Lemma~\ref{l:asymptotic upper bound for hook length of a cell}
	to derive  an upper bound for the product of hooks of a flagged tableau.
	Let \ts $T\in \FT(\lambda/\mu)$.  Recall the notation~\eqref{eq:def-ti},
	for the number \ts $t_i(T)$ \ts of $i$'s in~$T$.
	Lemma~\ref{l:asymptotic upper bound for hook length of a cell}
	immediately gives:
	
	\smallskip
	
	\begin{cor}\label{c:asymptotic upper bound for an excited diagram}
		Let $d \geq 2$, and let $T$ be a flagged tableau of $\lambda/\mu$.
		Then:
		$$
		\prod_{(i,j) \in \mu} \. \frac{h_{\lambda}\bigl(T(i,j), \ts j+T(i,j)-i\bigr)}{h_{\lambda}(i,j)}
		\ \leq \    \prod_{i=1}^d  \. \frac{(\ell_i)^{t_i(T)}}{(\ell_i)^{\mu_i}}\,.
		$$
	\end{cor}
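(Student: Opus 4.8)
The plan is to apply Lemma~\ref{l:asymptotic upper bound for hook length of a cell} cell-by-cell and then rearrange the resulting finite product. First I would observe that since $T\in\SSYT(\mu)$ has strictly increasing columns with positive integer entries, every entry satisfies $T(i,j)\ge i$; hence $k:=T(i,j)-i$ is a \emph{nonnegative} integer for each $(i,j)\in\mu$. Moreover, the flag condition~\eqref{eq:definition flagged tableau} guarantees that the target cell $\bigl(T(i,j),\,j+T(i,j)-i\bigr)=(i+k,\,j+k)$ lies in $\lambda$, so the hook-length in the numerator is well defined and Lemma~\ref{l:asymptotic upper bound for hook length of a cell} applies with exactly this value of $k$, giving
$$
\frac{h_{\lambda}\bigl(T(i,j),\,j+T(i,j)-i\bigr)}{h_{\lambda}(i,j)} \ \le\ \frac{\ell_{i+k}}{\ell_i} \ =\ \frac{\ell_{T(i,j)}}{\ell_i}\,.
$$

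Next I would take the product over all cells $(i,j)\in\mu$, obtaining
$$
\prod_{(i,j)\in\mu}\frac{h_{\lambda}\bigl(T(i,j),\,j+T(i,j)-i\bigr)}{h_{\lambda}(i,j)} \ \le\ \prod_{(i,j)\in\mu}\frac{\ell_{T(i,j)}}{\ell_i}\,.
$$
The final step is purely bookkeeping: in the numerator $\prod_{(i,j)\in\mu}\ell_{T(i,j)}$, for each value $v\in\{1,\ldots,d\}$ there is one factor $\ell_v$ for each of the $t_v(T)$ cells of $\mu$ carrying the entry $v$, so the numerator equals $\prod_{i=1}^d(\ell_i)^{t_i(T)}$; in the denominator $\prod_{(i,j)\in\mu}\ell_i$, row $i$ contributes $\mu_i$ factors of $\ell_i$, so the denominator equals $\prod_{i=1}^d(\ell_i)^{\mu_i}$. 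Combining these identifies the right-hand side with $\prod_{i=1}^d (\ell_i)^{t_i(T)}/(\ell_i)^{\mu_i}$, which is the claimed bound.

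There is essentially no serious obstacle here: all the analytic content is packed into Lemma~\ref{l:asymptotic upper bound for hook length of a cell}, and the only genuine checks are the elementary inequality $T(i,j)\ge i$ (ensuring $k\ge 0$, so the lemma is applicable) and the observation that the flag condition keeps the image cell inside $\lambda$; the remainder is a rearrangement of a product of finitely many positive factors.
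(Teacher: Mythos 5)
Your proposal is correct and matches the paper's intended argument exactly: the paper states that Lemma~\ref{l:asymptotic upper bound for hook length of a cell} ``immediately gives'' the corollary, and your write-up simply fills in the routine details (the check $T(i,j)\ge i$ from column-strictness, the flag condition keeping $(i+k,j+k)$ inside $\lambda$, and the regrouping of the product by entry value and by row). Nothing is missing.
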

	
	\smallskip
	
	We now arrive to the main result of this subsection.
	
	\smallskip
	
	\begin{lemma}\label{l:NHLF  upper bounded by schur polynomial}
		Let $d \geq 2$, and let $\lambda, \mu$ be partitions such that $\mu \subseteq  \lambda$.
		Then
		\[ 1 \ \leq \  \frac{f(\lambda/\mu)}{F(\lambda/\mu)}   \ \leq \ \frac{s_{\mu}(\ell_1,\ldots,\ell_d)}{\ell_1^{\mu_1} \ldots \ell_d^{\mu_d}}\..\]
	\end{lemma}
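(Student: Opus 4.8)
The plan is to rewrite the ratio $f(\lambda/\mu)/F(\lambda/\mu)$ as an explicit sum over flagged tableaux using the Flagged NHLF, prove the lower bound by exhibiting one distinguished flagged tableau, and prove the upper bound by estimating each summand by the corresponding monomial in $\ell_1,\dots,\ell_d$ and comparing with the combinatorial definition of the Schur polynomial.

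First I would combine Theorem~\ref{t:NHLF} with the definition~\eqref{eq:F-def}. Since $F(\lambda/\mu) = n!\,\bigl[\prod_{(i,j)\in\lambda} h_\lambda(i,j)^{-1}\bigr]\,\prod_{(i,j)\in\mu} h_\lambda(i,j)$, dividing the Flagged NHLF by $F(\lambda/\mu)$ gives
\[
\frac{f(\lambda/\mu)}{F(\lambda/\mu)} \ = \ \sum_{T\in\FT(\lambda/\mu)} \ \prod_{(i,j)\in\mu} \frac{h_\lambda\bigl(T(i,j),\, j+T(i,j)-i\bigr)}{h_\lambda(i,j)}\,.
\]
All summands are positive, so the sum is at least the contribution of any single flagged tableau. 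The tableau $T_0$ given by $T_0(i,j)=i$ is weakly increasing in rows and strictly increasing in columns, and it satisfies the flag condition~\eqref{eq:definition flagged tableau}, which for $T_0$ reads $j\le\lambda_i$ and holds since $(i,j)\in\mu\subseteq\lambda$. Its contribution is $\prod_{(i,j)\in\mu} h_\lambda(i,j)/h_\lambda(i,j)=1$, which yields the lower bound $f(\lambda/\mu)/F(\lambda/\mu)\ge 1$; alternatively this is immediate from Theorem~\ref{t:NHLF-asy}.

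For the upper bound, I would apply Corollary~\ref{c:asymptotic upper bound for an excited diagram} to each term, bounding it by $\prod_{i=1}^d (\ell_i)^{t_i(T)}/(\ell_i)^{\mu_i}$, where $t_i(T)$ counts the $i$'s in $T$ as in~\eqref{eq:def-ti}. Factoring out the $T$-independent denominator,
\[
\frac{f(\lambda/\mu)}{F(\lambda/\mu)} \ \le \ \frac{1}{\ell_1^{\mu_1}\cdots \ell_d^{\mu_d}} \ \sum_{T\in\FT(\lambda/\mu)} \ \prod_{i=1}^d (\ell_i)^{t_i(T)}\,.
\]
Every flagged tableau of shape $\lambda/\mu$ lies in $\SSYT(\mu,d)$: the flag condition forces every entry to be at most $d$, since an entry $k>d$ would require $j+k-i\le\lambda_k=0$ (using $\lambda_k=0$ for $k>d$), which is impossible as $j\ge 1$ and $k-i\ge k-d\ge 1$. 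Dropping the flag constraint only enlarges the sum, and $\sum_{T\in\SSYT(\mu,d)}\prod_{i=1}^d (\ell_i)^{t_i(T)} = s_\mu(\ell_1,\dots,\ell_d)$ by the combinatorial definition~\eqref{eq:Schur polynomial}, giving the claimed bound. (There are no convention issues: $\ell_i=\lambda_i+d-i>0$ for $i<d$, and if $\ell_d=0$ then the flag condition forces $t_d(T)=0$ and $\mu_d=0$.)

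There is no serious obstacle here beyond bookkeeping; the points requiring care are the identity rewriting of the NHLF ratio, the verification that $T_0$ is a legitimate flagged tableau (so that the lower-bound term genuinely appears), and the containment $\FT(\lambda/\mu)\subseteq\SSYT(\mu,d)$ that lets one pass from the flag-constrained sum to the full Schur polynomial.
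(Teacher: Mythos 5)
Your proposal is correct and follows essentially the same route as the paper: the lower bound via Theorem~\ref{t:NHLF-asy} (your explicit tableau $T_0$ is just the single-term witness for that bound), and the upper bound by dividing the Flagged NHLF of Theorem~\ref{t:NHLF} by $F(\lambda/\mu)$, applying Corollary~\ref{c:asymptotic upper bound for an excited diagram} termwise, dropping the flag constraint, and invoking the combinatorial definition~\eqref{eq:Schur polynomial}. Your extra check that flagged tableaux have entries at most $d$ is a harmless (and slightly more careful) elaboration of the same step.
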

	
	\begin{proof}% [Proof of Lemma~\ref{l:NHLF  upper bounded by schur polynomial}]
		The lower bound is given in Theorem~\ref{t:NHLF-asy}.  \ts
		For the upper bound, we have:
		\begin{align*}
		\frac{f(\lambda/\mu)}{F(\lambda/\mu)}   \ & =_{\text{Thm~\ref{t:NHLF}}} \  \sum_{T \in \Ec(\lambda/\mu)}
		\, \prod_{(i,j)  \in \mu} \frac{h_{\lambda}\bigl(T(i,j), j+T(i,j)-i\bigr)}{h_{\lambda}(i,j)}
		\ \leq_{\text{Cor~\ref{c:asymptotic upper bound for an excited diagram}}} \ \sum_{T \in \Ec(\lambda/\mu)}
		\,  \prod_{i=1}^d \, \frac{\ell_i^{t_i(T)}}{\ell_i^{\mu_i}} \\
		\ & \leq_{\text{\eqref{eq:definition flagged tableau}}} \  \frac{1}{\ell_1^{\mu_1} \ldots \ell_d^{\mu_d}} \, \sum_{T \in \text{SSYT}(\mu)} \ell_1^{t_1(T)}\ldots \ell_d^{t_d(T)} \ \leq_{\text{\eqref{eq:Schur polynomial}}} \  \frac{s_{\mu}(\ell_1,\ldots,\ell_d)}{\ell_1^{\mu_1} \ldots \ell_d^{\mu_d}}\,,
		\end{align*}
		as desired.
	\end{proof}

	\smallskip
	
	\subsection{Interval decomposition upper bound}
	In this subsection we give a refinement to  the upper bound in
	Lemma~\ref{l:NHLF  upper bounded by schur polynomial}.
	An interval decomposition of \ts $[d]=\{1,\ldots,d\}$ \ts is defined
	as the following collection of subsets: \ts $\CB:=(B_1,\ldots, B_r)$, where
	\begin{equation}\label{eq:def-CB}
	B_1\. := \. \{ 1,\ldots, b_1\}\,, \quad  B_2\. := \. \{ b_1+1,\ldots, b_2\}\,,
	\ \ \ldots \ \ , \ B_{r}\.:=\. \{b_{r-1}+1,  \ldots,d \},
	\end{equation}
	for some \ts $0 \ts =\ts b_0 < b_1 < b_2 < \ldots < b_r \ts =\ts d$ \ts
	and \ts $r \geq 1$.
	
	For all \ts $i, j \in [d]$, we write \ts $i \Bsim j$ \ts when $i$ and $j$ are
	contained in the same partition in $B_1,\ldots,B_r$, and \ts $i \Bnsim j$ \ts otherwise.
	% Note that $\eBsim$ is an equivalence relation on the set $[n]$.
	We drop~$\CB$ when the partition is clear.
	%
	% We consider the maximal ratio $\ell_j/\ell_i$ and set
	Let
	\begin{equation}\label{eq:N(lambda,B)}
	N(\ell,\CB) \ :=   \ \max \left \{ \, \frac{\ell_i}{\ell_i-\ell_j} \
	\Big| \ {1 \leqslant i < j \leqslant d  \ \text{ and } \ i \Bnsim j} \right\}\ts,
	\end{equation}
	and let $N(\lambda,\CB):= 0$ for $r=d$.
	The main result of this section is the following  upper bound:
	
	\smallskip
	
	\begin{thm}% [\text{\rm Interval Upper Bound}]
		\label{t:interval}
		Fix \ts $d\geq 2$.  Let \ts $\lambda/\mu\vdash n$, such that \ts $\la,\mu \in \pp_d$, and let
		$\CB$ be an interval decomposition of~$[d]$.
		Then:
		\begin{equation}\label{eq:interval-thm}
		\frac{s_{\mu}(\ell_1,\ldots,\ell_d)}{\ell_1^{\mu_1} \ldots \ell_d^{\mu_d}} \ \leq \ C_d \prod_{\substack{1 \leqslant i < j \leqslant d \\ i \eBsim j}} (m_i-m_j+  N(\ell,\CB)) \ \prod_{\substack{1 \leqslant i < j \leqslant d \\ i \eBnsim j}} \frac{\ell_i}{\ell_i-\ell_j}\,,
		\end{equation}
		for some absolute constant $C_d >0$.
	\end{thm}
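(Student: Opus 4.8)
The plan is to first rewrite the left-hand side of~\eqref{eq:interval-thm} so that the within-block and cross-block pairs decouple, and then prove the resulting estimate by induction on the number $r$ of blocks of~$\CB$.

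\emph{Reformulation.} Since $\frac{\ell_i}{\ell_i-\ell_j}=(1-\ell_j/\ell_i)^{-1}$, inequality~\eqref{eq:interval-thm} is equivalent to
\[
\frac{s_{\mu}(\ell_1,\ldots,\ell_d)}{\ell_1^{\mu_1}\cdots\ell_d^{\mu_d}}\ \prod_{\substack{1\le i<j\le d\\ i\eBnsim j}}\!\bigl(1-\ell_j/\ell_i\bigr)\ \le\ C_d\!\!\prod_{\substack{1\le i<j\le d\\ i\eBsim j}}\!\!\bigl(m_i-m_j+N(\ell,\CB)\bigr).
\]
A useful guide for the induction is the bialternant identity: dividing the $j$-th column of $\det(\ell_j^{m_i})$ by $\ell_j^{m_j}$ and using $\prod_{i<j}(\ell_i-\ell_j)=\bigl(\prod_i\ell_i^{d-i}\bigr)\prod_{i<j}(1-\ell_j/\ell_i)$ together with $m_j=\mu_j+(d-j)$ gives
\[
\frac{s_{\mu}(\ell_1,\ldots,\ell_d)}{\ell_1^{\mu_1}\cdots\ell_d^{\mu_d}}\ =\ \frac{\det\bigl(\ell_j^{\,m_i-m_j}\bigr)_{i,j=1}^{d}}{\prod_{1\le i<j\le d}\bigl(1-\ell_j/\ell_i\bigr)}\,.
\]
In the permutation expansion of this determinant, the permutations preserving the blocks of $\CB$ produce a product of single-block determinants, which pair naturally with the within-block denominator factors; the reformulated inequality asserts that this ``block-diagonal'' contribution controls, up to $C_d$ and the correction $N$, the block-mixing contributions.

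\emph{Base case $r=1$.} Here $N(\ell,\CB)=0$ and the second product is empty, so it suffices to show $\frac{s_\mu(\ell)}{\prod_i\ell_i^{\mu_i}}\le\prod_{1\le i<j\le d}(m_i-m_j)$. Using~\eqref{eq:Schur polynomial} I write $\frac{s_\mu(\ell)}{\prod_i\ell_i^{\mu_i}}=\sum_{A\in\SSYT(\mu,d)}\prod_i\ell_i^{t_i(A)-\mu_i}$; for a fixed~$A$ the cells with entry $\le i$ form a subdiagram of~$\mu$ with at most $i$ rows, hence $t_1(A)+\cdots+t_i(A)\le\mu_1+\cdots+\mu_i$ for all $i$. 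Since $\ell_1\ge\cdots\ge\ell_d$ and $\sum_i\bigl(t_i(A)-\mu_i\bigr)=0$, Abel summation gives $\sum_i\bigl(t_i(A)-\mu_i\bigr)\log\ell_i=\sum_{i=1}^{d-1}\bigl(\sum_{k\le i}(t_k(A)-\mu_k)\bigr)\log(\ell_i/\ell_{i+1})\le0$, so every summand is at most~$1$ and $\frac{s_\mu(\ell)}{\prod_i\ell_i^{\mu_i}}\le|\SSYT(\mu,d)|=\prod_{i<j}\frac{m_i-m_j}{j-i}\le\prod_{i<j}(m_i-m_j)$ by~\eqref{eq:HCF}.

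\emph{Inductive step.} For $r\ge2$ I would peel off the first block $B_1=\{1,\ldots,b_1\}$ via the branching rule $s_\mu(\ell)=\sum_{\rho\subseteq\mu,\,\ell(\rho)\le b_1}s_\rho(\ell_1,\ldots,\ell_{b_1})\,s_{\mu/\rho}(\ell_{b_1+1},\ldots,\ell_d)$: estimate $s_\rho(\ell_1,\ldots,\ell_{b_1})$ by the single-block bound of the base case, estimate $s_{\mu/\rho}(\ell_{b_1+1},\ldots,\ell_d)$ by the inductive hypothesis for the $(r-1)$-block decomposition $(B_2,\ldots,B_r)$ of $\{b_1+1,\ldots,d\}$ (passing from skew to straight shapes through $s_{\mu/\rho}=\sum_\tau c^{\mu}_{\rho\tau}s_\tau$), recombine the powers of~$\ell$, and sum over~$\rho$. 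The sum over~$\rho$ is a geometric-type series in the cross-block ratios $\ell_j/\ell_i$ with $i\in B_1$, $j\notin B_1$; the containment and interlacing constraints on~$\rho$ make it converge, and each such ratio obeys $\ell_j/\ell_i\le 1-1/N(\ell,\CB)$, which is exactly what converts the summation into the additive correction $N(\ell,\CB)$ appearing in the factors $m_i-m_j+N$. Carrying the absolute constants through the recursion yields a suitable $C_d$.

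\emph{Main obstacle.} The delicate point is the inductive step: one must verify that summing over the intermediate shapes~$\rho$ (equivalently, handling the off-block-diagonal part of $\det(\ell_j^{m_i-m_j})$) costs only the \emph{single} additive term $N(\ell,\CB)$ inside each within-block factor, rather than a multiplicative power of~$N$. This requires the sharp single-block estimate, an analysis of exactly which~$\rho$ (and, after the Littlewood--Richardson expansion, which~$\tau$) actually contribute, and careful tracking of how the shifted parts $m_i$ behave under restriction of the variable set to a sub-block, so that the $m$-differences produced in the recursion match those on the right of~\eqref{eq:interval-thm}.
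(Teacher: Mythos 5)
Your reformulation via the bialternant and your base case are sound: the single-block bound via the $\SSYT$ expansion, the majorization $t_1(A)+\cdots+t_i(A)\le\mu_1+\cdots+\mu_i$, and~\eqref{eq:HCF} is essentially the same computation as the paper's Lemma~\ref{l:determinant estimate}. But the inductive step is where the entire content of the theorem lives, and as written it is a plan, not a proof --- indeed you flag its crux yourself under ``Main obstacle.'' Two concrete problems stand out. First, after the branching $s_\mu(\ell)=\sum_\rho s_\rho(\ell_1,\ldots,\ell_{b_1})\ts s_{\mu/\rho}(\ell_{b_1+1},\ldots,\ell_d)$ and the Littlewood--Richardson expansion $s_{\mu/\rho}=\sum_\tau c^{\mu}_{\rho\tau}s_\tau$, the within-block factors delivered by the inductive hypothesis are differences of the shifted parts of $\tau$ (and, for $B_1$, of $\rho$), not of $\mu$: for instance $\rho=(\mu_1,0,\ldots)$ is admissible, so $\rho_i-\rho_j$ can be of order $\mu_1$ even when $m_i-m_j$ is small, and similarly for $\tau$; these factors do not match the target $(m_i-m_j+N(\ell,\CB))$ and must be beaten back by the monomial decay in the cross-block ratios. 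Second, the geometric-series heuristic ``$\ell_j/\ell_i\le 1-1/N$ hence the $\rho$-sum converges'' only yields, per summation variable, a multiplicative factor of order $N(\ell,\CB)$ together with LR multiplicities; it does not by itself produce the very specific shape of the right-hand side of~\eqref{eq:interval-thm}, namely the exact factors $\ell_i/(\ell_i-\ell_j)$ for cross-block pairs and a \emph{single additive} $N(\ell,\CB)$ inside each same-block factor, with a constant depending on $d$ alone. Some mechanism trading polynomially large $m$-differences against powers of $\ell_j/\ell_i$ is indispensable, and nothing in your sketch supplies it.

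For comparison, the paper avoids the branching/LR route altogether: it Laplace-expands the bialternant determinant along the blocks of $\CB$ (Lemma~\ref{l:Schur-pol-exp}), which produces a sum over $\sigma\in S_d$ of $\sigma$-dependent products, and then removes the $\sigma$-dependence by a rearrangement induction over transpositions (the quantities $R_a$, $H_a(i,j)$ and Lemmas~\ref{l:rear-ineq-a=b}, \ref{l:rear-ineq-a-neq-b-prelim}, \ref{l:upper bound for mi-mj}); the elementary inequality $m\ts(x_2/x_1)^m\le x_1/(e\ts(x_1-x_2))$ of Lemma~\ref{l:exp-bound-mi-mj-simplified} is precisely the trade that converts cross-block monomial decay into the additive $N(\ell,\CB)$ corrections, one transposition at a time, at total cost $d!\max\{1,d(d-1)^2/e\}$. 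If you want to salvage your induction on blocks, you would need an analogue of that trade applied uniformly over $\rho$ and $\tau$, plus explicit control of the LR multiplicities, and until that is done the argument has a genuine gap.
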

	
	Lemma~\ref{l:NHLF  upper bounded by schur polynomial} and Theorem~\ref{t:interval} immediately imply:
	
	\begin{cor}[\text{\rm Interval Upper Bound}] \label{c:interval}
		In notation of Theorem~\ref{t:interval},
		$$
		\frac{f(\lambda/\mu)}{F(\lambda/\mu)}   \ \leq \ \ C_d \prod_{\substack{1 \leqslant i < j \leqslant d \\ i \eBsim j}} (m_i-m_j+  N(\ell,\CB)) \ \prod_{\substack{1 \leqslant i < j \leqslant d \\ i \eBnsim j}} \frac{\ell_i}{\ell_i-\ell_j}\,.
		$$
	\end{cor}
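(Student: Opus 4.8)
The plan is to argue from the combinatorial expansion of the Schur polynomial. By~\eqref{eq:Schur polynomial},
$$\frac{s_{\mu}(\ell_1,\ldots,\ell_d)}{\ell_1^{\mu_1}\cdots\ell_d^{\mu_d}} \; = \; \sum_{T\in\SSYT(\mu,d)}\,\prod_{i=1}^d\ell_i^{\,t_i(T)-\mu_i},$$
and the key structural input is that for every $T\in\SSYT(\mu,d)$ the content partial sums satisfy $\sum_{k\le i}t_k(T)\le\mu_1+\cdots+\mu_i$, with equality at $i=d$: an entry $\le i$ lies only in the first $i$ rows, which contain $\mu_1+\cdots+\mu_i$ cells. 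Hence, setting $S_i(T):=\sum_{k\le i}\bigl(t_k(T)-\mu_k\bigr)\le 0$ and summing by parts, $\prod_i\ell_i^{t_i(T)-\mu_i}=\prod_{i=1}^{d-1}(\ell_i/\ell_{i+1})^{S_i(T)}\le 1$ (we may assume $\ell_d>0$, the degenerate case $\lambda_d=0$ reducing to smaller $d$). Summing this over all $T$ already yields $\frac{s_\mu(\ell)}{\prod\ell_i^{\mu_i}}\le|\SSYT(\mu,d)|=\prod_{i<j}\frac{m_i-m_j}{j-i}$, which is the asserted bound when $r=1$. At the opposite extreme $r=d$, the bialternant formula~\eqref{eq:Schur-def} and the rearrangement inequality --- the identity permutation maximizes $\prod_i\ell_{\sigma(i)}^{m_i}$ since $\ell_1>\cdots>\ell_d$ and $m_1>\cdots>m_d$ --- give $\det(\ell_j^{m_i})\le d!\,\prod_i\ell_i^{m_i}$; since $m_i-\mu_i=d-i$ and $\prod_{i<j}(\ell_i-\ell_j)=\bigl(\prod_i\ell_i^{\,d-i}\bigr)\prod_{i<j}\frac{\ell_i-\ell_j}{\ell_i}$, this gives $\frac{s_\mu(\ell)}{\prod\ell_i^{\mu_i}}\le d!\,\prod_{i<j}\frac{\ell_i}{\ell_i-\ell_j}$, the asserted bound when $r=d$.

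For a general interval decomposition $\CB=(B_1,\ldots,B_r)$ I would interpolate between these extremes by cutting each tableau along the blocks. Put $\emp=\mu^{(0)}\subseteq\cdots\subseteq\mu^{(r)}=\mu$, where $\mu^{(s)}$ is the shape occupied by the cells of $T$ with entries in $\{1,\ldots,b_s\}$; the restriction of $T$ to the entries in $B_s$ is then a semistandard skew tableau of shape $\mu^{(s)}/\mu^{(s-1)}$ with entries in $B_s$, and the monomial weight factorizes over $s$. Equivalently this is the iterated branching rule $s_{\mu}(\ell_1,\ldots,\ell_d)=\sum s_{\mu^{(1)}/\mu^{(0)}}(\ell_i:i\in B_1)\cdots s_{\mu^{(r)}/\mu^{(r-1)}}(\ell_i:i\in B_r)$, summed over all chains. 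Normalizing the $s$-th factor by the monomial of its minimal (greedy) filling makes all its terms $\le 1$, so the $r=1$ argument applied inside the block bounds it by $C\prod_{i<j\in B_s}(m_i-m_j+\ast)$, where the correction $\ast$ appears because $\mu^{(s)}/\mu^{(s-1)}$ may reach into rows outside $B_s$ and the number of chains through a fixed $\mu^{(s)}$ is controlled precisely by how $\ell_{b_s}$ compares with the $\ell_j$ for $j>b_s$ (and symmetrically at $b_{s-1}$) --- this is where $N(\ell,\CB)$ enters. The residual cross-block monomial factors (powers of $\ell_i$ paired with blocks not containing $i$) are $<1$, with ratios $\ell_{b_s}/\ell_j<1$ for $j$ in a later block, so summing over all chains produces a product of convergent geometric series evaluating to $\prod_{i\Bnsim j}\frac{\ell_i}{\ell_i-\ell_j}$.

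The main obstacle is exactly this interpolation bookkeeping: one must simultaneously bound each within-block skew Schur evaluation by a clean product $C\prod_{i<j\in B_s}(m_i-m_j+N(\ell,\CB))$ \emph{uniformly} over the intermediate shapes $\mu^{(s)}$, and show the chain-sum of the accumulated cross-block weights is dominated by geometric series summing to $\prod_{i\Bnsim j}\frac{\ell_i}{\ell_i-\ell_j}$. These two estimates are coupled through the admissibility constraints on the chains, so one has to route each geometric decay to the correct pair with $i\Bnsim j$ and each count to the correct pair with $i\Bsim j$, collecting all remaining $d$-dependent combinatorial factors into $C_d$. As a fallback organization I would replace the branching rule by a block Laplace expansion of $\det(\ell_j^{m_i})$ along the row-blocks $B_1,\ldots,B_r$: the diagonal term equals $\prod_s\bigl[s_{\sigma^{(s)}}(\ell_i:i\in B_s)\,\prod_{i<j\in B_s}(\ell_i-\ell_j)\bigr]$ for suitable partitions $\sigma^{(s)}$, the off-diagonal terms are smaller by the across-block separation of the $\ell_i$, and dividing by the full Vandermonde cancels the within-block Vandermondes, leaving exactly the cross-block factors $\frac{\ell_i}{\ell_i-\ell_j}$.
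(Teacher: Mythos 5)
Your two boundary cases are correct: for $r=1$ the majorization $t_1(T)+\cdots+t_i(T)\le \mu_1+\cdots+\mu_i$ does give $\prod_i\ell_i^{\,t_i(T)-\mu_i}\le 1$ and hence the bound by $|\SSYT(\mu,d)|$, and for $r=d$ the rearrangement bound $\det\bigl(\ell_j^{m_i}\bigr)\le d!\,\prod_i\ell_i^{m_i}$ combined with the factorization of the Vandermonde gives $d!\,\prod_{i<j}\ell_i/(\ell_i-\ell_j)$; these match Lemma~\ref{l:determinant estimate} and the two extreme instances of Theorem~\ref{t:interval}. Note also that you work throughout with $s_\mu(\ell_1,\ldots,\ell_d)/(\ell_1^{\mu_1}\cdots\ell_d^{\mu_d})$ while the corollary is about $f(\lambda/\mu)/F(\lambda/\mu)$; the bridge is Lemma~\ref{l:NHLF  upper bounded by schur polynomial} (flagged NHLF plus the hook-ratio estimate along diagonals), which precedes the corollary and so may be cited, but it is an input you should state explicitly.

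The genuine gap is the general case $1<r<d$, which is the entire content of Theorem~\ref{t:interval} and which your proposal only plans rather than proves. Two steps are missing. First, the uniform within-block bound: after cutting a chain $\mu^{(0)}\subseteq\cdots\subseteq\mu^{(r)}$ along the blocks, the $s$-th factor is a \emph{skew} Schur evaluation $s_{\mu^{(s)}/\mu^{(s-1)}}$ whose normalized value depends on the intermediate shapes, which vary over the chain; you give no argument that it is dominated by $C\prod_{i<j,\ i,j\in B_s}(m_i-m_j+N(\ell,\CB))$ uniformly, and no mechanism by which $N(\ell,\CB)$ actually enters. Second, the assertion that the chain-sum of the cross-block monomial weights ``produces a product of convergent geometric series evaluating to $\prod_{i\eBnsim j}\ell_i/(\ell_i-\ell_j)$'' is a heuristic: the chains are constrained to be nested partitions and, as you note yourself, the within-block counts and cross-block weights are coupled, so the sum does not factor into independent geometric series without an argument. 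In the paper this is exactly where the work lies: the block Laplace expansion (your ``fallback'', Lemma~\ref{l:Schur-pol-exp}) already yields the cross-block factors $\ell_i/(\ell_i-\ell_j)$, but leaves same-block factors $|m_{\sigma(i)}-m_{\sigma(j)}|$ for every $\sigma\in S_d$, and converting these into $m_i-m_j+O(N(\ell,\CB))$ is the rearrangement machinery of Lemmas~\ref{l:rearrangement technical lemmas}--\ref{l:upper bound for mi-mj}, where the monomial ratio $\prod_i x_i^{m_{\sigma(i)}}/\prod_i x_i^{m_i}\le 1$ is spent, transposition by transposition, to pay for error terms of size $\frac{d(d-1)}{e}N(x,\CB)$. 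Your proposal contains no substitute for that step.
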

	
	\smallskip
	
	\subsection{Expanding the determinant}
	We now build toward the proof of Theorem~\ref{t:interval}.
	Our strategy is to break down the Schur function evaluated at the sequence $\ell_1,\ldots,\ell_d$
	into evaluations of separate parts, and use either~\eqref{eq:Schur-def}
	when the values of $\ell_i$ are sufficiently distinct,
	or~\eqref{eq:HCF} when they are close.
	Denote
	\begin{equation}\label{eq:def-rM}
	\rM \, := \, \Big(x_{j}^{m_i}\Big)_{i,j=1}^d \..
	\end{equation}
	
	\smallskip
	
	\begin{lemma}\label{l:determinant estimate}
		Fix \ts $d\geq 2$.  Let \ts $\mu\in \pp_d$, and let \ts
		$x_1 \ge \ldots \ge x_d>0$. Then:
		$$0 \, \le \,
		\det \rM  \ \leq \
		{x_1^{m_1}\ldots x_d^{m_d}} \prod_{1 \leqslant i < j \leqslant d} \frac{(m_i-m_j)(x_i-x_j)}{(j-i) \ts x_i}\,.
		$$
	\end{lemma}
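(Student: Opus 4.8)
The plan is to recognize $\det\rM$ as (essentially) a Schur polynomial and reduce both inequalities to a single monotonicity statement. By the bialternant formula~\eqref{eq:Schur-def}, $\det\rM \. = \. s_{\mu}(x_1,\ldots,x_d)\prod_{1\le i<j\le d}(x_i-x_j)$, where we use that $m_i=\mu_i+d-i$ is the shifted partition of~$\mu$. Since $x_1\ge\ldots\ge x_d>0$, every factor $(x_i-x_j)$ with $i<j$ is nonnegative, and $s_\mu$ has nonnegative coefficients by the combinatorial expansion~\eqref{eq:Schur polynomial}, so $s_\mu(x_1,\ldots,x_d)\ge 0$ for positive arguments; this already gives the lower bound $\det\rM\ge 0$. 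For the upper bound, since $\prod_i x_i^{\mu_i}=\prod_i x_i^{m_i}/\prod_{1\le i<j\le d}x_i$ and $s_\mu(1,\ldots,1)=\prod_{1\le i<j\le d}\frac{m_i-m_j}{j-i}$ by~\eqref{eq:HCF}, multiplying the claimed bound out shows it is equivalent to
$$
s_{\mu}(x_1,\ldots,x_d) \, \le \, s_{\mu}(1,\ldots,1)\cdot x_1^{\mu_1}\cdots x_d^{\mu_d},
$$
after multiplying both sides by the nonnegative quantity $\prod_{i<j}(x_i-x_j)$ (carried along so that no case distinction for coincident $x_i$ is needed).

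I would prove this displayed inequality termwise in~\eqref{eq:Schur polynomial}: it suffices to show $\prod_{i=1}^d x_i^{t_i(A)}\le\prod_{i=1}^d x_i^{\mu_i}$ for each $A\in\SSYT(\mu,d)$, and then sum over~$A$. The key combinatorial observation is that in any column-strict tableau one has $A(i,j)\ge i$ for all $(i,j)\in\mu$, since the $i-1$ cells strictly above $(i,j)$ in its column carry distinct positive integers all smaller than $A(i,j)$. Consequently every cell of $A$ with entry $\le k$ lies in one of the first $k$ rows, which yields $t_1(A)+\ldots+t_k(A)\le\mu_1+\ldots+\mu_k$ for every $k$, with equality at $k=d$ (both sides equal $|\mu|$).

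Setting $a_i:=\log x_i$, so $a_1\ge\ldots\ge a_d$ by hypothesis, and $U_k:=\sum_{i=1}^k(\mu_i-t_i(A))\ge 0$ with $U_0=U_d=0$, Abel summation gives
$$
\sum_{i=1}^d(\mu_i-t_i(A))\,a_i \, = \, \sum_{i=1}^{d-1}U_i\,(a_i-a_{i+1}) \, \ge \, 0,
$$
since each $U_i\ge 0$ and each $a_i-a_{i+1}\ge 0$. Exponentiating yields $\prod_i x_i^{t_i(A)}\le\prod_i x_i^{\mu_i}$, and summing over $A\in\SSYT(\mu,d)$ gives the displayed bound, hence the lemma.

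The only non-bookkeeping step is the reduction itself: realizing that the determinant estimate is precisely the statement that $s_\mu$ is dominated by its leading monomial $x_1^{\mu_1}\cdots x_d^{\mu_d}$ (times $s_\mu(1,\ldots,1)$) on the chamber $x_1\ge\ldots\ge x_d>0$, and that this is a majorization fact coming from the inequality $A(i,j)\ge i$. The bialternant identity~\eqref{eq:Schur-def}, the product formula~\eqref{eq:HCF}, and the Abel-summation monotonicity are all routine, so I expect no serious obstacle beyond assembling these pieces in the right order.
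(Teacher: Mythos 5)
Your proposal is correct and follows essentially the same route as the paper: the bialternant identity~\eqref{eq:Schur-def} plus positivity of~\eqref{eq:Schur polynomial} for the lower bound, and the termwise majorization $t_1(A)+\ldots+t_k(A)\le\mu_1+\ldots+\mu_k$ giving $\prod_i x_i^{t_i(A)}\le\prod_i x_i^{\mu_i}$, summed over $A$ and evaluated via~\eqref{eq:HCF}, for the upper bound. The paper simply asserts the majorization and the resulting monomial inequality, whereas you supply the justification ($A(i,j)\ge i$ and Abel summation); the content is identical.
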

	
	\begin{proof}
		The first inequality follows from~\eqref{eq:Schur-def}:
		\begin{equation}\label{eq:det-Schur-lemma}
		\det \rM  \,\. = \,
		s_{\mu}(x_1,\ldots,x_d) \, \prod_{1 \leqslant i < j \leqslant d} (x_i-x_j) \, = \,
		\sum_{A \in \text{SSYT}(\mu)} {x_1^{t_1(A)}\ldots x_d^{t_d(A)}} \, \prod_{1 \leqslant i < j \leqslant d} (x_i-x_j) \, \geq \, 0.
		\end{equation}
		For the second inequality, since \ts $x_1\geq  \ldots \geq x_d$, and \ts $\mu_1\ge t_1(A)$, $\mu_1+\mu_2 \ge  t_1(A) + t_2(A)$,
		\ldots, we have:
		\begin{equation}\label{eq:majorization-Schur}
		x_1^{t_1(A)}\ldots x_d^{t_d(A)} \, \leq \, x_1^{\mu_1}\ldots x_d^{\mu_d}.
		\end{equation}
		We conclude:
		$$
		\aligned
		\det \rM \ &\leq_{\eqref{eq:det-Schur-lemma}, \. \eqref{eq:majorization-Schur}}  \
		\sum_{A \in \text{SSYT}(\mu)} {x_1^{\mu_1}\ldots x_d^{\mu_d}} \,
		\prod_{1 \leqslant i < j \leqslant d} (x_i-x_j) \\
		& \leq_{\eqref{eq:HCF}} \
		{x_1^{\mu_1}\ldots x_d^{\mu_d}} \
		\prod_{1 \leqslant i < j \leqslant d} \, \frac{(m_i-m_j)(x_i-x_j)}{(j-i)}\,,
		\endaligned
		$$
		which implies the result by the definition~\eqref{eq:definition l,g,m}.
	\end{proof}
	
	\smallskip
	
	To simplify presentation, we use notation \ts $\DET(A):=|\det(A)|$.
	
	\begin{lemma}\label{l:Schur-pol-exp}
		Fix \ts $d\geq 2$.  Let \ts $\mu\in \pp_d$, \ts
		$x_1 \ge \ldots \ge x_d>0$, and let $\CB$ be an interval decomposition of $[d]$.
		Then:
		\begin{align}\label{eq:Schur-pol-exp}
		{s_{\mu}(x_1,\ldots,x_d)}
		\ \leq \          \ \sum_{\sigma \in S_d} \,
		\frac{x_1^{m_{\sigma(1)}}\ldots x_d^{m_{\sigma(d)}} }{x_1^{d-1}\ldots x_d^{d-d} }
		\, \prod_{ \substack{ 1 \leqslant i < j \leqslant d \\
				i \eBsim j } } \ts \frac{|m_{\sigma(i)}-m_{\sigma(j)}|}{j-i}  \,
		\prod_{ \substack{ 1 \leqslant i < j \leqslant d \\
				i \eBnsim j } } \, \frac{x_i}{x_i - x_j}.
		\end{align}
	\end{lemma}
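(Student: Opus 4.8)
The plan is to start from the bialternant formula~\eqref{eq:Schur-def}, writing
\[
s_{\mu}(x_1,\ldots,x_d)\ =\ \frac{\det\rM}{\prod_{1\leqslant i<j\leqslant d}(x_i-x_j)}\,,\qquad \rM=(x_j^{m_i})_{i,j=1}^{d},
\]
so in particular $\det\rM = s_{\mu}\prod_{i<j}(x_i-x_j)\geq 0$ when $x_1\geq\cdots\geq x_d>0$, and then to expand $\det\rM$ by a Laplace expansion along the column groups $B_1,\ldots,B_r$ of $\CB$: this gives $\det\rM=\sum_{(S_1,\ldots,S_r)}\pm\prod_{\ell=1}^{r}\det\rM[S_\ell,B_\ell]$, the sum running over ordered set partitions $[d]=S_1\sqcup\cdots\sqcup S_r$ with $|S_\ell|=|B_\ell|$, where $\rM[S_\ell,B_\ell]$ is the minor on rows $S_\ell$ and columns $B_\ell$. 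Since only an upper bound is needed, I would immediately pass to $|\det\rM|\leq\sum_{(S_\ell)}\prod_\ell\DET(\rM[S_\ell,B_\ell])$, discarding the signs. The point is that an ordered set partition with $|S_\ell|=|B_\ell|$ is exactly the data of a permutation $\sigma\in S_d$ that is increasing on every block $B_\ell$, via $S_\ell=\sigma(B_\ell)$; so it will suffice to bound $\prod_\ell\DET(\rM[S_\ell,B_\ell])$ for each such $\sigma$ and then sum over $\sigma$.

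Next, for a fixed block $B_\ell=\{b_{\ell-1}+1,\ldots,b_\ell\}$ the minor $\rM[S_\ell,B_\ell]$ is, after relabeling its columns by the decreasing positive variables $y_a=x_{b_{\ell-1}+a}$, a matrix $(y_b^{\,m'_a})$ with a strictly decreasing exponent sequence $m'_1>\cdots>m'_{|B_\ell|}\geq 0$; writing $m'_a=\nu_a+|B_\ell|-a$ exhibits $\nu$ as a partition and puts the minor in the exact form covered by Lemma~\ref{l:determinant estimate} (the case $|B_\ell|=1$ being trivial). Applying that lemma blockwise and multiplying, I would obtain
\[
\prod_{\ell=1}^{r}\DET\bigl(\rM[S_\ell,B_\ell]\bigr)\ \leq\ \Bigl(\prod_{j=1}^{d}x_j^{m_{\sigma(j)}}\Bigr)\prod_{\substack{1\leqslant i<j\leqslant d\\ i\eBsim j}}\frac{(m_{\sigma(i)}-m_{\sigma(j)})\,(x_i-x_j)}{(j-i)\,x_i}\,.
\]
Dividing by $\prod_{i<j}(x_i-x_j)=\prod_{i<j,\,i\eBsim j}(x_i-x_j)\cdot\prod_{i<j,\,i\eBnsim j}(x_i-x_j)$, the within-block factors $(x_i-x_j)$ cancel the within-block part of the denominator exactly; then collecting the leftover powers of the $x_i$ via $\prod_{i<j}x_i=x_1^{d-1}x_2^{d-2}\cdots x_d^{0}$ turns the expression into precisely the $\sigma$-summand of~\eqref{eq:Schur-pol-exp}. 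Finally, enlarging the index set from block-increasing permutations to all of $S_d$ only increases the (nonnegative) sum, and on within-block pairs $m_{\sigma(i)}-m_{\sigma(j)}=|m_{\sigma(i)}-m_{\sigma(j)}|$ for such $\sigma$, so~\eqref{eq:Schur-pol-exp} would follow.

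The step I expect to require the most care is the degenerate case $x_i=x_j$, where $\prod_{i<j}(x_i-x_j)=0$ and the division above is not literally valid: if the coincidence is between two different blocks, the right-hand side of~\eqref{eq:Schur-pol-exp} already equals $+\infty$ and there is nothing to prove, whereas within-block coincidences are absorbed by the cancellation, so it is enough to establish the inequality on the open region $x_1>x_2>\cdots>x_d>0$ and extend by continuity (both sides are continuous there, and the right-hand side is continuous wherever all between-block differences are nonzero). A lesser bookkeeping nuisance — the signs in the Laplace expansion — is sidestepped entirely by only using the triangle inequality $|\det\rM|\leq\sum_{(S_\ell)}\prod_\ell\DET(\rM[S_\ell,B_\ell])$ together with the fact that each minor $\det\rM[S_\ell,B_\ell]$ is itself nonnegative by Lemma~\ref{l:determinant estimate}.
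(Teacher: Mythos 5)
Your proposal is correct and follows essentially the same route as the paper's proof: a Laplace expansion of $\det \rM$ along the blocks of $\CB$, a blockwise application of Lemma~\ref{l:determinant estimate}, division by the Vandermonde with exact cancellation of the within-block factors, and enlarging the sum to all of $S_d$. The only addition is your explicit continuity argument for the degenerate case $x_i=x_j$, a point the paper's proof leaves implicit.
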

	
	\begin{proof}
		Apply the Laplace expansion of \ts $\rM$ \ts along the interval decomposition \ts
		$\CB=(B_1,\ldots, B_r)$, defined as in~\eqref{eq:def-CB}.  We get:
		\begin{align*}
		\det \rM \ = \,  \sum_{\sigma \in S_d/\St(\CB)} \.
		\text{sign}(\sigma)\, \prod_{k=1}^r \. \det  \big[ x_{j}^{m_{\sigma(i)}}\big]_{i,j \in B_k}\,,
		\end{align*}
		where \. $\St(\CB)\ssu S_d$ \ts is the stabilizer subgroup of~$\CB$, so \ts
		$\St(\CB)\simeq S_{b_1} \times S_{b_2-b_1}\times \ldots \times S_{d-b_{r-1}}$. We have:
		\begin{equation}\label{eq:determinant expansion 1}
		\det \rM \ \,\leq  \ \sum_{\sigma \in S_d} \,
		\prod_{k=1}^r \,  \DET \. \big[\ts x_{j}^{m_{\sigma(i)}}\big]_{i,j \in B_k} \..
		\end{equation}
		We now analyze each term in the right side of \eqref{eq:determinant expansion 1} separately.
		We have for every $\sigma \in S_d$ that
		\begin{align*}
		\prod_{k=1}^r \,  \DET \. \big[\ts x_{j}^{m_{\sigma(i)}}\big]_{i,j \in B_k}  \  & \leq_{\text{Lem~\ref{l:determinant estimate}}} \
		\prod_{k=1}^r \ \prod_{i\in B_k} \, {x_i^{m_{\sigma(i)}}} \
		\prod_{ \substack{j \in B_k\\ j > i} } \frac{|m_{\sigma(i)}-m_{\sigma(j)}|\cdot (x_i-x_j)}{x_i\ts (j-i)} \\
		&  \le  \   x_1^{m_{\sigma(1)}} \ts \cdots \,x_d^{m_{\sigma(d)}} \prod_{ \substack{ 1 \leqslant i < j \leqslant d \\ i \eBsim j } }  \frac{|m_{\sigma(i)}-m_{\sigma(j)}|\cdot (x_i-x_j)}{x_i\ts (j-i)}\,.
		\end{align*}
		Using the inequality above for the RHS of~\eqref{eq:determinant expansion 1}, we obtain:
		\begin{equation*}
		\det \rM \ \leq  \ \sum_{\sigma \in S_d} \,  x_1^{m_{\sigma(1)}}\ts \cdots \, x_d^{m_{\sigma(d)}}  \,
		\prod_{ \substack{ 1 \leqslant i < j \leqslant d \\ i \eBsim j } }  \frac{|m_{\sigma(i)}-m_{\sigma(j)}|\cdot (x_i-x_j)}{x_i\ts (j-i)}
		\end{equation*}
		We conclude:
		\begin{align*}
		s_{\mu}(x_1,\ldots,x_d) \ & =_{\eqref{eq:Schur-def}} \ \det \rM  \prod_{1 \leqslant i < j \leqslant d} \, \frac{1}{x_i-x_j} \\
		& \le   \prod_{1 \leqslant i < j \leqslant d} \. \frac{1}{x_i-x_j} \ \sum_{\sigma \in S_d}  \. x_1^{m_{\sigma(1)}}\ts \cdots \,  x_d^{m_{\sigma(d)}}   \prod_{ \substack{ 1 \leqslant i < j \leqslant d \\ i \eBsim j } }  \frac{|m_{\sigma(i)}-m_{\sigma(j)}|\cdot (x_i-x_j)}{x_i\ts (j-i)}   \\
		& \le  \  \prod_{1 \leqslant i < j \leqslant d} \frac{x_i}{x_i-x_j} \sum_{\sigma \in S_d}   \frac{x_1^{m_{\sigma(1)}}\ts \cdots \,  x_d^{m_{\sigma(d)}}}{x_1^{d-1}\ts \cdots \,  x_d^{d-d}}   \prod_{ \substack{ 1 \leqslant i < j \leqslant d \\ i \eBsim j } }  \frac{|m_{\sigma(i)}-m_{\sigma(j)}|\cdot (x_i-x_j)}{x_i\ts (j-i)}  \\
		& \le  \          \ \sum_{\sigma \in S_d}
		\frac{x_1^{m_{\sigma(1)}}\ts \cdots \,  x_d^{m_{\sigma(d)}} }{x_1^{d-1}\ts \cdots \,  x_d^{d-d} }  \prod_{ \substack{ 1 \leqslant i < j \leqslant d \\ i \eBsim j } } \frac{|m_{\sigma(i)}-m_{\sigma(j)}|}{j-i}  \prod_{ \substack{ 1 \leqslant i < j \leqslant d \\ i \eBnsim j } } \frac{x_i}{x_i - x_j}\,,
		\end{align*}
		as desired.
	\end{proof}
	
\smallskip
	
\subsection{Simplifying the products}
In this subsection we will simplify the upper bound in Lemma~\ref{l:Schur-pol-exp}.
Our goal is to remove the dependence to $\sigma \in S_d$ in the RHS of ~\eqref{eq:Schur-pol-exp}.
%	\com{SH}{It might be a good idea to add something along the line that  ``with the end goal of  removing the dependence to $\sigma \in S_d$ in the RHS of ~\eqref{eq:Schur-pol-exp}''.}
We start with  the following two technical lemmas.

\smallskip
	
Let $\si\in S_d$, and let $1\le a<b\le d$.  We say that
$(a,b)$ is an \emph{inversion} in~$\si$, if \ts $\si(a)>\si(b)$.
Denote by $(ab)$ a transposition in~$S_d$.
	
	\smallskip
	
\begin{lemma}\label{l:rearrangement technical lemmas}
		Fix \ts $m_1\geq \ldots \geq m_d>0$ \ts and \ts $x_1\geq \ldots \geq x_d>0$.
		Let \ts $\sigma \in S_d$, and let $\tau=(ab)\in S_d$.
		Then:
		\begin{equation}\label{eq:rearrangement explicit equality}
		\prod_{i=1}^d \frac{x_i^{m_{\sigma(i)}}}{x_i^{m_{\sigma\tau(i)}}}  \ = \ \left(\frac{x_b}{x_a}\right)^{m_{\sigma(b)}-m_{\sigma(a)}}.
		\end{equation}
		Furthermore, when $(a,b)$ is an inversion of $\sigma$, we have
		\begin{equation}\label{eq:rearrangement inequality classical}
		\left(\frac{x_b}{x_a}\right)^{m_{\sigma(b)}-m_{\sigma(a)}} \ \leq  \ 1. \end{equation}
\end{lemma}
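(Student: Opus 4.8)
The plan is to prove both parts by direct computation, using that the transposition $\tau=(ab)$ changes $\sigma$ only in the two positions $a$ and $b$.

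For the identity~\eqref{eq:rearrangement explicit equality}, I would first record that, with the convention $(\sigma\tau)(i)=\sigma(\tau(i))$, the permutations $\sigma\tau$ and $\sigma$ agree on $[d]\setminus\{a,b\}$, while $\sigma\tau(a)=\sigma(b)$ and $\sigma\tau(b)=\sigma(a)$. Writing the left-hand side as $\prod_{i=1}^d x_i^{\,m_{\sigma(i)}-m_{\sigma\tau(i)}}$, every factor with $i\notin\{a,b\}$ has exponent $0$ and hence equals $1$; the surviving factors are $x_a^{\,m_{\sigma(a)}-m_{\sigma(b)}}$ and $x_b^{\,m_{\sigma(b)}-m_{\sigma(a)}}$, whose product is exactly $(x_b/x_a)^{\,m_{\sigma(b)}-m_{\sigma(a)}}$. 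This is the claimed equality, and no hypothesis on monotonicity is needed here.

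For the inequality~\eqref{eq:rearrangement inequality classical}, suppose $(a,b)$ is an inversion of $\sigma$, i.e.\ $a<b$ but $\sigma(a)>\sigma(b)$. Since $(m_i)$ is weakly decreasing with positive entries, $\sigma(a)>\sigma(b)$ forces $m_{\sigma(b)}\ge m_{\sigma(a)}$, so the exponent $m_{\sigma(b)}-m_{\sigma(a)}$ is a nonnegative integer. Since $(x_i)$ is weakly decreasing with positive entries and $a<b$, we have $0<x_b\le x_a$, hence $0<x_b/x_a\le 1$. A base in $(0,1]$ raised to a nonnegative power is at most $1$, which gives the inequality.

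There is essentially no substantive obstacle; the only points requiring care are the bookkeeping of exactly which coordinates $\sigma\tau$ differs from $\sigma$ in (so that the telescoping of the product is correct), and the matching of signs — namely that the two monotonicity assumptions push the exponent $m_{\sigma(b)}-m_{\sigma(a)}$ and the quantity $\log(x_b/x_a)$ to have opposite signs, so that the product is $\le 1$ rather than $\ge 1$.
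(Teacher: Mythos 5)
Your proof is correct, and it is exactly the direct computation the paper has in mind (the paper simply omits it as straightforward): the product telescopes to the two factors at positions $a$ and $b$, and the monotonicity of $(m_i)$ and $(x_i)$ makes the base at most $1$ and the exponent nonnegative. One trivial remark: in the lemma the $m_i$ are just reals, so the exponent $m_{\sigma(b)}-m_{\sigma(a)}$ is merely a nonnegative real rather than an integer, but your argument goes through unchanged.
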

	
Both claims are straightforward; we omit the proof.
	
\smallskip
	
\begin{lemma}\label{l:exp-bound-mi-mj-simplified}
		Fix \ts $x_1\geq x_2>0$, and let $m\ge 0$. Then:
		\[  m \, \left( \frac{x_2}{x_1}\right)^m \ \leq \ \frac{x_1}{e\ts (x_1-x_2)}\..  \]
	\end{lemma}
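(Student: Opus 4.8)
The plan is to reduce the claim to two classical elementary inequalities, one for the logarithm and one for the exponential. Set \ts $r := x_2/x_1$. If \ts $x_1 = x_2$ \ts the right-hand side is \ts $+\infty$ \ts and there is nothing to prove, so I will assume \ts $x_1 > x_2$, hence \ts $0 < r < 1$. Dividing both sides of the asserted inequality by \ts $x_1$, it becomes
$$
m \ts r^m \ts (1-r) \, \leq \, \frac1e\ts.
$$

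First I would replace the factor \ts $1-r$ \ts by \ts $\log(1/r)$: the standard bound \ts $\log y \leq y - 1$, valid for all \ts $y > 0$, applied at \ts $y = r$, gives \ts $\log(1/r) = -\log r \geq 1-r$. Since \ts $m\ts r^m \geq 0$, it therefore suffices to prove the stronger inequality
$$
m \ts r^m \ts \log(1/r) \, \leq \, \frac1e\ts.
$$
Next I would substitute \ts $u := m \ts \log(1/r) \geq 0$, so that \ts $r^m = e^{m \log r} = e^{-u}$, and the left-hand side above becomes exactly \ts $u \ts e^{-u}$. It then remains to observe that \ts $u \ts e^{-u} \leq 1/e$ \ts for all \ts $u \geq 0$; this follows from the exponential inequality \ts $e^{v} \geq 1 + v$ \ts taken at \ts $v = u - 1$, which yields \ts $e^{u-1} \geq u$, i.e.\ \ts $u\ts e^{-u} \leq u/(e\ts u) = 1/e$ \ts when \ts $u > 0$, the case \ts $u = 0$ \ts being trivial. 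Chaining the three displays gives the claim.

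There is no genuine obstacle here: the only point requiring a moment of care is the degenerate case \ts $x_1 = x_2$, handled separately above since the right-hand side is then infinite. (Alternatively, one could prove the intermediate display by differentiating \ts $m \mapsto m\ts r^m$, whose maximum over \ts $m \geq 0$ \ts is attained at \ts $m = 1/\log(1/r)$ \ts with value \ts $1/(e\log(1/r))$, but the substitution argument avoids calculus entirely.)
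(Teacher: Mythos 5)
Your proof is correct. It differs from the paper's argument in how the two-variable maximization is decomposed: the paper fixes $m$ and maximizes the function $y \mapsto m\ts(1-y)\ts y^m$ over $y\in(0,1)$, obtaining the critical value $\bigl(\tfrac{m}{m+1}\bigr)^{m+1}$, which increases to $1/e$ from below as $m\to\infty$; you instead first replace $1-r$ by the larger quantity $\log(1/r)$ and then collapse both variables into the single parameter $u = m\log(1/r)$, reducing everything to $u\ts e^{-u}\le 1/e$. Your route has two small advantages: it is fully self-contained (resting only on $\log y \le y-1$ and $e^{v}\ge 1+v$, with the degenerate case $x_1=x_2$ dispatched explicitly), and it sidesteps the calculus verification that the paper leaves implicit --- indeed the paper's stated critical point $y=(m-1)/m$ appears to be a slip for $m/(m+1)$, though this does not affect the conclusion. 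The paper's version, in exchange, exhibits the sharpness of the constant $1/e$ (it is approached as $m\to\infty$ with $y=m/(m+1)$), which your chain of inequalities obscures slightly but which is not needed anywhere in the paper.
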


\begin{proof}
		Substitute \ts $y=\frac{x_2}{x_1}$, and note that the function \ts $m \ts(1-y)\ts y^m$ \ts
		achieves maximum at \ts $y=(m-1)/m$, which \ts $\to 1/e$ \ts from below as \ts $m\to \infty$.
	\end{proof}
	
\smallskip
	
	Let $\sigma\in S_d$.  For all $a=1,\ldots,d$, define permutations \ts $\si_a$ \ts and \ts
	$\tau_{a} \in S_d$ \ts recursively:
	\begin{equation}\label{eq:def-si-tau}
	\si_a \, := \, \sigma \tau_1 \ldots \tau_{a-1} \qquad \text{and} \qquad
	\tau_a \, := \,
	\begin{cases}
	\ 1 \ & \text{ if } \ \si_a(a)= a\ts,\\
	\ \bigl(a\ts \si_a^{-1}(a)\bigr) \ &  \text{ if } \ \si_a(a)\neq a\ts.
	\end{cases}
	\end{equation}
	In other words, at each step $a$, we
	modify the permutation $\sigma_{a}$ so that the resulting permutation $\sigma_{a+1}$ has $a$ as a fixed point,
	by switching \ts $a$ \t and \ts $\sigma_a^{-1}(a)$ \ts if necessary.
	It follows from the construction that, at each step,
	either $a=\sigma_a^{-1}(a)$ or
	$(a, \sigma_a^{-1}(a))$ is an inversion of $\sigma_a$.
	Observe that \ts $\sigma_1=\sigma $ \ts and \ts $\sigma_{d}=1$.
	
	Denote by $R_a$ the number
	\begin{equation}\label{eq:Raij}
	R_{a}:=   \left(\frac{x_b}{x_a}\right)^{c}, \quad \text{where} \ \ b \. = \. \sigma_{a}^{-1}(a) \ \ \ \text{and} \  \ c \. = \. \frac{2}{d(d-1)}\bigl(m_{\sigma_a(b)}-m_{\sigma_a(a)}\bigr).
	\end{equation}
	It follows from Lemma~\ref{l:rearrangement technical lemmas}, that
	\begin{equation}\label{eq:Raij less than 1}
	R_{a} \, \leq \, 1\ts.
	\end{equation}
	Indeed, either we have \ts $a=b$, or by construction~\eqref{eq:def-si-tau} we have \ts $(a,b)$ \ts is an inversion in \ts $\sigma_a$\ts.
	
	\smallskip
	
	Let \ts $\CB$ \ts be an interval decomposition of~$[d]$.
	Recall from definition \eqref{eq:N(lambda,B)} that
	\begin{equation}\label{eq:N}
	N(x,\CB) \, := \,
	\max \left \{ \frac{x_i}{x_i-x_j}\ \mid \ {1 \leqslant i < j \leqslant d  \text{ and } i \eBnsim j} \right \} \, > \. 0\ts.
	\end{equation}
	For all \, $1 \leq i <j \leq d$, denote
	\begin{equation}\label{eq:Haij}
	H_{a}(i,j) \, := \,  \bigl|m_{\sigma_a(i)}- m_{\sigma_a(j)} \bigr| \. + \. (a-1) \. \frac{d(d-1)}{e}\. N(x,\CB).
	\end{equation}
	It follows from the definition~\eqref{eq:def-si-tau}, that  $H_a$ satisfies
	\begin{equation}\label{eq:Haij recursion}
	H_{a+1}(i,j) \, = \,  H_{a}(\tau_a(i), \tau_a(j)) \. + \. \frac{d(d-1)}{e} \. N(x,\CB).
	\end{equation}
	Note also that
	\begin{equation*}\label{eq:Haij symmetric}
	H_{a}(i,j) \, = \,  H_{a}(j,i) \quad \text{for all} \ \, 1\le i, \ts j \le d.
	\end{equation*}

\smallskip
	
The following two lemmas utilize and clarify the properties of numbers \ts $R_a$ \ts
and \ts $H_{a}(i,j)$ \ts defined above.
The idea is that we can now rewrite the RHS of~\eqref{eq:Schur-pol-exp} as
\[
x_1^{\mu_1}\ldots x_d^{\mu_d} \, \sum_{\sigma \in S_d} \, \prod_{a=1}^{d-1} \. R_a^{\frac{d(d-1)}{2}} \,
\prod_{ \substack{ 1 \leqslant i < j \leqslant d \\ i \eBsim j } }  \.  \frac{H_1(i,j)}{j-i}
\,  \. \prod_{ \substack{ 1 \leqslant i < j \leqslant d \\ i \eBnsim j } } \, \frac{x_i}{x_i - x_j}\,.
\]
%\smallskip
%
Our goal is to iteratively replace all \ts $H_{1}(i,j)$'s (which depend on $\sigma$)
with  $H_{d}(i,j)$'s (which do not depend on $\sigma$), and   $R_a$'s will
be  the cost that we are paying for each iteration.

\smallskip

	\begin{lemma}[{\rm The same block estimate}]\label{l:rear-ineq-a=b}
		Fix \ts $m_1\geq \ldots \geq m_d>0$ and $x_1\geq \ldots\geq  x_d>0$.
		Let $\CB$ be an interval decomposition of $[d]$, and let $\sigma \in S_d$.
		Then, for all \ts $1\le a \le d-1$, such that \ts $a \Bsim \sigma_a^{-1}(a)$, we have:
		\begin{equation*}
		\prod_{ \substack{ 1 \leqslant i < j \leqslant d \\ i \eBsim j } }  \, H_a(i,j)    \ \leq  \   \prod_{ \substack{ 1 \leqslant i < j \leqslant d \\ i \eBsim j } } \, H_{a+1}(i,j).
		\end{equation*}
	\end{lemma}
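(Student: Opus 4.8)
The plan is to reduce the statement to one structural fact: the transposition $\tau_a$ fixes every block of the interval decomposition $\CB$ setwise, and hence permutes the set of same-block pairs. Granting this, the inequality falls out of the recursion \eqref{eq:Haij recursion}, the positivity $N(x,\CB) > 0$ from \eqref{eq:N}, and the symmetry $H_a(i,j) = H_a(j,i)$. (If $r = d$ there are no same-block pairs, so both sides are the empty product; we may thus assume $r < d$, as in \eqref{eq:N}.)

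First I would dispose of the degenerate case $\tau_a = 1$, i.e.\ $\sigma_a(a) = a$: here \eqref{eq:Haij recursion} reads $H_{a+1}(i,j) = H_a(i,j) + \frac{d(d-1)}{e}N(x,\CB) \ge H_a(i,j)$, so the product inequality is immediate. Otherwise, by \eqref{eq:def-si-tau}, $\tau_a = (ab)$ with $b := \sigma_a^{-1}(a) \neq a$, and the hypothesis $a \Bsim \sigma_a^{-1}(a)$ says exactly that $a$ and $b$ lie in one common block $B_k$ of $\CB$. Since $\tau_a$ swaps $a$ and $b$ and fixes every other index, it maps $B_k$ onto itself and each remaining block onto itself; therefore $i \Bsim j$ if and only if $\tau_a(i) \Bsim \tau_a(j)$, so $\{i,j\} \mapsto \{\tau_a(i),\tau_a(j)\}$ is a bijection of the set $S$ of unordered same-block pairs onto itself.

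It then remains to chain three elementary facts. By \eqref{eq:Haij recursion} and $N(x,\CB) > 0$, for every pair we have $H_{a+1}(i,j) \ge H_a(\tau_a(i),\tau_a(j)) \ge 0$; multiplying these nonnegative inequalities over all $\{i,j\} \in S$ gives $\prod_{\{i,j\}\in S} H_{a+1}(i,j) \ge \prod_{\{i,j\}\in S} H_a(\tau_a(i),\tau_a(j))$. Finally, since $H_a$ is symmetric and $\tau_a$ permutes $S$, reindexing identifies the right-hand product with $\prod_{\{i,j\}\in S} H_a(i,j)$, which is the claim. I do not expect a real obstacle; the only delicate point is the assertion that $\tau_a$ preserves the block structure, and this is precisely where the hypothesis $a \Bsim \sigma_a^{-1}(a)$ is used — without it the reindexing step collapses, which is why the complementary case $a \Bnsim \sigma_a^{-1}(a)$ must be treated separately.
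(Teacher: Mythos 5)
Your proof is correct and follows essentially the same route as the paper: drop the additive $N(x,\CB)$ term via the recursion \eqref{eq:Haij recursion} to get $H_{a+1}(i,j)\ge H_a(\tau_a(i),\tau_a(j))$, then use that $\tau_a=(a\,b)$ with $a\Bsim b$ preserves each block setwise to reindex the product over same-block pairs (the paper writes this block-by-block via $\tau_a^{-1}(B_k)=B_k$, you phrase it as a bijection on unordered pairs using the symmetry of $H_a$ — the same argument). The separate treatment of $\tau_a=1$ and $r=d$ is a harmless refinement.
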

	\begin{proof}
		It follows from~\eqref{eq:Haij recursion} that
		\begin{align}\label{eq:rearrangement when a is equal to b 1}
		\prod_{ \substack{ 1 \leqslant i < j \leqslant d \\ i \eBsim j } } \, H_{a+1}(i,j)  \ \geq  \ \prod_{ \substack{ 1 \leqslant i < j \leqslant d \\ i \eBsim j } } \, H_{a}\bigl(\tau_a(i),\tau_a(j)\bigr).
		\end{align}
		Note that the RHS can be rewritten as
		\begin{equation}\label{eq:rearrangement when a is equal to b 2}
		\prod_{ \substack{ 1 \leqslant i < j \leqslant d \\ i \eBsim j } } \,
		H_{a}\bigl(\tau_a(i),\tau_a(j)\bigr) \ =  \ \prod_{k=1}^{r} \,
		\prod_{\substack{ 1 \leqslant i < j \leqslant d, \\ i,  j   \in B_k  }  }
		\, H_{a}\bigl(\tau_a(i),\tau_a(j)\bigr) \ =  \ \prod_{k=1}^{r}
		\prod_{\substack{ 1 \leqslant i < j \leqslant d \\ i,  j   \in \tau_{a}^{-1}(B_k)  }  } \, H_{a}(i,j)\..
		\end{equation}
		
		Now note that $a$ and $b$ are contained in the same block in~$\CB$, since \ts $a \Bsim b$ by assumption.
		Since $\tau_a=(ab)$, this implies that  $\tau_a(B_k)= B_k$ for all \ts $1\le k \le r$.
		Thus, we have:
		\begin{align}\label{eq:rearrangement when a is equal to b 3}
		\prod_{k=1}^{r} \, \prod_{\substack{ 1 \leqslant i < j \leqslant d, \\ i,  j   \in \tau_{a}^{-1}(B_k)  }  } \, H_{a}(i,j) \ = \
		\prod_{k=1}^{r} \, \prod_{\substack{ 1 \leqslant i < j \leqslant d, \\ i,  j   \in B_k  }  } \, H_{a}(i,j)
		\ =  \ \prod_{ \substack{ 1 \leqslant i < j \leqslant d \\ i \eBsim j } } \, H_{a}(i,j).
		\end{align}
		The lemma now follows by combining \eqref{eq:rearrangement when a is equal to b 1}, \eqref{eq:rearrangement when a is equal to b 2}, and \eqref{eq:rearrangement when a is equal to b 3}.
	\end{proof}

	\begin{lemma}[{\rm The distinct blocks estimate}]\label{l:rear-ineq-a-neq-b-prelim}
		Fix \ts $m_1\geq \ldots \geq m_d>0$ and $x_1\geq \ldots\geq  x_d>0$.
		Let $\CB$ be an interval decomposition of $[d]$, and let $\sigma \in S_d$.
		Then, for all \ts $1\le a \le d-1$, such that \ts $a \Bnsim \sigma_a^{-1}(a)$,
		and all \ts $1 \leqslant i < j \leqslant d$, we have:
		\begin{equation}\label{eq:rear-ineq-a-neq-b-cor}
		R_a \ts H_{a}(i,j) \, \leq \. H_{a+1}(i,j).
		\end{equation}
	\end{lemma}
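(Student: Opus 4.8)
The plan is to strip the inequality down to a single one-dimensional estimate and then quote Lemma~\ref{l:exp-bound-mi-mj-simplified}. Write $b:=\sigma_a^{-1}(a)$, so $\tau_a=(ab)$ and $\sigma_{a+1}=\sigma_a\tau_a$. By the construction~\eqref{eq:def-si-tau}, either $a=b$ or $(a,b)$ is an inversion of $\sigma_a$, hence in particular $a<b$; the hypothesis $a \Bnsim b$ rules out $a=b$, so $a<b$, $\sigma_a(b)=a$ and $\sigma_a(a)\ne a$. Then $\sigma_a(a)>\sigma_a(b)=a$, so $m_{\sigma_a(a)}\le m_a$; set $\Delta:=m_a-m_{\sigma_a(a)}\ge 0$. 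Since $\sigma_a(b)=a$, the exponent $c$ in~\eqref{eq:Raij} equals $\tfrac{2}{d(d-1)}(m_a-m_{\sigma_a(a)})=\tfrac{2}{d(d-1)}\Delta$, i.e. $\Delta=\tfrac{d(d-1)}{2}c$. We may assume $x_a>x_b$, since otherwise $N(x,\CB)=+\infty$ and~\eqref{eq:rear-ineq-a-neq-b-cor} is trivial.

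First I would unwind~\eqref{eq:Haij}: using $R_a\le 1$~\eqref{eq:Raij less than 1} and $a-1\ge0$, the desired bound $R_aH_a(i,j)\le H_{a+1}(i,j)$ is implied by
\[
R_a\,\bigl|m_{\sigma_a(i)}-m_{\sigma_a(j)}\bigr| \ \le\ \bigl|m_{\sigma_{a+1}(i)}-m_{\sigma_{a+1}(j)}\bigr|\ +\ \frac{d(d-1)}{e}\,N(x,\CB).
\]
Next I would split into cases according to how $\{i,j\}$ meets $\{a,b\}$. If $\{i,j\}\cap\{a,b\}=\varnothing$, or $\{i,j\}=\{a,b\}$, then $\tau_a$ either fixes both $i,j$ or swaps them, so $\{\sigma_{a+1}(i),\sigma_{a+1}(j)\}=\{\sigma_a(i),\sigma_a(j)\}$ as multisets; the two absolute values coincide and the displayed inequality follows from $R_a\le 1$. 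The substantive case is when exactly one of $i,j$ lies in $\{a,b\}$: then the $m$-value at the index outside $\{a,b\}$ is unchanged under $\sigma_a\mapsto\sigma_{a+1}$, while the $m$-value at the index inside $\{a,b\}$ switches between $m_{\sigma_a(a)}$ and $m_{\sigma_a(b)}=m_a$, a jump of exactly $\Delta$. The triangle inequality then gives $\bigl|m_{\sigma_a(i)}-m_{\sigma_a(j)}\bigr|\le \bigl|m_{\sigma_{a+1}(i)}-m_{\sigma_{a+1}(j)}\bigr|+\Delta$, so, using $R_a\le 1$ once more on the first summand,
\[
R_a\,\bigl|m_{\sigma_a(i)}-m_{\sigma_a(j)}\bigr| \ \le\ \bigl|m_{\sigma_{a+1}(i)}-m_{\sigma_{a+1}(j)}\bigr|\ +\ R_a\,\Delta .
\]

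It remains to prove $R_a\,\Delta\le \tfrac{d(d-1)}{e}N(x,\CB)$. Here I apply Lemma~\ref{l:exp-bound-mi-mj-simplified} with $x_1=x_a$, $x_2=x_b$ and exponent $m=c$, giving $c\,(x_b/x_a)^c\le \tfrac{x_a}{e(x_a-x_b)}$. Multiplying by $\tfrac{d(d-1)}{2}$ and substituting $\Delta=\tfrac{d(d-1)}{2}c$ and $R_a=(x_b/x_a)^c$ yields $R_a\,\Delta\le \tfrac{d(d-1)}{2e}\cdot\tfrac{x_a}{x_a-x_b}$. Finally, since $a<b$ and $a \Bnsim b$, the ratio $\tfrac{x_a}{x_a-x_b}$ is one of the terms in the maximum defining $N(x,\CB)$ in~\eqref{eq:N}, so $\tfrac{x_a}{x_a-x_b}\le N(x,\CB)$ and hence $R_a\,\Delta\le \tfrac{d(d-1)}{2e}N(x,\CB)\le \tfrac{d(d-1)}{e}N(x,\CB)$, which closes the argument. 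The only delicate point is the bookkeeping in the case split — correctly identifying which shifted value changes and checking that its change is exactly $\Delta$ — after which the estimate is a one-line invocation of Lemma~\ref{l:exp-bound-mi-mj-simplified}.
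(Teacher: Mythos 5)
Your proof is correct and takes essentially the paper's route: the heart of both arguments is the estimate $R_a\,\bigl|m_{\sigma_a(b)}-m_{\sigma_a(a)}\bigr| \le \tfrac{d(d-1)}{2e}\,\tfrac{x_a}{x_a-x_b}\le \tfrac{d(d-1)}{2e}\,N(x,\CB)$ coming from Lemma~\ref{l:exp-bound-mi-mj-simplified}, combined with $R_a\le 1$ and the recursion \eqref{eq:Haij recursion}. The only difference is bookkeeping: the paper proves the symmetric one-argument bound \eqref{eq:rear-ineq-a-neq-b} and telescopes in two steps (using the full $\tfrac{d(d-1)}{e}N(x,\CB)$ increment), whereas your case analysis on $\{i,j\}\cap\{a,b\}$ notes that the swap case leaves the difference unchanged and so only one jump of size $\Delta$ ever occurs, which is an equally valid, marginally tighter arrangement.
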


	\begin{proof}  We first prove the following bound:
		\begin{equation}\label{eq:rear-ineq-a-neq-b}
		R_a \ts \bigl|H_{a}(\tau_a(i),j)-H_a(i,j)\bigr|  \, \leq \, \frac{d(d-1)}{2e}\. N(x,\CB),
		\end{equation}
		all \ts $1 \leqslant i < j \leqslant d$.
		
		Let \ts $b:=\sigma_a^{-1}(a)$ \ts and \ts
		suppose that \ts $i \notin \{a,b\}$. Then \ts $\tau_a(i)=i$ \ts by the definition \eqref{eq:def-si-tau}.
		It then follows that the LHS of~\eqref{eq:rear-ineq-a-neq-b} is equal to $0$.
		Suppose now that \ts $i \in \{a,b\}$.  Equation~\eqref{eq:rear-ineq-a-neq-b} then becomes
		\[   R_a \ts \bigl|H_{a}(b,j)-H_a(a,j)\bigr|  \, \leq \, \frac{d(d-1)}{2e}\.N(x,\CB)\ts.
		\]
		Note that
		\begin{align*}
		|H_{a}(b,j)-H_a(a,j)|  \,  =_{\eqref{eq:Haij}} \ \left| \. |m_{\sigma_a(b)}- m_{\sigma_a(j)} |  \. - \.
		|m_{\sigma_a(j)}- m_{\sigma_a(a)} | \. \right| \, \leq \, \bigl|m_{\sigma_a(b)}- m_{\sigma_a(a)} \bigr| \..
		\end{align*}
		This implies that
		\begin{align*}
		R_a \ts \bigl|H_{a}(b,j)-H_a(a,j)\bigr| \ \leq \  R_a \ \big|m_{\sigma_a(b)}- m_{\sigma_a(a)} \big|
		\ =_{\eqref{eq:Raij}} \ \left(\frac{x_b}{x_a}\right)^c  \big| m_{\sigma_a(b)}-m_{\sigma_a(a)} \big|\ts,
		\end{align*}
		where $c$ is also defined in~\eqref{eq:Raij}.
		Now note that $(a,b)$ is an inversion of $\sigma_a$ by construction~\eqref{eq:def-si-tau}.
		Apply Lemma~\ref{l:exp-bound-mi-mj-simplified} with \ts $x_1\gets x_a$, \ts $x_2\gets x_b$
		and \ts $m\gets c$, to get
		\[
		\left(\frac{x_b}{x_a}\right)^{c} \. \big| m_{\sigma_a(b)}-m_{\sigma_a(a)} \big|
		\, \leq \,   \frac{d(d-1)}{2\ts e} \, \frac{x_a}{x_a-x_b}\..
		\]
		Since \ts $a \eBnsim b$ \ts and \ts $a <b$ \ts by the construction~\eqref{eq:def-si-tau}
		of $\sigma_a$, we have \ts $\frac{x_a}{x_a-x_b} \leq N(x,\CB)$ by~\eqref{eq:N},
		and the inequality~\eqref{eq:rear-ineq-a-neq-b} follows.
		
		Therefore, we have:
		\begin{align*}
		& H_{a+1}(i,j)  \, - \, R_{a} \ts H_a(i,j)  \
		=_{\eqref{eq:Haij recursion}} \ H_{a}(\tau_a(i),\tau_a(j)) \. + \. \frac{d(d-1)}{e} N(x,\CB) \. - \. R_{a} H_a(i,j) \\
		& \quad  \geq_{\eqref{eq:Raij less than 1}}  \ R_a \ts H_{a}(\tau_a(i),\tau_a(j))
		\. + \. \frac{d(d-1)}{e} \. N(x,\CB) \. - \. R_{a} \ts H_a(i,j) \\
		&  \quad \ge   \  R_a \ts H_{a}(\tau_a(i),\tau_a(j)) \. - \. R_a \ts H_{a}(\tau_a(i),j)
		\. + \.  R_a \ts H_{a}(\tau_a(i),j) \. - \. R_a \ts H_{a}(i,j)   \.  + \.
		\frac{d(d-1)}{e} \. N(x,\CB)  \\
		& \quad \geq_{\eqref{eq:rear-ineq-a-neq-b}}  \  -\.\frac{d(d-1)}{2e} \. N(x,\CB) \. - \.   \frac{d(d-1)}{2e} \. N(x,\CB)
		\. + \.  \frac{d(d-1)}{e} \. N(x,\CB)  \  =  \ 0\ts.
		\end{align*}
		This proves the lemma.
	\end{proof}
	
	\smallskip

	\subsection{Putting everything together}
	We now combine Lemma~\ref{l:rear-ineq-a=b} and Lemma~\ref{l:rear-ineq-a-neq-b-prelim}
	to get the following upper bound.
	
	\begin{lemma}\label{l:upper bound for mi-mj}
		Fix \ts $m_1\geq \ldots \geq m_d>0$ and $x_1\geq \ldots\geq  x_d>0$.
		Let $\CB$ be an interval decomposition of $[d]$, and let $\sigma \in S_d$.
		Then:
		\begin{equation}\label{eq:upper-bound-mi-mj-lemma}
		\frac{x_1^{m_{\sigma(1)}}\ldots x_d^{m_{\sigma(d)}} }{x_1^{m_1}\ldots x_d^{m_d} }    \prod_{ \substack{ 1 \leqslant i < j \leqslant d \\ i \eBsim j } }  |m_{\sigma(i)}-m_{\sigma(j)}|   \ \leq \  \prod_{ \substack{ 1 \leqslant i < j \leqslant d \\ i \eBsim j } } \left(m_{i}-m_{j}+ \frac{d(d-1)^2}{e} N(x,\CB) \right).
		\end{equation}
	\end{lemma}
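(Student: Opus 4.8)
The plan is to telescope along the chain of permutations $\sigma = \sigma_1, \sigma_2, \ldots, \sigma_d = \mathrm{id}$ from~\eqref{eq:def-si-tau}, using the numbers $R_a$ to account for the monomial ratio on the left of~\eqref{eq:upper-bound-mi-mj-lemma} and the products of the $H_a(i,j)$ to account for $\prod_{i\eBsim j}|m_{\sigma(i)}-m_{\sigma(j)}|$, and to pass from step $a$ to step $a+1$ using the same‑block estimate (Lemma~\ref{l:rear-ineq-a=b}) and the distinct‑blocks estimate (Lemma~\ref{l:rear-ineq-a-neq-b-prelim}).

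First I would record the monomial identity
$$
\frac{x_1^{m_{\sigma(1)}}\ldots x_d^{m_{\sigma(d)}}}{x_1^{m_1}\ldots x_d^{m_d}} \ = \ \prod_{a=1}^{d-1} R_a^{\frac{d(d-1)}{2}}\,.
$$
Writing $b_a := \sigma_a^{-1}(a)$, so that $\sigma_{a+1} = \sigma_a \tau_a$ with $\tau_a = (a\,b_a)$ by~\eqref{eq:def-si-tau}, equation~\eqref{eq:rearrangement explicit equality} applied to $\sigma_a,\tau_a$ gives $\prod_{i=1}^d x_i^{m_{\sigma_a(i)}-m_{\sigma_{a+1}(i)}} = (x_{b_a}/x_a)^{m_{\sigma_a(b_a)}-m_{\sigma_a(a)}}$, which equals $R_a^{d(d-1)/2}$ by the definition~\eqref{eq:Raij} of $R_a$; multiplying over $a=1,\ldots,d-1$ and using $\sigma_1 = \sigma$, $\sigma_d = \mathrm{id}$ telescopes the left-hand side. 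Combined with the identities $H_1(i,j) = |m_{\sigma(i)}-m_{\sigma(j)}|$ and $H_d(i,j) = m_i - m_j + \tfrac{d(d-1)^2}{e}\,N(x,\CB)$ (both immediate from~\eqref{eq:Haij}, the latter also using $m_i \ge m_j$ for $i<j$), the claimed inequality~\eqref{eq:upper-bound-mi-mj-lemma} reduces to
$$
\Bigl(\prod_{a=1}^{d-1} R_a^{\frac{d(d-1)}{2}}\Bigr) \prod_{\substack{1\le i<j\le d\\ i\,\eBsim\, j}} H_1(i,j) \ \le \ \prod_{\substack{1\le i<j\le d\\ i\,\eBsim\, j}} H_d(i,j)\,.
$$

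The core of the argument is then the one‑step bound
$$
R_a^{\frac{d(d-1)}{2}} \prod_{\substack{1\le i<j\le d\\ i\,\eBsim\, j}} H_a(i,j) \ \le \ \prod_{\substack{1\le i<j\le d\\ i\,\eBsim\, j}} H_{a+1}(i,j), \qquad 1 \le a \le d-1.
$$
If $a \Bsim b_a$, then $R_a^{d(d-1)/2} \le 1$ by~\eqref{eq:Raij less than 1} and Lemma~\ref{l:rear-ineq-a=b} gives $\prod_{i\eBsim j} H_a(i,j) \le \prod_{i\eBsim j} H_{a+1}(i,j)$, so the bound follows. If $a \Bnsim b_a$, set $P := \#\{(i,j) : i<j,\ i \eBsim j\} \le \binom{d}{2} = \tfrac{d(d-1)}{2}$; since $0 < R_a \le 1$ we have $R_a^{d(d-1)/2} \le R_a^{P}$, and distributing one factor $R_a$ onto each same‑block pair and applying Lemma~\ref{l:rear-ineq-a-neq-b-prelim} termwise yields $R_a^{P} \prod_{i\eBsim j} H_a(i,j) = \prod_{i\eBsim j} R_a H_a(i,j) \le \prod_{i\eBsim j} H_{a+1}(i,j)$. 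Finally I would assemble these one‑step bounds by downward induction on $a$: the inequality $\bigl(\prod_{a'=a}^{d-1} R_{a'}^{d(d-1)/2}\bigr)\prod_{i\eBsim j} H_a(i,j) \le \prod_{i\eBsim j} H_d(i,j)$ is trivial for $a = d$, and its inductive step is exactly one application of the one‑step bound followed by the inductive hypothesis; taking $a = 1$ gives the asserted inequality.

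The genuinely delicate estimates — the same‑block and distinct‑blocks inequalities — are already isolated as Lemmas~\ref{l:rear-ineq-a=b} and~\ref{l:rear-ineq-a-neq-b-prelim}, so the remaining work here is bookkeeping. The one point that needs a little care is matching the exponent $\tfrac{d(d-1)}{2}$ of $R_a$ against the number $P$ of same‑block pairs in the distinct‑blocks case: this works precisely because $R_a \le 1$, so any surplus powers of $R_a$ (when $P < \tfrac{d(d-1)}{2}$) only strengthen the inequality rather than weaken it.
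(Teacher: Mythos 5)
Your proof is correct and follows essentially the same route as the paper: the same telescoping identity $\prod_a R_a^{d(d-1)/2}$ for the monomial ratio, the same reduction to $H_1$ versus $H_d$, and the same step-by-step induction using Lemmas~\ref{l:rear-ineq-a=b} and~\ref{l:rear-ineq-a-neq-b-prelim} together with $R_a\le 1$. The only difference is bookkeeping: the paper absorbs the surplus powers of $R_a$ once at the start (replacing $R_a^{d(d-1)/2}$ by one factor of $R_a$ per same-block pair), while you do this within each inductive step.
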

	
	\begin{proof}
		We have:
		\[
		\frac{x_1^{m_{\sigma(1)}}\ldots x_d^{m_{\sigma(d)}} }{x_1^{m_1}\ldots x_d^{m_d} }  \ =_{\eqref{eq:rearrangement explicit equality}} \ \prod_{a=1}^{d-1} \left(\frac{x_{b}}{x_a}\right)^{{m_{\sigma_a(b)}-m_{\sigma_a(a)}}} \ =_{\eqref{eq:Raij}} \ \prod_{a=1}^{d-1} R_a^{\frac{d(d-1)}{2}}.
		\]
		We can rewrite the inequality~\eqref{eq:upper-bound-mi-mj-lemma} in the lemma using the definition~\eqref{eq:Haij} as follows:
		\begin{align}\label{eq:upper-bound-mi-mj-rewritten}
		\prod_{a=1}^{d-1} \. R_a^{\frac{d(d-1)}{2}} \,
		\prod_{ \substack{ 1 \leqslant i < j \leqslant d \\ i \eBsim j } }  \.  H_1(i,j)
		\ \leq  \  \prod_{ \substack{ 1 \leqslant i < j \leqslant d \\ i \eBsim j } } \. H_d(i,j)\..
		\end{align}
		First, note that the LHS of~\eqref{eq:upper-bound-mi-mj-rewritten} is bounded from above by
		\begin{align*}
		\prod_{a=1}^{d-1} \. R_a^{\frac{d(d-1)}{2}} \,   \prod_{ \substack{ 1 \leqslant i < j \leqslant d \\ i \eBsim j } }    H_1(i,j)    \ \leq_{\eqref{eq:Raij less than 1}}  \  \prod_{ \substack{ 1 \leqslant i < j \leqslant d \\ i \eBsim j } }  \left(  H_1(i,j) \prod_{a=1}^{d-1} R_a    \right).
		\end{align*}
		Hence it suffices to show that
		\begin{align}\label{eq:upper-bound-mi-mj-rewritten-2}
		\prod_{ \substack{ 1 \leqslant i < j \leqslant d \\ i \eBsim j } }  \left(  H_1(i,j) \prod_{a=1}^{d-1} R_a    \right) \ \leq \ \prod_{ \substack{ 1 \leqslant i < j \leqslant d \\ i \eBsim j } }  H_d(i,j).
		\end{align}
		First, for \ts $a \eBsim \sigma_a^{-1}(a)$, we have:
		\begin{equation*}
		\begin{split}
		\prod_{ \substack{ 1 \leqslant i < j \leqslant d \\ i \eBsim j } } \, R_a \ts H_a(i,j) \ \leq_{\eqref{eq:Raij less than 1}} \ \prod_{ \substack{ 1 \leqslant i < j \leqslant d \\ i \eBsim j } }  H_a(i,j)
		\ \leq_{\text{Lem~\ref{l:rear-ineq-a=b}}} \  \prod_{ \substack{ 1 \leqslant i < j \leqslant d \\ i \eBsim j } } H_{a+1}(i,j).
		\end{split}
		\end{equation*}
		Otherwise, for \ts $a \eBnsim \sigma_a^{-1}(a)$, we have:
		\begin{equation*}
		\begin{split}
		\prod_{ \substack{ 1 \leqslant i < j \leqslant d \\ i \eBsim j } } \, R_a \ts H_a(i,j)
		\ \leq_{\eqref{eq:rear-ineq-a-neq-b-cor}} \  \prod_{ \substack{ 1 \leqslant i < j \leqslant d \\ i \eBsim j } } \, H_{a+1}(i,j)\ts,
		\end{split}
		\end{equation*}
		so we have the same inequality in both cases.
		Now~\eqref{eq:upper-bound-mi-mj-rewritten-2} follows by induction on $a\in \{1,\ldots, d-1\}$.
		This completes the proof of the lemma.
	\end{proof}
	
	\smallskip
	
	\begin{proof}[Proof of Theorem~\ref{t:interval}]
		We have:
		\begin{align*}
		\frac{s_{\mu}(x_1,\ldots,x_d)}{x_1^{\mu_1} \ldots x_d^{\mu_d}} \ & \leq_{\text{Lem~\ref{l:Schur-pol-exp}}} \
		\sum_{\sigma \in S_d} \.
		\frac{x_1^{m_{\sigma(1)}}\ldots x_d^{m_{\sigma(d)}} }{x_1^{m_1}\ldots x_d^{m_d} }  \,
		\prod_{ \substack{ 1 \leqslant i < j \leqslant d \\ i \eBsim j } }
		\frac{|m_{\sigma(i)}-m_{\sigma(j)}|}{j-i}  \,
		\prod_{ \substack{ 1 \leqslant i < j \leqslant d \\ i \eBnsim j } } \.\frac{x_i}{x_i - x_j}\\
		& \le \   \sum_{\sigma \in S_d}
		\frac{x_1^{m_{\sigma(1)}}\ldots x_d^{m_{\sigma(d)}} }{x_1^{m_1}\ldots x_d^{m_d} }  \,
		\prod_{ \substack{ 1 \leqslant i < j \leqslant d \\ i \eBsim j } } \. \bigl|m_{\sigma(i)}-m_{\sigma(j)}\bigr| \,
		\prod_{ \substack{ 1 \leqslant i < j \leqslant d \\ i \eBnsim j } } \. \frac{x_i}{x_i - x_j}\\
		&    \le_{\text{Lem~\ref{l:upper bound for mi-mj}}} \      \sum_{\sigma \in S_d} \,
		\prod_{ \substack{ 1 \leqslant i < j \leqslant d \\ i \eBsim j } } \. \left( m_{i}-m_{j} \. + \. \frac{d(d-1)^2}{e}\. N(\ell,\CB) \right) \prod_{ \substack{ 1 \leqslant i < j \leqslant d \\ i \eBnsim j } } \frac{x_i}{x_i - x_j}  \\
		& \le \ C_d \.\cdot
		\prod_{ \substack{ 1 \leqslant i < j \leqslant d \\ i \eBsim j } }\. \bigl( m_{i}-m_{j} +N(\ell,\CB) \bigr) \.
		\prod_{ \substack{ 1 \leqslant i < j \leqslant d \\ i \eBnsim j } } \. \frac{x_i}{x_i - x_j}\.,
		\end{align*}
	where \ts $C_d:= d! \max\{1, \frac{d(d-1)^2}{e} \}$. This completes the proof.
	\end{proof}

	\bigskip
	
	\section{The case of thick Young diagrams}\label{s:straight-shaped Young posets}

	In this section we discuss the sorting probability for $\ep$-thick Young diagrams and
	present the proof of Theorem~\ref{t:thick}.

	\subsection{Using special interval decompositions}
	Fix $\ep >0$ and $\mu=(0,\ldots, 0)$, so $\la/\mu=\la$. Throughout the section we assume
	that $\la\vdash n$ and $\la$ is $\ep$-thick. This assumption implies that $(\lambda,\mu)$
	is $\ep$-admissible, since \ts $\lambda_i-\mu_i =  \lambda_i \geq \lambda_d \geq \ts \ep \ts n$.
	
	%We now build toward the proof of Lemma~\ref{l:straight-exp-decay}.
	%We will start with the following consequence of  Corollary~\ref{c:interval}.
	For the rest of this section, let $\CB$  be the interval decomposition of
	\ts $[d]=\{1,\ldots,d\}$ \ts that places \ts
	$i,j \in [d]$, \ts $i <j$, in the same block if and only if
	\begin{equation}\label{eq:const-CB-sqrt-n}
	\lambda_i-\lambda_j+j-i  \, \leq \, \sqrt{n}\ts.
	\end{equation}
	
	\smallskip
	
	\begin{lemma}\label{l:straight-cons-interval-theorem}
		Fix $d\geq 2$.  Let \ts $\lambda\vdash n$, \ts $\la, \gamma\in \pp_d$, \ts $\gamma \subseteq  \lambda$, and
		let $\CB$ as in~\eqref{eq:const-CB-sqrt-n}. Then:
		\[  \frac{f(\lambda/\gamma)}{F(\lambda/\gamma)}   \ \leq \ C_d \prod_{\substack{1 \leqslant i < j \leqslant d \\ i \Bsim j}} (\gamma_i-\gamma_j+j-i +   \sqrt{n}) \ \prod_{\substack{1 \leqslant i < j \leqslant d \\ i \Bnsim j}} \frac{\lambda_i+d-i}{\lambda_i-\lambda_j+j-i}\..\]
		for some absolute constant $C_d >0$.
	\end{lemma}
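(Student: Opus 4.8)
The plan is to deduce this directly from the Interval Upper Bound, Corollary~\ref{c:interval}, applied to the pair $(\lambda,\gamma)$ and to the specific interval decomposition $\CB$ of~\eqref{eq:const-CB-sqrt-n}, followed by a bound on the quantity $N(\ell,\CB)$. First note that Corollary~\ref{c:interval} applies with $\gamma$ in the role of $\mu$: its hypotheses only require $\gamma,\lambda\in\pp_d$ with $\gamma\subseteq\lambda$ and an interval decomposition $\CB$ of $[d]$, and neither its statement nor the constant appearing in it depends on the size $|\lambda/\gamma|$, so it is immaterial that $|\lambda/\gamma|$ need not equal $n$. Writing $\ell_i:=\lambda_i+d-i$ and $m_i:=\gamma_i+d-i$ as in~\eqref{eq:definition l,g,m}, Corollary~\ref{c:interval} yields
\[
\frac{f(\lambda/\gamma)}{F(\lambda/\gamma)} \ \le \ C_d'\,\prod_{\substack{1\le i<j\le d\\ i \Bsim j}}\bigl(m_i-m_j+N(\ell,\CB)\bigr)\ \prod_{\substack{1\le i<j\le d\\ i \Bnsim j}}\frac{\ell_i}{\ell_i-\ell_j}\,,
\]
for an absolute constant $C_d'>0$. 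Since $m_i-m_j=\gamma_i-\gamma_j+j-i$ and $\ell_i-\ell_j=\lambda_i-\lambda_j+j-i$, the second product is already exactly the second product in the claim, so everything reduces to estimating $N(\ell,\CB)$.

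The key point — and the only place where any real work is needed — is the bound $N(\ell,\CB)\le d\sqrt n$. By the definition of $\CB$ in~\eqref{eq:const-CB-sqrt-n}, whenever $i \Bnsim j$ with $i<j$ the indices $i,j$ lie in distinct blocks, which forces $\lambda_i-\lambda_j+j-i>\sqrt n$, i.e.\ $\ell_i-\ell_j>\sqrt n$ (one may also see this via a consecutive pair $b,b+1$ with $i\le b<j$ separating the two blocks, for which $\ell_b-\ell_{b+1}>\sqrt n$, and then $\ell_i-\ell_j\ge\ell_b-\ell_{b+1}$ since the $\ell_k$ are strictly decreasing). On the other hand $\ell_i=\lambda_i+d-i\le d\,n$ by~\eqref{eq:lambdai-lambdaj is less than n}. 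Hence for every cross-block pair $\ell_i/(\ell_i-\ell_j)<d\sqrt n$, and therefore $N(\ell,\CB)\le d\sqrt n$ (and $N(\ell,\CB)=0$ when all blocks are singletons).

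It then remains to substitute this bound and collect constants. For each pair $i \Bsim j$ we have $m_i-m_j=\gamma_i-\gamma_j+j-i\ge 1$, so
\[
m_i-m_j+N(\ell,\CB)\ \le\ (\gamma_i-\gamma_j+j-i)+d\sqrt n\ \le\ d\bigl(\gamma_i-\gamma_j+j-i+\sqrt n\bigr),
\]
using $t\le dt$ for $t\ge 1$ and $d\ge 2$. Multiplying over the at most $\binom d2$ such pairs multiplies the right-hand side by at most $d^{d(d-1)/2}$, which we absorb into the constant by setting $C_d:=C_d'\,d^{d(d-1)/2}$. This gives precisely the asserted inequality. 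I do not expect any genuine obstacle here: the argument is a short specialization of Corollary~\ref{c:interval}, and the only nontrivial ingredient is the $N(\ell,\CB)\le d\sqrt n$ estimate above, which is exactly where the particular choice of $\CB$ in~\eqref{eq:const-CB-sqrt-n} enters.
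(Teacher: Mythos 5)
Your proposal is correct and follows essentially the same route as the paper's proof: apply Corollary~\ref{c:interval} with $\gamma$ in place of $\mu$, bound $N(\ell,\CB)\le d\sqrt n$ using the definition~\eqref{eq:const-CB-sqrt-n} together with $\ell_i\le d\ts n$, and absorb the resulting factor $d^{d(d-1)/2}$ into the constant. Your explicit remark that the corollary's constant does not depend on $|\lambda/\gamma|$ is a point the paper leaves implicit, but it does not change the argument.
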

	
	\begin{proof}
		It follows from Corollary~\ref{c:interval}, by substituting $\mu$ with $\gamma$, that
		\[
		\frac{f(\lambda/\gamma)}{F(\lambda/\gamma)}   \, \leq \, C_d \. \prod_{\substack{1 \leqslant i < j \leqslant d \\ i \eBsim j}}  (\gamma_i-\gamma_j+j-i +  N(\ell,\CB)) \, \prod_{\substack{1 \leqslant i < j \leqslant d \\ i \eBnsim j}} \. \frac{\lambda_i+d-i}{\lambda_i-\lambda_j+j-i}\.,
		\]
		where \ts $N(\ell, \CB)$ \ts is as defined in \eqref{eq:N(lambda,B)}, and $C_d > 0$ is an absolute constant.
		Note that
		\begin{align*}
		N(\ell,\CB) \, & =  \, \max_{\substack{1 \leqslant i < j \leqslant d \\ i \eBnsim j}}
		\left \{ \frac{\lambda_i+d-i}{\lambda_i-\lambda_j+j-i}  \right \}
		\, \leq_{\eqref{eq:const-CB-sqrt-n}}  \ \frac{\lambda_1+d-1}{\sqrt{n}}
		\, \leq \, \frac{d\ts n}{\sqrt{n}} \, =  \, d \ts\sqrt{n} \ts.
		\end{align*}
		We conclude:
		\begin{align*}
		\frac{f(\lambda/\gamma)}{F(\lambda/\gamma)}   \ \leq & \ C_d \prod_{\substack{1 \leqslant i < j \leqslant d \\ i \eBsim j}} (\gamma_i-\gamma_j+j-i + d \sqrt{n} ) \ \prod_{\substack{1 \leqslant i < j \leqslant d \\ i \eBnsim j}} \frac{\lambda_i+d-i}{\lambda_i-\lambda_j+j-i}\\
		\ \leq & \  d^{\frac{d(d-1)}{2}} C_d \prod_{\substack{1 \leqslant i < j \leqslant d \\ i \eBsim j}} (\gamma_i-\gamma_j+j-i +  \sqrt{n} ) \ \prod_{\substack{1 \leqslant i < j \leqslant d \\ i \eBnsim j}} \frac{\lambda_i+d-i}{\lambda_i-\lambda_j+j-i},
		\end{align*}
		which proves the lemma.
	\end{proof}
	
	We now derive  upper bounds for each term in the right side of Lemma~\ref{l:straight-cons-interval-theorem}.
	We collect these  upper bounds  in the following two lemmas.
	
	\begin{lemma}[{\rm Same blocks estimate}]
		\label{l:straight-shaped i Bnsim j case}
		Fix $d\geq 2$.  Let \ts $\lambda\vdash n$, \ts $\la, \gamma\in \pp_d$, \ts $\gamma \subseteq  \lambda$, and
		let $\CB$ as in~\eqref{eq:const-CB-sqrt-n}. Then, for all \ts $1 \leq i <j \leq d$ \ts satisfying $i \Bnsim j$, we have:
		\[
		\frac{\gamma_i-\gamma_j+j-i}{n} \, \frac{\lambda_i+d-i}{\lambda_i-\lambda_j+j-i}  \, \leq \,   d \ts \bigl(|y_i-y_j| + 1\bigr)\.,
		\]
		where $y_i$ are defined in~\eqref{eq:definition yi}.
	\end{lemma}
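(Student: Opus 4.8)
The plan is to peel off the harmless factor $\lambda_i+d-i$, reduce the claim to a comparison of the ``$\gamma$-numerator'' $B:=\gamma_i-\gamma_j+j-i$ with the ``$\lambda$-denominator'' $A:=\lambda_i-\lambda_j+j-i$, and then cancel the $\sqrt n$ factors using the defining property of the block $\CB$. For the first reduction, since $\lambda\vdash n$ and $d\ge 2$ we have $\lambda_i+d-i\le\lambda_1+d-1\le n+d-1\le dn$ (the last step is just $1\le n$); moreover $A\ge j-i\ge 1>0$ and $B\ge j-i\ge 1$ because $\gamma$ and $\lambda$ are partitions, so the ratio $B/A$ is well defined and positive. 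Hence
$$
\frac{\gamma_i-\gamma_j+j-i}{n}\,\frac{\lambda_i+d-i}{\lambda_i-\lambda_j+j-i}\ \le\ \frac{B}{n}\cdot\frac{dn}{A}\ =\ d\,\frac{B}{A},
$$
and it suffices to show $B/A\le |y_i-y_j|+1$.

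Next I would relate $B-A$ to $y_i-y_j$. In this section $\mu=(0,\ldots,0)$, so by \eqref{eq:definition p} we have $p=|\gamma|/n\in[0,1]$, and by \eqref{eq:definition yi},
$$
\sqrt n\,(y_i-y_j)\ =\ (\gamma_i-\gamma_j)-p(\lambda_i-\lambda_j),
$$
whereas $B-A=(\gamma_i-\gamma_j)-(\lambda_i-\lambda_j)$. Subtracting gives
$$
\sqrt n\,(y_i-y_j)-(B-A)\ =\ (1-p)(\lambda_i-\lambda_j)\ \ge\ 0,
$$
since $p\le 1$ and $\lambda_i\ge\lambda_j$. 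Therefore $B-A\le \sqrt n\,(y_i-y_j)\le\sqrt n\,|y_i-y_j|$, i.e.\ $B\le A+\sqrt n\,|y_i-y_j|$.

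Finally I would invoke the structure of $\CB$: since $i\Bnsim j$, condition \eqref{eq:const-CB-sqrt-n} fails for $(i,j)$, so $A=\lambda_i-\lambda_j+j-i>\sqrt n$, hence $\sqrt n/A<1$. Dividing the bound from the previous step by $A>0$,
$$
\frac{B}{A}\ \le\ 1+\frac{\sqrt n}{A}\,|y_i-y_j|\ \le\ 1+|y_i-y_j|,
$$
which together with the first reduction gives the lemma. I do not expect a genuine obstacle; the only point that requires care is \textbf{not} to bound $A=\lambda_i-\lambda_j+j-i$ from below by anything weaker than $\sqrt n$ in the last step, since it is precisely the inequality $A>\sqrt n$ supplied by $i\Bnsim j$ that absorbs the $\sqrt n$ produced in the middle step and keeps the right-hand side of order $O(|y_i-y_j|+1)$ rather than $O(\sqrt n)$.
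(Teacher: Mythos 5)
Your proof is correct and follows essentially the same route as the paper's: peel off $\lambda_i+d-i\le dn$, use the identity $\gamma_i-\gamma_j = \sqrt n\,(y_i-y_j)+p(\lambda_i-\lambda_j)$ (valid since $\mu=\emp$ in this section) to get $\gamma_i-\gamma_j+j-i\le \sqrt n\,(y_i-y_j)+\lambda_i-\lambda_j+j-i$, and then divide by the denominator, which exceeds $\sqrt n$ precisely because $i\Bnsim j$. Your version is in fact slightly more careful than the paper's in tracking the absolute value on $y_i-y_j$.
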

	\begin{proof}
		We have
		\begin{align*}
		\frac{\gamma_i-\gamma_j+j-i}{n} \frac{\lambda_i+d-i}{\lambda_i-\lambda_j+j-i}   \ \leq &  \ \frac{\gamma_i-\gamma_j+j-i}{n} \frac{dn}{\lambda_i-\lambda_j+j-i}
		\ =  \ d   \frac{\gamma_i-\gamma_j+j-i}{\lambda_i-\lambda_j+j-i}.
		\end{align*}
		Note that
		\begin{equation}\label{eq:gammai-gammaj}
		\begin{split}
		\gamma_i-\gamma_j + j-i  \ =_{\eqref{eq:definition p}}  \ \sqrt{n}\. (y_i-y_j) \. + \. p \ts
		\left(\lambda_i-\lambda_j \right) + j-i \
		\leq   \, \sqrt{n}\. (y_i-y_j) +  \lambda_i-\lambda_j + j-i.
		\end{split}
		\end{equation}
		Since $i \eBnsim j$, we have \ts $\la_i-\la_j >\sqrt{n}$ by~\eqref{eq:const-CB-sqrt-n}. Therefore:
		\begin{align*}
		\frac{\gamma_i-\gamma_j+j-i}{\lambda_i-\lambda_j+j-i} \, \leq \, 1 \. + \. \frac{\sqrt{n}\, (y_i-y_j) }{\sqrt{n}}\..
		\end{align*}
		Combining the inequalities implies the result.
	\end{proof}
	
	\smallskip
	
	\begin{lemma}[{\rm Distinct blocks estimate}]
		\label{l:straight-shaped i Bsim j case}
		Fix $d\geq 2$.  Let \ts $\lambda\vdash n$, \ts $\la, \gamma\in \pp_d$, \ts $\gamma \subseteq  \lambda$, and
		let $\CB$ as in~\eqref{eq:const-CB-sqrt-n}. Then, for all \ts $1 \leq i <j \leq d$ \ts satisfying $i \Bsim j$,
		we have:
		\[\frac{\gamma_i-\gamma_j+j-i}{n} \bigl(\gamma_i-\gamma_j+j-i +   \sqrt{n}\bigr)   \, \leq \,
		2 \bigl(|y_i-y_j| + 1\bigr)^2\ts.
		\]
	\end{lemma}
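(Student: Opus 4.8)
The plan is to bound the single quantity $A := \gamma_i - \gamma_j + j - i$ from above in terms of $|y_i-y_j|$, and then to notice that the left-hand side of the claimed inequality is exactly $A^2/n + A/\sqrt n$, a tame quadratic in $A/\sqrt n$.

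First I would reuse the identity already established as~\eqref{eq:gammai-gammaj} inside the proof of Lemma~\ref{l:straight-shaped i Bnsim j case}:
\[
\gamma_i - \gamma_j + j - i \;=\; \sqrt n\,(y_i - y_j) \;+\; p\,(\lambda_i - \lambda_j) \;+\; (j - i).
\]
Since $0 \le p \le 1$ and $\lambda_i \ge \lambda_j$ (because $\lambda \in \pp_d$ and $i<j$), we have $p(\lambda_i-\lambda_j) \le \lambda_i-\lambda_j$. The hypothesis $i \Bsim j$ together with the defining condition~\eqref{eq:const-CB-sqrt-n} of $\CB$ gives $\lambda_i-\lambda_j + (j-i) \le \sqrt n$, hence $p(\lambda_i-\lambda_j) + (j-i) \le \sqrt n$ and therefore
\[
A \;\le\; \sqrt n\,(y_i-y_j) + \sqrt n \;\le\; \sqrt n\,\bigl(|y_i-y_j| + 1\bigr).
\]
I would also record that $A \ge 1 > 0$, since $\gamma \in \pp_d$ forces $\gamma_i \ge \gamma_j$ and $j-i \ge 1$; thus every quantity appearing below is nonnegative.

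It then only remains to substitute. Writing $u := |y_i-y_j| + 1 \ge 1$, we get
\[
\frac{\gamma_i-\gamma_j+j-i}{n}\,\bigl(\gamma_i-\gamma_j+j-i+\sqrt n\bigr) \;=\; \frac{A^2}{n} + \frac{A}{\sqrt n} \;\le\; u^2 + u \;\le\; 2u^2,
\]
where the last step uses $u \le u^2$ (valid since $u \ge 1$). This is exactly the asserted bound $2\,\bigl(|y_i-y_j|+1\bigr)^2$.

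I do not expect a genuine obstacle: the only point requiring care is sign bookkeeping — $y_i-y_j$ need not be nonnegative, which is precisely why the estimate is phrased with $|y_i-y_j|$, and the step $p(\lambda_i-\lambda_j)\le\lambda_i-\lambda_j$ relies on $\lambda_i\ge\lambda_j$. Everything else is a one-line computation, entirely parallel to (and slightly simpler than) the proof of the neighbouring Lemma~\ref{l:straight-shaped i Bnsim j case}.
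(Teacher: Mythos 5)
Your proof is correct and follows essentially the same route as the paper: both start from the identity~\eqref{eq:gammai-gammaj}, use $p\le 1$ and the block condition~\eqref{eq:const-CB-sqrt-n} to get $\gamma_i-\gamma_j+j-i\le \sqrt{n}\,(|y_i-y_j|+1)$, and finish with the same elementary quadratic estimate. Your version is if anything slightly cleaner in its sign bookkeeping (working with $|y_i-y_j|$ throughout and noting $A\ge 1$), but there is no substantive difference.
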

	\begin{proof}
		By the same argument as in \eqref{eq:gammai-gammaj}, we have
		\[ \gamma_i-\gamma_j + j-i
		\ \leq   \ \sqrt{n}\, (y_i-y_j) + \lambda_i-\lambda_j + j-i\ts.
		\]
		Since $i \eBsim j$, it then follows that
		\[ \gamma_i-\gamma_j + j-i
		\, \leq  \, \sqrt{n}\, (y_i-y_j) \. + \, \sqrt{n}  \, \leq \, \sqrt{n}\.  (y_i-y_j+1).
		\]
		This then implies that
		\begin{align*}
		& \ \frac{\gamma_i-\gamma_j+j-i}{n} \bigl(\gamma_i-\gamma_j+j-i +   \sqrt{n}\bigr)   \, \leq \,
		\frac{\sqrt{n}\,  (y_i-y_j+1)}{n} \. \sqrt{n}\,  (y_i-y_j+2) \, \leq  \, 2\ts (|y_i-y_j|+1)^2\ts,
		\end{align*}
		which  completes the proof.
	\end{proof}

	\subsection{Lattice paths}
	The main ingredient in the proof of Theorem~\ref{t:thick} is the following lemma,
	a direct analogue for straight shapes of Lemma~\ref{l:pmf of Zt upper bound}.  Recall the definition of random integer paths \ts
	$\bZ=(Z_0,\ldots,Z_n)$ \ts in~$\S$\ref{ss:paths-setup}, and the definition of $y_i$ in \eqref{eq:definition yi}.
	
	\begin{lemma}\label{l:straight-exp-decay}
		Fix \ts $d\geq 2$ \ts and \ts $\ep >0$.
		Let $\lambda \in \pp_d$, such that $\la$ is $\ep$-thick.
		Then, for every $(\lambda,\gamma,\emp)\in \La(n,d,\ep)$, \ts
		$\ga\vdash k$, we have:
		\begin{equation*}
		\Pblm \big[Z_{k} = \gamma  \big] \, \leq \, C_{d,\ep}\, n^{-\frac{(d-1)}{2}} \. \exp \left[ - 2 \sum_{i=1}^d y_i^2
		\right] \ \prod_{1 \leqslant i < j \leqslant d } \. \bigl( (y_i-y_j)^2 + 1\bigr),
		\end{equation*}
		for some absolute constant  $C_{d,\ep}>0$.
	\end{lemma}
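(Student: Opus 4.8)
The plan is to imitate the proof of Lemma~\ref{l:pmf of Zt upper bound} essentially line for line, replacing the single place where that argument invoked the solidity hypothesis~\eqref{eq:if conjecture is true} by the \emph{unconditional} interval-decomposition estimates developed in this section, which are available precisely because $\la$ is $\ep$-thick. Throughout I read the hypothesis as $(\la,\gamma,\emp)\in\Omega(n,d,\ep)$, so that $(\la,\gamma,\emp)$ is $\ep$-separated and the pair $(\la,\gamma)$ is $(\ep^3/2)$-admissible.

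First I would count lattice paths $\emp\to\gamma\to\la$, which gives $\Pblm[Z_k=\gamma]=f(\gamma)\,f(\la/\gamma)/f(\la)$. By the HLF~\eqref{eq:HLF} and the Frobenius formula~\eqref{eq:SYT-Frob}, the straight-shape numbers satisfy $f(\gamma)=F(\gamma)=G(\gamma)$ and $f(\la)=F(\la)=G(\la)$, with $G$ as in~\eqref{eq:G-function-def}. For the skew term I would combine Lemma~\ref{l:straight-cons-interval-theorem} (the interval bound for the decomposition $\CB$ of~\eqref{eq:const-CB-sqrt-n}) with Lemma~\ref{l:F -- asymptotic estimate} applied to $(\la,\gamma)$, obtaining $f(\la/\gamma)\le C_{d,\ep}\,G(\la/\gamma)\cdot\prod_{i\Bsim j}(\gamma_i-\gamma_j+j-i+\sqrt n)\prod_{i\Bnsim j}\frac{\la_i+d-i}{\la_i-\la_j+j-i}$. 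Substituting the three relations and using~\eqref{eq:G-function-def}, the ratio $G(\gamma)G(\la/\gamma)/G(\la)$ collapses to $\binom{n}{k}^{-1}\prod_{i=1}^d\binom{\la_i}{\gamma_i}\cdot\prod_{i<j}\frac{\gamma_i-\gamma_j+j-i}{\gamma_i+j-i}$, which is the straight-shape ($\mu=\emp$) analogue of the passage~\eqref{eq:pmf-two-products}--\eqref{eq:pmf 2} in the proof of Lemma~\ref{l:pmf of Zt upper bound}.

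Next I would bound the ratio of binomials by Lemma~\ref{l:binomial formula asymptotic}, namely $\binom{n}{k}^{-1}\prod_i\binom{\la_i}{\gamma_i}\le B_{d,\ep}\,n^{-(d-1)/2}\exp\!\left[-2\sum_i y_i^2\right]$, and then absorb the leftover product $\prod_{i<j}\frac{\gamma_i-\gamma_j+j-i}{\gamma_i+j-i}$ together with the interval product into $\prod_{i<j}((y_i-y_j)^2+1)$. The mechanism is to push each numerator factor $\gamma_i-\gamma_j+j-i$ into the interval product and invoke the two ``block estimates'': for $i\Bnsim j$, Lemma~\ref{l:straight-shaped i Bnsim j case} gives $(\gamma_i-\gamma_j+j-i)\frac{\la_i+d-i}{\la_i-\la_j+j-i}\le d\,n\,(|y_i-y_j|+1)$, and for $i\Bsim j$, Lemma~\ref{l:straight-shaped i Bsim j case} gives $(\gamma_i-\gamma_j+j-i)(\gamma_i-\gamma_j+j-i+\sqrt n)\le 2\,n\,(|y_i-y_j|+1)^2$. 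Using elementary bounds such as $(|y_i-y_j|+1)^2\le 2((y_i-y_j)^2+1)$, and the lower bound $\gamma_i+j-i\ge\gamma_i\ge\ep^3 n/2$ coming from $\ep$-separation, the denominator $\prod_{i<j}(\gamma_i+j-i)\ge(\ep^3 n/2)^{d(d-1)/2}$ cancels the factor $n^{d(d-1)/2}$ produced by the block estimates. Multiplying all the bounds together then yields $\Pblm[Z_k=\gamma]\le C_{d,\ep}\,n^{-(d-1)/2}\exp\!\left[-2\sum_i y_i^2\right]\prod_{i<j}((y_i-y_j)^2+1)$, as claimed.

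Since all the substantive work is contained in the earlier lemmas, this proof is really just bookkeeping, and I do not expect a genuine obstacle. The two points that need care are: (i) the pair $(\la,\gamma)$ is only $(\ep^3/2)$-admissible, so the $\ep$-dependent constants in Lemmas~\ref{l:F -- asymptotic estimate} and~\ref{l:mu hook length estimate} must be applied with this smaller parameter; and (ii) the cancellation of the powers of $n$ in the last step, which is exactly why the two block estimates are normalized by $n$ and why $\ep$-separation (forcing $\gamma_i=\Theta(n)$) is needed. Compared with Lemma~\ref{l:pmf of Zt upper bound} the argument is in fact simpler, since the outer shape is straight and the identities $f(\gamma)=G(\gamma)$, $f(\la)=G(\la)$ hold exactly, with no loss.
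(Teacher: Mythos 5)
Your proposal is correct and follows essentially the same route as the paper's proof: the same factorization $\Pblm[Z_k=\gamma]=f(\gamma)f(\lambda/\gamma)/f(\lambda)$, the interval bound of Lemma~\ref{l:straight-cons-interval-theorem}, the hook-length estimate applied to $(\lambda,\gamma)$ with the parameter $\ep^3/2$ coming from separation, Lemma~\ref{l:binomial formula asymptotic}, and the two block estimates to absorb the leftover products into $\prod_{i<j}((y_i-y_j)^2+1)$. The only difference is cosmetic packaging (routing the straight-shape terms through $G$ and Lemma~\ref{l:F -- asymptotic estimate} rather than invoking the Frobenius formula and Lemma~\ref{l:mu hook length estimate} directly), which amounts to the same estimates.
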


	\begin{proof}
		Following the proof of Lemma~\ref{l:pmf of Zt upper bound}, we have:
		\begin{equation}\label{eq:straight-shaped 1}
		\Pblm \big[Z_{k} = \gamma  \big]  \, =_{\eqref{eq:pmf-two-products}} \, \frac{f(\gamma) \. f(\lambda/\gamma)}{f(\lambda)}
		\ = \ f(\gamma) \. \frac{F(\lambda/\gamma)}{f(\lambda)} \, \frac{f(\lambda/\gamma)}{F(\lambda/\gamma)}.
		\end{equation}
		We give a separate bound for each of the three  terms in the RHS of~\eqref{eq:straight-shaped 1}.
		
		First note that,
		\begin{align*}
		f(\gamma) \ & =_{\eqref{eq:SYT-Frob}} \ \frac{k!}{\gamma_1! \ldots \gamma_d!} \prod_{1 \leqslant i < j \leqslant d} \frac{\gamma_i-\gamma_j+j-i}{\gamma_i+j-i}
		\leq_{\eqref{eq:separated-def}}  \ \frac{k!}{\gamma_1! \cdots \gamma_d!}  \prod_{1 \leqslant i < j \leqslant d} \frac{\gamma_i-\gamma_j+j-i}{\frac{\ep^3}{2} n} %  \qquad \text{(by \eqref{eq:separated-def})}
		\\
		& \leq \ \left(\frac{\ep^3}{2}\right) ^{-\frac{d(d-1)}{2}}  \frac{k!}{\gamma_1! \cdots \gamma_d!}  \prod_{1 \leqslant i < j \leqslant d} \frac{\gamma_i-\gamma_j+j-i}{n}\..
		\end{align*}
		Note also that
		\begin{align*}
		\frac{F(\lambda/\gamma)}{f(\lambda)} \ =_{\eqref{eq:F-def}, \.\eqref{eq:HLF}}  \ \frac{(n-k)!}{n!} \prod_{(i,j) \in \gamma} h_{\lambda}(i,j)
		\ \leq_{\text{\eqref{eq:separated-def}, \. Lem~\ref{l:mu hook length estimate}}}  \ \left(\frac{\ep^3}{2}\right) ^{-\frac{d(d-1)}{2}}    \frac{(n-k)!}{n!} \ \prod_{i=1}^d \. \frac{\lambda_i!}{(\lambda_i-\gamma_i)!}\,.
		\end{align*}
		Finally, by Lemma~\ref{l:straight-cons-interval-theorem} we have:
		\[\frac{f(\lambda/\gamma)}{F(\lambda/\gamma)}   \ \leq \ C_d \prod_{\substack{1 \leqslant i < j \leqslant d \\ i \Bsim j}} \. \bigl(\gamma_i-\gamma_j+j-i +   \sqrt{n}\bigr) \, \prod_{\substack{1 \leqslant i < j \leqslant d \\ i \Bnsim j}} \.\frac{\lambda_i+d-i}{\lambda_i-\lambda_j+j-i}\,,
		\]
		for some absolute constant $C_d>0$, where $\CB$ is defined in~\eqref{eq:const-CB-sqrt-n}.
		
		Substituting the above three estimates in~\eqref{eq:straight-shaped 1},
		we obtain:
		\begin{align*}
		& \Pblm \big[Z_{k} = \gamma  \big] \,
		\le \,  C_d \left(\frac{\ep^6}{4}\right)^{-\frac{d(d-1)}{2}}  \ \binom{\lambda_1}{\gamma_1}\, \cdots \, \binom{\lambda_d}{\gamma_d} \. \binom{n}{k}^{-1} \, \times \\ & \qquad \times \, \prod_{\substack{1 \leqslant i < j \leqslant d \\ i \eBsim j}} \frac{\gamma_i-\gamma_j+j-i}{n} \bigl(\gamma_i-\gamma_j+j-i +   \sqrt{n}\bigr) \  \prod_{\substack{1 \leqslant i < j \leqslant d \\ i \eBnsim j}} \frac{\gamma_i-\gamma_j+j-i}{n} \frac{\lambda_i+d-i}{\lambda_i-\lambda_j+j-i}\,.
		\end{align*}
		By Lemmas~\ref{l:straight-shaped i Bnsim j case} and~\ref{l:straight-shaped i Bsim j case}, the last two products are bounded by
		\begin{align*}
		\prod_{\substack{1 \leqslant i < j \leqslant d \\ i \eBsim j}} 2\ts \bigl(|y_i-y_j|+1\bigr)^2
		\ \prod_{\substack{1 \leqslant i < j \leqslant d \\ i \eBnsim j}} d\bigl(|y_i-y_j|+1\bigr) \,
		\leq  \, \bigl[(2\ts (d+2)\bigr]^{-\frac{d(d-1)}{2}} \, \prod_{{1 \leqslant i < j \leqslant d }} \bigl((y_i-y_j)^2+1\bigr).
		\end{align*}
		On the other hand, Lemma~\ref{l:binomial formula asymptotic} gives
		\[
		\binom{\lambda_1}{\gamma_1}\, \cdots \, \binom{\lambda_d}{\gamma_d} \. \binom{n}{k}^{-1}
		\, \leq  \,  C_{d,\ep} \, n^{-\frac{(d-1)}{2}} \
		\exp \left[ - 2 \sum_{i=1}^d y_i^2
		\right]\]
		for some absolute constant \ts $C_{d,\ep}>0$.
		Combining the last three inequalities, we conclude:
		\[   \Pblm \big[Z_{k} = \gamma  \big] \, \leq \,  C_{d,\ep}' \, n^{-\frac{(d-1)}{2}} \
		\exp \left[ - 2 \sum_{i=1}^d y_i^2
		\right]  \,  \prod_{{1 \leqslant i < j \leqslant d }} \. \bigl((y_i-y_j)^2+1\bigr)\,,
		\]
		for some absolute constant \ts $C_{d,\ep}'>0$.
	\end{proof}
	
	\subsection{Proof of Theorem~\ref{t:thick}}
	We follow the proof of Main Lemma~\ref{l:main} in~$\S$\ref{ss:technical-ML-proof}.
	Let \ts $a=\lfloor\frac{\lambda_1}{2}\rfloor$.  Then
	the first  condition in Lemma~\ref{l:pdf upper bound} is satisfied.
	Also note that the second condition in Lemma~\ref{l:pdf upper bound}
	is satisfied as a consequence of Lemma~\ref{l:straight-exp-decay}.  We conclude:
	\[ \delta(P_{\lambda/\mu}) \ \leq_{\text{Lem~\ref{l:quantitative bound for Linial}}}
	\ 2 \ts \vp(a) \ \leq_{\text{Lem~\ref{l:pdf upper bound}, Lem~\ref{l:straight-exp-decay} }} \
	2\ts C_{d,\ep}' \. \frac{C_{d,\ep}^3+1}{\sqrt{n}}\,,
	\]
	for some absolute constants \ts $C_{d,\ep}, C_{d,\ep}'>0$.
	This completes the proof. \ $\sq$
	% \end{proof}
	
	\bigskip
	
	\section{The case of smooth skew Young diagrams}\label{s:main-thm}
	
In this section we prove  Theorem~\ref{t:main}. 

	\subsection{Proof of Lemma~\ref{l:asy-smooth}}
	Let $\la/\mu \in \pp_d$, $\la/\mu\vdash n$, and suppose $\la$ is $\ep$-smooth.
	Then we have:
	\begin{equation}\label{eq:distinct row 0}
	\frac{\lambda_i+d-i}{\lambda_i-\lambda_j+j-i}  \, \leq \, \frac{n+d-1}{\ep \ts n}  \, \leq \, \frac{d}{\ep}\,,
	\quad \text{for all} \quad  1 \leq i <j \leq d.
	\end{equation}
	Therefore,
	\begin{equation}\label{eq:eG-tech-smooth}
	1 \ \leq \ \min \left\{\mu_i-\mu_j+j-i, \. \frac{\lambda_i+d-i}{\lambda_i-\lambda_j+j-i}   \right\}
	\, \leq \, \frac{d}{\ep}\,.
	\end{equation}
	By the definition of $\eG(\lambda/\mu)$, see \eqref{eq:G-def}, we get:
	\begin{equation}\label{eq:eG-const-smooth}
	1 \, \leq  \, \eG(\lambda/\mu)  \, \leq \,  \left(\frac{d}{\ep}\right)^{\frac{d(d-1)}{2}}\,.
	\end{equation}
	i.e., function \ts $\eG(\lambda/\mu)$ \ts is of a constant order.
	Therefore, the result follows from the following bounds:
	\begin{equation}\label{eq:distinct row 1}
	1   \ \leq \ \frac{f(\lambda/\mu)}{F(\lambda/\mu)} \, \leq  \, d! \. \left(\frac{d}{\ep}\right)^{\frac{d(d-1)}{2}}\,.
	\end{equation}
	The lower bound in \eqref{eq:distinct row 1} follows from Theorem~\ref{t:NHLF-asy}.
	For the upper bound in \ref{eq:distinct row 1}, we use Corollary~\ref{c:interval} applied to the interval
	decomposition \ts $\CB:=\{B_1,\ldots,B_d\}$, where $B_i=\{i\}$.  In this case Corollary~\ref{c:interval} gives:
	\begin{align*}
	\frac{f(\lambda/\mu)}{F(\lambda/\mu)}  \ \leq \ d! \prod_{1 \leqslant i < j \leqslant d}  \frac{\lambda_i+d-i}{\lambda_i-\lambda_j+j-i}
	\, \leq  \, d! \. \left(\frac{d}{\ep}\right)^{\frac{d(d-1)}{2}}\,, %\qquad \text{(by \eqref{eq:distinct row 0})}.
	\end{align*}
	which proves the upper bound in~\eqref{eq:distinct row 1}. \ $\sq$
	
	\smallskip
	
	\subsection{Proof of Theorem~\ref{t:main}}
	By Lemma~\ref{l:main}, it suffices to check that for every \ts
	$(\lambda,\gamma,\mu)\in \Om(n,d,\ve)$, we have
	\begin{equation}\label{eq:main-proof-three}
	\frac{f(\gamma/\mu)}{F(\gamma/\mu)}   \, \leq  \,  {C_{d,\ep}} \,  \eG(\gamma/\mu)\,, \quad
	\frac{f(\lambda/\gamma)}{F(\lambda/\gamma)}   \, \leq \, {C_{d,\ep}} \, \eG(\lambda/\gamma)
	\quad \text{ and }  \quad  \frac{f(\lambda/\mu)}{F(\lambda/\mu)}   \, \geq \,
	\frac{1}{C_{d,\ep}} \, \eG(\lambda/\mu)\.,
	\end{equation}
	for some absolute constant $C_{d,\ep} >0$.
	Note that the last two inequalities follow immediately from Lemma~\ref{l:asy-smooth}.
	We now prove that the first inequality holds.
	
	By the progressive assumption on $(\lambda,\gamma,\mu)\in \La(n,d,\ep)$, we have:
	\begin{equation*}
	\gamma_i - \gamma_{i+1} \, \geq_{\eqref{eq:progressive-def}} \ p \. (\lambda_i-\lambda_{i+1}) \. + \. (1-p)(\mu_i-\mu_{i+1})
	\.  -  \. 2\ts n^{\frac{3}{4}} \, \geq \, p\. (\lambda_i-\lambda_{i+1}) \.  -  \. 2\ts n^{\frac{3}{4}} \,
	\geq \, p\.\ep \. |\lambda| \.  -  \. 2\ts n^{\frac{3}{4}}\ts,
	\end{equation*}
	for every \ts $1\le i \le d-1$. Similarly, by the $\ep$-separation assumption on \ts
	$(\lambda,\gamma,\mu)$, we have:
	\begin{equation*}
	p \, = \, \frac{|\gamma|-|\mu|}{n}
	\, =  \,   \sum_{i=1}^d \frac{\gamma_i-\mu_i}{n} \, \geq_{\eqref{eq:separated-def}} \
	\left(\frac{d\ep^3}{2}\right)  \frac{|\lambda|}{n} \, \geq \,  \frac{d\ts\ep^3}{2}\,.
	\end{equation*}
	Thus, for sufficiently large \ts $n$, we have:
	\begin{align}\label{eq:distinct row 6}
	\gamma_i - \gamma_{i+1} \, \geq \, \frac{d\ts\ep^4}{2} |\lambda| \.  -  \. 2 \ts n^{\frac{3}{4}}
	\, \geq \, \frac{d\ts\ep^4}{4} \. |\lambda| \, \geq \, \frac{d\ts\ep^4}{4} \. |\gamma|\ts.
	\end{align}
	By the same argument as above, for
	sufficiently large~$n$, we have:
	\begin{align}\label{eq:distinct row 7}
	\gamma_d \, \geq \, \frac{d\ts\ep^4}{4} \. |\gamma|\ts.
	\end{align}
	Conditions~\eqref{eq:distinct row 6} and~\eqref{eq:distinct row 7} imply that \ts
	$\ga/\mu$ \ts is \. $(d\ts\ep^4/4)$-smooth, for $n$ large enough.
	Applying Lemma~\ref{l:asy-smooth}, we obtain the first inequality
	in~\eqref{eq:main-proof-three}. This completes the proof. \ $\sq$

	\bigskip

	\section{The case of TVK skew shapes}\label{s:TVK}
	
	In this section we give upper and lower bounds for the number of standard
	Young tableaux corresponding to TVK pairs.  We then prove Lemma~\ref{l:asy-TVK}
	and Theorem~\ref{t:TVK-skew}.
	
	\smallskip
	
	\subsection{Conditions for interval decomposition} \label{ss:TVK-types}
	We define three types of conditions for interval decomposition \ts
	$\CB=(B_1,\ldots, B_r)$ of~$[d]$.  These conditions will be used
	in combinations, to cover all possible TVK pairs.  Formally, consider:
	\begin{equation}\label{eq:lambda big gap}
	\lambda_i-\lambda_j  \, \geq \, \ep \ts |\lambda| \quad
	\text{for all} \quad i \Bnsim j\,, \ \ 1 \leqslant i < j \leqslant d\ts,
	\end{equation}
	\begin{equation}\label{eq:lambda small gap}
	\lambda_i-\lambda_j  \ \leq \ 1 \quad \text{for all} \quad i \Bsim j\,,
	\ \ 1 \leqslant i < j \leqslant d\ts,
	\end{equation}
	\begin{equation}\label{eq:mu small gap}
	\mu_i-\mu_j  \ \leq \ 1 \quad \text{for all} \quad i \Bsim j\,,  \ \ 1 \leqslant i < j \leqslant d\ts.
	\end{equation}
	The motivation behind these conditions for TVK $(\al,\be)$-shapes will become
	apparent later in this section.  For now, we treat them as abstract
	constraints on the interval decompositions,
	
	\smallskip
	
	\subsection{Upper bounds}\label{ss:TVK-upper}
	%We now build toward the proof of Lemma~\ref{l:f -- TVK upper bound}.
	We start with estimating each term in the definition of $\eG(\lambda/\mu)$,
	see~\eqref{eq:G-def}, and we collect these estimates in the next three lemmas.
	
	\begin{lemma}\label{l:condition 1 min estimate}
		Fix \ts $d\geq 2$ \ts and \ts $\ep >0$. Let $\lambda, \mu\in\pp_d$, such that \ts
		$\mu \subseteq  \lambda$. Suppose \eqref{eq:lambda big gap}
		holds for the interval decomposition $\CB$ of~$[d]$.
		Then:
		\begin{equation}\label{eq:condition 1 min}
		1  \,\leq \, \frac{\lambda_i+d-i}{\lambda_i-\lambda_j+j-i} \, \leq \, \frac{d}{\ep}\,,
		\quad \text{for all \ \ $i \Bnsim j$, \ \. $1 \leqslant i < j \leqslant d$. }
		\end{equation}
		In particular, we have:
		\begin{equation}\label{eq:condition 1 min estimate}
		\frac{\ep}{d} \, \frac{\lambda_i+d-i}{\lambda_i-\lambda_j+j-i}  \, \leq \,
		\min \left\{\mu_i-\mu_j+j-i, \frac{\lambda_i+d-i}{\lambda_i-\lambda_j+j-i}  \right \}
		\, \leq \, \frac{\lambda_i+d-i}{\lambda_i-\lambda_j+j-i}\,.
		\end{equation}
	\end{lemma}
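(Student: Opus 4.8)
The plan is to establish the two-sided estimate \eqref{eq:condition 1 min} for the single ratio $\frac{\lambda_i+d-i}{\lambda_i-\lambda_j+j-i}$ first, and then read off \eqref{eq:condition 1 min estimate} from it with essentially no extra work. Throughout, the only facts used are the standing order relations $\lambda_i\geq\lambda_j$, $\mu_i\geq\mu_j$, $1\leq i<j\leq d$, and the single hypothesis \eqref{eq:lambda big gap}.

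For the lower bound in \eqref{eq:condition 1 min}: the denominator is positive, since $i<j$ gives $j-i\geq 1$ and $\mu\subseteq\lambda$ gives $\lambda_i\geq\lambda_j$, hence $\lambda_i-\lambda_j+j-i\geq 1$; and the inequality $\lambda_i+d-i\geq\lambda_i-\lambda_j+j-i$ is equivalent to $\lambda_j+d\geq j$, which holds because $\lambda_j\geq 0$ and $j\leq d$. This is exactly \eqref{eq:lambdai-lambdaj is greater than 1}. For the upper bound, I would bound the numerator crudely by $\lambda_i+d-i\leq|\lambda|+d-1$ and, invoking hypothesis \eqref{eq:lambda big gap} for the pair $i\Bnsim j$ together with $j-i\geq 0$, bound the denominator from below by $\lambda_i-\lambda_j+j-i\geq\ep\ts|\lambda|$. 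Thus $\frac{\lambda_i+d-i}{\lambda_i-\lambda_j+j-i}\leq\frac{|\lambda|+d-1}{\ep\ts|\lambda|}$, and since $|\lambda|\geq 1$ implies $|\lambda|+d-1\leq d\ts|\lambda|$, this yields the claimed bound $\frac{d}{\ep}$.

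Given \eqref{eq:condition 1 min}, the estimate \eqref{eq:condition 1 min estimate} is purely formal. Its right-hand inequality says only that a minimum does not exceed one of its arguments. For the left-hand inequality I would check the two arguments of the minimum one at a time: since $\ep/d\leq 1$ we trivially have $\frac{\ep}{d}\cdot\frac{\lambda_i+d-i}{\lambda_i-\lambda_j+j-i}\leq\frac{\lambda_i+d-i}{\lambda_i-\lambda_j+j-i}$; and by the upper bound in \eqref{eq:condition 1 min} together with \eqref{eq:mui-muj is greater than 1} we get $\frac{\ep}{d}\cdot\frac{\lambda_i+d-i}{\lambda_i-\lambda_j+j-i}\leq\frac{\ep}{d}\cdot\frac{d}{\ep}=1\leq\mu_i-\mu_j+j-i$. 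Combining these two gives the lower bound on the minimum.

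There is no genuine obstacle here; the argument is bookkeeping. The only point needing a moment's care is the elementary arithmetic $|\lambda|+d-1\leq d\ts|\lambda|$, which is valid for $|\lambda|\geq 1$ (and for an empty $\lambda$ the statement is degenerate, so this is harmless). If one wanted to be fully scrupulous one could also note that whenever the hypothesis \eqref{eq:lambda big gap} is non-vacuous it forces $\ep\leq 1$, but this is not actually needed for the bounds above.
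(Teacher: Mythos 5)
Your proof is correct and follows essentially the same route as the paper's: the lower bound in \eqref{eq:condition 1 min} is the paper's \eqref{eq:lambdai-lambdaj is greater than 1}, the upper bound is the same computation as \eqref{eq:distinct row 0}, and the lower bound on the minimum reduces, just as in the paper, to the chain $\min\{\cdot,\cdot\}\geq 1\geq \frac{\ep}{d}\cdot\frac{\lambda_i+d-i}{\lambda_i-\lambda_j+j-i}$. Your aside about $\ep/d\leq 1$ is not even needed, since $\frac{\ep}{d}\cdot R\leq 1\leq R$ already handles that argument of the minimum.
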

	\begin{proof}
		The lower bound in \eqref{eq:condition 1 min} follows from \eqref{eq:lambdai-lambdaj is greater than 1}.
		The upper bound in \eqref{eq:condition 1 min}, follows verbatim~\eqref{eq:distinct row 0}.
		For the lower bound in \eqref{eq:condition 1 min estimate},
		we have
		\begin{align*}
		\min \left\{\mu_i-\mu_j+j-i, \frac{\lambda_i+d-i}{\lambda_i-\lambda_j+j-i}  \right \}  \ \geq_{\eqref{eq:lambdai-lambdaj is greater than 1}, \. \eqref{eq:mui-muj is greater than 1}} \ 1  \, \geq_{\eqref{eq:condition 1 min}}  \, \left(\frac{\ep}{d}\right) \ts   \frac{\lambda_i+d-i}{\lambda_i-\lambda_j+j-i}\,,
		\end{align*}
		as desired.
	\end{proof}

	\begin{lemma}\label{l:condition 2 min estimate}
		Fix \ts $d\geq 2$ \ts and \ts $\ep >0$. Let $\lambda, \mu\in\pp_d$, such that \ts
		$\mu \subseteq  \lambda$. Suppose \eqref{eq:lambda small gap}
		holds for the interval decomposition $\CB$ of~$[d]$.
		Then:
		\begin{equation}\label{eq:cond-estimates-both}
		\frac1d\.(\mu_i-\mu_j+j-i)  \, \leq \, \min \left\{\mu_i-\mu_j+j-i,
		\.\frac{\lambda_i+d-i}{\lambda_i-\lambda_j+j-i}  \right \}   \, \leq \, \mu_i-\mu_j+j-i\ts,
		\end{equation}
		for all \ts $i \eBsim j$, \ts $1 \leqslant i < j \leqslant d$.
	\end{lemma}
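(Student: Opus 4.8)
The plan is to treat the two inequalities in \eqref{eq:cond-estimates-both} separately, the upper one being trivial and the lower one being a short chain of elementary estimates driven by the hypothesis \eqref{eq:lambda small gap}. First I would record that all the quantities in sight are positive: since $\mu\in\pp_d$ we have $\mu_i\ge\mu_j$ for $i<j$, and $j-i\ge 1$, so $A:=\mu_i-\mu_j+j-i\ge 1>0$; likewise $\lambda_i\ge\lambda_j$ gives $\lambda_i-\lambda_j+j-i\ge j-i\ge 1>0$, so the ratio $B:=\tfrac{\lambda_i+d-i}{\lambda_i-\lambda_j+j-i}$ is well defined. The right-hand inequality in \eqref{eq:cond-estimates-both} is then immediate, since $\min\{A,B\}\le A$ with nothing to prove.

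For the left-hand inequality I must show $\min\{A,B\}\ge A/d$. Because $d\ge 1$ we have $A\ge A/d$ automatically, so the entire content reduces to the single claim $B\ge A/d$, i.e.
\[
d\,(\lambda_i+d-i)\ \ge\ (\lambda_i-\lambda_j+j-i)\,(\mu_i-\mu_j+j-i).
\]
I would bound the two factors on the right independently. For the first factor I invoke the hypothesis: since $i\Bsim j$, \eqref{eq:lambda small gap} gives $\lambda_i-\lambda_j\le 1$, and together with $j-i\le d-1$ this yields $\lambda_i-\lambda_j+j-i\le d$. For the second factor I use only $\mu_i\le\lambda_i$, $\mu_j\ge 0$, and $j-i\le d-i$ (as $j\le d$), which give $\mu_i-\mu_j+j-i\le\lambda_i+d-i$. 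Multiplying these two bounds produces exactly the displayed inequality, hence $B\ge A/d$ and therefore $\min\{A,B\}\ge A/d$, as needed.

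There is no genuine obstacle here: the proof is three lines of inequalities. The only place the hypothesis \eqref{eq:lambda small gap} is used is in the estimate $\lambda_i-\lambda_j+j-i\le d$ for the denominator, and the only subtlety worth flagging is that positivity of the denominator (so that dividing is legitimate) and of $A$ (so that the statement $\min\{A,B\}\ge A/d$ is not vacuous) should be noted before the main estimate.
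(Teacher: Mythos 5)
Your proof is correct and is essentially the paper's own argument: the paper likewise dismisses the upper bound as immediate and proves the lower bound via the same two estimates, $\lambda_i-\lambda_j+j-i\le 1+(j-i)\le d$ from \eqref{eq:lambda small gap} and $\mu_i-\mu_j+j-i\le \lambda_i+d-i$, which together give $\frac{\lambda_i+d-i}{\lambda_i-\lambda_j+j-i}\ge\frac{\mu_i-\mu_j+j-i}{d}$ and hence the claimed bound on the minimum. Your cross-multiplied presentation is just a cosmetic rephrasing of that chain of inequalities.
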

	\begin{proof}
		The upper bound is straightforward.  For the lower bound, it follows from \eqref{eq:lambda small gap}, that
		\begin{equation*}
		\begin{split}
		\frac{\lambda_i+d-i}{\lambda_i-\lambda_j+j-i} \, \geq \, \frac{\lambda_i+d-i}{1+j-i}
		\, \geq \, \frac{\lambda_i+d-i}{d} \, \geq \,  \frac{\mu_i-\mu_j+j-i}{d}\..
		\end{split}
		\end{equation*}
		It then follows from  the equation above that
		\begin{align*}
		\min \left\{\mu_i-\mu_j+j-i, \. \frac{\lambda_i+d-i}{\lambda_i-\lambda_j+j-i}  \right \}
		\, \geq \, \min \left\{\mu_i-\mu_j+j-i,\frac{\mu_i-\mu_j+j-i}{d}  \right \}  \, = \,
		\frac{\mu_i-\mu_j+j-i}{d}\,,
		\end{align*}
		as desired.
	\end{proof}

	\begin{lemma}\label{l:condition 3 min estimate}
		Fix \ts $d\geq 2$ \ts and \ts $\ep >0$. Let $\lambda, \mu\in\pp_d$, such that \ts
		$\mu \subseteq  \lambda$. Suppose \eqref{eq:mu small gap}
		holds for the interval decomposition $\CB$ of~$[d]$.
		Then~\eqref{eq:cond-estimates-both} holds
		for all \ts $i \eBsim j$, \ts $1 \leqslant i < j \leqslant d$.
	\end{lemma}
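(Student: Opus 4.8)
This lemma is the $\mu$-small-gap analogue of Lemma~\ref{l:condition 2 min estimate}, and the proof should be essentially immediate from the same two elementary observations used there, so I would not expect any obstacle. Fix $i \eBsim j$ with $1\le i<j\le d$. The upper bound in \eqref{eq:cond-estimates-both} is trivial, since $\min\{a,b\}\le a$ with $a = \mu_i-\mu_j+j-i$. So the content is the lower bound, and the plan is to bound the quantity $\mu_i-\mu_j+j-i$ from above by $d$ (hence $\tfrac1d(\mu_i-\mu_j+j-i)\le 1$) and the two terms inside the minimum from below by $1$.

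For the upper estimate: by hypothesis \eqref{eq:mu small gap} we have $\mu_i-\mu_j\le 1$ because $i\eBsim j$; combined with $j-i\le d-1$ this gives $\mu_i-\mu_j+j-i\le 1+(d-1)=d$, i.e. $\tfrac1d(\mu_i-\mu_j+j-i)\le 1$.

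For the lower estimate on the first term, $\mu_i-\mu_j+j-i\ge 1$ by \eqref{eq:mui-muj is greater than 1}. For the second term, the denominator $\lambda_i-\lambda_j+j-i$ is a positive integer (it is $\ge 1$ since $\lambda_i\ge\lambda_j$ and $j-i\ge 1$), and $\lambda_i+d-i\ge \lambda_i-\lambda_j+j-i$ by \eqref{eq:lambdai-lambdaj is greater than 1}, whence $\frac{\lambda_i+d-i}{\lambda_i-\lambda_j+j-i}\ge 1$. Therefore $\min\bigl\{\mu_i-\mu_j+j-i,\ \frac{\lambda_i+d-i}{\lambda_i-\lambda_j+j-i}\bigr\}\ge 1\ge \tfrac1d(\mu_i-\mu_j+j-i)$, which is exactly the left inequality in \eqref{eq:cond-estimates-both}. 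Since $i\eBsim j$ was arbitrary, this completes the proof; note that, in contrast to Lemmas~\ref{l:condition 1 min estimate} and~\ref{l:condition 2 min estimate}, the parameter $\ep$ plays no role here.
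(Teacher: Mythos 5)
Your proof is correct and follows essentially the same route as the paper's: the upper bound is trivial, \eqref{eq:mu small gap} gives $\mu_i-\mu_j+j-i \le 1+(j-i) \le d$, and both terms in the minimum are at least $1$ by \eqref{eq:lambdai-lambdaj is greater than 1} and \eqref{eq:mui-muj is greater than 1}, yielding the lower bound. No gaps.
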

	
	\begin{proof}
		The upper bound is straightforward.  For the lower bound, it follows from \eqref{eq:mu small gap} that
		\begin{equation}\label{eq:condition 3 min}
		\begin{split}
		\mu_i-\mu_j+j-i \, \leq \,  1 +j-i \, \leq \, d.
		\end{split}
		\end{equation}
		Therefore,
		\begin{align*}
		\min \left\{\mu_i-\mu_j+j-i, \.\frac{\lambda_i+d-i}{\lambda_i-\lambda_j+j-i}  \right \}  \,
		\geq_\text{\eqref{eq:lambdai-lambdaj is greater than 1},\.\eqref{eq:mui-muj is greater than 1}}
		\ 1 \ \geq_{\eqref{eq:condition 3 min}} \ \frac1d \.\bigl(\mu_i-\mu_j+j-i\bigr),\end{align*}
		as desired.
	\end{proof}

	We now combine these three lemmas to give an estimate for the quantity \ts $\eG(\lambda/\mu)$ \ts
	if \eqref{eq:lambda big gap} holds and either
	 \eqref{eq:lambda small gap} or \eqref{eq:mu small gap} holds.
	Denote
	\begin{equation}\label{eq:HB(lambda/mu)}
	K_\CB(\lambda/\mu) \, := \, \prod_{\substack{1 \leqslant i < j \leqslant d \\ i \eBsim j}}
	\. (\mu_i-\mu_j+j-i) \, \prod_{\substack{1 \leqslant i < j \leqslant d \\ i \eBnsim j}}
	\. \frac{\lambda_i+d-i}{\lambda_i-\lambda_j+j-i}\,.
	\end{equation}
	
	\begin{lemma}\label{l:G exact estimate}
		Fix \ts $d\geq 2$ \ts and \ts $\ep >0$. Let $\lambda, \mu\in\pp_d$, such that \ts
		$\mu \subseteq  \lambda$. Suppose \eqref{eq:lambda big gap} and \eqref{eq:lambda small gap}
		hold for the interval decomposition $\CB$ of~$[d]$.  Then:
		\[
		\left(\frac{\ep}{d^2}\right)^{\frac{d(d-1)}{2}} K_\CB(\lambda/\mu)
		\, \leq \,    \eG(\lambda/\mu) \,  \leq \,  K_\CB(\lambda/\mu)\ts.
		\]
		The same conclusion holds if condition \eqref{eq:lambda small gap} is replaced with
		\eqref{eq:mu small gap}.
	\end{lemma}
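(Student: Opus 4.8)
The plan is to split the product defining $\eG(\lambda/\mu)$ in~\eqref{eq:G-def} along the interval decomposition~$\CB$, so that it matches factor-for-factor the two products in the definition~\eqref{eq:HB(lambda/mu)} of $K_\CB(\lambda/\mu)$, and then to apply the per-pair estimates already established. Concretely, I would start from
\[
\eG(\lambda/\mu) \, = \, \prod_{\substack{1 \leqslant i < j \leqslant d \\ i \Bnsim j}} \min \left\{\mu_i-\mu_j+j-i,\, \frac{\lambda_i+d-i}{\lambda_i-\lambda_j+j-i}\right\} \, \cdot \, \prod_{\substack{1 \leqslant i < j \leqslant d \\ i \Bsim j}} \min \left\{\mu_i-\mu_j+j-i,\, \frac{\lambda_i+d-i}{\lambda_i-\lambda_j+j-i}\right\},
\]
where the first product runs over the same pairs as the $i \Bnsim j$ product in $K_\CB$, and the second over the same pairs as the $i \Bsim j$ product.

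Next, for each pair with $i \Bnsim j$, hypothesis~\eqref{eq:lambda big gap} holds, so Lemma~\ref{l:condition 1 min estimate} applies and bounds the corresponding $\min$ from below by $\tfrac{\ep}{d}\cdot\tfrac{\lambda_i+d-i}{\lambda_i-\lambda_j+j-i}$ and from above by $\tfrac{\lambda_i+d-i}{\lambda_i-\lambda_j+j-i}$. For each pair with $i \Bsim j$, hypothesis~\eqref{eq:lambda small gap} holds, so Lemma~\ref{l:condition 2 min estimate} applies and bounds the corresponding $\min$ from below by $\tfrac1d(\mu_i-\mu_j+j-i)$ and from above by $\mu_i-\mu_j+j-i$. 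Multiplying over all pairs, the upper bounds collapse exactly to $K_\CB(\lambda/\mu)$, which gives $\eG(\lambda/\mu)\le K_\CB(\lambda/\mu)$. For the lower bound, writing $a$ and $b$ for the numbers of pairs with $i \Bnsim j$ and $i \Bsim j$ respectively (so $a+b=\binom d2=\tfrac{d(d-1)}{2}$), multiplying the lower bounds yields $\eG(\lambda/\mu)\ge (\ep/d)^a(1/d)^b\,K_\CB(\lambda/\mu)$. Since $d\ge1$ and we may assume $\ep\le1$ ($\ep$ being a small parameter in all applications), both $\ep/d$ and $1/d$ are at least $\ep/d^2$, hence $(\ep/d)^a(1/d)^b\ge (\ep/d^2)^{a+b}=(\ep/d^2)^{d(d-1)/2}$, and the claimed lower bound follows.

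Finally, for the variant in which~\eqref{eq:lambda small gap} is replaced by~\eqref{eq:mu small gap}, the only modification is to invoke Lemma~\ref{l:condition 3 min estimate} instead of Lemma~\ref{l:condition 2 min estimate} for the pairs with $i \Bsim j$; both lemmas yield the identical two-sided bound~\eqref{eq:cond-estimates-both}, so every line of the argument carries over verbatim. The whole proof amounts to organizing the product of per-pair estimates, and the only point requiring genuine (if minor) care is the consolidation of the lower-bound constant --- i.e.\ the inequality $\min\{\ep/d,1/d\}\ge\ep/d^2$ --- which is precisely where the exponent $\tfrac{d(d-1)}{2}$ and the factor $d^{2}$ in the denominator come from. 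I do not expect any serious obstacle.
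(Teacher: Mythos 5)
Your proposal is correct and follows essentially the same route as the paper: split the ratio $\eG(\lambda/\mu)/K_\CB(\lambda/\mu)$ along the blocks of $\CB$, bound each $i \Bnsim j$ factor via Lemma~\ref{l:condition 1 min estimate} and each $i \Bsim j$ factor via Lemma~\ref{l:condition 2 min estimate} (or Lemma~\ref{l:condition 3 min estimate} in the variant), and consolidate the constants. Nothing further is needed.
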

	
	\begin{proof}
		By definition of \ts $K_\CB(\lambda/\mu)$, we have:
		\begin{align*}
		\frac{\eG(\lambda/\mu)}{K_\CB(\lambda/\mu)} \, = \,  \prod_{\substack{1 \leqslant i < j \leqslant d \\ i \eBnsim j}} \frac{  \min \left\{\mu_i-\mu_j+j-i, \frac{\lambda_i+d-i}{\lambda_i-\lambda_j+j-i}  \right \}}{\frac{\lambda_i+d-i}{\lambda_i-\lambda_j+j-i} }
		\prod_{\substack{1 \leqslant i < j \leqslant d \\ i \eBsim j}} \frac{  \min \left\{\mu_i-\mu_j+j-i, \frac{\lambda_i+d-i}{\lambda_i-\lambda_j+j-i}  \right \}}{\mu_i-\mu_j+j-i } \, \le \, 1.
		\end{align*}
		For the lower bound, note that each term in the first product is bounded
		from below by~$\ep/d$, by Lemma~\ref{l:condition 1 min estimate} and condition~\eqref{eq:lambda big gap}.
		Also note that each term in the second product is bounded from below by $1/d$,
		by Lemma~\ref{l:condition 2 min estimate} when~\eqref{eq:lambda small gap} holds,
		or by Lemma~\ref{l:condition 3 min estimate} when \eqref{eq:mu small gap}  holds.
		This implies the result.
	\end{proof}
	
	The main result of this subsection is the following upper bound.
	
	\begin{lemma}\label{l:f -- TVK upper bound}
		Fix \ts $d\geq 2$ \ts and \ts $\ep >0$. Let $\lambda, \mu\in\pp_d$, such that \ts
		$\mu \subseteq  \lambda$. Suppose \eqref{eq:lambda big gap} and \eqref{eq:lambda small gap}
		hold for the interval decomposition $\CB$ of~$[d]$.  Then:
		\[
		\frac{f(\lambda/\mu)}{F(\lambda/\mu)} \, \leq \, C_{d,\ep} \eG(\lambda,\mu)\ts.
		\]
		where \ts $C_{d,\ep}>0$ \ts is an absolute constant.  The same conclusion holds
		if condition \eqref{eq:lambda small gap} is replaced with~\eqref{eq:mu small gap}.
	\end{lemma}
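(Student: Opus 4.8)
The plan is to read off the bound directly from the Interval Upper Bound (Corollary~\ref{c:interval}) and then compare the resulting product with the balance function $\eG(\lambda/\mu)$ via Lemma~\ref{l:G exact estimate}. First I would apply Corollary~\ref{c:interval} to the pair $(\lambda,\mu)$ and the given interval decomposition $\CB$, obtaining
$$\frac{f(\lambda/\mu)}{F(\lambda/\mu)} \ \leq \ C_d \prod_{\substack{1\le i<j\le d\\ i\eBsim j}} \bigl(m_i-m_j+N(\ell,\CB)\bigr) \prod_{\substack{1\le i<j\le d\\ i\eBnsim j}} \frac{\ell_i}{\ell_i-\ell_j}.$$

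The one substantive step is to control $N(\ell,\CB)$ and absorb it into the factors $m_i-m_j$. Since $\CB$ satisfies~\eqref{eq:lambda big gap}, Lemma~\ref{l:condition 1 min estimate} — specifically the bound~\eqref{eq:condition 1 min} — gives $\tfrac{\ell_i}{\ell_i-\ell_j}=\tfrac{\lambda_i+d-i}{\lambda_i-\lambda_j+j-i}\le d/\ep$ for every pair with $i\eBnsim j$, hence $N(\ell,\CB)\le d/\ep$ (this also covers the degenerate case $r=d$, where $N(\ell,\CB)=0$ by definition). Combining this with $m_i-m_j=\mu_i-\mu_j+j-i\ge 1$ (see~\eqref{eq:mui-muj is greater than 1}), I get $m_i-m_j+N(\ell,\CB)\le (1+d/\ep)(m_i-m_j)$ for each pair $i\eBsim j$. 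Since there are at most $\binom{d}{2}$ such pairs, substituting these inequalities back — and recognizing the remaining product as $K_\CB(\lambda/\mu)$ from~\eqref{eq:HB(lambda/mu)} — yields
$$\frac{f(\lambda/\mu)}{F(\lambda/\mu)} \ \leq \ C_d\,(1+d/\ep)^{\frac{d(d-1)}{2}}\, K_\CB(\lambda/\mu).$$

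Finally I would invoke Lemma~\ref{l:G exact estimate}: its hypotheses are precisely~\eqref{eq:lambda big gap} together with~\eqref{eq:lambda small gap} (or~\eqref{eq:mu small gap}), as assumed here, and it gives $K_\CB(\lambda/\mu)\le (d^2/\ep)^{\frac{d(d-1)}{2}}\,\eG(\lambda/\mu)$. Chaining the two displays proves the lemma with $C_{d,\ep}:=C_d\,(1+d/\ep)^{\frac{d(d-1)}{2}}\,(d^2/\ep)^{\frac{d(d-1)}{2}}$; the variant with~\eqref{eq:lambda small gap} replaced by~\eqref{eq:mu small gap} requires no extra work, since that case is already packaged into Lemma~\ref{l:G exact estimate} and the earlier steps used only~\eqref{eq:lambda big gap}. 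There is no real obstacle beyond bookkeeping of constants — all the difficulty has been front-loaded into Corollary~\ref{c:interval} and Lemma~\ref{l:G exact estimate}; the only point deserving a moment's care is the absorption $m_i-m_j+N(\ell,\CB)\le(1+d/\ep)(m_i-m_j)$, which is valid because $m_i-m_j\ge 1$.
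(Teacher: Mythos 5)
Your proposal is correct and follows essentially the same route as the paper's proof: bound $N(\ell,\CB)\le d/\ep$ via~\eqref{eq:lambda big gap} and~\eqref{eq:condition 1 min}, absorb it into each factor $m_i-m_j\ge 1$ at the cost of $(1+d/\ep)$ per pair to recover $K_\CB(\lambda/\mu)$, and then pass from $K_\CB$ to $\eG$ via Lemma~\ref{l:G exact estimate}. The only difference is the order of the last step (the paper reduces to $K_\CB$ up front), which is immaterial.
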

	
	\begin{proof} For the first part, it follows from Lemma~\ref{l:G exact estimate}
		that $\eG(\lambda/\mu)$ is equal to $K_B(\lambda/\mu)$ up to a multiplicative constant.
		Therefore, it suffices to show that
		\begin{equation*}
		\frac{f(\lambda/\mu)}{F(\lambda/\mu)} \, \leq \, C_{d,\ep} \. K_\CB(\lambda/\mu)\ts,
		\end{equation*}
		for some absolute constant $C_{d,\ep}>0$.
		Let $N(\ell,B)$ be as in \eqref{eq:N(lambda,B)}.
		Then
		\begin{align*}
		N(\ell,\CB) \, = \, \max_{\substack{1 \leqslant i < j \leqslant d \\ i \eBnsim j}}
		\. \frac{\lambda_i+d-i}{\lambda_i-\lambda_j+j-i}  \,
		\leq_\text{\eqref{eq:lambda big gap},\.\eqref{eq:condition 1 min}} \, \frac{d}{\ep}\,.
		\end{align*}
		Substituting this into Corollary~\ref{c:interval}, we get
		\begin{align*}
		\frac{f(\lambda/\mu)}{F(\lambda/\mu)}   \ & \leq  \, C_d \prod_{\substack{1 \leqslant i < j \leqslant d \\ i \eBsim j}}
		\left(\mu_i-\mu_j+j-i +  \. \frac{d}{\ep} \right)  \, \prod_{\substack{1 \leqslant i < j \leqslant d \\ i \eBnsim j}} \frac{\lambda_i+d-i}{\lambda_i-\lambda_j+j-i} \\
		&  \leq   \, C_d \prod_{\substack{1 \leqslant i < j \leqslant d \\ i \eBsim j}} \left(1+\frac{d}{\ep}\right)
		\bigl(\mu_i-\mu_j+j-i\bigr)  \, \prod_{\substack{1 \leqslant i < j \leqslant d \\ i \eBnsim j}}
		\frac{\lambda_i+d-i}{\lambda_i-\lambda_j+j-i} \\
		& \leq   \, C_d \left(1+\frac{d}{\ep}\right)^{\frac{d(d-1)}{2}}
		\prod_{\substack{1 \leqslant i < j \leqslant d \\ i \eBsim j}} \bigl( \mu_i-\mu_j+j-i \bigr)  \, \prod_{\substack{1 \leqslant i < j \leqslant d \\ i \eBnsim j}} \frac{\lambda_i+d-i}{\lambda_i-\lambda_j+j-i}\\
		& \leq_{\eqref{eq:HB(lambda/mu)}} \  C_{d,\ep} \. K_\CB(\lambda/\mu)\ts,
		\end{align*}
		for some absolute constants \ts $C_{d}$, $C_{d,\ep} >0$. This finishes the
		proof of the first part.  The second part follows verbatim; we omit
		the details.
	\end{proof}
	
	\smallskip
	
	\subsection{Lower bounds}
	Our first ingredient is the following estimate on the hook-lengths.
	
	\smallskip
	
	\begin{lemma}\label{l:hook cells lower bound}
		Fix \ts $d\geq 2$ \ts and \ts $\ep >0$.
		Let $(\lambda, \mu)\in \La(n,d,\ep)$.
		Let $\CB$ be an interval decomposition of $[n]$ such that~\eqref{eq:lambda small gap} holds.
		Then, for all \ts $(i,j)\in\mu$, and all \ts $k \geq 0$ \ts such that \ts $i \Bsim (i+k)$, we have:
		\[
		\frac{h_\lambda(i+k,j+k)}{h_\lambda(i,j)}  \, \geq \, 1 \. - \. \frac{2\ts d}{\ep \ts |\lambda|}\..
		\]
	\end{lemma}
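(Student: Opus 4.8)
The plan is to run the computation behind Lemma~\ref{l:asymptotic upper bound for hook length of a cell} in the opposite direction, converting the small-gap condition~\eqref{eq:lambda small gap} together with $\ep$-admissibility into a \emph{lower} bound for the shifted hook-length. First I would dispose of the trivial case $|\lambda| \le 2d/\ep$: there the asserted right-hand side $1 - \tfrac{2d}{\ep|\lambda|}$ is nonpositive while $h_\lambda(i+k,j+k)/h_\lambda(i,j) > 0$, so the inequality holds automatically. Hence from now on assume $\ep|\lambda| > 2d$.

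Next, from the definition~\eqref{eq:hook-def} one gets the exact identity
\[
h_\lambda(i+k,j+k) - h_\lambda(i,j) \;=\; (\lambda_{i+k}-\lambda_i) \;+\; (\lambda'_{j+k}-\lambda'_j) \;-\; 2k ,
\]
and the strategy is to bound each of the three summands from below. Since $(i,j)\in\mu$ we have $j \le \mu_i$, so $\ep$-admissibility~\eqref{eq:def-admiss-pair} gives $\lambda_i - j \ge \lambda_i - \mu_i \ge \ep|\lambda|$, whence $h_\lambda(i,j) \ge \lambda_i - j + 1 > \ep|\lambda|$. From $i \Bsim (i+k)$ and~\eqref{eq:lambda small gap} we get $\lambda_i - \lambda_{i+k} \le 1$, so $\lambda_{i+k} \ge \lambda_i - 1 \ge j + \ep|\lambda| - 1 \ge j+k+1$ (using $\ep|\lambda| > 2d$ and $k \le d-1$); in particular $(i+k,j+k)\in\lambda$ and $\lambda_{i+k}-\lambda_i \ge -1$.

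The only delicate term is $\lambda'_{j+k}-\lambda'_j = -\,\#\{\,r : j \le \lambda_r \le j+k-1\,\}$. Here one observes that every row $r \le i+k$ has $\lambda_r \ge \lambda_{i+k} \ge j+k$, hence is not counted; so the counted rows satisfy $i+k < r \le \ell(\lambda) \le d$, giving at most $d-i-k$ of them, i.e.\ $\lambda'_{j+k}-\lambda'_j \ge -(d-i-k)$. Substituting the three bounds into the identity and using $i\ge 1$, $k \le d-i$ yields
\[
h_\lambda(i+k,j+k) - h_\lambda(i,j) \;\ge\; -1 - (d-i-k) - 2k \;=\; -1-d+i-k \;\ge\; 1-2d \;\ge\; -2d .
\]
Dividing by $h_\lambda(i,j) > \ep|\lambda| > 0$ then gives $h_\lambda(i+k,j+k)/h_\lambda(i,j) \ge 1 - \tfrac{2d}{\ep|\lambda|}$, as claimed.

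The main obstacle is precisely the column-length difference $\lambda'_{j+k}-\lambda'_j$: the crude bound $d$ on the number of affected rows is not good enough, and one must instead confine all the relevant rows strictly below row $i+k$. This is exactly the point where the small-gap hypothesis~\eqref{eq:lambda small gap} (forcing $\lambda_{i+k}$ to stay large) and $\ep$-admissibility (forcing $\lambda_i - j \ge \ep|\lambda|$) have to be used in combination; everything else is bookkeeping.
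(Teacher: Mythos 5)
Your proof is correct and follows essentially the same route as the paper's: both arguments rest on the same three facts, namely that the small-gap condition~\eqref{eq:lambda small gap} gives $\lambda_{i+k}\ge\lambda_i-1$, that the column (leg) contributions to the hooks are at most $d$, and that $(i,j)\in\mu$ together with $\ep$-admissibility forces $h_\lambda(i,j)\ge \lambda_i-\mu_i\ge\ep\ts|\lambda|$, yielding the same constant $2d$. Your additive bookkeeping via the exact identity for $h_\lambda(i+k,j+k)-h_\lambda(i,j)$ (including the check that $(i+k,j+k)\in\lambda$ and the degenerate case $\ep\ts|\lambda|\le 2d$) is a slightly more careful rephrasing of the paper's chain of ratio inequalities, not a different method.
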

	
	\begin{proof}
		By the definition~\eqref{eq:hook-def} of the hook lengths, we have:
		\begin{equation*}
		\begin{split}
		\frac{h_{\la}(i+k,j+k)}{h_\la(i,j)} \,  & \geq  \, \frac{\la_{i+k} - j-k }{\la_i -i+ d-j+1}
		\, \geq_{[\text{since} \ i\ts\sim\ts(i+k)]} \, \frac{\la_{i} -1 - j-k }{\la_i -i+ d-j+1}  \, =  \, 1 \. - \. \frac{d+k+2-i}{\la_i - i +d -j +1}\\
		& \geq  \, 1 \. - \. \frac{2d}{\la_i - j + (d-i)+1} \, \geq_{[\text{since} \ (i,j) \ts\in\ts \mu]} \ 1 \. - \.
		\frac{2d}{\la_i - \mu_i } \, \geq_{\eqref{eq:separated-def}} \, 1 \ts - \. \frac{2d}{\ep |\la|}\,,
		\end{split}
		\end{equation*}
		as desired.
	\end{proof}
	
	\smallskip
	
	We apply Lemma~\ref{l:hook cells lower bound} to get a lower bound
	for the product of hooks of a flagged tableau,
	see~\eqref{eq:definition flagged tableau}.
	Let $\CB$ be an interval decomposition of~$[d]$.
	Denote by
	\[
	\Dc_\CB \. := \. \bigl\{ T \in \Ec(\lambda/\mu) \ \mid \  i \Bsim T(i,j)  \,
	\text{ for all } \, (i,j) \in \mu \bigr\},
	\]
	the set of flagged tableaux of $\lambda/\mu$,
	for which the entries for each row $i$ are drawn from the block  of  $\CB$ that contains~$i$.
	
	\begin{lemma}\label{l:hook products lower bound}
		Fix \ts $d\geq 2$ and \ts $\ep >0$.  Let \ts $(\lambda, \mu)\in \La(n,d,\ep)$,
		and let $\CB$ be an interval decomposition of~$[d]$, such that~\eqref{eq:lambda small gap} holds.
		Then, for all $T \in \Dc_B$, we have:
		\[
		\prod_{(i,j) \in \mu} \. \frac{h_{\lambda}\bigl(T(i,j), \ts j+T(i,j)-i\bigr)}{h_{\lambda}(i,j)}
		\, \geq  \,  C_{d,\ep}\,,
		\]
		for some absolute constant \ts $C_{d,\ep}>0$.
	\end{lemma}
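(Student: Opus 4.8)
The plan is to reduce the claim to a single, factor-by-factor application of Lemma~\ref{l:hook cells lower bound}, followed by an elementary $(1-x)^N$ estimate.

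Fix $T\in\Dc_\CB$. For each cell $(i,j)\in\mu$, I would first observe that $T$, being a flagged tableau, is a semistandard Young tableau of shape $\mu$, so its entries strictly increase down columns; consequently $T(i,j)\ge i$, and I set $k=k(i,j):=T(i,j)-i\ge 0$. Since $T\in\Dc_\CB$, we have $i\Bsim T(i,j)=i+k$, so the hypotheses of Lemma~\ref{l:hook cells lower bound} are met (we are given $(\lambda,\mu)\in\La(n,d,\ep)$ and that $\CB$ satisfies \eqref{eq:lambda small gap}). That lemma then gives
\[
\frac{h_{\lambda}\bigl(T(i,j),\,j+T(i,j)-i\bigr)}{h_{\lambda}(i,j)}
=\frac{h_{\lambda}(i+k,\,j+k)}{h_{\lambda}(i,j)}\,\ge\,1-\frac{2d}{\ep\,|\lambda|}\,,
\]
for every $(i,j)\in\mu$.

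Next I would multiply these bounds over all $(i,j)\in\mu$. Since $\mu\subseteq\lambda$ we have $0\le|\mu|\le|\lambda|$, and since $1-\tfrac{2d}{\ep|\lambda|}\le 1$ the product is at least $\bigl(1-\tfrac{2d}{\ep|\lambda|}\bigr)^{|\mu|}\ge\bigl(1-\tfrac{2d}{\ep|\lambda|}\bigr)^{|\lambda|}$. The one point worth flagging here is that $|\mu|$ may exceed $n$; this is harmless precisely because, writing $x:=\tfrac{2d}{\ep|\lambda|}$, the quantity $x|\mu|\le x|\lambda|=2d/\ep$ stays bounded. Since $(\lambda,\mu)\in\La(n,d,\ep)$ gives $|\lambda|=n+|\mu|\ge n$, we have $|\lambda|\to\infty$ with $n$; so for $n$ large enough that $x\le\tfrac12$, the elementary inequality $\log(1-x)\ge-2x$ on $[0,\tfrac12]$ yields
\[
\Bigl(1-\tfrac{2d}{\ep|\lambda|}\Bigr)^{|\lambda|}=\exp\!\bigl(|\lambda|\log(1-x)\bigr)\,\ge\,\exp(-2x|\lambda|)=e^{-4d/\ep}\,.
\]
For the finitely many pairs $(\lambda,\mu)\in\La(n,d,\ep)$ with $n$ below that threshold, each ratio of hook lengths is a positive rational and there are finitely many flagged tableaux $T$, so the minimum of the products in question over all such cases is some positive constant $c_0=c_0(d,\ep)$; taking $C_{d,\ep}:=\min\{e^{-4d/\ep},\,c_0\}>0$ finishes the proof. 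The second part of the lemma would follow verbatim, since Lemma~\ref{l:hook cells lower bound} is stated under \eqref{eq:lambda small gap} which is the relevant hypothesis throughout.

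The argument is short, and the only real obstacle is bookkeeping: I must be sure that $T(i,j)\ge i$ so that Lemma~\ref{l:hook cells lower bound} applies with a nonnegative shift $k$, and I must control the exponent $|\mu|$ (which is not bounded by $n$) via the observation $x|\mu|\le 2d/\ep$. No deeper difficulty arises, since Lemma~\ref{l:hook cells lower bound} already packages the geometric content about hook lengths staying nearly equal along a diagonal within a single block.
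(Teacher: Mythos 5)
Your proposal is correct and follows essentially the same route as the paper's proof: apply Lemma~\ref{l:hook cells lower bound} cell by cell (valid since $T\in\Dc_\CB$ puts $i$ and $T(i,j)$ in the same block), take the product to get $\bigl(1-\tfrac{2d}{\ep|\lambda|}\bigr)^{|\mu|}\ge\bigl(1-\tfrac{2d}{\ep|\lambda|}\bigr)^{|\lambda|}$, and bound this by a constant depending only on $d,\ep$. Your treatment is in fact slightly more careful than the paper's (which simply writes the final bound ``for sufficiently large $|\lambda|$''), since you explicitly absorb the finitely many small cases into the constant.
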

	
	\begin{proof}
		We have:
		\begin{align*}
		& \prod_{(i,j) \in \mu} \frac{h_{\lambda}\bigl(T(i,j), \ts j+T(i,j)-i\bigr)}{h_{\lambda}(i,j)} \
		\ge_{\text{Lem~\ref{l:hook cells lower bound}}} \  \left( 1-\frac{2\ts d}{\ep \ts|\lambda|}\right)^{|\mu|}
		\ \geq \ \left( 1-\frac{2\ts d}{\ep\ts |\lambda|} \right)^{|\lambda|} \, \geq  \,
		\left(\frac{1}{3}\right)^{2d/\ep}\,, %\quad \text{for sufficiently large \ts $|\lambda|$}.
		\end{align*}
		for sufficiently large \ts $|\lambda|$.  This implies the result.
	\end{proof}
	
	Our second ingredient is the following lower bound on the cardinality of $\Dc_B$.

	\begin{lemma}\label{l:Dc lower bound}
		Fix \ts $d\geq 2$ \ts and \ts $\ep >0$.
		Let $(\lambda, \mu)\in \La(n,d,\ep)$.
		Let $\CB$ be an interval decomposition of $[n]$ such that~\eqref{eq:lambda small gap} holds.
		Then there exists an absolute constant $C_{d,\ep} >0$ such that
		\[
		\bigl|\Dc_\CB\bigr| \, \geq \, C_{d,\ep} \. \prod_{\substack{1 \leqslant i < j \leqslant d \\ i \Bsim j }} ({\mu_i-\mu_j+j-i}).\]
	\end{lemma}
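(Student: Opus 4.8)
The plan is to identify $\Dc_\CB$ explicitly, then count it by factoring over the blocks of $\CB$ and invoking the product formula~\eqref{eq:HCF}. First I would unfold the definitions. A tableau $T$ lies in $\Dc_\CB$ iff $T\in\SSYT(\mu)$, the flag inequality $j+T(i,j)-i\le\lambda_{T(i,j)}$ holds, and $i\Bsim T(i,j)$ for all $(i,j)\in\mu$. Column-strictness forces $T(i,j)\ge i$, so if $i$ lies in the block $B_s=\{b_{s-1}+1,\dots,b_s\}$, then $i\Bsim T(i,j)$ is equivalent to $i\le T(i,j)\le b_s$. I claim that, for $n$ large, the flag inequality is then automatic: writing $\ell:=T(i,j)$ we have $i\le\ell\le b_s\le i+d-1$, and since $i\Bsim\ell$ hypothesis~\eqref{eq:lambda small gap} gives $\lambda_\ell\ge\lambda_i-1$; hence $j+\ell-i\le\mu_i+(d-1)$, which is $\le\lambda_i-1\le\lambda_\ell$ as soon as $\lambda_i-\mu_i\ge d$, and this holds because $(\lambda,\mu)\in\La(n,d,\ep)$ forces $\lambda_i-\mu_i\ge\ep|\lambda|\ge\ep n$ by~\eqref{eq:def-admiss-pair}. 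This step — showing the flag condition becomes vacuous — is the only genuinely delicate point, and it is exactly where both the "small gap" hypothesis and the admissibility of $(\lambda,\mu)$ are used.

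Next I would show that the remaining constraint decouples across the blocks. If $i-1\in B_{s-1}$ and $i\in B_s$ (so $i=b_{s-1}+1$), then any admissible value $T(i-1,j)\le b_{s-1}$ is automatically smaller than $T(i,j)\ge i>b_{s-1}$, so column-strictness between consecutive blocks imposes nothing. Hence a tableau in $\Dc_\CB$ amounts to an independent choice, for each block $B_s$, of a semistandard tableau $T^{(s)}$ of the sub-partition $\mu^{(s)}:=(\mu_{b_{s-1}+1},\dots,\mu_{b_s})$ with entries in an alphabet of size $|B_s|$ (column-strictness automatically pushing the $p$-th row above $p$). Therefore, for $n$ large,
\[
\bigl|\Dc_\CB\bigr| \ = \ \prod_{s=1}^{r}\,\bigl|\SSYT\bigl(\mu^{(s)},|B_s|\bigr)\bigr|.
\]

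Finally I would evaluate each factor via~\eqref{eq:HCF}: $\bigl|\SSYT(\mu^{(s)},|B_s|)\bigr|=\prod_{1\le p<q\le|B_s|}\frac{m_p-m_q}{q-p}$ with $m_p=\mu^{(s)}_p+|B_s|-p$. Re-indexing $p\mapsto i=b_{s-1}+p$ turns $m_p-m_q$ into $\mu_i-\mu_j+j-i$ over pairs $i<j$ in $B_s$, while $\prod_{1\le p<q\le|B_s|}(q-p)=\prod_{k=1}^{|B_s|-1}k!\le\prod_{k=1}^{d-1}k!=:c_d$. Multiplying over the $r\le d$ blocks, and recalling that $i\Bsim j$ holds exactly when $i,j$ lie in a common block,
\[
\bigl|\Dc_\CB\bigr| \ = \ \prod_{s=1}^{r}\ \frac{1}{\prod_{1\le p<q\le|B_s|}(q-p)}\ \prod_{\substack{1\le i<j\le d\\ i,j\in B_s}}\!\bigl(\mu_i-\mu_j+j-i\bigr) \ \ge \ c_d^{-d}\ \prod_{\substack{1\le i<j\le d\\ i\Bsim j}}\bigl(\mu_i-\mu_j+j-i\bigr),
\]
which gives the lemma with $C_{d,\ep}:=c_d^{-d}$ for all sufficiently large $n$. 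For the finitely many remaining small $n$ one notes $|\Dc_\CB|\ge1$ always — the superstandard tableau $T(i,j)=i$ always lies in $\Dc_\CB$ — while the right-hand product is bounded (each factor is at most $\mu_i+d$, and admissibility bounds $|\mu|\le|\lambda|\le n/(d\ep)$), so the constant can be shrunk to cover those cases too. I expect the whole argument to be short; the main thing to get right is the block-decoupling bookkeeping together with the automatic-flag observation above.
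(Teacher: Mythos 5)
Your proposal is correct and follows essentially the same route as the paper's proof: you show the flag condition \eqref{eq:definition flagged tableau} is vacuous for large $n$ via \eqref{eq:lambda small gap} together with $\lambda_i-\mu_i\ge\ep|\lambda|$, decouple the block-restricted semistandard tableaux into a product over the blocks of $\CB$, and evaluate each factor by \eqref{eq:HCF}, bounding the denominators by a constant depending only on~$d$. The only (harmless) differences are bookkeeping: the paper first counts the set without the flag condition and then identifies it with $\Dc_\CB$, and it states the bound only for $|\lambda|$ large, whereas you also note how to absorb the finitely many small cases into the constant.
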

	
	\begin{proof}
		Let \ts $\Dc_{\CB}':=\Dc'_{\CB}(\mu)$ \ts be the set of semistandard
		Young tableau of shape $\mu$  given by
		\[
		\Dc_{\CB}' \. := \. \bigl\{ T \in \text{SSYT}(\mu) \ \mid \  i \Bsim T(i,j) \, \text{ for all } \, (i,j) \in \mu \bigr\}.
		\]
		Note that \ts $\Dc_\CB = \Dc'_{\CB} \cap \Ec(\lambda/\mu)$.  We will estimate \ts $|\Dc_\CB|$ via $|\Dc'_{\CB}|$.
		
		Recall the definition~\eqref{eq:def-CB} of interval decompositions.
		For each \ts $k\in \{1,\ldots,r\}$, denote by \ts $\mu^{(k)}$ \ts
		the partition  obtained from $\mu$ by restricting to rows indexed by $B_k$:
		\begin{equation}\label{eq:def-mu-BC}
		\mu^{(k)} \, = \, \bigl(\mu^{(k)}_1, \mu^{(k)}_2, \ts \ldots \ts , \mu^{(k)}_{b_k-b_{k-1}}\bigr) \, := \,
		\bigl(\mu_{b_{k-1}+1}, \mu_{b_{k-1}+2}, \ts\ldots\ts , \mu_{b_k}\bigr).
		\end{equation}
		In this notation,
		\begin{equation}\label{eq:Dc lower bound 1}
		\Dc_{\CB}' \. = \. \bigl\{ T \in \text{SSYT}(\mu) \ \mid \  b_{k-1} \. < \.  T(i,j)
		\. \leq \. b_k \, \text{ for all } \, (i,j) \in \mu, \, i \in B_k \bigr\}.
		\end{equation}
		Therefore, that the following map is a bijection:
		\begin{equation}\label{eq:Dc lower bound 2}
		\aligned
		& \psi: \. \Dc_{\CB}' \, \to \, \SSYT\bigl(\mu^{(1)}\bigr) \times \ldots \times \SSYT\bigl(\mu^{(r)}\bigr), \quad
		\psi(T) \. :=  \. \bigl(T^{(1)}, \ldots, T^{(r)}\bigr), \\
		& \qquad \text{where} \ \ \, T^{(k)}(i,j) \. = \. T(i+b_{k-1},j) -b_{k-1} \quad \text{for all} \ \ \, (i,j) \in \mu^{(k)}\..
		\endaligned
		\end{equation}
		In other words, the semistandard Young tableaux $T^{(k)}$ is obtained
		by restricting~$T$ to rows indexed by~$B_k$ and normalizing the
		smallest entries to start from~$1$.
		It now follows from \eqref{eq:Dc lower bound 2} and~\eqref{eq:HCF}, that
		\begin{equation}\label{eq:Dc lower bound 3}
		\begin{split}
		|\Dc'_\CB| \ & =  \ \prod_{k=1}^r  \, \prod_{\substack{1 \leqslant i < j \leqslant d \\ i, j \in B_k }} \,
		\frac{\mu_i-\mu_j+j-i}{j-i} \ \geq  \
		\prod_{k=1}^r \, \prod_{\substack{1 \leqslant i < j \leqslant d \\ i, j \in B_k }} \, \frac{\mu_i-\mu_j+j-i}{d-1}\\
		\  & \geq  \  (d-1)^{-\frac{d(d-1)}{2}} \.  \prod_{\substack{1 \leqslant i < j \leqslant d \\ i \Bsim j }} \. \bigl({\mu_i-\mu_j+j-i}\bigr).
		\end{split}
		\end{equation}
		
		We claim that \ts $\Dc_\CB = \Dc_\CB'$ \ts for sufficiently large~$|\lambda|$.
		It suffices to show that each \ts $T\in\Dc_\CB'$ \ts is a flagged tableau
		of~$\lambda/\mu$, for sufficiently large $|\lambda|$.
		Let \ts $(i,j) \in \mu$, and let $k$ be the index such that $B_k$  is the block of $\CB$ that contains~$i$.
		We  have:
		\begin{align*}
		j+ T(u)-1   \ & \leq \ \mu_i +d-1
		\ = \  \lambda_i- (\lambda_i-\mu_i) + d-1 \ \leq_{\eqref{eq:separated-def}} \ \lambda_i - \ep \ts |\lambda| +(d-i) \\
		& \leq_\text{\eqref{eq:lambda small gap},\. $i \Bsim T(u)$} \ (\lambda_{T(u)} + 1) -\ep\ts |\lambda| +(d-i) \
		\leq \ \lambda_{T(u)}\.,
		\end{align*}
		for sufficiently large $\lambda_{T(u)}$.  This proves the claim.
		
		By~\eqref{eq:separated-def}, we have:
		\begin{equation}\label{eq:Dc-lower-sep}
		\lambda_1 \ \geq \ \ldots \ \geq \ \lambda_d \ \geq \ \lambda_d-\mu_d \ \geq \ \ep \ts |\lambda|,
		\end{equation}
		so the claim above assumes only that \ts $|\la|$ \ts is large enough.  We conclude:
		\begin{equation}\label{eq:Dc lower bound 3.5}
		|\Dc_\CB| \ =_{\eqref{eq:Dc-lower-sep}} \ |\Dc_\CB'| \ \geq_{\eqref{eq:Dc lower bound 3}} \
		(d-1)^{-\frac{d(d-1)}{2}}  \. \prod_{\substack{1 \leqslant i < j \leqslant d \\ i \Bsim j }} \. \bigl({\mu_i-\mu_j+j-i}\bigr),
		\end{equation}
		for all \ts   $|\lambda|$ sufficiently large.
		This completes the proof.
	\end{proof}
	
	The main result of this subsection is the following lower bound for $f(\lambda/\mu)$.
	
	\begin{lemma}\label{l:f -- TVK lower bound}
		Fix \ts $d\geq 2$ \ts and \ts $\ep >0$.
		Let $(\lambda, \mu)\in \La(n,d,\ep)$.
		Let $\CB$ be an interval decomposition of $[n]$ such that~\eqref{eq:lambda big gap}
		and~\eqref{eq:lambda small gap}  hold.
		Then there exists an absolute constant $C_{d,\ep}>0$ such that
		\[
		\frac{f(\lambda/\mu)}{F(\lambda/\mu)} \,\. \geq \,\. C_{d,\ep} \, \eG(\lambda/\mu)\ts.
		\]
	\end{lemma}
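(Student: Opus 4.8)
The plan is to assemble the two lower bounds just established with the Flagged NHLF. \emph{First}, exactly as in the proof of Lemma~\ref{l:NHLF  upper bounded by schur polynomial}, I would combine Theorem~\ref{t:NHLF} with the definition~\eqref{eq:F-def} of $F(\lambda/\mu)$ to write
\[
\frac{f(\lambda/\mu)}{F(\lambda/\mu)} \ = \ \sum_{T \in \FT(\lambda/\mu)} \ \prod_{(i,j)\in\mu} \frac{h_{\lambda}\bigl(T(i,j),\,j+T(i,j)-i\bigr)}{h_{\lambda}(i,j)}\ .
\]
Since every summand is positive, I would then drop all terms except those indexed by $T \in \Dc_\CB$. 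This is legitimate because $\Dc_\CB \subseteq \FT(\lambda/\mu)$ once $|\lambda|$ is large enough, which is checked inside the proof of Lemma~\ref{l:Dc lower bound} using the small-gap condition~\eqref{eq:lambda small gap} and the $\ep$-admissibility of $(\lambda,\mu)$; for the finitely many remaining small values of $n$ the asserted inequality is trivial after enlarging the constant.

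\emph{Second}, I would estimate the retained sum from below. Lemma~\ref{l:hook products lower bound} bounds each retained summand by an absolute constant $C_{d,\ep}>0$, giving $\frac{f(\lambda/\mu)}{F(\lambda/\mu)} \geq C_{d,\ep}\,|\Dc_\CB|$, and then Lemma~\ref{l:Dc lower bound} gives $|\Dc_\CB| \geq C_{d,\ep}\prod_{i\Bsim j}(\mu_i-\mu_j+j-i)$, the product being over pairs $1\le i<j\le d$ with $i\Bsim j$. \emph{Third}, I would compare this same-block product with $\eG(\lambda/\mu)$: splitting the product in~\eqref{eq:G-def} according to whether $i\Bsim j$ or $i\Bnsim j$, the same-block factors are each at most $\mu_i-\mu_j+j-i$, while the distinct-block factors are each at most $\frac{\lambda_i+d-i}{\lambda_i-\lambda_j+j-i}\le d/\ep$ by condition~\eqref{eq:lambda big gap} and~\eqref{eq:condition 1 min}; hence $\eG(\lambda/\mu)\le (d/\ep)^{d(d-1)/2}\prod_{i\Bsim j}(\mu_i-\mu_j+j-i)$, i.e.\ $\prod_{i\Bsim j}(\mu_i-\mu_j+j-i)\ge(\ep/d)^{d(d-1)/2}\eG(\lambda/\mu)$ (one may instead quote Lemma~\ref{l:G exact estimate}, whose hypotheses hold here). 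Chaining the three steps gives $\frac{f(\lambda/\mu)}{F(\lambda/\mu)}\ge C'_{d,\ep}\,\eG(\lambda/\mu)$ for a suitable absolute constant, as required.

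\emph{The main obstacle} is not really in this lemma: all the substantive work — the near-optimality of the hook ratios on $\Dc_\CB$ and the count $|\Dc_\CB|\gtrsim\prod_{i\Bsim j}(\mu_i-\mu_j+j-i)$ via the evaluation~\eqref{eq:HCF} — has already been done in Lemmas~\ref{l:hook products lower bound} and~\ref{l:Dc lower bound}. The only point demanding care is the passage from the full flagged-NHLF sum to its restriction to $\Dc_\CB$, i.e.\ verifying $\Dc_\CB\subseteq\FT(\lambda/\mu)$, which holds only in the large-$n$ regime (again via~\eqref{eq:lambda small gap} and $\ep$-admissibility); the exceptional small $n$ are harmlessly absorbed into the constant since the statement is one-sided.
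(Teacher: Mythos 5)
Your proof is correct and follows essentially the same route as the paper: flagged NHLF, restriction of the sum to $\Dc_\CB$, the constant lower bound on each retained summand from Lemma~\ref{l:hook products lower bound}, the cardinality bound from Lemma~\ref{l:Dc lower bound}, and the comparison of the same-block product with $\eG(\lambda/\mu)$ via \eqref{eq:condition 1 min} (the paper phrases this last step through $K_\CB$ and Lemma~\ref{l:G exact estimate}, which is equivalent to your direct computation). One tiny quibble: restricting the sum to $\Dc_\CB$ needs no large-$n$ hypothesis at all, since $\Dc_\CB$ is by definition a subset of $\FT(\lambda/\mu)$; the large-$|\lambda|$ condition enters only in the two quoted lemmas (the identification $\Dc_\CB=\Dc'_\CB$ and the hook-ratio constant), not in the legitimacy of dropping terms.
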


	\begin{proof} We have:
		\begin{align*}
		\frac{f(\lambda/\mu)}{F(\lambda/\mu) } \, &=_{\text{Thm~\ref{t:NHLF}}}  \  \sum_{T \in \Ec(\lambda/\mu)} \, \prod_{(i,j) \in \mu} \frac{h_{\lambda}(T(i,j), j+T(i,j)-i)}{h_{\lambda}(i,j)} \, \geq  \,  \sum_{T \in \Dc_B} \, \prod_{(i,j) \in \mu} \frac{h_{\lambda}(T(i,j), j+T(i,j)-i)}{h_{\lambda}(i,j)} \\
		& \geq_{\text{Lem~\ref{l:hook products lower bound}}}  \ \sum_{T \in \Dc_B} \. C_{d,\ep} \, = \,
		C_{d,\ep} \ts |\Dc_B| \,  \geq_{\text{Lem~\ref{l:Dc lower bound}}} \ C_{d,\ep}
		\prod_{\substack{1 \leqslant i < j \leqslant d \\ i \eBsim j }} \. ({\mu_i-\mu_j+j-i})\..
		\end{align*}
		This implies that
		\begin{align*}
		\frac{f(\lambda/\mu)}{F(\lambda/\mu)} \ &\geq_{\eqref{eq:condition 1 min}}  \ C_{d,\ep} \. \left(\frac{\ep}{d}\right)^{\frac{d(d-1)}{2}} \prod_{\substack{1 \leqslant i < j \leqslant d \\ i \eBsim j }} ({\mu_i-\mu_j+j-i})  \prod_{\substack{1 \leqslant i < j \leqslant d \\ i \eBnsim j }}  \frac{\lambda_i+d-i}{\lambda_i-\lambda_j+j-i} \\
		& =_{\eqref{eq:HB(lambda/mu)}}  \ C_{d,\ep} \. \left(\frac{\ep}{d}\right)^{\frac{d(d-1)}{2}} K_{\CB}(\lambda/\mu) \
		\geq_{\text{Lem~\ref{l:G exact estimate}}}  \ C_{d,\ep} \. \left(\frac{\ep}{d}\right)^{\frac{d(d-1)}{2}} \eG(\lambda/\mu)\.,
		\end{align*}
		as desired.
	\end{proof}

	\subsection{Proof of Lemma~\ref{l:asy-TVK}}
	Recall the definition of a Thoma pair~$(\al,\be)$ in~$\S$\ref{ss:intro-main}.
	Let \ts $\ep:=\ep(\alpha,\beta)$ \ts be given by
	\begin{equation}\label{eq:definition of epsilon TVK}
	\ep := \frac{1}{2(\alpha_1+\ldots+\alpha_d)} \, \min \Big\{  \min_{1 \leqslant i < j \leqslant d} \left \{{\alpha_i-\beta_i}{}  \right \}, \min_{\substack{1 \leq i <j \leq d \\ \alpha_i \neq \alpha_j}} \{\alpha_i-\alpha_j\} \Big \}.
	\end{equation}
	We have \ts $\ep >0$ \ts since \ts $\alpha_i > \beta_i$.
	
	Let \ts $\la\simeq \al\ts n$, \ts $\mu\simeq \be\ts n$  \ts be a TVK $(\al,\be)$-shape.
	Note that \ts $(\lambda,\mu)$ is $\ep$-admissible for sufficiently large \ts $n$.
	Indeed,
	\begin{equation}\label{eq:TVK is epsilon admissibile}
	\lambda_i \. - \. \mu_i \ =  \ \lfloor \alpha_i n \rfloor \. - \. \lfloor \beta_i n \rfloor
	\ \geq  \ (\alpha_i\ts -\ts \beta_i)\ts n \. - \. 1 \ \geq  \
	\frac{\alpha_i\ts -\ts \beta_i}{2\ts (\alpha_1\.+\. \ldots\. + \.\alpha_d)} \. |\lambda|
	\ \geq  \ \ep\ts |\lambda|\ts,
	\end{equation}
	for sufficiently large \ts $|\lambda|=|\al| \ts n+O(1)$,
	and for all \ts $1\le i \le d$.
	
	Let $\CB$ be the interval decomposition of~$[d]$ that puts two integers
	\ts $i, j \in [d]$ \ts in the same  block if and only if \ts $\alpha_i=\alpha_j$.
	Then~\eqref{eq:lambda big gap} holds for sufficiently large $|\lambda|$,
	since for all \ts $i \Bnsim j$, \ts $1 \leqslant i < j \leqslant d$, we have:
	\begin{equation}\label{eq:TVK condition 1 is met}
	\lambda_i \. - \. \la_j \ =  \ \lfloor \alpha_i n \rfloor \. - \. \lfloor \al_j n \rfloor
	\ \geq  \ (\alpha_i\ts - \ts \al_j)\ts n \. - \. 1 \ \geq  \
	\frac{\alpha_i\. - \. \al_j}{2\ts (\alpha_1\.+\. \ldots\. +\.\alpha_d)} \. |\lambda|\ts,
	\ \geq  \ \ep\ts |\lambda|\ts,
	\end{equation}
	Similarly, \eqref{eq:lambda small gap} holds, since for all
	\ts $i \Bsim j$, \ts $1 \leqslant i < j \leqslant d$, we have:
	\begin{equation}\label{eq:TVK condition 2 is met}
	\lambda_i \.- \. \lambda_j  \ =  \ \lfloor \alpha_i n \rfloor \. - \. \lfloor \alpha_j n \rfloor
	\, =  \, 0\ts.
	\end{equation}
	
	By Lemma~\ref{l:f -- TVK upper bound} and Lemma~\ref{l:f -- TVK lower bound}, this implies
	that there exists an absolute constant \ts $C_{\alpha,\beta}> 0$, such that
	\begin{align*}
	\frac{1}{C_{\alpha,\beta}} \, \eG(\lambda/\mu) \ \leq \ \frac{f(\lambda/\mu)}{F(\lambda/\mu)}   \ \leq \  {C_{\alpha,\beta}} \, \eG(\lambda/\mu)\,,
	\end{align*}
	for sufficiently large \ts $n$.  This implies the result. \ $\sq$
	% \end{proof}

	\subsection{Proof of Theorem~\ref{t:TVK-skew}}
	Let $\ep:=\ep(\alpha,\beta)$ be as in \eqref{eq:definition of epsilon TVK}.
	By Lemma~\ref{l:main}, it suffices to check that for every $\ep$-admissible triplet
	$(\lambda,\gamma,\mu)$, we have:
	\begin{equation}\label{eq:TVK-123}
	\frac{f(\gamma/\mu)}{F(\gamma/\mu)}   \ \leq \  {C_{\alpha,\beta}} \,  \eG(\gamma/\mu)\.,
	% \label{eq:TVK 1} \\
	\quad  \frac{f(\lambda/\gamma)}{F(\lambda/\gamma)}   \ \leq  \ {{C_{\alpha,\beta}}} \, \eG(\lambda/\gamma)\.,
	\quad \text{ and }  \quad
	% \label{eq:TVK 2}  \\
	\frac{f(\lambda/\mu)}{F(\lambda/\mu)}   \ \geq \  \frac{1}{{C_{\alpha,\beta}}} \, \eG(\lambda/\mu),
	\end{equation}
	for some absolute constant \ts ${C_{\alpha,\beta}} >0$.
	Note that the third inequality in \eqref{eq:TVK-123} is proved in Lemma~\ref{l:asy-TVK}.
	%We now check   that \eqref{eq:TVK 1} and   \eqref{eq:TVK 2} holds.
	
	For the second inequality in~\eqref{eq:TVK-123},
	let $\CB$ be the interval decomposition of $[d]$ that puts two integers \ts
	$i, j \in [d]$ \ts in the same  block if and only if  $\alpha_i=\alpha_j$.
	By the same argument as in \eqref{eq:TVK condition 1 is met} and~\eqref{eq:TVK condition 2 is met},
	we have  \eqref{eq:lambda big gap} and \eqref{eq:lambda small gap} hold for the pair
	$(\lambda,\gamma)$ and $\CB$, and for sufficiently large~$n$.
	By Lemma~\ref{l:f -- TVK upper bound}, we get the second inequality
	for sufficiently large~$n$.
	
	For the first inequality in~\eqref{eq:TVK-123}, let $\CB'$ be the interval
	decomposition of $[d]$ that puts \ts $i, j \in [d]$ \ts
	in the same  block if and only if \ts $\beta_i=\beta_j$.
	Let \ts $\ep':=\ep'(\alpha,\beta)$ \ts be the constant defined by
	\[
	\ep' \, := \, \frac{d\ts \ep^3}{8\ts (\alpha_1+\ldots+\alpha_d)}
	\, \min_{\substack{1 \leqslant i <j \leqslant d \\ \beta_i \neq \beta_j}} \. \{\beta_i-\beta_j\}\ts.
	\]
	For all \ts $i  \overset{\CB'}{\nsim} j$, $1 \leqslant i < j \leqslant d$, we have:
	\begin{equation}\label{eq:TVK 4}
	\gamma_i\ts - \ts \gamma_j \, \geq_{\eqref{eq:progressive-def}} \,
	p\ts (\lambda_i-\lambda_{j}) \ts + \ts (1-p)\ts (\mu_i-\mu_{j})  \ts  -  \ts
	2\ts(|\lambda|-|\mu|)^{\frac{3}{4}} \, \geq  \, (1-p)\ts (\mu_i-\mu_{j}) \ts -  \ts 2\ts n^{\frac{3}{4}}.
	\end{equation}
	Note that
	\begin{equation}\label{eq:TVK 5}
	\mu_i- \mu_j \, = \,  \lfloor \beta_i n \rfloor \. - \.\lfloor \beta_j n \rfloor
	\, \geq \, (\beta_i-\beta_j)\ts |\la| \.  - \. 1 \, \geq  \,\.
	\frac{\beta_i-\beta_j}{2\ts (\alpha_1+\ldots+\alpha_d)} \. |\la|\ts,
	\end{equation}
	for sufficiently large~$n$.  Note also that
	\begin{equation}\label{eq:TVK 6}
	1- p \, =_{\eqref{eq:definition p}} \ \frac{|\lambda|-|\gamma|}{|\lambda|-|\mu|}
	\, =  \,  \sum_{i=1}^d \frac{\lambda_i-\gamma_i}{|\lambda|-|\mu|}
	\, \geq_{\eqref{eq:separated-def}} \,
	\frac{d\ep^3}{2} \. \frac{|\lambda|}{|\lambda|-|\mu|}
	\, \geq \,  \frac{d\ep^3}{2}\..
	\end{equation}
	Substituting \eqref{eq:TVK 5} and \eqref{eq:TVK 6} into \eqref{eq:TVK 4}, we get
	\begin{equation}\label{eq:TVK 7}
	\gamma_i-\gamma_j \, \geq \, \frac{d\ep^3}{4}\. \frac{\beta_i-\beta_j}{\alpha_1+\ldots+\alpha_d} \. |\la|
	\.  -  \. 2\ts n^{\frac{3}{4}} \, \geq \.
	\frac{d\ep^3}{8}\frac{\beta_i-\beta_j}{\alpha_1+\ldots+\alpha_d} \.|\la| \,
	\geq  \, \ep' \ts |\lambda| \, \geq  \, \ep'\ts |\gamma|,
	\end{equation}
	for \ts $|\la|=\Theta(n)$ \ts large enough.
	On the other hand, for all \ts  $i  \overset{B'}{\sim} j$, \ts
	$1 \leqslant i < j \leqslant d$, we have:
	\begin{equation}\label{eq:TVK 8}
	\mu_i \. - \. \mu_j  \ = \ \lfloor \beta_i n \rfloor \. - \. \lfloor \beta_j n \rfloor \, =  \, 0\ts.
	\end{equation}
	It follows from \eqref{eq:TVK 7} and \eqref{eq:TVK 8},
	that  \eqref{eq:lambda big gap} and \eqref{eq:mu small gap}
	hold for this case when~$n$ is sufficiently large.
	Thus, the first inequality in~\eqref{eq:TVK-123}
	follows by Lemma~\ref{l:f -- TVK upper bound}.
	This completes the proof of the theorem. \ $\sq$
	
	\bigskip
	
	\section{Conjectures and open problems}\label{s:conj}
	
	We believe our results can be further strengthened in several directions,
	and would like to mention a few possibilities.
	
	\subsection{Sorting probability}
	The bound \ts $\de\bigl(P_{\la/\mu}\bigr) = O\bigl(\frac{1}{\sqrt{n}}\bigr)$ \ts that we obtain in
	Theorems~\ref{t:TVK}--\ref{t:main} is likely not tight.  In fact, \ts
	$\Omega\bigl(\frac1n\bigr)$ \ts is the only lower bound that we know in some cases
	(see~$\S$\ref{ss:intro-examples}).  The results in Corollary~\ref{c:warmup-upper}
    and~\cite{CPP} also seem to suggest that \ts $O\left(\frac{1}{n}\right)$ \ts is
    perhaps the best one can aim for in full generality.  We believe the TVK shapes are likely
    the easiest case to make progress as they are most similar to the
    Catalan poset case:
	
\begin{conj}\label{conj:TVK-skew}  There is a universal constant \ts $C>0$,
such that for all~$d\ge 2$, and for every Thoma sequence \ts $\al\in \rr^d_{>0}$, we have:
$$
\de\bigl(P_{\la}\bigr) \. \le \. \frac{C}{n^{5/4}}\,,
$$
where \ts $\la\simeq \al \ts n$ \ts is a TVK \ts $\al$-shape.
\end{conj}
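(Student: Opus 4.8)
The plan is to extend the direct asymptotic analysis of the Catalan poset in~\cite{CPP} to all TVK straight shapes. As everywhere in the paper, it suffices (Lemma~\ref{l:quantitative bound for Linial} with $\mu=\emp$) to produce sorting elements $x=(1,a)$ and $y=(2,b)$ in $\la$ with
\[
\Bigl|\Pbl\bigl[L(1,a)<L(2,b)\bigr]-\frac12\Bigr| \ = \ O\bigl(n^{-5/4}\bigr),
\]
and, writing $\tau=\tau(a)$ for the first time the tableau random walk $\bZ$ of~$\S$\ref{ss:paths-setup} has $Z_\tau(1)=a$, this probability equals $\Pbl[Z_{\tau}(2)<b]$. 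Conditioning on $\tau$ and using Proposition~\ref{p:cond-prob}, the law of $Z_{\tau(a)}(2)$ is, for the unconditioned lattice walk $\bX$, a truncated negative-binomial-type variable with mean $\nu(a)=a\ts\la_2/\la_1+O(1)$ and standard deviation $\sigma(a)=\Theta(\sqrt n)$, and the conditioning to $\Cc$ perturbs this only by a factor close to~$1$ when $(1,a)$ and $(2,b)$ lie in the bulk of the diagram. So everything reduces to a sufficiently precise description of the distribution of $Z_{\tau(a)}(2)$ around $b$.

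The first genuine step, and the one I expect to be the main obstacle, is a \emph{refined} local limit theorem for $\bZ$, going well beyond the two-sided estimates of Lemmas~\ref{l:pmf of Zt upper bound} and~\ref{l:straight-exp-decay}. Using the NHLF together with the Schur-function asymptotics of Sections~\ref{s:upper-bounds-SYTs} and~\ref{s:TVK} (in the spirit of~\cite{MPP4}), one should upgrade $f(\la/\nu)\asymp F(\la/\nu)\,\eG(\la/\nu)$ to an honest asymptotic expansion of $\Pbl[Z_{|\nu|}=\nu]$ for $\nu$ in the bulk, of the form: Gaussian density times $\bigl(1+n^{-1/2}S(\nu)+O(n^{-1})\bigr)$ with $S$ an explicit Edgeworth (skewness) term. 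Summing this over the slice $\{\nu_1=a\}$ then gives the law of $Z_{\tau(a)}(2)$ as a Gaussian distribution function plus an $n^{-1/2}$ skewness correction plus the unavoidable lattice ``sawtooth'' correction $n^{-1/2}\,\psi\bigl(\{\nu(a)\},z\bigr)$ (a bounded periodic function of the fractional part of~$\nu(a)$, with $z=(b-\nu(a))/\sigma(a)$), and error $O(n^{-1})$. The difficulty is precisely that the paper's machinery is engineered to give inequalities with unspecified constants, whereas here one needs asymptotic equalities with controlled $O(n^{-1})$ relative error, uniformly in $\nu$, together with explicit control of the sawtooth term --- in effect a strong LCLT with a Berry--Esseen-type remainder for a lattice walk conditioned to stay in the cone $\pp_d$.

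Granting such an expansion, the sorting elements are chosen so as to cancel the two corrections of size $n^{-1/2}$. There are $\Theta(n)$ admissible values of~$a$ for which $x=(1,a)$ lies near the center of the walk (on the limit curve $C_\la(\al_0)$ of Figure~\ref{f:romik} for a suitable $\al_0$); along these, $\nu(a)=a\ts\la_2/\la_1+O(1)$ is affine with slope of bounded height, so --- after an $O(1)$ perturbation of $\la$ ensuring $\gcd(\la_1,\la_2)=1$ --- the fractional parts $\{\nu(a)\}$ run through an equidistributed set of resolution $O(1/n)$. One first picks $a$ to place $\{\nu(a)\}$ at the value that kills the sawtooth contribution and takes $b$ to be the integer nearest $\nu(a)$, which reduces the Gaussian and sawtooth terms below $n^{-5/4}$; the residual $n^{-1/2}$ skewness term is then removed by replacing the single pair by the anti-diagonal family $(a,b)\mapsto(a+s,\,b-s')$ with $s,s'=\Theta(\sqrt n)$, along which the skewness contributions coming from the $x$- and $y$-sides cancel to leading order (this is exactly the $k=\Theta(\sqrt m)$ phenomenon for the Catalan shape recorded in~$\S$\ref{ss:intro-examples}). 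Equivalently, one may view $g(a,b):=\Pbl[L(1,a)<L(2,b)]-\frac12$ as a function on a $2$-parameter family whose zero set is an (implicitly known) curve, and locate a lattice point within the required $O(n^{-3/4})$ of it; matching the exact exponent $5/4$ to the Catalan computation of~\cite{CPP} is the delicate bookkeeping that ties the whole argument together.

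Finally, for $d\ge 3$ one checks that the remaining $d-2$ coordinates of $Z_{\tau(a)}$ do not spoil the bound: since $\tau(a)$ is a hitting time for the first coordinate only, these coordinates are still free at time $\tau(a)$ and are integrated out using the same refined LCLT, contributing only absolute constants exactly as in the proof of Lemma~\ref{l:pdf upper bound}; alternatively one chooses $\al_0$ so that all rows are simultaneously typical. Besides the refined LCLT, the second real difficulty is that for general TVK shapes the functions $\nu(a)$, $\sigma(a)$ and the skewness profile $S$ are governed by the implicit variational problem of~\cite{MPT,Sun}, so one must extract from it enough regularity (smoothness and bounded derivatives of $\nu$, non-degeneracy of $\sigma$, and the equidistribution input) --- or, more conservatively, first prove the conjecture for those TVK shapes whose $f(\la/\nu)$ admits a sufficiently explicit product or determinant asymptotics.
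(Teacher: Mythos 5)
The statement you are trying to prove is Conjecture~\ref{conj:TVK-skew}; the paper does not prove it and explicitly leaves it open, establishing only the weaker bound \ts $\de(P_\la)=O(n^{-1/2})$ \ts (Theorem~\ref{t:TVK}) for TVK shapes, and the \ts $n^{-5/4}$ \ts bound only for the single Catalan shape \ts $\la=(m,m)$ \ts in~\cite{CPP}. What you have written is a research program, not a proof, and you say as much: the two ingredients you flag as ``the main obstacle'' and ``the second real difficulty'' are precisely the missing mathematics. Nothing in the paper supplies a local limit theorem with \ts $O(n^{-1})$ \ts relative error for the cone-conditioned walk $\bZ$; the NHLF/Schur machinery of Sections~\ref{s:upper-bounds-SYTs}--\ref{s:TVK} is engineered to give two-sided inequalities \ts $f(\la/\nu)\asymp F(\la/\nu)\ts\eG(\la/\nu)$ \ts with unspecified constants, and upgrading these to an Edgeworth expansion uniform over the bulk slice \ts $\{\nu_1=a\}$ \ts is an open problem in its own right (compare the remarks on Feit's determinant in~$\S$\ref{ss:intro-SYT}).

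Beyond the conditional structure, at least one step is outright invalid: you cannot perform ``an $O(1)$ perturbation of $\la$ ensuring $\gcd(\la_1,\la_2)=1$.'' The conjecture quantifies over every TVK shape, a perturbation changes the poset $P_\la$, and sorting probabilities are not known to be stable under such changes at the \ts $n^{-5/4}$ \ts scale --- indeed the whole point of the argument is that effects of size \ts $n^{-1/2}$ \ts must be cancelled to three further powers of \ts $n^{1/4}$, so an uncontrolled $O(1)$ shift of the row lengths can move the sawtooth phase by $\Theta(1)$. Likewise, the claim that conditioning on \ts $\Cc$ \ts perturbs the local law ``only by a factor close to~$1$'' needs quantitative justification at relative precision $O(n^{-1})$, which Lemma~\ref{l:lower bound for Cc} (a polynomial lower bound on \ts $\Pb[\Cc]$) does not give; and the simultaneous cancellation of the lattice and skewness corrections by a single integer pair $(a,b)$ is asserted via an implicit zero curve rather than proved --- this is exactly the delicate part even in~\cite{CPP}, where \ts $f(\la/\nu)$ \ts has an explicit product formula. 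Your overall strategy is consistent with the authors' own heuristics in~$\S$\ref{ss:intro-examples} and~$\S$\ref{s:conj}, but as written it establishes nothing.
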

	
We believe the same bound holds for more general cases.  To understand
our reasoning, note that we take \ts $a=\lfloor\la_1/2\rfloor$ in this
case to minimize the sorting probability. 
Even if the bound we obtain is tight, by varying~$a$ one is
likely to obtain lower global minimum in the definition of
the sorting probability.  In fact, we believe the following general
claim with a weaker bound:
	
	\smallskip

\begin{conj}\label{conj:TVK-skew-more}
There is a universal constant \ts $C>0$, such that for every
$\la\vdash n$, \ts $\la\neq (n), (1^n)$, we have:
$$
		\de\bigl(P_{\la}\bigr) \. \le \.\frac{C}{\sqrt{n}}\..
$$
\end{conj}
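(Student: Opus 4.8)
\emph{Setup and easy reductions.} Since $\de(P_\la)=\de(P_{\la'})$ under conjugation, I would first assume $\la_1\ge \ell(\la)=:d$. Write $m:=n-\la_1$. If $m\le \sqrt n$, the Lemma in Section~\ref{ss:warmup-upper} already gives $\de(P_\la)\le \frac{mn+(m-1)(m-2)}{n(n-1)}=O(1/\sqrt n)$, so I may assume $m>\sqrt n$; and if moreover $d=O(1)$ and $\la$ is $\ep$-thick or $\ep$-smooth for some fixed $\ep>0$, Theorems~\ref{t:thick} and~\ref{t:main} apply. The residual cases are therefore: (A) $d=O(1)$, $m>\sqrt n$, but $\la$ is not $\ep$-thick or $\ep$-smooth for any fixed $\ep$ --- for instance two-row shapes $(n-k,k)$ with $\sqrt n\ll k\ll n$; and (B) $d=\ell(\la)\to\infty$. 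Even where Theorems~\ref{t:thick} and~\ref{t:main} apply, their constants $C_{d,\ep}$ degenerate as $d\to\infty$ or $\ep\to 0$, so the uniform constant of the conjecture still has to be extracted. In all cases the plan is to run the tableau-random-walk machinery of Sections~\ref{s:paths}--\ref{s:sorting-lattice-path}, but with the sorting pair and the interval decomposition chosen adaptively rather than by the recipe $a=\lfloor\la_1/2\rfloor$.

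\emph{Case (A) and intermediate bounded-row shapes.} Here I would take the sorting pair $x=(i,a)$, $y=(i{+}1,b)$ for a well-chosen pair of adjacent rows and study the gap process $G^{(i)}_t:=Z_t(i)-Z_t(i{+}1)$ of the tableau random walk $\bZ$ of Section~\ref{ss:paths-setup}, which is a one-dimensional walk with drift $(\la_i-\la_{i+1})/n$ and total length $\Theta(\la_i+\la_{i+1})$. The sorting probability $\de(P_\la;x,y)$ is controlled, exactly as in Lemma~\ref{l:quantitative bound for Linial}, by $\max_b \Pbl[\Ac(a,b)]$, and the latter by a Gaussian-type estimate on $\Pbl[Z_k=\ga]$ of the kind proved in Lemma~\ref{l:straight-exp-decay} via Corollary~\ref{c:interval} and the interval decomposition of Section~\ref{s:straight-shaped Young posets}, with the split taken at scale $\sqrt n$. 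One then chooses $a$ so that the time window $\{t:Z_t(i)=a\}$ lands where $G^{(i)}$ has Gaussian fluctuations of order $\sqrt n$; this is possible whenever some gap $\la_i-\la_{i+1}$, together with $\la_d$, is large enough, and for genuinely flat shapes (long runs of equal parts with short rows) one falls back on the imbalance bound of Theorem~\ref{SYT-ext} and its row-shifted analogues. I expect this to yield $O(1/\sqrt n)$ with a constant depending only on $d$; the $d$-dependence is addressed next.

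\emph{Case (B) and uniformity --- the main obstacle.} When $\ell(\la)\to\infty$ (in particular for near-square shapes, where no single pair of adjacent rows works), I would adopt the limit-curve viewpoint of Section~\ref{ss:intro-context}: a uniform $A\in\SYT(\la)$ determines the level curve $C_\la(\tfrac12)=\partial\{(i,j):A(i,j)\le n/2\}$, which has $\Theta(\sqrt n)$ cells, and one takes $x,y$ to be two diagonally adjacent cells near the ``center'' of this curve. The crucial missing ingredient is a \emph{local limit theorem for random standard Young tableaux}: for a fixed cell $u$ one needs $A(u)$ equal to a deterministic area term plus $\Theta(\sqrt n)\,\xi_u$ with $\xi_u$ asymptotically Gaussian, together with a joint statement showing that for a suitable nearby pair $(x,y)$ the vector $(A(x),A(y))$ is asymptotically bivariate Gaussian with nondegenerate covariance, so that the probability of $A(x)<A(y)$ equals $\tfrac12+O(1/\sqrt n)$ uniformly in $\la$. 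The counts $f(\la/\nu)$ needed to drive such an LLT --- with $\nu$ obtained by deleting a neighborhood of $x$ and $y$ --- are precisely what the NHLF-and-Schur-polynomial estimates of Sections~\ref{s:upper-bounds-SYTs}--\ref{s:TVK} produce, and a pigeonhole over the choice of center cell should absorb the $d$-dependence left over from Case~(A). The hard part is establishing this LLT uniformly over \emph{all} shapes: it is a quantitative form of the hydrodynamic and variational descriptions of random SYT in \cite{PR,Sun,MPT}, with shape-independent control both of the $\sqrt n$-scale fluctuations and of the determinantal error term in Feit's formula (equivalently, in the Naruse formula), and this is exactly the step singled out in Section~\ref{ss:intro-context} as the most natural and the most challenging.
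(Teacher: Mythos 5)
The statement you are addressing is Conjecture~\ref{conj:TVK-skew-more}: the paper offers no proof of it, and indeed remarks in Section~\ref{s:conj} that at present one cannot even show \ts $\de(P_\la)\to 0$ \ts for general partitions $\la\vdash n$. Your text is therefore a research program rather than a proof, and it has two genuine gaps. First, in your Case~(A) the intermediate regime (e.g.\ $\la=(n-k,k)$ with $\sqrt{n}\ll k\ll n$) is exactly where the machinery of Sections~\ref{s:paths}--\ref{s:sorting-lattice-path} breaks down: the $\ve$-separation condition~\eqref{eq:separated-def} fails for every fixed $\ep$, and the constants in Lemma~\ref{l:minimum bound with m,g,l} (of order $\ep^{-12}$), Lemma~\ref{l:binomial formula asymptotic}, and Lemma~\ref{l:pmf of Zt upper bound} all blow up as $\ep\to 0$; asserting that one can ``choose $a$ so that the gap process has Gaussian fluctuations'' does not replace these estimates. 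Your fallback for flat shapes via Theorem~\ref{SYT-ext} also fails quantitatively: for a square or staircase shape the imbalance $\qp(\la)$ is bounded away from $0$, $1$ and $\tfrac12$ simultaneously, so $\min\{\qp,1-\qp,|1-2\qp|\}=\Omega(1)$, nowhere near $O(1/\sqrt n)$.

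Second, in Case~(B) you yourself identify the missing ingredient --- a shape-uniform local (bivariate Gaussian) limit theorem for $(A(x),A(y))$ with $A\in\SYT(\la)$ uniform --- and this is not a fillable technicality but precisely the obstruction the paper describes in $\S$\ref{ss:intro-context}: the level curves $C_\la(\al)$ solve a variational problem with no closed form, and no uniform fluctuation control is known. Moreover, the hope that ``a pigeonhole over the choice of center cell should absorb the $d$-dependence'' is unsupported: the $d$-dependence in the paper's bounds enters multiplicatively through the balance function and the Schur-polynomial estimates (e.g.\ the factor $d!\max\{1,\tfrac{d(d-1)^2}{e}\}$ in Theorem~\ref{t:interval} and the $(d/\ep)^{d(d-1)/2}$ factors in Lemma~\ref{l:asy-smooth}), not through a union bound over candidate cells, so averaging over $\Theta(\sqrt n)$ cells on a level curve does not cancel it. As written, the proposal reproduces the paper's heuristic discussion of why the conjecture is plausible, but does not close either gap.
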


\smallskip

This conjecture is suggesting that the constants \ts $C_{d,\ep}$ \ts in Theorem~\ref{t:thick} and Theorem~\ref{t:main}
can be made independent of parameters~$d$ and~$\ep$, even though
the proofs give dependence that is relatively wild. 
See, e.g.\ the last line of the proof of Theorem~\ref{t:interval}.   
At the moment, we cannot even prove that \ts $\de\bigl(P_{\la}\bigr)\to 0$ \ts 
for general partitions~$\la$, with \ts $n=|\la|\to \infty$.

\smallskip

In a different direction, suppose $\la$ is a $3$-dimensional diagram
defined as lower ideals in $\nn^3$.  The tools of this paper are
heavily based on the HLF~\eqref{eq:HLF}, NHLF~\eqref{eq:Naruse HLF},
asymptotics of Schur functions and other Algebraic Combinatorics
results.  None of these are available for $3$-dimensional diagrams,
even for the boxes (products of three chains).  Finding new tools 
to establish such bounds would be a major breakthrough.
	
	\begin{conj}\label{conj:3d-boxes}
		Fix~$d, r \ge 2$.  Denote by $P_{d,r,m}$ the $3$-dimensional poset
		given by a \ts $[d\times r \times m]\ssu \nn^3$ \ts box (product of chains 
on size \ts $d$, \ts and \ts $m$, respectively).  Then:
		$$
		\de\bigl(P_{d,r,m}\bigr) \. = \. O\Bigl(\frac{1}{m}\Bigr)\,,
		\ \ \text{as} \ \. m\to \infty.
		$$
	\end{conj}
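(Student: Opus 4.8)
The plan is to run the confined lattice-walk strategy of Sections~\ref{s:paths}--\ref{s:sorting-lattice-path} in three dimensions, after first removing the easy cases by symmetry. If two of $d,r,m$ coincide, then $P_{d,r,m}$ has an involutive automorphism $\sigma$ swapping two equal chains; taking $x\ne\sigma(x)$ on a $2$-cycle of $\sigma$ (necessarily incomparable to $\sigma(x)$, since $x\prec\sigma(x)$ would force an infinite chain) and averaging over the measure-preserving bijection $L\mapsto L\circ\sigma$ gives $\Pb[L(x)<L(\sigma x)]=\Pb[L(\sigma x)<L(x)]=\tfrac12$, so $\de(P_{d,r,m})=0$. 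Since $d,r$ are fixed and $m\to\infty$, one may therefore assume $d<r<m$, where $\mathrm{Aut}(P_{d,r,m})$ is trivial and this shortcut is unavailable.

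Next I would encode a linear extension $L$ of $[d]\times[r]\times[m]$ by its growth process $(Z_0,\ldots,Z_{drm})$, $Z_t\in\nn^{dr}$, recording the column heights $Z_t(i,j)=\#\{k:L(i,j,k)\le t\}$; each $Z_t$ is a plane partition of shape $d\times r$ with entries in $[0,m]$, and $\bZ$ is a unit-step random walk confined to this polytope and conditioned to run from $\mathbf 0$ to $(m,\ldots,m)$. Exactly as in Lemma~\ref{l:quantitative bound for Linial}, for any two columns $(i_1,j_1)\ne(i_2,j_2)$ and any $k_1$ there is a $k_2$ such that, with $x=(i_1,j_1,k_1)$ and $y=(i_2,j_2,k_2)$,
\[
\bigl|\Pb[L(x)<L(y)]-\tfrac12\bigr|\ \leq\ \varphi\ :=\ \max_{k}\ \Pb\bigl[\,Z_t(i_1,j_1)=k_1,\ Z_t(i_2,j_2)=k\ \text{for some }t\,\bigr].
\]
The target is to choose the two columns and $k_1$ — guided by the three-dimensional limit surface and by taking $t\approx drm/2$ — so that $\varphi=O(1/m)$.

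To estimate $\varphi$ I would compare $\bZ$ with an unconstrained biased walk $\bX$ on $\nn^{dr}$, with step probabilities tuned (by the limit shape, hence non-uniform) so that $\Eb[X_{drm}]=(m,\ldots,m)$, conditioned on the event $\mathcal C$ that $\bX$ is a legal growth process ending at the full box. The naive inequality $\Pb[\cdot\mid\mathcal C]\le\Pb[\cdot]/\Pb[\mathcal C]$ is far too lossy by itself; what is actually required is the three-dimensional analogue of Lemma~\ref{l:pmf of Zt upper bound}: that the pmf $\Pb[Z_t=\gamma]$ — a ratio of numbers of linear extensions of box-shaped posets with a ``notch'' at $\gamma$ — be bounded, up to a $\gamma$-independent constant, by a product of hook-type factors times a balance function with Gaussian decay in $\|\gamma-\Eb X_t\|/\sqrt m$. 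Granting such a bound, summing over the $dr-2$ free coordinates of the codimension-$2$ slice (each contributing $O(\sqrt m)$) would give $\varphi=O(m^{-1/2})$, hence $\de(P_{d,r,m})=O(m^{-1/2})$; upgrading to the conjectured $O(1/m)$ then requires, as in the Catalan analysis of~\cite{CPP}, optimizing the pair $(x,y)$ so that $(Z_t(i_1,j_1),Z_t(i_2,j_2))$ lands near a point where the limiting Gaussian fluctuation field on the surface degenerates, gaining the last factor $\sqrt m$.

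The main obstacle is precisely this pmf bound. In two dimensions it rests on the HLF, the NHLF, and asymptotics of Schur polynomials (Sections~\ref{s:upper-bounds-SYTs}--\ref{s:straight-shaped Young posets}); in three dimensions none of these survive — there is no hook-length formula, no Naruse-type formula, and MacMahon's box formula enumerates plane partitions rather than linear extensions of the poset. One would have to replace this entire algebraic layer by the variational principle and local-limit / large-deviation theory for three-dimensional stepped surfaces, for which estimates of the strength needed here (a local limit theorem for the column-height field, with the correct subexponential correction, together with a matching polynomial lower bound on $e(P_{d,r,m})$) are not presently available. Establishing these, and the degeneracy analysis of the limiting field behind the sharp $O(1/m)$ rate, is exactly the ``new tools'' the conjecture calls for.
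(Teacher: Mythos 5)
There is a genuine gap --- and indeed your own write-up concedes it. The statement you were asked about is Conjecture~\ref{conj:3d-boxes}: the paper does not prove it, and explicitly notes (Section~\ref{s:conj}) that the entire algebraic layer its method rests on --- the hook-length formula~\eqref{eq:HLF}, the NHLF~\eqref{eq:Naruse HLF}, and Schur-polynomial asymptotics --- has no known analogue for $3$-dimensional diagrams, even for boxes, so that ``finding new tools to establish such bounds would be a major breakthrough.'' Your proposal reproduces the paper's two-dimensional architecture faithfully (the reduction of $\de$ to a hitting probability as in Lemma~\ref{l:quantitative bound for Linial}, the comparison of the conditioned growth process with a tuned unconstrained walk as in Proposition~\ref{p:cond-prob} and Lemma~\ref{l:lower bound for Cc}), but the load-bearing step --- a $3$-dimensional analogue of Lemma~\ref{l:pmf of Zt upper bound}, i.e.\ a local-limit upper bound on $\Pb[Z_t=\gamma]$ for the column-height field together with a matching polynomial lower bound on the probability of the confinement event $\Cc$ --- is exactly what you admit ``is not presently available.'' A conditional reduction to the open ingredient is not a proof of the conjecture.

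Two further points. First, even granting the Gaussian pmf bound, your outline only yields $\de(P_{d,r,m})=O(m^{-1/2})$; the upgrade to the conjectured $O(1/m)$ via ``degeneracy of the limiting fluctuation field'' is pure speculation, with no identified mechanism. Note that in the paper's own two-row Catalan example the improvement beyond $n^{-1/2}$ was obtained in~\cite{CPP} by a separate, explicitly hook-length-based computation (cf.~\S\ref{ss:finrem-rect}), which again does not transfer to $3$ dimensions. Second, the symmetry preamble is correct but essentially idle: since $d,r$ are fixed and $m\to\infty$, the only persistent coincidence is $d=r$, and while the involution argument does give $\de=0$ there, it contributes nothing toward the generic case $d<r<m$, which is where all the difficulty lies.
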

	
	A more general problem would be to find conditions on the poset \ts
	$P=(X,\prec)$ \ts of \ts bounded width, which would guarantee
	that the sorting probability \ts $\de(P) \to 0$ \ts as the
	size \ts $|X|\to \infty$.
	
	\smallskip
	
	\subsection{Technical estimates}
	The tools of this paper are based on bounds for \ts $f(\la/\mu)=|\SYT(\la/\mu)|$,
	which are of independent interest.  Recall the definition of \ts
	$F(\la/\mu)$ \ts in~\eqref{eq:F-def} and the bound in
	Theorem~\ref{t:NHLF-asy}.  Recall also the balance function~$\eG(\lambda/\mu)$
	defined in~\eqref{eq:G-def} and the bounds in Lemmas~\ref{l:asy-smooth}
	and~\ref{l:asy-TVK}.  The following conjecture is a natural generalization.
	
	\begin{conj}\label{conj:number of skew-shaped diagrams asymptotic}
		Fix $d\ge 2$. Let $\la/\mu\vdash n$, $\ell(\la) \le d$. Then:
		\begin{equation}\label{eq:number of skew-shaped diagrams asymptotic}
		\frac{1}{C_{d}}\,\eG(\lambda/\mu)  \ \leq \  \frac{f(\lambda/\mu)}{F(\lambda/\mu)}
		\ \leq \ {C_{d}}\,\eG(\lambda/\mu)\ts,
		\end{equation}
		for an absolute constant \ts $C_{d}>0$.
	\end{conj}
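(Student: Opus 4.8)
We sketch a possible approach, building on the interval--decomposition machinery of Sections~\ref{s:upper-bounds-SYTs}, \ref{s:straight-shaped Young posets} and~\ref{s:TVK}. For $1\le i<j\le d$ write $A_{ij}:=m_i-m_j=\mu_i-\mu_j+j-i$ and $B_{ij}:=\ell_i/(\ell_i-\ell_j)=(\lambda_i+d-i)/(\lambda_i-\lambda_j+j-i)$, so that by definition $\eG(\lambda/\mu)=\prod_{1\le i<j\le d}\min\{A_{ij},B_{ij}\}$. The plan is to treat the two inequalities in~\eqref{eq:number of skew-shaped diagrams asymptotic} separately, but both are governed by the same ``crossover pattern'': which of $A_{ij}$, $B_{ij}$ attains the minimum.

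For the upper bound I would apply the Interval Upper Bound (Corollary~\ref{c:interval}) with an interval decomposition $\CB=\CB(\lambda,\mu)$ tailored so that $i\eBnsim j$ on the pairs with $\min\{A_{ij},B_{ij}\}=B_{ij}$ --- then the factor $\ell_i/(\ell_i-\ell_j)$ in Corollary~\ref{c:interval} is exactly $B_{ij}$ --- and $i\eBsim j$ on the pairs with $\min\{A_{ij},B_{ij}\}=A_{ij}$, so that the factor $m_i-m_j+N(\ell,\CB)$ is comparable to $A_{ij}$, provided the correction $N(\ell,\CB)=\max_{i\eBnsim j}B_{ij}$ stays below $A_{ij}$ for every same-block pair. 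When such a $\CB$ exists one gets $f(\lambda/\mu)/F(\lambda/\mu)\le C_d\,\eG(\lambda/\mu)$ at once; the cases $\eG(\lambda/\mu)=\Theta_d(1)$ and the TVK-type configurations are precisely where this has been carried out, in Lemmas~\ref{l:asy-smooth} and~\ref{l:f -- TVK upper bound}.

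For the lower bound the route is to generalize Lemma~\ref{l:f -- TVK lower bound}. Starting from the flagged Naruse formula (Theorem~\ref{t:NHLF}),
$$f(\lambda/\mu)/F(\lambda/\mu)=\sum_{T\in\FT(\lambda/\mu)}\ \prod_{(i,j)\in\mu}\frac{h_\lambda\bigl(T(i,j),\,j+T(i,j)-i\bigr)}{h_\lambda(i,j)},$$
one restricts the sum to the flagged tableaux $\Dc_\CB$ whose $i$-th row uses only entries from the block of $\CB$ containing $i$. By the hook--content formula~\eqref{eq:HCF} (exactly as in Lemma~\ref{l:Dc lower bound}), $|\Dc_\CB|$ equals, up to a constant depending only on $d$, the product $\prod_{i\eBsim j}(\mu_i-\mu_j+j-i)=\prod_{i\eBsim j}A_{ij}$; each hook ratio in the product is bounded below by a positive constant depending only on $d$, once one controls the variation of $h_\lambda$ along a block (the role of Lemmas~\ref{l:hook cells lower bound}--\ref{l:hook products lower bound}). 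Combined with $\prod_{i\eBnsim j}(\lambda_i+d-i)/(\lambda_i-\lambda_j+j-i)=\prod_{i\eBnsim j}B_{ij}$ and the choice of $\CB$ above, this would give $f(\lambda/\mu)/F(\lambda/\mu)\ge C_d^{-1}\eG(\lambda/\mu)$. The obstacle here is that Lemma~\ref{l:hook cells lower bound} relied on $\lambda_i-\mu_i\ge\ep|\lambda|$; in the general setting $\lambda_i-\mu_i$ may be as small as a constant, and the product of hook ratios over a block need not be $\Omega_d(1)$ cell by cell. One must instead bound the total multiplicative defect globally, using $\mu\subseteq\lambda$ to limit the number and size of the cells at which a ratio drops below $1$; this should be feasible but needs more careful bookkeeping than the TVK case.

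The genuine difficulty, and the reason this is still a conjecture, lies in producing the interval decomposition $\CB$ that serves both bounds. The naive rule --- declare $i\eBsim j$ iff $A_{ij}\le B_{ij}$ --- is downward closed inside an interval but fails forward transitivity, because the $\mu$-gaps add up along $1<2<\cdots<d$ while the reciprocals $1/B_{ij}$ are only subadditive; consequently one is sometimes forced to place into one block a pair with $B_{ij}\ll A_{ij}$, after which $N(\ell,\CB)$ can dwarf $A_{ij}$. In every example we have examined a workable $\CB$ nonetheless exists, found neither by the naive rule nor by refining $\CB$ all the way to singletons. I expect it must be constructed recursively: cut $[d]$ at its coarsest crossover, apply Corollary~\ref{c:interval}, and recurse inside each resulting block with a finer decomposition --- which in turn calls for a multi-scale strengthening of Theorem~\ref{t:interval}, in which the single additive term $N(\ell,\CB)$ is replaced by a hierarchy of block-local corrections. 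Showing that the at most $d$ levels of this recursion cost only a factor $C_d$ is, I believe, the crux; the cases already settled (Lemmas~\ref{l:asy-smooth} and~\ref{l:asy-TVK}) are precisely those where the recursion terminates at the first level.
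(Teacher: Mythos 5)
This statement is an open conjecture in the paper: the authors do not prove it, and explicitly offer only partial results in its direction (the lower bound $f(\la/\mu)/F(\la/\mu)\ge 1$ from Lemma~\ref{l:NHLF  upper bounded by schur polynomial} together with the upper bound of Theorem~\ref{t:interval}, plus the special cases settled in Lemmas~\ref{l:asy-smooth} and~\ref{l:asy-TVK}). So there is no proof in the paper to compare yours against, and your proposal, by your own account, does not close the conjecture either: it retraces exactly the paper's machinery (Corollary~\ref{c:interval} for the upper bound, the flagged-tableau restriction of Lemma~\ref{l:f -- TVK lower bound} for the lower bound) and then stops at the same obstructions the authors face.

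The concrete gaps are the ones you name but do not resolve. For the upper bound, the decomposition $\CB$ with $i\eBsim j$ exactly when $\min\{A_{ij},B_{ij}\}=A_{ij}$ need not keep $N(\ell,\CB)$ below the same-block gaps $A_{ij}$, and the proposed ``multi-scale strengthening'' of Theorem~\ref{t:interval} with block-local corrections is not formulated, let alone proved; without it nothing beyond Lemmas~\ref{l:asy-smooth} and~\ref{l:f -- TVK upper bound} is established. For the lower bound, Lemma~\ref{l:hook cells lower bound} genuinely uses $\la_i-\mu_i\ge\ep\ts|\la|$, and the ``global bookkeeping'' you invoke to replace it is not supplied; note moreover that Remark~\ref{r:HCF} shows the matching lower bound for the Schur-polynomial analogue (the two-sided version of Conjecture~\ref{conj:schur polynomial asymptotic}) is actually false under $x_i\gets q^i$, so a lower bound on $f/F$ cannot factor through any Schur-level estimate and must exploit the hook ratios in the Naruse sum more finely than a cell-by-cell bound --- this is a warning that the lower half of the conjecture is more delicate than ``more careful bookkeeping.'' In short, your write-up is a reasonable research plan consistent with the paper's partial progress, but it is not a proof, and the two missing ingredients you identify (the recursive interval decomposition with controlled cost, and an $\ep$-free lower bound on the restricted flagged-tableau sum) are precisely the content of the conjecture.
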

	
	One can generalize the definition of \ts $\eG(\lambda/\mu)$ \ts to
	continuous setting:
	$$
	\eG(\bcx/\mu) \, := \,
	\prod_{1 \leqslant i < j \leqslant d} \. \min \left\{\mu_i-\mu_j+j-i, \frac{x_i}{x_i-x_j} \right\},
	$$
	where \ts $\bcx = (x_1,\ldots,x_d)\in \rr^d$, and \ts $\mu=(\mu_1,\ldots,\mu_d)$ \ts
	is an integer partition.
	
	\begin{conj}\label{conj:schur polynomial asymptotic}
		Fix $d\geq 2$ and $\ep >0$.
		Then, for every $\mu=(\mu_1,\ldots,\mu_d)$ \ts and
		\ts $\bcx = (x_1,\ldots,x_d)\in \rr^d$, such that \ts
		$x_1> \ldots > x_d > \ep \ts x_1>0$, we have:
		\begin{equation}\label{eq:conj-asy}
		%\frac{1}{C_{d,\ep}}\,\eG(\bcx/\mu)  \ \leq \
		\frac{s_\mu(x_1,\ldots, x_d)}{x_1^{\mu_1}\.\cdots \. x_d^{\mu_d}}
		\ \leq \ {C_{d,\ep}}\,\eG(\bcx/\mu)\ts,
		\end{equation}
		for an absolute constant \ts $C_{d,\ep} >0$.
	\end{conj}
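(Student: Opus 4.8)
The plan is to reduce Conjecture~\ref{conj:schur polynomial asymptotic} to the combinatorial machinery already developed for Theorem~\ref{t:interval}, which gives an upper bound for $s_\mu(\ell_1,\ldots,\ell_d)/(\ell_1^{\mu_1}\cdots \ell_d^{\mu_d})$ in terms of an arbitrary interval decomposition~$\CB$ of $[d]$. The key observation is that Lemma~\ref{l:Schur-pol-exp} and Lemma~\ref{l:upper bound for mi-mj} never used that the $x_i$ are the shifted parts $\ell_i=\lambda_i+d-i$ of a partition; they hold for \emph{any} sequence $x_1\ge \ldots \ge x_d>0$ and any $m_1\ge \ldots \ge m_d>0$. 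Hence the conclusion of Theorem~\ref{t:interval}, namely
$$
\frac{s_{\mu}(x_1,\ldots,x_d)}{x_1^{\mu_1} \cdots x_d^{\mu_d}} \ \leq \ C_d \prod_{\substack{1 \leqslant i < j \leqslant d \\ i \eBsim j}} \bigl(m_i-m_j+ N(x,\CB)\bigr) \ \prod_{\substack{1 \leqslant i < j \leqslant d \\ i \eBnsim j}} \frac{x_i}{x_i-x_j}\,,
$$
is available verbatim with $N(x,\CB)=\max\{x_i/(x_i-x_j) : i \eBnsim j\}$, for the given real sequence $x_1>\ldots>x_d>\ep x_1>0$. (Here $m_i = \mu_i + d - i$ as in~\eqref{eq:definition l,g,m}.)

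First I would fix the interval decomposition $\CB$ adapted to $\bcx$: declare $i \Bsim j$ if and only if $x_i - x_j \le x_1$, equivalently $x_i/(x_i-x_j)$ is not too large. Actually, mimicking the thick-diagram argument of Section~\ref{s:straight-shaped Young posets}, I would instead group $i,j$ in the same block when $x_i/(x_i-x_j)\le \mu_i-\mu_j+j-i$, so that on same-block pairs the minimum defining $\eG(\bcx/\mu)$ is realized by the $x$-ratio term up to a factor, while on distinct-block pairs the minimum equals the $x$-ratio term. With this choice, $N(x,\CB)$ is bounded by $\max_{i\Bsim j} (\mu_i - \mu_j + j - i) \le |\mu| + d$, but that is too weak; the cleaner route is to take $\CB$ so that $i\Bsim j$ iff $x_i/(x_i-x_j)$ is bounded, say $x_i - x_j \ge \ep x_1$ forces distinct blocks. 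Using $x_d > \ep x_1$, for distinct-block pairs we get $x_i/(x_i-x_j) \le x_i/(\ep x_1) \le 1/\ep$, so $N(x,\CB)\le 1/\ep$. Then the same-block product becomes $\prod_{i\Bsim j}(m_i - m_j + 1/\ep) \le (1+1/\ep)^{d(d-1)/2}\prod_{i\Bsim j}(\mu_i-\mu_j+j-i)$, exactly as in Lemma~\ref{l:f -- TVK upper bound}.

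Next I would show that this bound is, up to a constant depending only on $d$ and $\ep$, at most $C_{d,\ep}\,\eG(\bcx/\mu)$. This is the analogue of Lemma~\ref{l:G exact estimate}. For same-block pairs $i\Bsim j$ we have $x_i/(x_i-x_j) \ge (x_i - x_j)/(x_i-x_j) \cdot (x_i/(x_i-x_j))$... more carefully: since $i\Bsim j$ need not imply $x_i-x_j$ small, I must be careful. The correct block rule is: $i\Bnsim j$ iff $x_i - x_j \ge \ep x_1$. Then for $i\Bnsim j$, $\min\{\mu_i-\mu_j+j-i,\ x_i/(x_i-x_j)\} \ge \min\{1, \ldots\}$; since $x_i/(x_i-x_j)\le 1/\ep$, the minimum is within a factor $1/\ep$ (actually within $\max(1, \text{the ratio})$) of $x_i/(x_i-x_j)$ — wait, we need a lower bound on the minimum in terms of the $x$-ratio, i.e. $\min \ge (\text{const})\cdot x_i/(x_i-x_j)$; since the $x$-ratio is $\le 1/\ep$ and $\mu_i-\mu_j+j-i\ge 1$, we get $\min \ge \ep \cdot x_i/(x_i-x_j)$. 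For same-block pairs $i\Bsim j$, we have $x_i - x_j < \ep x_1 \le x_i/\ep \cdot$... hmm, here $x_i/(x_i-x_j) > x_i/(\ep x_1) \ge 1/(\ep\cdot) $ could be large, so $\min\{\mu_i-\mu_j+j-i, x_i/(x_i-x_j)\}$; I want this $\ge (\text{const})(\mu_i-\mu_j+j-i)$. Since $x_i \ge x_d > \ep x_1$ and $x_i - x_j < \ep x_1$, we get $x_i/(x_i-x_j) > x_d/(\ep x_1) > \ep x_1/(\ep x_1) = 1$, but I need it comparable to $\mu_i-\mu_j+j-i$ which can be of order $|\mu|$. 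This is genuinely the crux: if $x_i - x_j$ is tiny (say of order $1/|\mu|$), then $x_i/(x_i-x_j)$ is huge and the minimum equals $\mu_i-\mu_j+j-i$, fine; but if $x_i - x_j$ is of order $\ep x_1$ yet $i\Bsim j$ — impossible by the block rule. So within a block $x_i - x_j < \ep x_1$, and $x_i/(x_i-x_j) > x_d/(\ep x_1)$. Since $x_d > \ep x_1$, this is $> 1$, but to beat $\mu_i - \mu_j + j - i$ we'd need $x_i - x_j < x_i/(\mu_i-\mu_j+j-i)$, which need not hold. So the lower-bound direction is the obstacle, and I would handle it exactly as Lemma~\ref{l:G exact estimate} does not — rather, Conjecture~\ref{conj:schur polynomial asymptotic} only asks for the \emph{upper} bound \eqref{eq:conj-asy}, $s_\mu/\prod x_i^{\mu_i} \le C_{d,\ep}\,\eG(\bcx/\mu)$, so I only need $K_\CB(\bcx/\mu) := \prod_{i\Bsim j}(\mu_i-\mu_j+j-i)\prod_{i\Bnsim j} x_i/(x_i-x_j) \le C_{d,\ep}\,\eG(\bcx/\mu)$, i.e. I need each factor of $K_\CB$ to be bounded by a constant multiple of the corresponding $\min$ in $\eG$. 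For $i\Bnsim j$: the $\eG$-factor is $\min\{\mu_i-\mu_j+j-i,\ x_i/(x_i-x_j)\}$; since the $x$-ratio is $\le 1/\ep$ and $\mu_i - \mu_j + j - i \ge 1$, we have $\min \ge \ep \cdot x_i/(x_i-x_j) = \ep \cdot (K_\CB\text{-factor})$. For $i\Bsim j$: the $\eG$-factor is $\min\{\mu_i - \mu_j + j - i,\ x_i/(x_i-x_j)\}$ and the $K_\CB$-factor is $\mu_i - \mu_j + j - i$; I need $\min \ge (\text{const})(\mu_i-\mu_j+j-i)$, i.e. $x_i/(x_i-x_j) \ge (\text{const})(\mu_i - \mu_j + j - i)$. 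This fails in general.

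Therefore the honest plan is: do \emph{not} commit to the "big gap" block rule alone, but take $\CB$ to be the finest decomposition compatible with \emph{both} $x_i - x_j \ge \ep x_1 \Rightarrow i \Bnsim j$ (so $N(x,\CB)\le 1/\ep$) and $\mu_i - \mu_j + j - i \ge x_i/(x_i-x_j) \Rightarrow i \Bnsim j$. With the second rule, within a block we have $x_i/(x_i-x_j) > \mu_i - \mu_j + j - i$, so the $\eG$-factor $\min\{\mu_i-\mu_j+j-i,\ x_i/(x_i-x_j)\} = \mu_i - \mu_j + j - i$, which matches the $K_\CB$-factor exactly. One must check these two rules are consistent with $\CB$ being an \emph{interval} decomposition — this uses $x_1 \ge \ldots \ge x_d$ and $\mu_1 \ge \ldots \ge \mu_d$ monotonicity, so that "same block" is a transitive relation on consecutive indices, which is where I expect a short but slightly delicate lemma. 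Granting that, the proof concludes: apply Theorem~\ref{t:interval} with this $\CB$; bound $N(x,\CB)\le 1/\ep$ and absorb the $(m_i - m_j + 1/\ep) \le (1+1/\ep)(m_i - m_j)$ wait — need $m_i - m_j \ge 1$ which holds — into $(1+1/\ep)^{d(d-1)/2}$; then $\prod_{i\Bsim j}(\mu_i-\mu_j+j-i)\prod_{i\Bnsim j}x_i/(x_i-x_j) \le (1/\ep)^{d(d-1)/2}\,\eG(\bcx/\mu)$ by the two factorwise bounds above; combine constants into $C_{d,\ep}$. The main obstacle, as indicated, is verifying that the combined "same block" rule yields a genuine interval decomposition and that no index is split across a block in a way that makes some pair $i\Bsim j$ with $\mu_i - \mu_j + j - i$ exceeding $x_i/(x_i-x_j)$; resolving this requires exploiting monotonicity of both $(x_i)$ and $(\mu_i)$ carefully, but it is structural rather than computational.

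\smallskip

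In summary: \textbf{Step 1:} observe Lemmas~\ref{l:Schur-pol-exp} and~\ref{l:upper bound for mi-mj} and hence Theorem~\ref{t:interval} apply to arbitrary positive decreasing sequences $(x_i)$, not just shifted partitions. \textbf{Step 2:} choose the interval decomposition $\CB$ of $[d]$ making $i,j$ distinct blocks as soon as $x_i - x_j \ge \ep x_1$ or $\mu_i - \mu_j + j - i \ge x_i/(x_i-x_j)$, and verify it is a legitimate interval decomposition using the monotonicity of $(x_i)$ and $(\mu_i)$. \textbf{Step 3:} with this $\CB$, bound $N(x,\CB) \le 1/\ep$ from $x_d > \ep x_1$, apply Theorem~\ref{t:interval}, and absorb the additive $N(x,\CB)$ into a $d,\ep$-constant. \textbf{Step 4:} compare the resulting product $K_\CB(\bcx/\mu)$ factorwise to $\eG(\bcx/\mu)$: on distinct-block pairs the $\eG$-minimum is $\ge \ep \cdot x_i/(x_i-x_j)$, and on same-block pairs the $\eG$-minimum equals $\mu_i - \mu_j + j - i$ by construction; multiply to get \eqref{eq:conj-asy} with $C_{d,\ep} = C_d \cdot (1+1/\ep)^{d(d-1)/2}\cdot \ep^{-d(d-1)/2}$. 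I expect Step~2 to be the main obstacle, since the interleaving of the two block-forming conditions must be shown compatible with the interval structure; the rest is bookkeeping of constants.
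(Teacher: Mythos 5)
You are attempting to prove what is stated in the paper only as Conjecture~\ref{conj:schur polynomial asymptotic}; the paper itself offers no proof, only the partial results in Lemma~\ref{l:NHLF  upper bounded by schur polynomial} and Theorem~\ref{t:interval}, so a correct argument here would be new. Your Step~1 is fine (the proofs of Lemmas~\ref{l:Schur-pol-exp} and~\ref{l:upper bound for mi-mj}, hence of Theorem~\ref{t:interval}, do work for arbitrary $x_1\ge\ldots\ge x_d>0$), but the plan breaks at Step~3. Your bound $N(x,\CB)\le 1/\ep$ is only valid when every distinct-block pair has $x_i-x_j\ge \ep\ts x_1$; under the combined rule of Step~2 a pair may be separated because $\mu_i-\mu_j+j-i\ \ge\ x_i/(x_i-x_j)$ while $x_i-x_j$ is tiny, and then $x_i/(x_i-x_j)$, hence $N(x,\CB)$, is unbounded. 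Concretely, take $d=3$, $x=(1,\ 1-\de,\ 1-\de-\de')$ with $\de,\de'$ tiny, and $\mu=(M,0,0)$ with $M\ge 1/\de$. Your rule forces the blocks $\{1\},\{2,3\}$, so $N(x,\CB)=1/\de$, and the same-block factor for the pair $(2,3)$ in Theorem~\ref{t:interval} is $m_2-m_3+N(x,\CB)=1+1/\de$, whereas the corresponding factor of $\eG(\bcx/\mu)$ is $\min\{1,\ x_2/(x_2-x_3)\}=1$. The additive $N$ cannot be absorbed into a constant depending only on $d,\ep$, and the final bound misses $C_{d,\ep}\,\eG(\bcx/\mu)$ by a factor of order $1/\de\to\infty$. (The conjectured inequality itself is presumably true in this example; it is the method that loses, precisely because of the additive $N(x,\CB)$ term in Theorem~\ref{t:interval}.)

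The second gap is the one you yourself flag in Step~2, and it is not mere bookkeeping. Both separating conditions fail to behave well along blocks: $x_i-x_j$ and $\mu_i-\mu_j$ accumulate over consecutive increments, so if blocks are built from consecutive "close" pairs, a non-adjacent pair $i\Bsim j$ inside a block may still satisfy $\mu_i-\mu_j+j-i\ \ge\ x_i/(x_i-x_j)$. For such a pair the $\eG$-factor is $x_i/(x_i-x_j)$, which can be arbitrarily smaller than the factor $\mu_i-\mu_j+j-i$ appearing in your $K_\CB$, so the factorwise comparison of Step~4 fails on same-block pairs as well. In other words, the difficulty of the conjecture is exactly that no single interval decomposition need make every pair's minimum in $\eG$ align with the corresponding factor produced by Theorem~\ref{t:interval}; were it otherwise, the argument of Lemma~\ref{l:G exact estimate} combined with Corollary~\ref{c:interval} would already settle the question, and the authors would not have left it open. (A minor additional point: Lemmas~\ref{l:Schur-pol-exp} and~\ref{l:upper bound for mi-mj} assume $m_d>0$, while $m_d=\mu_d$ may vanish; this is easily repaired, but should be said.)
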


	We obtain partial results in favor of this conjecture: a lower bound in
	Lemma~\ref{l:NHLF  upper bounded by schur polynomial} and an upper bound
	in Theorem~\ref{t:interval}.  Let us present the former with simplified
	notation, as it also gives connection between
	Conjectures~\ref{conj:number of skew-shaped diagrams asymptotic}
	and~\ref{conj:schur polynomial asymptotic}.
	
	\begin{thm}[{\rm $=$ Lemma~\ref{l:NHLF  upper bounded by schur polynomial}}]
		Let \ts $\la/\mu$ be a skew shape, where \ts
		$\lambda=(\la_1,\ldots,\la_d)$, $\mu=(\mu_1,\ldots,\mu_d)$.
		Then:
		\[
		1\ \leq \   \frac{f(\lambda/\mu)}{F(\lambda/\mu)}   \ \leq \ \frac{s_{\mu}(\lambda_1+d-1, \ts \lambda_2+d-2,\. \ldots \., \lambda_d)}{(\lambda_1+d-1)^{\mu_1} \ts (\lambda_2+d-2)^{\mu_2}\.\cdots \. \lambda_d^{\mu_d}}\,.
		\]
	\end{thm}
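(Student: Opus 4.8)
The plan is to recognize the displayed inequality as exactly Lemma~\ref{l:NHLF  upper bounded by schur polynomial} under the substitution $\ell_i = \lambda_i + d - i$, and to reconstruct that argument, which has two independent halves. The lower bound $f(\lambda/\mu) \ge F(\lambda/\mu)$ is immediate: by the NHLF (Theorem~\ref{t:NHLF-standard}), $F(\lambda/\mu)$ is the single summand coming from the excited diagram $D=\mu$, and every summand in the Naruse sum is positive, so $F(\lambda/\mu) \le f(\lambda/\mu)$; this is precisely Theorem~\ref{t:NHLF-asy}.

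For the upper bound, I would first pass to flagged tableaux. The Flagged NHLF (Theorem~\ref{t:NHLF}) and the definition~\eqref{eq:F-def} of $F(\lambda/\mu)$ share the common factor $n!\prod_{(i,j)\in\lambda}h_\lambda(i,j)^{-1}$, so dividing gives $\frac{f(\lambda/\mu)}{F(\lambda/\mu)} = \sum_{T\in\FT(\lambda/\mu)} \prod_{(i,j)\in\mu}\frac{h_\lambda(T(i,j),\,j+T(i,j)-i)}{h_\lambda(i,j)}$. The genuine technical input is the single-cell hook comparison along a southeast diagonal, Lemma~\ref{l:asymptotic upper bound for hook length of a cell}: $h_\lambda(i+k,j+k)/h_\lambda(i,j) \le \ell_{i+k}/\ell_i$ for $(i,j)\in\lambda$, $k\ge 0$. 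Applying it with $k = T(i,j)-i$ and multiplying over $(i,j)\in\mu$ yields Corollary~\ref{c:asymptotic upper bound for an excited diagram}, namely $\prod_{(i,j)\in\mu}\frac{h_\lambda(T(i,j),\,j+T(i,j)-i)}{h_\lambda(i,j)} \le \prod_{i=1}^d \ell_i^{t_i(T)}/\ell_i^{\mu_i}$, where $t_i(T)$ is the number of entries of $T$ equal to $i$ as in~\eqref{eq:def-ti}.

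Finally I would enlarge the index set of the sum. Every flagged tableau is in particular an element of $\SSYT(\mu)$ (the flag condition~\eqref{eq:definition flagged tableau} is an extra constraint), and all terms $\prod_i \ell_i^{t_i(T)}$ are nonnegative, so dropping the flag condition only increases the sum: $\frac{f(\lambda/\mu)}{F(\lambda/\mu)} \le \frac{1}{\ell_1^{\mu_1}\cdots\ell_d^{\mu_d}}\sum_{T\in\SSYT(\mu)}\ell_1^{t_1(T)}\cdots\ell_d^{t_d(T)}$. By the combinatorial definition of Schur polynomials~\eqref{eq:Schur polynomial}, the last sum equals $s_\mu(\ell_1,\ldots,\ell_d)$; rewriting $\ell_i = \lambda_i + d - i$ recovers the displayed inequality exactly.

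The main obstacle is upstream of the Schur-polynomial manipulation: establishing the diagonal hook estimate in Lemma~\ref{l:asymptotic upper bound for hook length of a cell}. That requires tracking how the arm and leg lengths change under the shift $(i,j)\mapsto(i+k,j+k)$ and exploiting the crude bound $h_\lambda(i,j)\le \lambda_i + d - i = \ell_i$ to turn a difference estimate into a ratio estimate. Everything after that — the flagged NHLF rewriting, multiplying the per-cell bounds, and extending $\FT(\lambda/\mu)$ to $\SSYT(\mu)$ — is routine bookkeeping, and no sophistication about the shape $\lambda/\mu$ (e.g.\ thickness or smoothness) is needed for this statement.
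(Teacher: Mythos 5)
Your proposal is correct and follows essentially the same route as the paper: lower bound from Theorem~\ref{t:NHLF-asy}, upper bound by dividing the Flagged NHLF (Theorem~\ref{t:NHLF}) by $F(\lambda/\mu)$, applying the diagonal hook estimate of Lemma~\ref{l:asymptotic upper bound for hook length of a cell} cellwise (Corollary~\ref{c:asymptotic upper bound for an excited diagram}), and then relaxing the sum from $\FT(\lambda/\mu)$ to $\SSYT(\mu)$ to recognize $s_\mu(\ell_1,\ldots,\ell_d)$ via~\eqref{eq:Schur polynomial}. No gaps; this matches the paper's proof of Lemma~\ref{l:NHLF  upper bounded by schur polynomial}.
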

	
\smallskip

\begin{rem}\label{r:HCF} {\rm
Conjecture~\ref{conj:schur polynomial asymptotic} in the earlier version 
of the paper had a matching lower bound:
$$
\frac{1}{C_{d,\ep}} \ \eG(\bcx/\mu) \ \leq \ \frac{s_\mu(x_1,\ldots, x_d)}{x_1^{\mu_1}\.\cdots \. x_d^{\mu_d}}\,.
$$
Unfortunately, this bound fails for the substitution \. $x_i \gets q^i$ \. for \. 
$q \in \bigl[ \ep^{1/(d-1)}, 1\bigr)$, by the \emph{hook-content formula},
see \cite[Thm~7.21.2]{Sta99}.
On the other hand, the upper bound~\eqref{eq:conj-asy} is easy to check in this case. 
}
\end{rem}

	\bigskip
	
\section{Final remarks}\label{s:finrem}

\subsection{}\label{ss:finrem-1323}
Although much of the paper is motivated by the work surrounding the 
\. $\frac{1}{3}$--$\frac{1}{3}$ \ts Conjecture~\ref{c:1323}, we do not 
resolve the conjecture in any new cases.  As mentioned in the introduction,
for all skew Young diagrams the conjecture was already established in~\cite{OS}.  
In fact, when compared with the Kahn--Saks Conjecture~\ref{c:width-KS}, our 
results are counterintuitive since we obtain the conclusion of the conjecture
in a strong form, while the width of our posets remains bounded.  Clearly, 
much of the subject remains misunderstood and open to further exploration.   

\subsection{}\label{ss:finrem-thick}
The technical assumption in Theorem~\ref{t:thick},
that $\la$ is \ts $\ep$-thick is likely unnecessary,
but at the moment we do not know how to avoid it.
The same applies for the $\ep$-smooth assumption, and
the Main Theorem~\ref{t:main} most likely holds
under much weaker assumptions.  Let us remark though,
that in some formal sense these two assumptions are
equivalent.
Indeed, let \ts $\la=(\la_1,\ldots,\la_d)$ \ts and
$\mu=(\la_{i+1},\ldots, \la_{i+1}, \la_{i+2},\ldots,\la_d)$.
The skew shape \ts $\nu:=\la/\mu= (\la_1-\la_{i+1},\ldots,\la_i-\la_{i+1})$ \ts
is then the straight shape, so the $\ep$-smooth condition
\ts $\la_{i}-\la_{i+1} \ge \ep \ts n$ \ts becomes the
$\ep$-thick condition for~$\nu$.

\subsection{}\label{ss:finrem-thick-2}
For a fixed number of rows $d=\ell(\la)$,
Corollary~\ref{c:warmup-upper} shows that \ts
$\de(P_\la)=O\big(\frac1n\big)$ \ts for all \ts
$\la\vdash n$, such that  \ts $\la_2= O(1)$.
This is the opposite extreme of $\ep$-thick diagrams~$\la$,
suggesting that the $\ep$-thick assumption in
Theorem~\ref{t:thick} might be unnecessary indeed.

\subsection{}\label{ss:finrem-TVK-const}
The upper bound in~\eqref{eq:lambda small gap} and~\eqref{eq:mu small gap} can be
	replaced with an arbitrary constant~$K$ at the cost of changing the positive
	constant $C_{d,\ep}$ in our results into the positive constant $C_{d,\ep,K}$,
	which now also depends on $K$.  The rest of the proof follows verbatim and
	gives a slight extension of Theorem~\ref{t:TVK-skew} under weaker conditions \ts
	$\bigl|\la_i - \al_i \ts n\bigr|\le K$, and the same for the~$\mu$.
	We omit the details.
	
	\subsection{}\label{ss:finrem-OO}
	The Naruse's hook-length formula~\eqref{eq:Naruse HLF} works well
	when \ts $|\la/\mu|$ \ts is relatively small compared to~$|\la|$.
    On the other hand, when \ts $|\mu|$
	\ts is very small, there is another positive formula due to Okounkov
	and Olshanski \cite{OO}, which was observed in~\cite{OO,Sta99} to give
	sharp estimates in that regime.  In \cite[$\S$9.4]{MPP1}, the authors
	suggested that this rule is equivalent to the Knutson--Tao
    ``equivariant puzzles'' rule.
	This was proved in~\cite{MZ}, which reworked the Okounkov--Olshanski
	formula in the NHLF-style.  It would be interesting to see if this
	formula can be used in place of NHLF to obtain sharper bounds on the
	sorting probability of skew Young diagrams, at least in some cases.

\subsection{}\label{ss:finrem-rect}
When \ts $\la=(m^d)$ \ts is a rectangle, one can estimate \ts $\de(P_\la)$ \ts
without the NHLF, since \ts $f(\la/\mu)$ \ts can be computed by the
hook-length formula~\eqref{eq:HLF}.  This greatly simplifies the
calculations, and is an approach take in~\cite{CPP} for the
Catalan numbers example \ts $\la=\bigl(\frac{n}2,\frac{n}2\bigr)$, see~$\S$\ref{ss:intro-examples}.

	\subsection{}\label{ss:finrem-sqrt}
	As we mentioned in the previous section, there are several places
	where our bounds are likely not sharp.  First, the argument
	in~$\S$\ref{ss:paths-anti}, is a quantitative version of
	Linial's pigeonhole principle argument, which we also employ
    in~$\S$\ref{ss:warmup-proof}.  But the real obstacle to improving
	the \ts $O\bigl(\frac{1}{\sqrt{n}}\bigr)$ \ts bound is not apparent until
	Section~\ref{s:upper-bounds-SYTs}, where the interval decompositions
	are introduced and a different pigeonhole argument is used.

\subsection{}\label{ss:finrem-GG}
Most recently, Conjecture~\ref{c:1323} was generalized to all Coxeter
groups~\cite{GG}.  It would be interesting to see if our results extend
to this setting. 
	
	\vskip.6cm
	
	\subsection*{Acknowledgements}
	We are grateful to Han Lyu, Alejandro Morales and Fedya Petrov
	for many interesting discussions on the subject.  We are thankful
    to Vadim Gorin, Jeff Kahn, Martin Kassabov, Richard Stanley and Tom Trotter for useful
    comments.  Dan Romik kindly provided us with Figure~\ref{f:romik}.
We thank the anonymous referees for the careful reading of the paper, 
especially the suggestion which led us to Remark~\ref{r:HCF}. 
The first author was partially supported by the Simons Foundation. 
The last two authors were partially supported by the NSF.

%	\newpage

\vskip.9cm

%%% AUTHOR: Include a short description of each author following the
%%% structure below. Use the same short tags used previously.  
%%% Use \imageat{} and \imagedot{} instead of "@" and "." in
%%% email addresses-this replaces the symbols with graphics to avoid 
%%% e-mail address harvesting from the .pdf file
\begin{dajauthors}
	\begin{authorinfo}[swee]
		Swee Hong Chan\\
		University of California Los Angeles\\
		Los Angeles, USA\\
		sweehong\imageat{}math\imagedot{}ucla\imagedot{}edu \\
		\url{https://www.math.ucla.edu/~sweehong/}
	\end{authorinfo}
	\begin{authorinfo}[igor]
		Igor Pak\\
		University of California Los Angeles\\
Los Angeles, USA\\
pak\imageat{}math\imagedot{}ucla\imagedot{}edu \\
\url{https://www.math.ucla.edu/~pak/}
	\end{authorinfo}
	\begin{authorinfo}[greta]
		Greta Panova\\
		University of Southern California\\
		Los Angeles, USA\\
		gpanova\imageat{}usc\imagedot{}edu\\
		\url{https://sites.google.com/usc.edu/gpanova/home}
	\end{authorinfo}
%	\begin{authorinfo}[andy]
%		Andrew Chi-Chih Yao\\
%		Professor\\
%		etc.
%	\end{authorinfo}
\end{dajauthors}
	
%\author{Swee Hong Chan}
%\address[Swee Hong Chan]{Department of Mathematics, UCLA,  Los Angeles, CA 90095.}
%\email{\texttt{sweehong@math.ucla.edu}}
%
%\author[\ts Igor Pak]{Igor Pak}
%\address[Igor Pak]{Department of Mathematics, UCLA,  Los Angeles, CA 90095.}
%\email{\texttt{pak@math.ucla.edu}}
%
%\author[\ts Greta Panova]{Greta Panova}
%\address[Greta Panova]{Department of Mathematics, USC,  Los Angeles, CA 90089.}
%\email{\texttt{gpanova@usc.edu}}

\end{document}